\numberwithin{equation}{section}
\newcommand{\norm}{\|\,.\,\|}
\newcommand{\Ka}{K^{\rm an}}
\newcommand{\Kto}{K^{\rm top}}
\newcommand{\ka}{k^{\rm an}}
\newcommand{\kto}{k^{\rm top}}
\newcommand{\Kco}{K^{\rm cont}}
\newcommand{\kco}{k^{\rm cont}}
\newcommand{\Sch}{{\rm \bf Sch}}
\DeclareMathOperator{\GL}{GL}
\DeclareMathOperator{\Mat}{M}
\DeclareMathOperator{\Spec}{Spec}
\DeclareMathOperator*{\colim}{colim}
\DeclareMathOperator{\coker}{coker}
\DeclareMathOperator{\im}{im}
\DeclareMathOperator{\diag}{diag}
\DeclareMathOperator{\cofib}{cofib}
\DeclareMathOperator{\fib}{fib}
\DeclareMathOperator{\mycup}{\smallsmile}
\newcommand{\bC}{\mathbf{C}}
\DeclareMathOperator{\Pro}{Pro}
\DeclareMathOperator{\Fun}{Fun}
\newcommand{\Spaces}{\mathbf{Spc}}
\DeclareMathOperator{\Nerve}{N}
\DeclareMathOperator{\Map}{Map}
\newcommand{\lex}{\mathrm{lex}}
\newcommand{\acc}{\mathrm{acc}}
\newcommand{\an}{\mathrm{an}}
\newcommand{\cont}{\mathrm{cont}}
\newcommand{\op}{\mathrm{op}}
\newcommand{\sSet}{\mathbf{sSet}}
\newcommand{\Spc}{\operatorname{Sp}}
\newcommand{\prolim}[1]{\underset{#1}{\operatorname{``}\lim \operatorname{''}  \hspace{.2ex}  }}
\newcommand{\lprolim}[1]{\operatorname{``}\lim \operatorname{''} \hspace{-1.9ex} {}_{#1} \hspace{.5ex} }
\newcommand{\Sp}{\mathbf{Sp}}
\newcommand{\KGL}{K^{\GL}}
\newcommand{\BGL}{\mathrm{BGL}}
\newcommand{\N}{{\mathbb N}}
\newcommand{\Z}{{\mathbb Z}}
\newcommand{\cO}{\mathcal{O}}
\newcommand{\R}{{\mathbb R}}
\newcommand{\sU}{{\mathcal U}}
\newcommand{\Ab}{\mathbf{Ab}}
\def\At{A\langle t \rangle}
\def\Ati{A\langle t^{-1} \rangle}
\def\Atti{A\langle t, t^{-1} \rangle}
\def\Ao{A_0}
\newcommand{\bark}{{\bar k}}
\newcommand{\tildek}{{\tilde k}}
\newcommand{\hNpi}{\hat  N^\pi}
\theoremstyle{plain}
\newtheorem{thm}{Theorem}[section]
\newtheorem{lem}[thm]{Lemma}
\newtheorem{lemma}[thm]{Lemma}
\newtheorem{cor}[thm]{Corollary}
\newtheorem{prop}[thm]{Proposition}
\theoremstyle{definition}
\newtheorem{ex}[thm]{Example}
\newtheorem{defn}[thm]{Definition}
\newtheorem{claim}[thm]{Claim}
\newtheorem{rmk}[thm]{Remark}
\newtheorem{rmks}[thm]{Remarks}
\title{\textit{K}-theory of non-archimedean rings I}
\author{Moritz Kerz}
\address{Fakult\"at f\"ur Mathematik, Universit\"at Regensburg, 93040 Regensburg, Germany}
\email{moritz.kerz@ur.de}
\author{Shuji Saito}
\address{Interactive Research Center of Science,
Graduate School of Science and Engineering,
Tokyo Institute of Technology,
2-12-1 Okayama, Meguro,
Tokyo 152-8551,
Japan}
\email{sshuji@msb.biglobe.ne.jp}
\author{Georg Tamme}
\address{Fakult\"at f\"ur Mathematik, Universit\"at Regensburg, 93040 Regensburg, Germany}
\email{georg.tamme@ur.de}
\thanks{The authors are supported by the DFG through CRC 1085 \textit{Higher Invariants} (Universit\"at Regensburg). 
The second author thanks the Department of Mathematics at the University of
Regensburg
for hospitality while this work was done.
He is supported by JSPS KAKENHI Grant (15H03606).}
\date{\today}
\begin{document}

\begin{abstract}
We introduce a variant of homotopy $K$-theory for Tate rings, which we call \emph{analytic $K$-theory}. It is homotopy invariant with respect to the analytic affine line viewed as an ind-object of closed disks of increasing radii. Under a certain regularity assumption we prove an analytic analog of the Bass fundamental theorem and we compare analytic $K$-theory with continuous $K$-theory, which is defined in terms of models. Along the way we also prove some results about the algebraic $K$-theory of Tate rings.
\end{abstract}

\maketitle

\setcounter{tocdepth}{2}

\section{Introduction}

In this note we introduce and study a new version of $K$-theory for non-archimedean rings,
which takes the topology into account. As a motivation, we first recall some 
facts about continuous $K$-theory and about classical topological $K$-theory before we describe our construction in more detail in Subsection~\ref{intro:anaK}.

\subsection{Motivation: Algebraization problem and topological \textit{K}-theory}\label{into.sub1}

Let $\kappa$ be a complete discretely valued field, $\kappa_0\subset \kappa$ be its ring
of integers and $\pi\in \kappa_0$ be a prime element.

Let $X$ be a proper smooth scheme over $\kappa_0$.
The algebraization problem for $K_0$-classes of vector bundles asks how we can describe
the image of the map
\begin{equation} \label{eqin:alg}
  K_0(X) \to \lim_n K_0(X_n),
\end{equation}
where $X_n=X\otimes_{\kappa_0} \kappa_0/(\pi^n)$.

Unfortunately, the map \eqref{eqin:alg} is quite far
from being surjective in general, see~\cite[App.~B]{BEK}. However, one can ask if for an
abelian scheme $X$ over $\kappa_0$ and an integer $m>1$ the image of the map~\eqref{eqin:alg}, after tensoring with $\mathbb{Q}$, contains the
kernel of $\psi^{m^2}-[m]^*$, where $\psi$ is the Adams operation and $[m]:X\to X$ is
multiplication by $m$.

In~\cite{BEK} the first author together with Bloch and Esnault showed that if the answer
to this question is positive for all abelian schemes in
characteristic zero then the Hodge conjecture for abelian varieties holds.

These observations motivate us to take a closer look at the codomain of~\eqref{eqin:alg},
which is a kind of continuous $K$-group.
Such continuous $K$-groups have been extensively studied by several authors with different methods,  e.g.~\cite{Panin, HM1, Geisser-Hesselholt, GH2, Beilinson-continuous}.
 In this article we study such continuous
$K$-theory of affine formal schemes over $\kappa_0$ and of affinoid spaces over $\kappa$
and relate it to what we call analytic $K$-theory. The construction of the latter is in turn motivated by the
following observation about topological $K$-theory.

\medskip

Let $X$ be a compact Hausdorff space. The (complex) topological $K$-theory  of $X$ is usually defined
as the mapping spectrum
\[
\Kto(X) = \mathrm{map}( \Sigma^\infty X_+ , KU),
\]
where $KU$ is the spectrum representing complex $K$-theory. The connective covering spectrum
of $\Kto(X)$ is denoted by $ \kto (X)$.

For a ring $A$ let $K(A)$ be the non-connective algebraic $K$-theory
spectrum of $A$ and $k(A)$ be its connective cover.
 Let $C(X)$ be the ring of complex continuous functions on the compact Hausdorff
space $X$ and let
$j\mapsto \Delta^j_{\rm top}$ be the cosimplicial space of standard simplices.

An important observation is that topological $K$-theory can be obtained from algebraic
$K$-theory by making it homotopy invariant. Recall that there is a canonical morphism of
spectra $k(C(X))\to \kto (X)$.
As $\kto$ is homotopy invariant, it is natural to study the induced morphism
\[\psi\colon k(C(X\times \Delta_{\rm top})) \to \kto (X),\] where the domain is the realization
of the simplicial spectrum given by $[m]\mapsto k(C(X\times \Delta_{\rm top}^m))$.
Our definition of non-archimedean analytic $K$-theory below is motivated by the fact that
$\psi$ is an equivalence of spectra.

\subsection{Analytic\textit{K}-theory and Karoubi--Villamayor \textit{K}-theory}
	\label{intro:anaK}

There exist several articles generalizing aspects of either classical homotopy theory of the real
interval ${[0,1]}\subset \mathbb R$ or of $\mathbb A^1$-homotopy theory from algebraic geometry to the non-archimedean
setting, for example~\cite{Karoubi-Villamayor},  \cite{Karoubi1971}, \cite{calvo}, \cite{ayoub}. In all those works the approach has been to use the rigid analytic closed unit
ball $\mathbb B_\kappa$ over the  non-archimedean complete  valued field $\kappa$ (or some
variant of it in the work of Karoubi and Villamayor) as  an interval for homotopy theory.

The novel aspect we suggest in this note is to use the affinoid exhaustion of the whole
analytic line
\begin{equation}\label{eq.intro.exhaust}
  (\mathbb A^1)_\kappa^\an=\colim_\rho\, \mathbb B_\kappa (\rho)
\end{equation}
instead of the rigid analytic closed unit ball
$\mathbb B_\kappa= \mathbb B_\kappa (1)$ to construct a homotopy theory, or more precisely
a non-archimedean $K$-theory. Here $\mathbb B_\kappa (\rho)$ is the
rigid analytic closed ball of radius $\rho$ centered at the origin. So in some sense this new homotopy
theory keeps more information than the previous approaches mentioned above. Our approach is
motivated by the observation that the Picard group of a smooth affinoid algebra is
homotopy invariant in our sense, while it is not  $\mathbb B_\kappa$-invariant in general, see~\cite{proHIPic}.

More concretely, we mimic the description of topological $K$-theory explained in
Subsection~\ref{into.sub1} in order to construct  a new analytic $K$-theory of
affinoid spaces. Instead of actually performing the limit in~\eqref{eq.intro.exhaust} we
work with pro-systems of algebraic $K$-groups.

Let $\kappa$ be a complete discretely valued field and let $X$ be an affinoid space over
$\kappa$. For $1\le \rho\in |\kappa^\times|$ let $\Delta_\rho^m$ be the standard rigid $m$-simplex
of radius $\rho$, i.e.
\[
\Delta_\rho^m  = \Spc \kappa\langle X_0,\ldots , X_m \rangle_\rho/ (X_0 + \cdots + X_m-1)
\]
where $\kappa\langle X_0,\ldots , X_m \rangle_\rho$ is the Tate algebra of formal power series
which converge on a closed polydisc of radius $\rho$.

By varying $\rho\ge 1$ we obtain an ind-object  $\rho\mapsto \Delta_\rho$ in the category of cosimplicial
affinoid spaces over $\kappa$. We suggest to define connective analytic $K$-theory of the
affinoid space $X$ as the pro-spectrum
\[
\ka (X) = \prolim{\rho } k(\cO(X\times \Delta_\rho)).
\]
Here $\cO(Y)$ denotes the ring of rigid analytic functions of an affinoid space $Y$.
There exists also a Karoubi--Villamayor analog of $\ka(X)$ defined as the
pro-simplicial set
\[
KV^\an (X) = \prolim{\rho} \BGL(\cO(X\times \Delta_\rho)).
\]
For  $X=\Spc A$   we also write $\ka (A)$ for $\ka(X)$  and similarly  $KV^\an(A)$ for $KV^\an(X)$. We
write $\ka_i(A)$ resp.\ $KV^\an_i(A)$ for the homotopy pro-group $\pi_i$ of $\ka(A)$ resp.~of $KV^\an(A)$.

This definition of non-archimedean analytic $K$-theory has very pleasant properties if $A$ is
regular. For technical reasons we also have to assume a certain weak form of resolution of
singularities for a ring of definition of $A$, which we call condition $(\dagger)_A$, see Subsection~\ref{subsec:nonarch.tate} for details. This condition
is satisfied for any regular affinoid algebra $A$ if $\kappa$ has residue characteristic zero by a
variant of Hironaka's resolution of singularities.

\begin{thm}\label{intro.mainthm1}
  For a regular affinoid algebra $A$ which satisfies condition $(\dagger)_A$ we obtain:
\begin{itemize}
\item[(i)]  $\ka_0(A)$ is isomorphic to the constant pro-group $K_0(A)$.
\item[(ii)] There are isomorphisms of pro-groups
  \[
    \ka_1(A)  \cong \prolim{0<r<1} K_1(A)/(1+A(r )) = \prolim{\rho>1}\GL(A)/\GL(A)_\rho,
  \]
    where $ \GL(A)_\rho$ is the normal subgroup generated by matrices
    $g$ satisfying the unipotence condition $\lim_n \Vert (g-1)^n \Vert \, \rho^n=0$ and where $A(r)=\{
    a\in A \, |\, \Vert a \Vert < r \}$ for $r>0$.
\item[(iii)] For $i>0$ there is an isomorphism $ KV^\an_i(A) \xrightarrow{\simeq} \ka_i(A) $
  of pro-groups.
\item[(iv)] $\ka$ satisfies the analytic analog of the Bass fundamental theorem of algebraic
  $K$-theory, i.e.\ for $i>0$ there is a canonical exact sequence
  \[
    0\to \ka_i(A)\to \ka_i(A\langle t \rangle ) \times \ka_i(A\langle t^{-1} \rangle )\to
    \ka_i(A\langle t,t^{-1} \rangle ) \to \ka_{i-1}(A) \to 0
  \]
  together with a canonical splitting of the surjection on the right. 
 
\end{itemize}
\end{thm}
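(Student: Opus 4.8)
The plan is to reduce everything to known structural results about algebraic $K$-theory and Karoubi--Villamayor $K$-theory, applied levelwise to the pro-system $\rho \mapsto \cO(X \times \Delta_\rho)$, and then to control the behavior of the relevant quotients along the pro-system using the analytic structure. First I would treat (i) and (ii) together: since $\ka_i(A) = \prolim{\rho} K_i(\cO(A \otimes \Delta_\rho))$, the key input is an understanding of the rings $\cO(A \times \Delta^m_\rho) = A\langle X_0,\ldots,X_m\rangle_\rho/(\sum X_i - 1)$ and of the cosimplicial structure maps. For $K_0$, one expects each $\cO(A \times \Delta^m_\rho)$ to have the same $K_0$ as $A$ (a Lindel-type or direct affinoid argument: the polynomial simplex is a retract situation, and passing to the affinoid completion of radius $\rho \ge 1$ does not change $K_0$ when $A$ is regular), so the simplicial spectrum is degreewise constant on $\pi_0$ and (i) follows. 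For $K_1$, I would identify $\pi_1$ of the realization with the coequalizer of $K_1(\cO(A\times\Delta^1_\rho)) \rightrightarrows K_1(\cO(A\times\Delta^0_\rho)) = K_1(A)$; the two face maps correspond to evaluation at the endpoints, and an element of $\GL(\cO(A\times\Delta^1_\rho))$ realizes a "homotopy" between $g$ and $1$ precisely when $g$ is unipotent in the sense that $(g-1)^n$ has norm decaying faster than $\rho^{-n}$ — this is where the condition $\lim_n \Vert(g-1)^n\Vert\,\rho^n = 0$ enters, and one must check the two descriptions of $\GL(A)_\rho$ agree, which is a computation with the Tate norm on $A\langle X\rangle_\rho$.

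For (iii), the comparison $KV^\an_i(A) \xrightarrow{\simeq} \ka_i(A)$ for $i>0$, I would invoke the classical theorem (Gersten, or Anderson) that for a ring $R$ the Karoubi--Villamayor groups $KV_i(R) = \pi_{i-1}$ of the simplicial group $[m]\mapsto \mathrm{GL}(R[\Delta^m])$ agree with $\pi_i$ of the realization $k(R[\Delta^\bullet])$ whenever the latter is an $H$-space in the appropriate sense — equivalently, this is the statement that the plus-construction and the simplicial $\mathrm{GL}$ model compute the same homotopy-invariant $K$-theory. Applying this levelwise in $\rho$ and passing to the pro-category (which is exact, so commutes with the relevant homotopy groups) gives the isomorphism of pro-groups. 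The point requiring care is that one needs the $\rho$-indexed system to be a system of such comparison equivalences compatibly, but since the classical statement is functorial in the ring, this is automatic.

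Part (iv), the analytic Bass fundamental theorem, is the main obstacle. The strategy is: for each fixed simplicial degree $m$ and each $\rho$, apply the ordinary Bass fundamental sequence to the ring $\cO(X\times\Delta^m_\rho)$ with respect to the variable $t$, using $\cO(X\times\Delta^m_\rho)\langle t\rangle_{\rho'}$ etc. The genuine difficulty is that the algebraic $K$-theory of $A\langle t\rangle$ is \emph{not} the algebraic $K$-theory of $A[t]$, so the classical $\mathrm{NK}$-term does not literally apply; one must instead show that after passing to the analytic (homotopy-invariant) version the $\mathrm{NK}$-terms vanish, i.e.\ that $\ka$ is $\mathbb{B}_\kappa(\rho)$-homotopy invariant \emph{in the ind-sense}, which is precisely the regularity-plus-$(\dagger)_A$ input. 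Concretely I would: (a) establish that for regular $A$ satisfying $(\dagger)_A$, the pro-spectrum $\ka(A)$ is insensitive to adding a bounded polynomial variable — this should follow from a pro-cdh-descent or pro-excision argument for the models (here $(\dagger)_A$ supplies resolution of singularities of a ring of definition, feeding into a Cortiñas--Haesemeyer--Schlichting--Weibel-style argument that homotopy $K$-theory satisfies cdh-descent), (b) deduce the localization sequence relating $\ka(A\langle t\rangle)$, $\ka(A\langle t^{-1}\rangle)$, $\ka(A\langle t,t^{-1}\rangle)$ and $\ka(A)$ by comparing with the algebraic localization sequence for $\mathbb{A}^1 = \mathrm{Spec}\,A[t]$ glued from two copies, noting that the analytic functions on the exhausted line are built from these Tate algebras, and (c) extract the short exact sequences and the splitting from the resulting long exact sequence exactly as in the classical Bass argument, the splitting coming from the section $t\mapsto$ (unit) of the Laurent inclusion. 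The heart of the matter, and where I expect to spend the most effort, is step (a): proving the ind-$\mathbb{B}_\kappa$-homotopy invariance of $\ka$ for regular $A$, since this is the analytic replacement for the vanishing of $\mathrm{NK}$ of a regular ring and it is exactly what forces the hypotheses of the theorem.
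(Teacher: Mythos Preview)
Your sketches for (iii) and for the second isomorphism in (ii) are close to what the paper does. The key point you do not quite isolate is that the simplicial ring $A\langle\Delta_\rho\rangle$ is \emph{connected}, so $\pi_1\BGL(A\langle\Delta_\rho\rangle)$ is already abelian and the plus construction is a weak equivalence; this is what identifies $KV^\an_i$ with $\pi_i$ of $\KGL^{\an}$ and hence, via the fibre sequence with $K_0$, with $\ka_i$ for $i>0$. Note also that your ``coequalizer of $K_1$'' description in (ii) is really a computation of $KV^\an_1$, not of $\ka_1$ directly; the two agree only once you have (iii), so the order of your argument is slightly circular as written.

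For (i) there is a genuine imprecision: it is \emph{not} claimed (and likely not true) that $K_0(A\langle\Delta^m_{\pi^j}\rangle)\cong K_0(A)$ for each fixed $j$. What holds, and what suffices, is the pro-statement $K_0(A)\xrightarrow{\sim}\prolim{j}K_0(A\langle\underline t\rangle)$, and proving this already requires the full $(\dagger)_A$ machinery (pro-cdh descent for $K$-theory applied to a regular resolution of a ring of definition, combined with $\mathbb A^1$-invariance for the regular ring $A$).

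The main gap is in (iv) and in the \emph{first} isomorphism of (ii). The paper does not attempt to prove the analytic Bass sequence directly for $\ka$. Instead it first proves the comparison theorem $\Ka(A)\simeq\Kco(A)$ --- this is the technical heart of the paper --- and then observes that Bass for $\Kco$ is essentially formal: by construction $\Kco(A;A_0)$ sits in a fibre sequence with $\bar K(A_0)$ and $\Kco(A_0)$, and for both of these the Tate-variable map $\lambda$ reduces, by excision for $\bar K$ and by density of polynomials modulo $\pi^n$ for $\Kco(A_0)$, to the ordinary algebraic Bass equivalence over $A_0[t^{\pm 1}]$. Likewise the identification $\ka_1(A)\cong\prolim{n}K_1(A)/(1+\pi^nA_0)$ is read off from the definition of $\Kco_1$ together with the standard computation of $K_1$ of a quotient by a radical ideal, not from the simplicial realization. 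Your direct route to (iv) --- apply algebraic Bass levelwise and kill an analytic $NK$-term --- runs into exactly the problem you flag: the Tate algebra $A\langle t\rangle$ is not $A[t]$, so there is no levelwise algebraic Bass sequence to start from, and your steps (a)--(c) do not supply a substitute. In particular, the ind-homotopy-invariance you propose in (a) is true and in fact easy for any functor (it needs no regularity), but it is a statement about $\prolim{t\mapsto\pi t}\ka(A\langle t\rangle)$, not about $\ka(A\langle t\rangle)$ for a single $t$, and so does not feed into a Bass-type sequence in the way you need.
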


For an  affinoid $\kappa$-algebra $A$ the choice of a surjection $\kappa \langle t_{1},\dots t_{n} \rangle_{1} \twoheadrightarrow A$ gives rise to a residue norm $\| . \|$ on $A$. Up to equivalence, it does not depend on the chosen surjection. This is the norm occurring in (ii).
The results of Theorem~\ref{intro.mainthm1} are obtained at different places throughout the paper. We collect the references in Section~\ref{sec:overview} below.

\begin{rmk}
  Note that (i) and (ii) of Theorem~\ref{intro.mainthm1} are analogous to classical
  properties of topological $K$-theory, see~\cite[Sec.~3.2]{rosenberg}. We will see in a
  sequel to this note that (iv) can be interpreted as the Mayer--Vietoris sequence of
  $\ka$ corresponding to the standard admissible covering of the rigid space
  $(\mathbb P^1_A)^\an$.
\end{rmk}

Theorem~\ref{intro.mainthm1}(iv) implies that we can use a    delooping
  process (see Remark~\ref{rmk.deloop} for details) to define  a non-connective analytic $K$-theory spectrum $\Ka (A)$
such that under the same conditions as in Theorem~\ref{intro.mainthm1} the pro-spectrum $\ka (A) $ is the connective covering of $ \Ka (A)$ and such that the analytic
Bass fundamental theorem holds, i.e.\ the analog of the exact sequence in
Theorem~\ref{intro.mainthm1}(iv) for $\Ka_i$ exists for all $i\in \mathbb Z$.

\subsection{Continuous \textit{K}-theory}
	\label{intro:contK}

Let $A$ be an affinoid algebra over the complete discretely valued field $\kappa$.
Let $\kappa_0\subset \kappa$ be the ring of integers and $\pi\in \kappa_0$ be a prime element. Choose a
$\kappa_0$-algebra $A_0$ flat and topologically of finite type over $\kappa_0$ with $A_0[1/\pi]
\cong A$, see \cite{bosch}. Such an $A_0$ is also called a ring of definition of $A$. Morrow
\cite{morrow-hist} defines continuous $K$-theory of $A$ as
\[
\Kco (A) = \cofib (K(A_0 \text{ on } (\pi)) \to \prolim{n} K(A_0/(\pi^n) ),
\]
where $K$ stands for the non-connective algebraic $K$-theory spectrum and where the
cofibre is taken in the $\infty$-category of pro-spectra, recalled in Section~\ref{sec:prelims}. Using
pro-excision one shows that the pro-group $\Kco_i (A)$ defined as $\pi_i( \Kco (A))$
 for $i\in \mathbb Z$ does not depend on the choice of the ring of definition $A_0$ up to
 canonical isomorphism.

In fact, variants of the pro-group $\Kco_i (A)$ have been studied since the introduction of
algebraic $K$-theory, for example we have:

\begin{itemize}
\item[(i)] The canonical map $K_0(A) \xrightarrow{\simeq} \Kco_0(A)$ is an isomorphism of pro-groups, see
  Proposition~\ref{prop.comalgcont}. For $i\le 0$ the pro-groups $\Kco_i(A)$ are constant and
  agree with the negative ``topological'' $K$-groups studied
  in~\cite[Sec.~7]{Karoubi-Villamayor} and in~\cite{calvo}.
\item[(ii)] The canonical map $  \lprolim{n} K_1(A)/(1+\pi^n A_0)  \xrightarrow{\simeq}
    \Kco_1 (A)$ is an isomorphism of pro-groups, see Lemma~\ref{lem.calK1cont}. In
    particular, for a complete discretely valued field $\kappa$ with ring of integers
    $\kappa_0$ we  get $ \Kco_1 (\kappa) \cong  \lprolim{n} \kappa^\times /(1+\pi^n
    \kappa_0) $.
\item[(iii)] For a complete discretely valued field $\kappa$ with ring of integers
    $\kappa_0$ the canonical map $\lprolim{n} K_2(\kappa)/\{ 1+\pi^n \kappa_0 , \kappa_0^\times \}
    \xrightarrow{\simeq}   \Kco_2 (\kappa)$ is an isomorphism of pro-groups, see Lemma~\ref{lem.k2cont}. In particular, if
    the residue field of $\kappa_0$ is finite, the norm-residue symbol induces an
    isomorphism $\Kco_2 (\kappa)\cong \mu\subset \kappa^\times$, where $\mu$ are the roots
    of unity (Moore's Theorem), see~\cite[App.]{milnor}.
    
\end{itemize}

The key observation of our note is that for regular affinoid algebras continuous
$K$-theory agrees with analytic $K$-theory. This  relates the two existing
approaches to the
$K$-theory of non-archimedean rings which originate from~\cite{Karoubi-Villamayor}.

\begin{thm}
	\label{intro.thm.KcontKan}
For a regular affinoid algebra $A$ which satisfies condition $(\dagger)_A$ the pro-spectra
  $\Ka (A)$ and  $\Kco(A)$ are weakly equivalent.
\end{thm}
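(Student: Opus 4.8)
The plan is to reduce the equivalence of pro-spectra to a statement at the level of algebraic $K$-theory of the various models, using homotopy invariance and pro-excision. First I would recall that, by the delooping process producing $\Ka$, it suffices to compare the connective pro-spectra $\ka(A)$ and $\kco(A)$ (the connective cover of $\Kco(A)$), since both non-connective theories satisfy the analytic Bass fundamental theorem (for $\ka$ this is Theorem~\ref{intro.mainthm1}(iv); for $\Kco$ the corresponding delooping is built into Morrow's definition). So the main task is to produce a natural map $\ka(A)\to\kco(A)$ and show it is a weak equivalence of pro-spectra under the regularity and $(\dagger)_A$ hypotheses.

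To construct the map, fix a ring of definition $A_0$ as in Subsection~\ref{intro:contK}. For each radius $\rho=|\pi|^{-m}\ge 1$ the Tate algebra $\cO(X\times\Delta_\rho)$ has an evident ring of definition obtained by adjoining $\pi^m$-multiples of the simplex coordinates to $A_0$; reducing modulo powers of $\pi$ and taking the simplicial realization $[j]\mapsto k(\cdot)$, one gets a compatible system of maps from $k(\cO(X\times\Delta_\rho))$ into the pro-system defining $\kco$. Here one uses that the models of the $\Delta_\rho$ are themselves (ind-)polynomial-type extensions, so that after $\otimes\kappa_0/(\pi^n)$ the relevant rings are finite-type over $\kappa_0/(\pi^n)$ and the simplicial object realizes to something computing $K$-theory of the reduction. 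Passing to the limit over $\rho$ on the source (i.e.\ keeping it as a pro-spectrum) gives the comparison map $\ka(A)\to\kco(A)$.

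For the equivalence I would argue in two stages. Stage one: \emph{the source is already homotopy invariant and agrees with $k(A)$ pro-systemically} — more precisely, under the hypotheses of Theorem~\ref{intro.mainthm1} the natural map $k(A)\to\ka(A)$ realizes $\ka(A)$ as the ``analytic homotopification'' of $k(A)$, and regularity forces $K$-theory to be homotopy invariant so that this map is close to an equivalence on homotopy pro-groups in each degree; combined with Theorem~\ref{intro.mainthm1}(i)--(iii) this pins down $\ka_*(A)$. Stage two: \emph{the target computes the same thing}. Here the decisive input is pro-excision for algebraic $K$-theory together with the $(\dagger)_A$ resolution hypothesis: using a resolution of the special fibre of $A_0$ one reduces the computation of $\prolim{n}K(A_0/(\pi^n))$, relative to $K(A_0\text{ on }(\pi))$, to the regular (or at worst simple normal crossings) situation where continuous $K$-theory is controlled and where the same homotopy-invariant description as in Stage one applies. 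Matching the two descriptions degreewise, and checking compatibility under the Bass delooping, gives the weak equivalence.

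The hard part will be Stage two: making pro-excision interact correctly with the resolution provided by $(\dagger)_A$ so as to identify $\Kco(A)$ with the homotopy-invariant version of $k(A)$. Concretely, one must show that the pro-spectrum $\cofib\bigl(K(A_0\text{ on }(\pi))\to\prolim{n}K(A_0/(\pi^n))\bigr)$ does not see the difference between $A_0$ and a resolution of it, and that on the resolved model the comparison with the rigid simplices $\Delta_\rho$ becomes an equivalence of pro-systems — this is where the genuinely non-archimedean content lies, and where the earlier results on the algebraic $K$-theory of Tate rings and on condition $(\dagger)_A$ are indispensable. The remaining verifications (naturality, independence of $A_0$ up to the usual pro-excision argument, compatibility of the two deloopings) are formal once this core equivalence is in place.
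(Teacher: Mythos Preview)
There is a genuine gap, and it sits in Stage one. You assert that regularity of $A$ forces $k(A)\to\ka(A)$ to be ``close to an equivalence on homotopy pro-groups.'' This is false: regularity gives $\mathbb{A}^1$-homotopy invariance for \emph{polynomial} rings, not for the analytic simplices $A\langle\Delta_{\pi^j}\rangle$. Indeed, Theorem~\ref{intro.mainthm1}(ii) shows that $\ka_1(A)$ is a nontrivial pro-quotient of $K_1(A)$, so the comparison map is not an equivalence even in degree~1. Worse, you invoke Theorem~\ref{intro.mainthm1}(ii)--(iv) as inputs, but in the paper those statements are \emph{deduced from} the very comparison you are trying to prove (see the proof of Theorem~\ref{intro.mainthm1} in Section~\ref{sec:overview}), so the argument is circular.

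The paper's route is structurally different and avoids this trap. It does not compare $\ka(A)$ with $k(A)$ at all. Instead, the core step (Theorem~\ref{thm:kcontan-adic}) is a direct comparison $\ka(A_0)\simeq\kco(A_0)$ for the \emph{adic} ring of definition $A_0$, valid with no regularity hypothesis; the proof is an explicit contracting homotopy on the pro-simplicial group $1+\pi^n\mathrm{M}(A_0\langle\Delta_{\pi^j}\rangle)$ together with a plus-construction argument. Passing to the Tate ring is done by taking cofibres along the supported $K$-theory $\bar K(A_0)$: one shows $\hat N^\pi\bar K(A_0)$ is weakly contractible (Lemma~\ref{lem.homintateco}(ii)), and it is precisely \emph{here} that regularity and $(\dagger)_A$ enter, via pro-cdh descent and $\mathbb{A}^1$-invariance on a resolution. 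The comparison is then a zig-zag through an intermediate ``mixed'' theory $\tilde k^{\cont,\an}$ rather than a single map, and the non-connective statement follows by applying the analytic Bass construction to this zig-zag (Lemmas~\ref{lem:Bass-construction-cont}, \ref{lem:ktilde-kab-Bass}). Your Stage two has the right ingredients (pro-excision, resolution) but applies them to the wrong object: they control $\bar K(A_0)$, not $\Kco(A)$ directly, and the ``same homotopy-invariant description as in Stage one'' never materializes because Stage one is where the argument breaks.
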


For the definition of weakly equivalent  pro-spectra see
Subsection~\ref{sec:weak-equivalences-of-pro-spectra}. In particular, this means that the
pro-groups $ \Ka_i (A) $ and $ \Kco_i (A)$ are naturally isomorphic.
This theorem is proven as Theorem~\ref{thm:Kcont-Kan} below.

In a sequel to this note we will discuss the globalization of our construction to a
non-affine formal scheme $X$ over $\kappa_0$   and to its associated rigid space
$X_\kappa$ in the sense of Raynaud, see~\cite[Ch.~8]{bosch}.
The relevance of rigid geometry in the study of continuous $K$-groups is suggested by the
following consequence of pro-cdh descent for algebraic $K$-theory, see~\cite[Thm.~A]{KST-Weibel}: the homotopy pro-groups of the continuous $K$-theory pro-spectrum
\[
\Kco (X_\kappa) = \cofib (K(X \text{ on } (\pi)) \to \prolim{n} K(X/(\pi^n) ),
\]
depend only on $X_\kappa$ and not on the choice of $X$. This  
is also sketched in~\cite{morrow-hist}.

\subsection{Overview of sections}
	\label{sec:overview}

In Section~\ref{sec:prelims} we collect various facts about pro-objects in $\infty$-categories, and in particular about pro-spectra and pro-spaces.
Most of our results are formulated for adic rings or Tate rings --- we use affinoid
algebras in the introduction only for simplicity of presentation.   Basic properties of these non-archimedean rings are discussed 
in Section~\ref{sec:non-archimedean-rings}. There we also introduce normed rings which are used in the study of the Karoubi-Villamayor version of our theory in Section~\ref{sec.kv}.
In Section~\ref{sec:K-theory-of-na-rings} we discuss some general preliminaries about
algebraic $K$-theory and we study the low degree algebraic $K$-groups of Tate rings.
Continuous $K$-theory is introduced in Section~\ref{sec.contK}. Beside the basic properties mentioned in \ref{intro:contK} above, we also prove an analytic version of the Bass fundamental theorem and study homotopy invariance of continuous $K$-theory.
In Section~\ref{sec.anK} we introduce the  analytic $K$-theory discussed in~\ref{intro:anaK}. The main result there is the comparison with continuous $K$-theory for a regular affinoid algebra~$A$ satisfying $(\dag)_{A}$.

\begin{proof}[\normalfont{We end this overview with the} \emph{proof of Theorem~\ref{intro.mainthm1}}]
Assertion (i) is Lem\-ma~\ref{lem:K0-K0an}(ii). By Theorem~\ref{intro.thm.KcontKan} we can identify analytic $K$-theory with continuous $K$-theory. Using this identification, Lemma~\ref{lem.calK1cont} gives the first isomorphism of part (ii) and Proposition~\ref{prop.comalgcont}(i) gives (iv). Assertion (iii) is precisely Lemma~\ref{lem:KV-Kan-connective-cover}. Now Lemma~\ref{lem:KV1-explicit} gives the second isomorphism in (ii).
\end{proof}

\subsection*{Acknowledgment}

We would like to thank  M.~Morrow for helpful discussions on $K$-theory of non-archimedean
rings. In particular, he suggested the results discussed in
Subsection~\ref{subsec.alghomo} and he proposed the definition of continuous $K$-theory of
affinoid algebras.
We would like to thank the referee for many comments helping to improve the paper.


\section{Topological preliminaries}
\label{sec:prelims}

In the study of $K$-theory of non-archimedean rings, pro-spectra play an important role.
The theory of $\infty$-categories provides a well developed and flexible framework to deal with homotopy theoretic questions about pro-spectra. In the following, we therefore collect some  facts about pro-objects in $\infty$-categories in general and about pro-spectra.
General references for pro-objects are \cite{AM} or \cite{Isaksen-Calculating}.
The $\infty$-categorical version is discussed for example in \cite[\S A.8.1]{sag}.
We write $\Spaces$ for the $\infty$-category of spaces.

\subsection{Pro-objects in \texorpdfstring{$\infty$-}{infinity }categories}

Let $\bC$ be an $\infty$-category. 
For technical reasons we assume that $\bC$ is accessible \cite[Def.~5.4.2.1]{HTT} and admits finite limits.
A pro-object in $\bC$ is represented by a diagram $X \colon I \to \bC$ where $I$ is a small cofiltered $\infty$-category. We also use the notation $X = \lprolim{I} X_{i}$ and call this a level-representation of $X$.
The pro-objects are  the objects of an $\infty$-category $\Pro(\bC)$.
If $Y = \lprolim{J} Y_{j}$ is a second pro-object, the mapping space is given by the classical formula
\begin{equation}\label{eq:mapping-spaces-in-Pro-cat}
\Map(\prolim{I} X_{i}, \prolim{J} Y_{j}) \simeq \lim_{J} \colim_{I} \Map(X_{i},Y_{j}),
\end{equation}
where the limit and colimit are computed in the $\infty$-category of spaces.
A precise definition is
\begin{equation}
	\label{eq.def.pro}
\Pro(\bC) = \Fun^{\lex,\acc}(\bC,\Spaces)^{\op}	
\end{equation}
where $\Fun^{\lex,\acc}(\bC,\Spaces)$ is  the full subcategory of $\Fun(\bC,\Spaces)$ spanned by those functors which are accessible (i.e.~preserve small $\kappa$-filtered colimits for some regular cardinal $\kappa$) and left exact (i.e.~commute with finite limits). 
Here the pro-object $X$ from above corresponds to the functor $\colim_{I} \Map(X_{i}, -) \colon \bC \to \Spaces$, see
 \cite[Def.~A.8.1.1, Rem.~A.8.1.5]{sag}. 

Any functor $f \colon \bC \to \mathbf{D}$ induces a functor $\Pro(f)\colon \Pro(\bC) \to \Pro(\mathbf{D})$ given by $\lprolim{I} X_{i} \mapsto \lprolim{I} f(X_{i})$, see \cite[Ex.~A.8.1.8]{sag}.

Finite limits and colimits in $\Pro(\bC)$ can be computed level-wise:
Let $K$ be a finite simplicial set. The functor $\lim_{K} \colon \Fun(K,\bC) \to \bC$ induces a functor $\Pro(\Fun(K,\bC)) \to \Pro(\bC)$, the `level-wise limit'. If $\bC$ admits finite colimits, we similarly have a level-wise  colimit.
Any pro-object of $K$-diagrams induces a $K$-diagram of pro-objects via the natural functor
$\Pro(\Fun(K,\bC)) \to \Fun(K, \Pro(\bC))$.
\begin{lemma} \label{lem:level-wise-limit}
Let $F  \in \Pro(\Fun(K,\bC))$.
Then the level-wise limit of $F$ is a limit of the induced diagram $K \to \Pro(\bC)$. If $\bC$ admits finite colimits, the same is true for the colimit.
\end{lemma}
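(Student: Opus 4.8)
The plan is to reduce everything to the defining formula for mapping spaces in pro-categories, equation~\eqref{eq:mapping-spaces-in-Pro-cat}, together with the fact that in the $\infty$-category of spaces finite limits commute with cofiltered colimits. Concretely, I would first fix a level-representation $F = \lprolim{I} F_i$ with $F_i \in \Fun(K,\bC)$, so that the induced $K$-diagram in $\Pro(\bC)$ sends $k \in K$ to $\lprolim{I} F_i(k)$. Write $L = \lprolim{I}(\lim_K F_i)$ for the candidate level-wise limit. There is a canonical cone from $L$ to the $K$-diagram $k \mapsto \lprolim{I} F_i(k)$, induced level-wise by the projections $\lim_K F_i \to F_i(k)$; the task is to show this cone is limiting.

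The key step is to test against an arbitrary pro-object $Y = \lprolim{J} Y_j$ and show that the map
\[
\Map_{\Pro(\bC)}\bigl(Y, L\bigr) \longrightarrow \lim_{k \in K} \Map_{\Pro(\bC)}\bigl(Y, \prolim{I} F_i(k)\bigr)
\]
is an equivalence of spaces. By~\eqref{eq:mapping-spaces-in-Pro-cat}, the left-hand side is $\lim_J \colim_I \Map_\bC\bigl(Y_j, \lim_K F_i\bigr)$, and since mapping spaces in $\bC$ commute with limits in the second variable this equals $\lim_J \colim_I \lim_K \Map_\bC\bigl(Y_j, F_i(k)\bigr)$. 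The right-hand side is $\lim_K \lim_J \colim_I \Map_\bC\bigl(Y_j, F_i(k)\bigr)$ again by~\eqref{eq:mapping-spaces-in-Pro-cat} applied for each $k$, plus the fact that limits commute with limits. So the comparison reduces to swapping the finite limit $\lim_K$ past the cofiltered colimit $\colim_I$ inside the space of maps, which is exactly the statement that filtered (equivalently cofiltered, since $I^{\op}$ is filtered) colimits commute with finite limits in $\Spaces$; the outer $\lim_J$ and $\lim_K$ then rearrange freely. This shows $L$ corepresents the correct functor, hence is the limit.

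For the colimit statement, when $\bC$ admits finite colimits I would argue dually: the level-wise colimit of $F$ is $\lprolim{I}(\colim_K F_i)$, and one checks it is a colimit of the induced diagram by mapping \emph{out} of it into an arbitrary pro-object, where~\eqref{eq:mapping-spaces-in-Pro-cat} turns the colimit over $K$ into a limit over $K^{\op}$ of mapping spaces and no colimit-limit interchange is even needed --- limits commute with limits. Alternatively one can invoke the definition~\eqref{eq.def.pro}: $\Pro(\bC) \subset \Fun^{\lex,\acc}(\bC,\Spaces)^{\op}$ is closed under finite limits, and the level-wise limit functor $\Pro(\Fun(K,\bC)) \to \Pro(\bC)$ is the one induced by $\lim_K \colon \Fun(K,\bC) \to \bC$, which by general nonsense preserves the relevant universal property. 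The main obstacle is purely organizational: keeping track of the three nested (co)limits over $I$, $J$, $K$ and justifying each reordering by citing the correct commutation property, rather than any deep input --- the only nontrivial ingredient is the compatibility of finite limits with filtered colimits of spaces, which is standard.
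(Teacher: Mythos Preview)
Your approach is exactly the one the paper intends: reduce to the mapping-space formula~\eqref{eq:mapping-spaces-in-Pro-cat} and invoke the commutation of finite limits with filtered colimits in $\Spaces$. The limit half is carried out correctly.

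There is one slip in the colimit half. When you map out of the candidate $\lprolim{I}(\colim_K F_i)$ into $Y=\lprolim{J}Y_j$, the formula gives $\lim_J \colim_I \lim_{K^{\op}} \Map_\bC(F_i(k),Y_j)$, and you need to match this with $\lim_{K^{\op}} \lim_J \colim_I \Map_\bC(F_i(k),Y_j)$. Moving $\lim_{K^{\op}}$ past $\colim_I$ is again an interchange of a finite limit with a filtered colimit in $\Spaces$, so the claim that ``no colimit--limit interchange is even needed'' is not quite right; you need the same commutation property here as in the limit case. With that correction your argument goes through.
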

\begin{proof}
This can be checked as in the 1-categorical case (see \cite[App., Prop.~4.1]{AM}) using  formula \eqref{eq:mapping-spaces-in-Pro-cat} and the fact that in $\Spaces$ finite limits commute with filtered colimits \cite[Prop.~5.3.3.3]{HTT}.
\end{proof}

We also have level-representations for certain diagrams. In the following Lemma $\Nerve(A)$ denotes the nerve of a partially ordered set $A$ viewed as a category.
\begin{lemma}\label{lem:finite-diagrams}
Let $A$ be a finite partially ordered set. The natural functor
\[
\Pro( \Fun(\Nerve(A), \bC) ) \to \Fun( \Nerve(A), \Pro(\bC) )
\]
is essentially surjective.
\end{lemma}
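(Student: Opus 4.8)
The plan is to argue by induction on the size of the poset $A$, using the standard technique of building up a finite diagram from its restrictions to proper down-sets together with a single ``top'' element. Concretely, I would proceed as follows. If $A$ is empty there is nothing to prove, so pick a maximal element $a_{0} \in A$ and let $A' = A \setminus \{a_{0}\}$, a poset of strictly smaller cardinality. Write $A'' = \{a \in A' \mid a < a_{0}\}$ for the set of elements strictly below $a_{0}$. Then an $\Nerve(A)$-diagram in any $\infty$-category $\bC$ with finite limits is the same datum as an $\Nerve(A')$-diagram $Y'$, together with an object $Y_{a_{0}}$ and a map $Y_{a_{0}} \to \lim_{\Nerve(A'')} Y'|_{\Nerve(A'')}$; this is a Kan extension / slicing argument, and formally it realizes $\Fun(\Nerve(A),\bC)$ as the pullback (in the $\infty$-category of $\infty$-categories, or as a strict fibre product of the corresponding model categories) of $\Fun(\Nerve(A'),\bC)$ and $\Fun(\Delta^{1},\bC)$ over $\bC$, where the first functor is ``restrict to $\Nerve(A'')$ and take the limit'' and the second is ``evaluate at the target''. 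Since limits and the relevant evaluations are accessible and left exact, applying $\Pro(-)$ and Lemma~\ref{lem:level-wise-limit} shows that $\Pro(\Fun(\Nerve(A),\bC))$ maps to the corresponding pullback of $\Pro$-categories.

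Given an object of $\Fun(\Nerve(A),\Pro(\bC))$, I would unwind it along the same pullback decomposition: it consists of a diagram $\mathcal{Y}' \in \Fun(\Nerve(A'),\Pro(\bC))$, a pro-object $\mathcal{Y}_{a_{0}}$, and a map $\mathcal{Y}_{a_{0}} \to \lim_{\Nerve(A'')} \mathcal{Y}'|_{\Nerve(A'')}$ in $\Pro(\bC)$. By the inductive hypothesis applied to $A'$, the diagram $\mathcal{Y}'$ lifts to some $F' \in \Pro(\Fun(\Nerve(A'),\bC))$, say level-represented over a cofiltered index category $I$. By Lemma~\ref{lem:level-wise-limit} the target $\lim_{\Nerve(A'')} \mathcal{Y}'|_{\Nerve(A'')}$ is then level-represented over the same $I$ by the level-wise limits. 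It remains to produce, after possibly passing to a cofinal refinement of the index categories, a level-representation of $\mathcal{Y}_{a_{0}}$ over an index category mapping to $I$, compatibly with the structure map. This is exactly the place where one uses that a single map of pro-objects can be level-represented: using formula~\eqref{eq:mapping-spaces-in-Pro-cat}, a map $\mathcal{Y}_{a_{0}} \to \lprolim{I} (\text{something}_{i})$ is given by a compatible system of maps out of the terms of $\mathcal{Y}_{a_{0}}$, and after reindexing (replacing $I$ by a suitable cofiltered category mapping to both $I$ and the index category of $\mathcal{Y}_{a_{0}}$) one can arrange the map to be level-wise. Assembling $F'$ with this level-representation of $\mathcal{Y}_{a_{0}}$ and the level-wise structure maps yields the desired object of $\Pro(\Fun(\Nerve(A),\bC))$, and by construction (and Lemma~\ref{lem:level-wise-limit}) its image in $\Fun(\Nerve(A),\Pro(\bC))$ is equivalent to the given diagram.

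I expect the main obstacle to be the bookkeeping in the ``reindexing'' step: making the cofinal refinements precise at the level of $\infty$-categories rather than $1$-categories, and checking that the various level-representations (of $F'$, of the level-wise limit over $A''$, and of $\mathcal{Y}_{a_{0}}$) can be taken over a common cofiltered index and that the structure maps become strictly level-wise there. In the $1$-categorical setting this is the content of \cite[App., §4]{AM}; in the $\infty$-categorical setting the same strategy works because the relevant statements --- that finite limits in $\Pro(\bC)$ are computed level-wise (Lemma~\ref{lem:level-wise-limit}), and that mapping spaces are given by~\eqref{eq:mapping-spaces-in-Pro-cat} --- are already available. One should be slightly careful that the lemma only claims essential surjectivity, not an equivalence of $\infty$-categories (which is false already in the $1$-categorical case), so no coherence of the lift needs to be verified beyond exhibiting a single object in each fibre. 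A clean way to package the induction is to phrase it as: the functor in question is essentially surjective whenever it is for all proper down-closed subsets of $A$, which reduces everything to the two-step poset case handled by the pullback square above.
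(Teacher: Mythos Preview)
Your inductive strategy is sound in outline, but there is a direction error in the setup: if $a_{0}$ is \emph{maximal}, then the arrows in $\Nerve(A)$ run from the elements of $A''$ \emph{into} $a_{0}$, so the extra datum attached to an $\Nerve(A')$-diagram $Y'$ is a cocone $Y'|_{A''} \to Y_{a_{0}}$, not a map $Y_{a_{0}} \to \lim_{A''} Y'|_{A''}$. Since $\bC$ is only assumed to have finite limits (not finite colimits), the clean fix is to remove a \emph{minimal} element $a_{0}$ and set $A'' = \{a \in A' \mid a > a_{0}\}$; then the extension datum is genuinely a cone, i.e.\ a map $Y_{a_{0}} \to \lim_{A''} Y'|_{A''}$, and your pullback description of $\Fun(\Nerve(A),\bC)$ becomes correct. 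With this change the rest of your argument goes through: the induction reduces the problem to level-representing a single morphism of pro-objects, and Lemma~\ref{lem:level-wise-limit} identifies the target with its level-wise limit so that the reindexing can proceed.

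The paper takes a completely different and much shorter route: it simply invokes \cite[Prop.~5.3.5.15]{HTT}, which (after dualising from $\Ind$ to $\Pro$) gives the statement directly when $\bC$ is small, and then reduces the general accessible case to the small one by the size-reduction argument of \cite[Prop.~A.8.1.6]{sag}. Your approach has the virtue of being self-contained and making the Artin--Mazur-style reindexing mechanism explicit, at the cost of the $\infty$-categorical bookkeeping you already flagged as the main obstacle; the paper's approach trades that bookkeeping for a black-box citation to Lurie.
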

\begin{proof}
If $\bC$ is small, this follows directly from \cite[Prop.~5.3.5.15]{HTT}. The general case can be reduced to this as in the proof of \cite[Prop.~A.8.1.6]{sag}.
\end{proof}

\begin{lemma}\label{lem:complete-and-cocomplete}
The $\infty$-category $\Pro(\bC)$ is complete. If $\bC$ admits finite resp.~all small colimits, then so does $\Pro(\bC)$. 
\end{lemma}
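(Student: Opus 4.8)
The plan is to extract both assertions from the defining formula~\eqref{eq.def.pro}, $\Pro(\bC)=\Fun^{\lex,\acc}(\bC,\Spaces)^{\op}$, together with the level-representation results of Lemmas~\ref{lem:level-wise-limit} and~\ref{lem:finite-diagrams}. Write $\mathcal{E}=\Fun^{\lex,\acc}(\bC,\Spaces)$, so that a limit in $\Pro(\bC)$ is the same as a colimit in $\mathcal{E}$ and conversely; recall that $\mathcal{E}$ is a \emph{full} subcategory of $\Fun(\bC,\Spaces)$, and that limits and colimits in $\Fun(\bC,\Spaces)$ exist and are computed pointwise since $\Spaces$ is complete and cocomplete. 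To prove $\Pro(\bC)$ complete I would first observe that it admits pullbacks and a terminal object, computed level-wise: by Lemma~\ref{lem:finite-diagrams} every pullback diagram in $\Pro(\bC)$ is the image of a pro-object of pullback diagrams in $\bC$, whose level-wise limit is a limit by Lemma~\ref{lem:level-wise-limit}, and the constant pro-object on a terminal object of $\bC$ is terminal; hence $\Pro(\bC)$ has all finite limits. Next I would produce small cofiltered limits: under~\eqref{eq.def.pro} such a limit is a small filtered colimit in $\mathcal{E}$, and I claim it may be computed pointwise in $\Fun(\bC,\Spaces)$. Indeed a pointwise filtered colimit of left exact functors is again left exact, since filtered colimits commute with finite limits in $\Spaces$ \cite[Prop.~5.3.3.3]{HTT}, and a filtered colimit of accessible functors is accessible, since colimits commute with colimits and a small set of regular cardinals has a common regular upper bound; by fullness of $\mathcal{E}$ this pointwise colimit is then the colimit in $\mathcal{E}$. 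Having all finite limits and all small cofiltered limits, $\Pro(\bC)$ admits all small products --- a product being the cofiltered limit of its finite subproducts --- and therefore all small limits.

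For the statement about colimits, suppose first that $\bC$ admits finite colimits. Exactly as above, Lemmas~\ref{lem:finite-diagrams} and~\ref{lem:level-wise-limit} show that pushout squares and the initial object of $\Pro(\bC)$ are computed level-wise, so $\Pro(\bC)$ has all finite colimits. If $\bC$ moreover admits all small colimits, it remains to construct small filtered colimits in $\Pro(\bC)$; such a colimit corresponds under~\eqref{eq.def.pro} to a small cofiltered limit in $\mathcal{E}$, which I would again compute pointwise. A pointwise limit of left exact functors is left exact because limits commute with limits, and a small limit of accessible functors is accessible: one chooses a regular cardinal $\kappa$ large enough that the (small) indexing category is $\kappa$-small and each functor in the diagram preserves $\kappa$-filtered colimits, and then invokes that $\kappa$-filtered colimits commute with $\kappa$-small limits in $\Spaces$ \cite[Prop.~5.3.3.3]{HTT}. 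Hence $\Pro(\bC)$ admits finite and filtered, and therefore all small, colimits.

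I do not expect a genuine difficulty here: the real inputs are the already-established Lemmas~\ref{lem:level-wise-limit} and~\ref{lem:finite-diagrams} and the stability of left exactness and accessibility under pointwise (co)limits of functors. The single point requiring care is accessibility, which survives a small limit of functors only after one fixes one regular cardinal working simultaneously for every functor in the diagram --- possible precisely because the diagram is small. (When $\bC$ is itself small this bookkeeping can be avoided, since then $\Pro(\bC)\simeq\Ind(\bC^{\op})^{\op}$ and the $\Ind$-category of a small $\infty$-category with finite colimits is presentable, hence complete and cocomplete.)
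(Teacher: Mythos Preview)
Your argument is correct. For completeness and for finite colimits it coincides with the paper's proof: both combine Lemmas~\ref{lem:level-wise-limit} and~\ref{lem:finite-diagrams} to get pullbacks (resp.\ pushouts) level-wise, and both invoke that $\Pro(\bC)$ has small cofiltered limits --- the paper simply asserts this, while you verify it via the description~\eqref{eq.def.pro} by checking that filtered pointwise colimits of left exact accessible functors remain left exact and accessible. The genuine divergence is in the passage to \emph{all} small colimits. The paper reduces to the existence of small coproducts and supplies these by an explicit level-wise formula (Lemma~\ref{lem:coproducts}), which needs coproducts in $\bC$ and hence the hypothesis that $\bC$ be cocomplete. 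You instead produce small \emph{filtered} colimits in $\Pro(\bC)$ by dualizing to cofiltered limits in $\mathcal{E}=\Fun^{\lex,\acc}(\bC,\Spaces)$ and computing them pointwise; your accessibility bookkeeping (a single $\kappa$ dominating the indexing category and all functors, together with commutation of $\kappa$-filtered colimits with $\kappa$-small limits in $\Spaces$) is sound. This route is more uniform and in fact shows a bit more: it never uses that $\bC$ has infinite colimits, so once $\bC$ has finite colimits your argument already yields all small colimits in $\Pro(\bC)$. What the paper's approach buys in exchange is the concrete description of coproducts in Lemma~\ref{lem:coproducts}, which is of independent interest. (Your detour through products in the completeness half is harmless but unnecessary: finite limits together with cofiltered limits already give all small limits by \cite[Prop.~4.4.2.6]{HTT}, which is how the paper concludes.)
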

\begin{proof}
Lemma~\ref{lem:level-wise-limit} implies in particular that $\bC \to \Pro(\bC)$ commutes with finite limits and colimits. In particular, $\Pro(\bC)$ has a final  object.
Combining Lemmas~\ref{lem:finite-diagrams} and \ref{lem:level-wise-limit} we see that $\Pro(\bC)$ admits pullbacks. Now \cite[Cor.~4.4.2.4]{HTT} implies that $\Pro(\bC)$ has all finite limits. 
If $\bC$ admits finite colimits, then by the same argument $\Pro(\bC)$ has all finite colimits.
Since $\Pro(\bC)$ also has all small cofiltered limits, it follows that $\Pro(\bC)$ is complete by \cite[Prop.~4.4.2.6]{HTT}. In order to show that $\Pro(\bC)$ is cocomplete if $\bC$ is, it now suffices to show that $\Pro(\bC)$ has small coproducts \cite[Prop.~4.4.2.6]{HTT}. This is shown in Lemma~\ref{lem:coproducts} below.
\end{proof}

Recall that an accessible $\infty$-category \cite[Def.~5.4.2.1]{HTT} which is cocomplete is called presentable \cite[Def.~5.5.0.1]{HTT}. A presentable $\infty$-category is automatically complete \cite[Cor.~5.5.2.4]{HTT}.

\begin{lemma}\label{lem:coproducts}
Assume that $\bC$ is presentable.
Let $A$ be a set, and let $(X^{\alpha})_{\alpha\in A}$ be a family of objects in $\Pro(\bC)$. Choose level representations $$X^{\alpha} = \prolim{I_{\alpha}} X^{\alpha}_{i}.$$ Define the cofiltered $\infty$-category $K = \prod_{\alpha\in A} I_{\alpha}$. Then $$X = \prolim{k\in K} \coprod_{\alpha\in A} X^{\alpha}_{k_{\alpha}}$$ is a coproduct of $(X^{\alpha})_{\alpha\in A}$.
\end{lemma}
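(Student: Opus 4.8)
The plan is to verify the universal property of the coproduct directly using the mapping space formula \eqref{eq:mapping-spaces-in-Pro-cat}. Let $Y = \lprolim{J} Y_j$ be an arbitrary pro-object. I must show that the natural map
\[
\Map\bigl(X, Y\bigr) \longrightarrow \prod_{\alpha \in A} \Map\bigl(X^\alpha, Y\bigr)
\]
is an equivalence of spaces, where $X = \lprolim{k \in K} \coprod_{\alpha} X^\alpha_{k_\alpha}$. First I would unwind the left-hand side via \eqref{eq:mapping-spaces-in-Pro-cat}: since $K = \prod_\alpha I_\alpha$ is cofiltered (being a product of cofiltered $\infty$-categories, which is cofiltered because $A$ is a set — filtered colimits over $K$ compute as iterated filtered colimits, or one invokes that products of cofiltered categories are cofiltered), we get
\[
\Map(X,Y) \simeq \lim_{j \in J} \colim_{k \in K} \Map\Bigl(\coprod_{\alpha} X^\alpha_{k_\alpha}, Y_j\Bigr) \simeq \lim_{j\in J} \colim_{k\in K} \prod_{\alpha} \Map(X^\alpha_{k_\alpha}, Y_j),
\]
using that $\coprod$ in $\bC$ is a genuine coproduct so $\Map(\coprod_\alpha X^\alpha_{k_\alpha}, Y_j) \simeq \prod_\alpha \Map(X^\alpha_{k_\alpha}, Y_j)$.

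The crux is then to commute the colimit over $K$ past the product over $\alpha$. Since $K = \prod_\alpha I_\alpha$, a cofiltered colimit indexed by $K$ of a product of functors each depending on a single coordinate factors as the product of the colimits: $\colim_{k\in K} \prod_\alpha F_\alpha(k_\alpha) \simeq \prod_\alpha \colim_{i \in I_\alpha} F_\alpha(i)$. For finite $A$ this is just that finite products commute with filtered colimits in $\Spaces$ \cite[Prop.~5.3.3.3]{HTT}, applied coordinate-by-coordinate; for infinite $A$ one needs that the projection $K \to \prod_{\alpha \in A'} I_\alpha$ for finite $A' \subset A$ is cofinal and that $\Map$-spaces take values in, and the relevant limits are computed in, a setting where arbitrary products commute with filtered colimits over a product category — this holds because each factor $\colim_{I_\alpha}$ can be performed independently and $K$ is the filtered (cofiltered) union of products of finitely many coordinates. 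This gives $\colim_{k \in K} \prod_\alpha \Map(X^\alpha_{k_\alpha}, Y_j) \simeq \prod_\alpha \colim_{i\in I_\alpha} \Map(X^\alpha_i, Y_j)$.

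Finally I would reassemble:
\[
\Map(X,Y) \simeq \lim_{j\in J} \prod_\alpha \colim_{i\in I_\alpha}\Map(X^\alpha_i, Y_j) \simeq \prod_\alpha \lim_{j\in J}\colim_{i\in I_\alpha}\Map(X^\alpha_i, Y_j) \simeq \prod_\alpha \Map(X^\alpha, Y),
\]
where the middle equivalence is that limits commute with products, and the last is again \eqref{eq:mapping-spaces-in-Pro-cat}. One should check that this chain of equivalences is induced by the evident structure maps $X^\alpha \to X$ (the maps of pro-systems given on each level $k$ by the coproduct inclusion $X^\alpha_{k_\alpha} \hookrightarrow \coprod_\beta X^\beta_{k_\beta}$, which are compatible as $k$ varies), so that the comparison map is the canonical one; this is a routine naturality check. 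The main obstacle is the interchange of the cofiltered colimit over the product category $K$ with the (possibly infinite) product over $A$ — for finite $A$ it is immediate from \cite[Prop.~5.3.3.3]{HTT}, and for infinite $A$ it requires the observation that a colimit over $\prod_\alpha I_\alpha$ of functors in separated variables is the product of the individual colimits, which one can either cite or prove by a cofinality argument reducing to the finite case. Note that this lemma is exactly what is needed to complete the proof of Lemma~\ref{lem:complete-and-cocomplete}, namely that $\Pro(\bC)$ has small coproducts when $\bC$ is presentable.
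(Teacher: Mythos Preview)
Your approach is essentially identical to the paper's: verify the universal property by unwinding $\Map(X,Y)$ via the mapping-space formula, commute $\colim_K$ past $\prod_{\alpha}$, then swap $\lim_J$ and $\prod_\alpha$. The only difference is in justifying the key interchange $\colim_{K}\prod_{\alpha} \simeq \prod_{\alpha}\colim_{I_\alpha}$: the paper observes that in $\Spaces$ both filtered colimits and arbitrary products commute with homotopy groups, so it suffices to check the statement in $\Set$, where it is an elementary direct verification; your cofinality sketch reducing to finite $A$ is vaguer and not obviously sufficient for infinite $A$, so you may want to replace it with the paper's cleaner argument.
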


\begin{proof} (See \cite[\S8]{Isaksen-Calculating} for the classical situation.)
  Let $Y = \lprolim{J} Y_{j}$ be any pro-object. Then
\begin{align*}
\Map(X,Y) 
&\simeq \lim_{J} \colim_{K} \prod_{\alpha\in A} \Map(X^{\alpha}_{k_{\alpha}},Y_{j} ) \\
&\simeq \lim_{J} \prod_{\alpha\in A} \colim_{I_{\alpha}} \Map( X^{\alpha}_{k_{\alpha}}, Y_{j}) &&\text{(see below)} \\
&\simeq \prod_{\alpha\in A} \lim_{J} \colim_{I_{\alpha}} \Map( X^{\alpha}_{k_{\alpha}}, Y_{j}) \\
&\simeq \prod_{\alpha\in A} \Map(X^{\alpha}, Y).
\end{align*}
The second equivalence holds since $\colim_{K}\prod_{\alpha\in A} \simeq \prod_{\alpha\in A} \colim_{I_{\alpha}}$  in the $\infty$-category of spaces. Indeed, since products and filtered colimits in spaces commute with homotopy groups, this follows from the corresponding fact in the category of sets, where it can be checked directly. 
\end{proof}

\begin{lemma}
If $\bC$ is stable, then so is $\Pro(\bC)$.
\end{lemma}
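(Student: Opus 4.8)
The plan is to verify the two standard characterizations of a stable $\infty$-category for $\Pro(\bC)$, using that the same properties hold in $\bC$ together with the fact that finite limits and colimits in $\Pro(\bC)$ are computed level-wise (Lemmas~\ref{lem:level-wise-limit} and~\ref{lem:finite-diagrams}). Recall that an $\infty$-category is stable if and only if it is pointed, admits finite limits and finite colimits, and a commutative square is a pullback precisely when it is a pushout. By Lemma~\ref{lem:complete-and-cocomplete} the category $\Pro(\bC)$ already admits all finite limits, and it admits finite colimits since $\bC$ does (being stable, $\bC$ has finite colimits). It is pointed because the zero object of $\bC$, viewed as a constant pro-object, is both initial and final in $\Pro(\bC)$; indeed $\bC \to \Pro(\bC)$ preserves the final object by Lemma~\ref{lem:complete-and-cocomplete}, and one checks directly from~\eqref{eq:mapping-spaces-in-Pro-cat} that a constant pro-object on an initial object of $\bC$ is initial in $\Pro(\bC)$ (a filtered colimit of contractible spaces is contractible).

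The heart of the argument is the claim that a square in $\Pro(\bC)$ is a pullback if and only if it is a pushout. First I would reduce to level-representable squares: by Lemma~\ref{lem:finite-diagrams} applied to the poset $A$ indexing a commutative square, every square in $\Pro(\bC)$ is equivalent to one of the form $\lprolim{I}(\text{square in }\bC)$ for a single cofiltered $I$. For such a square, Lemma~\ref{lem:level-wise-limit} says its pullback (resp.\ pushout) in $\Pro(\bC)$ is computed level-wise, i.e.\ as $\lprolim{I}$ of the level-wise pullback (resp.\ pushout) square in $\bC$. Since $\bC$ is stable, each level-wise square is a pullback iff it is a pushout; hence the total $\Pro$-square has the corresponding property. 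The one genuinely careful point is the following: to say "the square is a pullback" in $\Pro(\bC)$ means the comparison map from the initial vertex to the level-wise pullback is an equivalence, and similarly for pushouts; I would spell out that being a pullback square is detected by this comparison map being an equivalence of pro-objects, and that level-wise equivalences give equivalences in $\Pro(\bC)$, so that "level-wise pullback = level-wise pushout" transports directly to the statement in $\Pro(\bC)$.

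I expect the main obstacle to be the bookkeeping needed to pass from an arbitrary commutative square in $\Pro(\bC)$ to a level-representable one while keeping track of the comparison maps; one has to be a little careful that Lemma~\ref{lem:finite-diagrams} only gives essential surjectivity of the functor $\Pro(\Fun(\Nerve(A),\bC)) \to \Fun(\Nerve(A),\Pro(\bC))$, so a given square is merely equivalent to a level-representable one, and one must argue that the pullback-iff-pushout property is invariant under such equivalences (which it is, since it is a property of the square as an object of $\Fun(\Nerve(A),\Pro(\bC))$). Alternatively, and perhaps more cleanly, one can avoid squares entirely: since $\Pro(\bC)$ is pointed with finite limits and colimits, it suffices to show that the loop functor $\Omega\colon \Pro(\bC)\to \Pro(\bC)$ is an equivalence; but $\Omega$ is computed level-wise by Lemma~\ref{lem:level-wise-limit} (it is the fiber of $0 \to 0$ over the object, a finite limit), so $\Pro(\Omega_\bC) = \Omega_{\Pro(\bC)}$, and $\Omega_\bC$ is an equivalence with inverse $\Sigma_\bC$ because $\bC$ is stable; applying the functor $\Pro(-)$ to this adjoint equivalence yields that $\Omega_{\Pro(\bC)}$ is an equivalence, with inverse $\Pro(\Sigma_\bC) = \Sigma_{\Pro(\bC)}$. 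I would present this second route as the main line of proof, as it sidesteps the square-chasing and uses only Lemma~\ref{lem:level-wise-limit} together with the functoriality of $\Pro(-)$ noted after~\eqref{eq.def.pro}.
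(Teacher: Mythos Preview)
Your proposal is correct and your preferred ``second route'' is essentially the paper's own proof: the paper verifies that $\Pro(\bC)$ is pointed with finite limits and colimits, observes that the suspension functor is computed level-wise (hence is an equivalence because $\Sigma_\bC$ is), and concludes via \cite[Cor.~1.4.2.27]{HA}. The only cosmetic difference is that you phrase the key step in terms of the loop functor $\Omega$ rather than its inverse $\Sigma$; the argument is otherwise identical.
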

For the notion of a stable $\infty$-category see \cite[Def.~1.1.1.9]{HA}.
\begin{proof}
By Lemma~\ref{lem:level-wise-limit} $\Pro(\bC)$ is pointed. By Lemma~\ref{lem:complete-and-cocomplete} it has all finite limits and colimits. Since by Lemma~\ref{lem:level-wise-limit} again the suspension functor in $\Pro(\bC)$ can be computed level-wise, it is an equivalence. We conclude using \cite[Cor.~1.4.2.27]{HA}.
\end{proof}

\subsection{Weak equivalences of pro-spectra}
\label{sec:weak-equivalences-of-pro-spectra}

We let $\Sp$ be the presentable stable $\infty$-category of spectra \cite[\S 1.4.3]{HA}. 
It carries a t-structure whose heart is equivalent to the category $\Ab$ of abelian groups.
 In particular, an abelian group $A$ gives a spectrum $H(A)$ with $\pi_{0}(H(A)) \cong A$, $\pi_{i}(H(A)) = 0$ for $i \not=0$. 

We denote by $\Sp^{+} \subseteq \Sp$ the full stable subcategory of spectra which are bounded
above, and denote the inclusion by $\iota$. 
By \cite[Ex.~A.8.1.8]{sag} we
get an adjunction
\[
\iota^{*} : \Pro(\Sp) \leftrightarrows \Pro(\Sp^{+}) : \Pro(\iota)
\]
where $\iota^*$ is induced using \eqref{eq.def.pro} by composition with $\iota$, and 
$\Pro(\iota)$ is fully faithful.
In other words, $\Pro(\Sp^{+})$ is a localization of $\Pro(\Sp)$ in the sense of \cite[Def.~5.2.7.2]{HTT}.
In particular, $\Pro(\Sp^{+})$ also admits all small colimits.

In terms of level representations $\iota^{*}$ has the following description: Denote by $\tau_{\leq n}\colon \Sp \to \Sp_{\leq n}$ the truncation functor. It is left adjoint to the inclusion.
\begin{lemma}\label{lem:explicit-description-of-i-star}
Let $\lprolim{I} X_{i} \in \Pro(\Sp)$. Then we have a natural equivalence 
\[
\iota^{*}(\prolim{I} X_{i}) \simeq \prolim{\N\times I} \tau_{\leq n}X_{i}
\]
in $\Pro(\Sp^{+})$.
\end{lemma}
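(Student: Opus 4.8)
The plan is to unwind both sides using the defining formula \eqref{eq:mapping-spaces-in-Pro-cat} for mapping spaces in a pro-category and to check that the adjunction unit exhibits the claimed object as the value of the left adjoint $\iota^{*}$. Concretely, let $Y = \lprolim{J} Y_{j}$ be an arbitrary object of $\Pro(\Sp^{+})$, so each $Y_{j}$ is bounded above, say $Y_j \in \Sp_{\leq n_j}$. I want to produce a natural equivalence
\[
\Map_{\Pro(\Sp^{+})}\bigl(\prolim{\N\times I}\tau_{\leq n}X_{i},\, Y\bigr) \simeq \Map_{\Pro(\Sp)}\bigl(\prolim{I}X_{i},\, \Pro(\iota)(Y)\bigr).
\]
By \eqref{eq:mapping-spaces-in-Pro-cat} the left-hand side is $\lim_{J}\colim_{(n,i)\in\N\times I}\Map_{\Sp^{+}}(\tau_{\leq n}X_{i}, Y_{j})$, while the right-hand side is $\lim_{J}\colim_{I}\Map_{\Sp}(X_{i}, Y_{j})$. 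Since the outer $\lim_{J}$ matches on both sides, it suffices to show that the natural map
\[
\colim_{(n,i)\in\N\times I}\Map_{\Sp}(\tau_{\leq n}X_{i}, Y_{j}) \longrightarrow \colim_{I}\Map_{\Sp}(X_{i}, Y_{j})
\]
is an equivalence for each fixed $j$, where I have used that $\Sp^{+}$ is a full subcategory of $\Sp$ so the mapping spaces agree.

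First I would note that $\N\times I \to I$ is cofinal (the fibers are copies of $\N$, which is filtered, hence weakly contractible), so the target colimit may be rewritten as $\colim_{(n,i)\in\N\times I}\Map_{\Sp}(X_{i}, Y_{j})$. Thus I am reduced to checking that the map of $(\N\times I)$-indexed diagrams $\Map_{\Sp}(\tau_{\leq n}X_{i}, Y_{j}) \to \Map_{\Sp}(X_{i}, Y_{j})$, induced by the truncation maps $X_i \to \tau_{\leq n}X_i$, becomes an equivalence after taking the colimit. The key point is that $Y_{j}$ is bounded above: there is some $N$ with $Y_{j}\in\Sp_{\leq N}$, and for $n\geq N$ the map $\Map_{\Sp}(\tau_{\leq n}X_{i}, Y_{j}) \to \Map_{\Sp}(X_{i}, Y_{j})$ is an equivalence. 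Indeed, $Y_{j}\in\Sp_{\leq N}\subseteq\Sp_{\leq n}$, and $\tau_{\leq n}$ is left adjoint to the inclusion $\Sp_{\leq n}\hookrightarrow\Sp$, so $\Map_{\Sp}(X_{i},Y_{j}) \simeq \Map_{\Sp_{\leq n}}(\tau_{\leq n}X_{i}, Y_{j}) \simeq \Map_{\Sp}(\tau_{\leq n}X_{i}, Y_{j})$, compatibly with the structure maps. Since $\N$ is filtered and the transition maps along the $n$-direction are equivalences for $n\geq N$, the colimit over $\N\times I$ of the source agrees with the colimit of the target; this gives the desired equivalence of colimits.

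Finally I would assemble these equivalences: they are natural in $Y$ and compatible with the structure maps, so taking $\lim_{J}$ produces the natural equivalence of mapping spaces displayed above. By the Yoneda lemma this identifies $\lprolim{\N\times I}\tau_{\leq n}X_{i}$ with $\iota^{*}(\lprolim{I}X_{i})$ as claimed, and one checks the identification is the canonical one induced by the truncation maps $X_i \to \tau_{\leq n}X_i$ (equivalently, it is compatible with the unit of the adjunction $\iota^{*}\dashv\Pro(\iota)$). The main obstacle, such as it is, is purely bookkeeping: making sure the cofinality of $\N\times I\to I$ and the naturality in $j\in J$ are handled cleanly so that the fiberwise equivalences really do glue to an equivalence of the $\lim_{J}$-level mapping spaces; there is no serious geometric or homotopy-theoretic difficulty, only the care needed to keep the indexing $\infty$-categories straight.
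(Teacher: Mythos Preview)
Your proof is correct and follows essentially the same approach as the paper: both reduce to the observation that for $Y\in\Sp_{\leq N}$ the truncation adjunction gives $\Map(\tau_{\leq n}X_{i},Y)\simeq\Map(X_{i},Y)$ for all $n\geq N$, and hence $\colim_{\N\times I}\Map(\tau_{\leq n}X_{i},Y)\simeq\colim_{I}\Map(X_{i},Y)$. The only cosmetic difference is that the paper works directly with the description \eqref{eq.def.pro} of $\Pro(\Sp^{+})$ as left exact accessible functors on $\Sp^{+}$, so it suffices to test against a single $Y\in\Sp^{+}$ rather than a general pro-object; your extra outer $\lim_{J}$ is harmless but unnecessary.
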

\begin{proof}
In terms of left exact accessible functors $\iota^{*}(\lprolim{I} X_{i})$ is given by 
$\colim_{I} \Map(X_{i},-) \in \Fun^{\lex,\acc}(\Sp^{+},\Spaces)$ and this colimit can be computed object-wise 
(see 
\cite[Cor.~5.1.2.3]{HTT}).
Consider $Y \in \Sp^{+}$. Note that if  $Y\in \Sp_{\leq m}$  then $\Map(\tau_{\leq n} X_{i}, Y) \simeq \Map(X_{i},Y)$ for all $n\geq m$. Hence
\[
\colim_{I} \Map(X_{i},Y) \simeq \colim_{\N\times I} \Map(\tau_{\leq n}X_{i}, Y),
\]
which implies the claim.
\end{proof}

\begin{defn}
	\label{def:weak-equivalence}
We call a map of pro-spectra $X \to Y$  a \emph{weak equivalence} if $\iota^{*} X \to \iota^* Y$
is an equivalence in $\Pro(\Sp^{+})$. We say that the pro-spectra $X,Y$ are \emph{weakly
equivalent} if there is an equivalence of  $\iota^{*} X $ and $\iota^*
Y$ in $\Pro(\Sp^{+})$.

We say that $X$ is \emph{weakly contractible} if it is weakly equivalent to a point. Finally,  we call $X \to Y \to Z$  a \emph{weak fibre sequence} if 
$\iota^{*}X \to \iota^{*}Y \to \iota^{*}Z$ is a fibre sequence in $\Pro(\Sp^{+})$.
\end{defn}

The functor $\pi_{i}\colon \Sp \to \Ab$ induces  functors
\[
\pi_{i}\colon \Pro(\Sp) \to \Pro(\Ab) \quad \text{and } \pi_{i}\colon \Pro(\Sp^{+}) \to \Pro(\Ab),
\]
given by $X = \lprolim{J}X_{j} \mapsto \pi_{i}(X) = \lprolim{J} \pi_{i}(X_{j})$, called the \emph{$i$-th homotopy pro-group}.
Abusing notation we write $\tau_{\leq n}\colon \Pro(\Sp) \to \Pro(\Sp)$ for the functor $\Pro(\tau_{\leq n})$.
\begin{lemma}\label{lem:description-of-weak-equivs}
For a map $f\colon X \to Y$ in $\Pro(\Sp)$ the following are equivalent:
\begin{enumerate}
\item $f$ is a weak equivalence.
\item $\tau_{\leq n}f$ is an equivalence for every $n\in \Z$.
\item There exists an $m\in \Z$ such that $\tau_{\leq m}f$ is an equivalence and $f$
  induces isomorphisms $\pi_n(X)\xrightarrow{\sim} \pi_n(Y)$ in $\Pro(\Ab)$ for all $n\in
  \mathbb Z$.
\end{enumerate}
\end{lemma}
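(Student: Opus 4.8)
The equivalences (1) $\Leftrightarrow$ (2) and (2) $\Rightarrow$ (3) should be quick; the substance is in (3) $\Rightarrow$ (2), or equivalently (3) $\Rightarrow$ (1). First I would record the reformulation of (1): by Lemma~\ref{lem:explicit-description-of-i-star}, $\iota^{*}f$ is an equivalence in $\Pro(\Sp^{+})$ if and only if the induced map $\lprolim{\N\times I}\tau_{\leq n}X_{i} \to \lprolim{\N\times J}\tau_{\leq n}Y_{j}$ is an equivalence. Since $\Sp^{+} = \colim_{n} \Sp_{\leq n}$ along the inclusions, and since a map in a filtered colimit of full subcategories is an equivalence iff it is an equivalence in one of them, one sees that $\iota^{*}f$ is an equivalence iff each $\tau_{\leq n}f$ is an equivalence in $\Pro(\Sp_{\leq n})$, i.e.\ (after noting $\tau_{\leq n}$ factors as $\Pro(\Sp) \to \Pro(\Sp_{\leq n}) \hookrightarrow \Pro(\Sp)$ with the second functor conservative) iff $\tau_{\leq n}f$ is an equivalence in $\Pro(\Sp)$ for all $n$. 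This gives (1) $\Leftrightarrow$ (2). The implication (2) $\Rightarrow$ (3) is immediate: take any $m$, and apply $\pi_{n} = \pi_{n}\circ \tau_{\leq n}$ (valid for $n \leq$ the truncation level, and $\pi_{n}$ commutes with $\Pro$ by definition) to conclude that $\pi_{n}(f)$ is an isomorphism of pro-abelian groups for every $n$.

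The remaining implication (3) $\Rightarrow$ (2) is the heart. Fix the $m$ provided by (3), so $\tau_{\leq m}f$ is already an equivalence, and we must propagate this upward to all $n > m$. The key is the fibre sequence of truncation functors: for $n > m$ there is a natural fibre sequence of functors $\Sigma^{n}H\pi_{n} \to \tau_{\leq n} \to \tau_{\leq n-1}$ (the $k$-invariant / Postnikov fibre sequence), which applies level-wise and hence, by Lemma~\ref{lem:level-wise-limit} (finite limits in $\Pro(\Sp)$ are computed level-wise, and this is a fibre sequence of spectra), gives a fibre sequence in $\Pro(\Sp)$
\[
\Sigma^{n}H\pi_{n}(X) \to \tau_{\leq n}X \to \tau_{\leq n-1}X
\]
and similarly for $Y$, compatibly with $f$. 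Now I induct on $n \geq m$: the base case $n=m$ is the hypothesis; for the inductive step, $\tau_{\leq n-1}f$ is an equivalence by induction and $\Sigma^{n}H\pi_{n}(f)$ is an equivalence because $\pi_{n}(f)$ is an isomorphism in $\Pro(\Ab)$ and $H \colon \Ab \to \Sp$ induces a fully faithful $\Pro(H)\colon \Pro(\Ab)\to\Pro(\Sp)$ (or simply: $\Pro(\Sigma^{n}H)$ of an isomorphism is an equivalence, since $\Sigma^n H$ is an equivalence onto its image and $\Pro(-)$ preserves this). By the five-lemma for fibre sequences in the stable $\infty$-category $\Pro(\Sp)$ (two out of three maps in a map of fibre sequences being equivalences forces the third), $\tau_{\leq n}f$ is an equivalence. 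This closes the induction and yields (2).

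The one genuine subtlety — and the step I expect to be the main obstacle — is passing the Postnikov fibre sequence $\Sigma^{n}H\pi_{n} \to \tau_{\leq n} \to \tau_{\leq n-1}$ through $\Pro(-)$ correctly. A priori $\Pro(\tau_{\leq n})$ is defined on $\Pro(\Sp)$ via \eqref{eq.def.pro}, so one must check that on a level-representation it really is computed level-wise; this is where Lemma~\ref{lem:level-wise-limit} enters, since $\tau_{\leq n-1}$ is a finite limit (cofibre, hence a shifted fibre) construction out of $\tau_{\leq n}$ and $\Sigma^{n}H\pi_{n}$, and the latter is again built by finite limits from truncations. I would phrase this cleanly by noting that all the functors involved — $\tau_{\leq n}$, $\pi_{n}$, $\Sigma^{n}H$ — are exact or at least preserve the relevant finite limits, so that the whole Postnikov tower of a pro-spectrum is the level-wise Postnikov tower, and the fibre sequences are level-wise. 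Once that bookkeeping is in place, everything reduces to the stable five-lemma and the conservativity facts above. I would also remark, for use later in the paper, that the argument shows $\iota^{*}$ detects equivalences on each homotopy pro-group plus a single truncation, which is the practically useful form.
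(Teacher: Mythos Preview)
Your proposal is correct and follows essentially the same approach as the paper: the inductive step for (3) $\Rightarrow$ (2) via the Postnikov fibre sequence $\Sigma^{n}H\pi_{n} \to \tau_{\leq n} \to \tau_{\leq n-1}$ applied level-wise is exactly what the paper does, and the paper likewise invokes Lemma~\ref{lem:explicit-description-of-i-star} to pass between (1) and (2). Your treatment of (1) $\Leftrightarrow$ (2) is more elaborate than necessary---the paper simply notes that $\tau_{\leq n}$ factors through $\iota^{*}$ for (1) $\Rightarrow$ (2), and uses Lemma~\ref{lem:explicit-description-of-i-star} at the end for the converse---but the substance is the same.
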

\begin{proof}
Since the functor $\tau_{\leq n}$ factors through $\iota^{*}\colon \Pro(\Sp) \to \Pro(\Sp^{+})$, (1) implies (2).
Clearly, (2) implies (3). 
 It remains to show that (3) implies (1). 
Choose a level representation $\lprolim{I} X_{i} \to \lprolim{I} Y_{i}$. By assumption $\lprolim{I} \tau_{\leq m}X_{i} \to \lprolim{I} \tau_{\leq m}Y_{i}$ is an equivalence. 
Using the fibre sequence
\[
\Sigma^n H(\pi_{n}(X_{i})) \to  \tau_{\leq n}X_{i} \to \tau_{\leq n-1}X_{i}
\]
(similarly for $Y_{i}$) and that $\lprolim{I}\pi_{n}(X_{i}) \to \lprolim{I}\pi_{n}(Y_{i})$ is an isomorphism, we inductively show that 
we have equivalences $\lprolim{I} \tau_{\leq n}X_{i} \xrightarrow{\simeq} \lprolim{I} \tau_{\leq n} Y_{i}$ for all $n\in \Z$. Lemma~\ref{lem:explicit-description-of-i-star} now implies that $f$ is a weak equivalence. 
\end{proof}


\subsection{Pro-spaces}
\label{sec:pro-spaces}

Let $\Spaces^{0}_{*}$ denote the $\infty$-category of pointed connected spaces.

\begin{defn}
A morphism $X \to Y$  in $\Pro(\Spaces^{0}_{*})$ is called a \emph{weak equivalence} if it induces an isomorphism between homotopy pro-groups $\pi_{i}(X) \to \pi_{i}(Y)$ for every $i > 0$.
\end{defn}

Isaksen \cite{Isaksen-simplicial} has a general notion of weak equivalences between pro-simplicial sets. 
There is a canonical functor from the $\infty$-category associated to the category of pro-objects in pointed connected simplicial sets to $\Pro(\Spaces^{0}_{*})$ and this functor detects weak equivalences by \cite[Cor.~7.5]{Isaksen-simplicial}.

Let $f\colon X \to Y$ be a morphism between pro-objects of connected spectra and $\Omega^{\infty}f\colon \Omega^{\infty}X \to \Omega^{\infty}Y$ the induced map on pro-objects of infinite loop spaces. 

The following is clear from Lemma~\ref{lem:description-of-weak-equivs}.
\begin{lemma}
	\label{lemma:weak-equivs-infinite-loop-space}
The map $f$ is a weak equivalence in the sense of Definition~\ref{def:weak-equivalence} if and only if  $\Omega^{\infty}f$ is a weak equivalence in $\Pro(\Spaces_{*}^{0})$.
\end{lemma}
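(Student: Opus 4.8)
The plan is to translate both notions of weak equivalence into statements about homotopy pro-groups and then match them using Lemma~\ref{lem:description-of-weak-equivs}. First I would record the levelwise fact that for a connected spectrum $E$ the space $\Omega^{\infty}E$ is pointed and connected, with $\pi_{i}(\Omega^{\infty}E)\cong\pi_{i}(E)$ for all $i\geq 1$. Choosing a level representation $X=\lprolim{J}X_{j}$ with each $X_{j}$ a connected spectrum, and using that $\Omega^{\infty}$ on pro-objects is the levelwise functor $\Pro(\Omega^{\infty})$, it follows that $\Omega^{\infty}X\in\Pro(\Spaces^{0}_{*})$ and that $\pi_{i}(\Omega^{\infty}X)\cong\pi_{i}(X)$ in $\Pro(\Ab)$ for every $i\geq 1$; similarly for $Y$. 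By the definition of weak equivalence in $\Pro(\Spaces^{0}_{*})$, this shows that $\Omega^{\infty}f$ is a weak equivalence there if and only if $\pi_{i}(f)\colon\pi_{i}(X)\to\pi_{i}(Y)$ is an isomorphism of pro-abelian groups for every $i>0$.

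Next I would examine the pro-spectrum side via condition (3) of Lemma~\ref{lem:description-of-weak-equivs}. Since each $X_{j}$ and $Y_{j}$ is a connected spectrum, $\tau_{\leq 0}X_{j}\simeq 0\simeq\tau_{\leq 0}Y_{j}$, so $\tau_{\leq 0}f$ is automatically an equivalence in $\Pro(\Sp)$ and we may take $m=0$ in condition (3). For the same reason $\pi_{n}(X)=0=\pi_{n}(Y)$ for $n\leq 0$, so $\pi_{n}(f)$ is automatically an isomorphism of pro-groups in that range. Hence the requirements of Lemma~\ref{lem:description-of-weak-equivs}(3) collapse to the single condition that $\pi_{n}(f)$ be an isomorphism of pro-abelian groups for all $n\geq 1$, which is exactly the condition found in the first paragraph for $\Omega^{\infty}f$. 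Thus $f$ is a weak equivalence of pro-spectra if and only if $\Omega^{\infty}f$ is a weak equivalence in $\Pro(\Spaces^{0}_{*})$.

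I do not anticipate a genuine obstacle here. The only points needing a little care are that $\Omega^{\infty}$ applied to pro-objects must be read as $\Pro(\Omega^{\infty})$, so that the homotopy pro-groups in play are cofiltered limits of the levelwise homotopy groups, and that ``connected spectrum'' is being used in the strong sense $\pi_{i}=0$ for $i\leq 0$ --- this is precisely what makes $\tau_{\leq 0}f$ an equivalence for free and forces condition (3) of Lemma~\ref{lem:description-of-weak-equivs} to reduce to the positive-degree homotopy pro-groups.
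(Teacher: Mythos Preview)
Your proof is correct and follows exactly the approach the paper takes: the paper simply asserts that the lemma is ``clear from Lemma~\ref{lem:description-of-weak-equivs},'' and you have spelled out precisely why condition~(3) there reduces, for pro-objects of connected spectra, to the statement that $\pi_n(f)$ is an isomorphism of pro-groups for all $n\geq 1$. No differences to report.
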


\begin{lemma}
	\label{lem:we-induces-homology-iso}
If $X \to Y$ is a weak equivalence in $\Pro(\Spaces_{*}^{0})$ then the induced map on  integral homology pro-groups $H_{i}(X) \to H_{i}(Y)$ is an isomorphism in $\Pro(\Ab)$.
\end{lemma}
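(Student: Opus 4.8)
The plan is to reduce the statement to the corresponding fact about pro-systems of simplicial sets, where it is classical that a weak equivalence induces an isomorphism on homology pro-groups. First I would recall that a weak equivalence $X \to Y$ in $\Pro(\Spaces_*^0)$, by the remark following the definition of weak equivalence of pro-spaces, is detected by the canonical functor from the $\infty$-category associated to pro-objects in pointed connected simplicial sets; so without loss of generality I may assume $X = \lprolim{I} X_i$ and $Y = \lprolim{I} Y_i$ are level-represented by a natural transformation of $I$-diagrams of pointed connected Kan complexes which is a weak equivalence of pro-spaces in Isaksen's sense.

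Next I would invoke Isaksen's work \cite{Isaksen-simplicial} directly: there it is shown (or is an easy consequence of the model structure on pro-simplicial sets) that a weak equivalence of pro-spaces induces a pro-isomorphism on all homotopy pro-groups with respect to every compatible choice of basepoints, and more relevantly that it induces an isomorphism on the homology pro-groups $H_i(-;\Z) = \lprolim{I} H_i(-;\Z)$. The cleanest route is the Hurewicz-type argument carried out level-wise up to reindexing: using the pro-Whitehead/pro-Hurewicz machinery, a map of pro-spaces which is an isomorphism on $\pi_i$ for all $i>0$ is, after passing to a suitable cofinal reindexing, strictly isomorphic to a map which is a weak equivalence on each level (this is the content of the statement that $\Pro(\Spaces_*^0)$ with these weak equivalences is the homotopy category of Isaksen's model structure), whence $H_i(X_i) \to H_i(Y_i)$ is an isomorphism level-wise after reindexing, and a level-wise isomorphism of pro-abelian-groups is in particular a pro-isomorphism.

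Alternatively, and perhaps more in the spirit of the rest of the section, I would argue via Lemma~\ref{lemma:weak-equivs-infinite-loop-space} and Lemma~\ref{lem:description-of-weak-equivs} for the special case where $X,Y$ come from pro-spectra, but since the present lemma is stated for arbitrary pro-spaces I think the Isaksen-model-category input is unavoidable. Concretely: the integral chain complex functor $C_*(-;\Z)$ from pointed connected spaces to connective $H\Z$-module spectra (or to the derived category of $\Z$) sends weak equivalences to equivalences and preserves filtered colimits and finite limits; hence it induces a functor $\Pro(\Spaces_*^0) \to \Pro(\mathbf{D}(\Z))$ sending weak equivalences, as characterized by homotopy pro-groups, to... — no, this is circular, since one still needs that homotopy-pro-group isomorphisms are inverted by $C_*$. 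So the genuine input must be: $C_*$ inverts the Isaksen weak equivalences because it factors through the homotopy category, and the latter is presented by the strict-level model. I would therefore cite \cite[Cor.~7.5]{Isaksen-simplicial} together with the fact that in Isaksen's model structure on pro-simplicial sets the weak equivalences are exactly the maps that are, up to cofinal reindexing, level-wise weak equivalences, and conclude immediately.

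The main obstacle is purely foundational bookkeeping: making precise that ``isomorphism on homotopy pro-groups'' coincides with ``weak equivalence in Isaksen's model structure,'' since a priori the pro-homotopy-group condition is weaker than strict level-wise equivalence, and one must pass to a cofinal reindexing before any level-wise (hence homological) conclusion can be drawn. Once one grants the comparison between the $\infty$-categorical $\Pro(\Spaces_*^0)$ and Isaksen's model category — which is exactly the content invoked after the definition of weak equivalence of pro-spaces — the homological statement is then formal, because a cofinal-reindexed level-wise homology isomorphism is a pro-isomorphism and the chain functor commutes with cofiltered limits up to the usual $\lim^1$-free situation at the pro level.
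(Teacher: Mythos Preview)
Your approach is essentially the same as the paper's: both defer the result to Isaksen's work on pro-simplicial sets. The paper's proof is a single line, ``This can be deduced from \cite[Thm.~7.3]{Isaksen-simplicial},'' whereas you unpack the deduction via \cite[Cor.~7.5]{Isaksen-simplicial} together with the characterization of Isaksen's strict weak equivalences as essentially levelwise weak equivalences. That route is correct: once a map is, after reindexing, a levelwise weak equivalence, the homology pro-group isomorphism is immediate.

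Two minor comments. First, your detour through the chain functor $C_*(-;\Z)$ is, as you yourself note, circular and should simply be dropped rather than narrated; the write-up would be cleaner if you went straight to the Isaksen citation. Second, the phrase ``strictly isomorphic to a map which is a weak equivalence on each level'' is the right idea but slightly imprecise: what one gets from the strict model structure is a level representation (over a possibly different cofiltered index category) in which the map is levelwise a weak equivalence, not an isomorphism of the original indexing diagrams. This is harmless for the conclusion, since passing to a level representation does not change the associated pro-object of homology groups.
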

This can be deduced from \cite[Thm.~7.3]{Isaksen-simplicial}.

There is also the following pro-version of the Whitehead theorem, see \cite[Cor.~4.3]{Singh}.
\begin{prop}
	\label{prop:pro-Whitehead}
Let $f\colon X \to Y$ be a morphism of pro-systems of pointed connected nilpotent spaces. 
If the induced map $f_{*}\colon H_{i}(X) \to H_{i}(Y)$ is an isomorphism in $\Pro(\Ab)$ for every $i > 0$, then $f$ is a weak equivalence in $\Pro(\Spaces_{*}^{0})$.
\end{prop}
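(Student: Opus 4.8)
The plan is to reduce the pro-statement to the classical Whitehead theorem for nilpotent spaces, applied level-wise, using a Postnikov/nilpotent-tower argument combined with the behaviour of pro-groups and the fact that finite limits commute with filtered colimits in $\Spaces$ (as in Lemma~\ref{lem:level-wise-limit}). First I would reduce to the case of a level map: by Lemma~\ref{lem:finite-diagrams} applied to the arrow category (or directly by a cofinality/reindexing argument over a common cofiltered index category $I$), I may assume $f$ is represented by $\lprolim{I} X_i \to \lprolim{I} Y_i$ with each $X_i$, $Y_i$ pointed connected nilpotent. The hypothesis says $H_i(X)\to H_i(Y)$ is an isomorphism of pro-abelian groups for every $i>0$; the goal is that $\pi_i(X)\to\pi_i(Y)$ is an isomorphism of pro-groups for every $i>0$.

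The key technical input is a relative Hurewicz-type statement for pro-systems of nilpotent spaces: if $f\colon X\to Y$ is a level map of pro-systems of connected nilpotent spaces which is a pro-homology isomorphism, then one can build, compatibly over $I$ (after possibly refining the index category), principal refinements of the nilpotent Postnikov towers of $X_i$ and $Y_i$, i.e.\ express $X$ and $Y$ as limits of towers of pro-principal fibrations with fibres $\lprolim{I} K(M_i^{(j)}, n_j)$ for pro-abelian groups $M^{(j)} = \lprolim{I} M_i^{(j)}$. This is the form in which the classical argument of Dror/Hilton–Mislin–Roitberg for nilpotent spaces gets ported to the pro-world; the reference \cite{Singh} presumably carries this out, so I would cite \cite[Cor.~4.3]{Singh} for the pro-nilpotent Postnikov machinery. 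Granting such towers, one argues by induction up the tower: the bottom stage is $K(\pi_1(X),1)$-type data, the pro-homology isomorphism forces $\pi_1(X)\to\pi_1(Y)$ to be a pro-iso (using $H_1 = \pi_1^{\mathrm{ab}}$ and nilpotence to bootstrap to $\pi_1$ itself via the lower central series, which is a finite filtration level-wise but must be controlled uniformly — this is where one invokes that the lower central series of a nilpotent group of bounded class is built from finitely many abelian pro-subquotients), and then a pro-version of the mod-$\mathcal{C}$ Hurewicz / comparison of fibration spectral sequences propagates the isomorphism through $\pi_n$ for all $n$.

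The main obstacle I expect is the \emph{uniformity} problem: "nilpotent" is a condition on each $X_i$ separately, with a priori unbounded nilpotency class as $i$ varies, and the lower central series filtration of $\pi_1$ and the action filtrations on $\pi_n$ need not be compatible with the transition maps in a way that makes the subquotients genuine pro-abelian groups. The honest way around this — and I suspect what \cite{Singh} does — is not to fix a tower but to argue cofiltered-diagrammatically: use that a map of pro-objects is an equivalence iff it is so after $\tau_{\leq n}$ for all $n$ (the analogue of Lemma~\ref{lem:description-of-weak-equivs} for pro-spaces), and prove by induction on $n$ that $\tau_{\leq n}f$ is a pro-equivalence; at each stage the relevant comparison is controlled by $H_{\leq n+1}$ and by the already-established lower-degree statement, and one never needs a global bound on nilpotency class, only the level-wise nilpotence to run the classical relative Hurewicz at each index $i$ and then pass to the pro-category via commutation of filtered colimits with the finite homotopy/homology computations involved. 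Once $\tau_{\leq n}f$ is a pro-equivalence for all $n$, $f$ is a weak equivalence in $\Pro(\Spaces_*^0)$ by definition, completing the proof.
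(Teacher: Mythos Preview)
The paper does not actually prove this proposition: it simply records the statement and defers to \cite[Cor.~4.3]{Singh}. So there is no argument in the paper to compare your proposal against; your sketch is already more detailed than what the authors provide. Your outline --- reduce to a level map, build principal refinements of nilpotent Postnikov towers, and propagate the homology isomorphism to homotopy by induction on the tower, with the uniformity issue handled by inducting on the truncation degree $n$ rather than on a fixed nilpotency class --- is the standard shape of the argument and is essentially what one finds in the literature (Singh, or the analogous arguments in Artin--Mazur and Isaksen). The honest thing to do here, matching the paper, is simply to cite \cite[Cor.~4.3]{Singh} and move on; if you want to include a sketch, the one you give is serviceable, though the step ``use nilpotence to bootstrap from $H_1$ to $\pi_1$ via the lower central series'' would need to be made precise in the pro-setting and is exactly where the work in \cite{Singh} lies.
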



\section{Non-archimedean rings}
	\label{sec:non-archimedean-rings}

\subsection{Adic rings}\label{sec.adicrin}

In this note we call a complete topological ring $A$ an \emph{adic ring} if a system of neighborhoods of zero is
given by the ideals $\{ (\pi^n)\, |\, n>0 \}$ for some element $\pi\in A$; so $A \cong \lim_{n} A/(\pi^{n})$. For an element
$\pi\in A$ with this property we also say that $A$ is \emph{$\pi$-adic}.

For such an adic ring $A$ and element $\pi\in A$ we denote by
\[
A\langle t_1 , \ldots , t_m \rangle = \lim_n A[t_1, \ldots , t_m ]/(\pi^n)
\]
the completion of the polynomial ring. Fix $a_0\in A$. By
\begin{equation}\label{comppolyadic}
  A\langle \Delta^m_{a_0} \rangle  = A\langle t_0 , \ldots , t_m \rangle /(t_0 + \cdots
  + t_m-a_0)
\end{equation}
we denote the standard $m$-simplex of `diameter $a_0$'. So $ A\langle \Delta^m_{a_0}
\rangle$ is the $\pi$-adic completion of 
\begin{equation}\label{comppolyadicpoly}
 A[ \Delta^m_{a_0} ]  = A[ t_0 , \ldots , t_m ] /(t_0 + \cdots
  + t_m-a_0).
\end{equation}
Note that there exists an $A$-algebra isomorphism
\[
A \langle x_1 , \ldots , x_m \rangle \xrightarrow{\simeq} A\langle \Delta^m_{a_0} \rangle
\quad \quad x_i \mapsto t_i .
\]
For varying $m$ and fixed $a_0$ the $ A\langle \Delta^m_{a_0} \rangle$ form in the usual
way a simplicial ring which we denote by $ A\langle \Delta_{a_0} \rangle$. If we choose
another element $a_1\in A$ we get a morphism of simplicial $A$-algebras
\[
\Psi_{a_1}^{a_0}\colon   A\langle \Delta_{a_1 a_0} \rangle \to  A\langle \Delta_{ a_0} \rangle
\]
by mapping each variable $t_i$ to $a_1 t_i$. The same applies to the polynomial rings $ A[ \Delta^m_{a_0} ]$.

In the following proposition we summarize some facts about excellent rings.
Let $\kappa$ be a complete discretely valued field with ring of integers $\kappa_0$. We
abbreviate $A\langle t_1 , \ldots , t_m \rangle$ by $A\langle \underline t \rangle$.

\begin{prop}\label{prop.adicexc} For a noetherian complete adic ring $A$ we have:
  \begin{itemize}
\item[(i)] For an element $\pi\in A$ as above the ring $A$ is quasi-excellent if and only
  if $A/(\pi)$ is quasi-excellent
\item[(ii)] If $A$ is quasi-excellent, then $A\langle \underline t \rangle$ is
  quasi-excellent, and the ring homomorphism $A\to A\langle \underline t \rangle$ is
  regular.
\item[(iii)] If $A$ is a $\kappa_0$-algebra of topologically  finite type, it is excellent.
  \end{itemize}
\end{prop}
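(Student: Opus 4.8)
The plan is to reduce all three parts to permanence properties of quasi-excellent rings together with one deep input, Gabber's theorem on ideal-adic completions: for a noetherian ring $A$ and an ideal $I\subseteq A$, the $I$-adic completion $\widehat A_I$ is again noetherian, $A$ is quasi-excellent if and only if $\widehat A_I$ is, and in the affirmative case the completion map $A\to\widehat A_I$ is regular (I would cite this through the account of Gabber's work by Kurano--Shimomoto). Granting this, part~(i) is almost a restatement: if $A$ is quasi-excellent then its quotient $A/(\pi)$ is quasi-excellent; conversely, since $A$ is an adic ring it is noetherian and coincides with its own $(\pi)$-adic completion, so quasi-excellence of $A/(\pi)$ forces that of $A$ by the theorem applied with $I=(\pi)$.

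For part~(ii) I would first note that $A[\underline t]$ is quasi-excellent, being of finite type over the quasi-excellent ring $A$, and that $A\to A[\underline t]$ is regular, being smooth. Since $A\langle\underline t\rangle$ is by definition the $(\pi)$-adic completion of the noetherian ring $A[\underline t]$, Gabber's theorem shows at once that $A\langle\underline t\rangle$ is noetherian and quasi-excellent and that $A[\underline t]\to A\langle\underline t\rangle$ is regular; composing the two regular maps gives that $A\to A\langle\underline t\rangle$ is regular. (Quasi-excellence of $A\langle\underline t\rangle$ also follows from part~(i), since $A\langle\underline t\rangle/(\pi)\cong(A/(\pi))[\underline t]$.) For part~(iii), the ring $\kappa_0$ is a complete discrete valuation ring, hence excellent, hence quasi-excellent, so by part~(ii) the Tate algebra $R=\kappa_0\langle t_1,\dots,t_m\rangle$ is quasi-excellent; it then suffices to show $R$ is universally catenary, and for that it is enough to prove $R$ is regular, since regular rings are Cohen--Macaulay and hence universally catenary. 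I would verify regularity of $R$ at each maximal ideal $\mathfrak m$: if $\pi\notin\mathfrak m$, then $R_{\mathfrak m}$ is a localization of $R[1/\pi]=\kappa\langle t_1,\dots,t_m\rangle$, which is regular by the classical theory of Tate algebras over a field (see~\cite{bosch}); if $\pi\in\mathfrak m$, then $\pi$ is a nonzerodivisor on $R$ (a domain) and $R/(\pi)\cong(\kappa_0/(\pi))[t_1,\dots,t_m]$ is regular, so $R_{\mathfrak m}$ is a noetherian local ring which is regular modulo a nonzerodivisor, hence regular. Thus $R$ is excellent, and since every $\kappa_0$-algebra topologically of finite type is a quotient of such an $R$ and quotients of excellent rings are excellent, the general statement follows.

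The real difficulty is not this bookkeeping but securing the deep input: one genuinely needs Gabber's theorem on ideal-adic completions, and for part~(i) in particular its ``descent'' form --- that quasi-excellence of $A/(\pi)$ implies that of $A$ --- rather than merely the assertion that completion preserves quasi-excellence, while part~(ii) uses in addition that the completion map of a quasi-excellent ring is regular. Everything else (stability of quasi-excellence under quotients, localizations, and finite-type extensions, closure of regular homomorphisms under composition, and regularity of Tate algebras over a field) is standard.
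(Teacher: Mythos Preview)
Your approach matches the paper's: (i) is Gabber's theorem, (ii) follows from permanence properties together with regularity of the completion map, and (iii) reduces to showing $\kappa_0\langle\underline t\rangle$ is regular (hence universally catenary) and quasi-excellent (by (ii)).

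One correction is needed. The form of Gabber's theorem you state in your opening sentence --- ``$A$ is quasi-excellent if and only if $\widehat A_I$ is'' --- is false in general: take $A$ to be any non-excellent Noetherian local ring and $I$ its maximal ideal; then $\widehat A_I$ is complete local Noetherian, hence excellent. The correct statement, which you do identify in your final paragraph and which Kurano--Shimomoto (and the paper's reference to Gabber's exposition) actually prove, is: for $A$ Noetherian and $I$-adically \emph{complete}, $A$ is quasi-excellent if and only if $A/I$ is. Your argument for (i) only makes sense with this form, so the first paragraph should be rewritten accordingly.

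One simplification: in (iii) your case analysis at maximal ideals for the regularity of $\kappa_0\langle\underline t\rangle$ is correct but unnecessary. The paper simply notes that by (ii) the map $\kappa_0\to\kappa_0\langle\underline t\rangle$ is regular, and a flat map with geometrically regular fibres out of a regular ring has regular target.
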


\begin{proof}
Part~(i) is due to Gabber, see~\cite[Thm.~I.9.2]{illgab}. Part~(ii) follows immediately from
part (i) and~\cite[0AH2]{stacks-project}.
For part (iii) note that a
$\kappa_0$-algebra $A$ which is topologically of finite type is a quotient of a ring
$\kappa_0 \langle \underline t \rangle$, which is quasi-excellent (by (ii)) and which is
universally catenary as it is regular (again by (ii)).
\end{proof}

\subsection{Tate rings}\label{subsec:nonarch.tate}

For the convenience of the reader we recall some facts about Tate rings in this section,
see~\cite[Sec.~1]{Huber}.
Tate rings are generalizations of affinoid algebras and we are going to state most of our
results about  $K$-theory in the language of Tate rings.

\begin{defn}\label{defn.tatering}
A topological ring $A$ is called a {\em Tate ring} if there exists an open subring $A_0\subset
A$ which is a complete adic ring in the sense of Subsection~\ref{sec.adicrin}, i.e.\ it
has the $\pi$-adic topology for some $\pi\in A_0$ and $A=A_0[1/\pi]$. We call such a
subring $A_0$ a {\em ring of definition} of $A$.
A subset $B \subset A$ is called \emph{bounded} if it is contained in $\pi^{n}A_{0} \subset A$ for some integer $n$ and an element $a \in A$ is called \emph{power bounded} if the set $\{ a^{n} \,|\, n \geq 0\}$ is bounded.
\end{defn}

An element $\pi$ as in Definition~\ref{defn.tatering} is clearly invertible and
topologically nilpotent in $A$. 

\begin{rmk}\label{rmk.tateringofdef}
  The following observations about a Tate ring $A$ are easy to prove.
  \begin{itemize}
  \item[(i)]
For an invertible and topologically nilpotent $\pi \in A$ and a ring of definition
$A_0\subset A$
there exists $n>0$ such that $\pi^n\in A_0$ and
$A_0$ has the $\pi^n$-adic topology.
\item[(ii)]
  If $A_0$ and $A'_0$ are both rings of definition
of the Tate ring $A$ then $A_0\cap A'_0$ and $A_0 \cdot A'_0$ are also rings of definition
of $A$.
\item[(iii)] For a power bounded element $a_0\in A$ there exists a ring of definition
  $A_0\subset A$ with $a_0\in A_0$.
\end{itemize}
\end{rmk}

 Quite often in this note we need to assume the existence
of a `nice' ring of definition of a Tate ring $A$. We usually consider the  following
condition on  $A$:

\begin{itemize}
\item[$(\dagger)_A$] \phantomsection\label{condition.dagger} There exists a ring of definition $A_0\subset A$ which is noetherian
  and there exists a proper morphism $p\colon X\to \Spec (A_0)$ such that $X$ is regular and such that $p$ is
  an isomorphism over $\Spec (A)$.
\end{itemize}  
There is also the following weaker version of $(\dagger)_{A}$,  which is not
used in later sections, but is included here for completeness.
\begin{itemize}
\item[$(\ddagger)_A$] There exists a ring of definition $A_0\subset A$ which is noetherian
  and quasi-excellent.
\end{itemize}

\begin{rmk}
By~\cite[Sec.~IV.7.9]{EGA} condition $(\dagger)_A$ implies condition $(\ddagger)_A$. If $A$ is regular
and if one has resolution of singularities available, for example if  $\mathbb Q\subset
A_0$ \cite{temkin}, then condition  $(\ddagger)_A$ conversely implies condition $(\dagger)_A$.
\end{rmk}

We define the Tate algebra over the Tate ring $A$ by
\[
A \langle t_1 , \ldots ,  t_m \rangle =( A_0 \langle t_1 , \ldots ,  t_m \rangle )\otimes
_{A_0} A.
\]
It is easy to see that the Tate algebra does not depend on the choice of the ring of
definition $A_0$. For a power bounded element $a_0\in A$ we define the standard $m$-simplex with `diameter $a_0$' by the same
formula as in~\eqref{comppolyadic}, as well as its polynomial
version~\eqref{comppolyadicpoly}. 
Given another power bounded $a_1\in A$  there exists a well-defined map
\[
\Psi_{a_1}^{a_0}\colon   A\langle \Delta_{a_1 a_0} \rangle \to  A\langle \Delta_{ a_0} \rangle
\]
 mapping each variable $t_i$ to $a_1 t_i$.
In these constructions one uses Remark~\ref{rmk.tateringofdef}(iii) to reduce to the
analogous constructions for adic rings.

\begin{lem}
Consider a Tate ring $A$ which satisfies condition $(\dagger)_A$ (resp.\ $(\ddagger)_A$).
Then  $A\langle \underline t \rangle$  satisfies condition $(\dagger)_{A\langle \underline t \rangle}$
(resp.\ $(\ddagger)_{A\langle \underline t \rangle}$). Moreover the ring homomorphism $A\to A\langle \underline t \rangle$ is regular.
\end{lem}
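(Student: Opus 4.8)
The plan is to reduce condition $(\dagger)_{A\langle \underline t \rangle}$ (resp.\ $(\ddagger)_{A\langle \underline t \rangle}$) to the corresponding condition on $A$ by transporting a good ring of definition through the Tate algebra construction. First I would fix a noetherian ring of definition $A_0 \subseteq A$ witnessing $(\dagger)_A$ (resp.\ $(\ddagger)_A$), together with the proper regular $p\colon X \to \Spec(A_0)$ that is an isomorphism over $\Spec(A)$. The natural candidate ring of definition for $A\langle \underline t \rangle$ is $A_0\langle \underline t \rangle$: indeed, $A_0\langle \underline t\rangle$ is $\pi$-adic, open in $A\langle \underline t\rangle$, and $A_0\langle \underline t\rangle[1/\pi] = A\langle \underline t\rangle$ by the very definition of the Tate algebra over a Tate ring. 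By Proposition~\ref{prop.adicexc}(ii) the ring $A_0\langle \underline t\rangle$ is again noetherian (completion of a finite-type algebra over a noetherian complete adic ring) and quasi-excellent, and $A_0 \to A_0\langle \underline t\rangle$ is regular; this already settles the $(\ddagger)$ case.

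For the $(\dagger)$ case I would base change the resolution along $\Spec(A_0\langle \underline t\rangle) \to \Spec(A_0)$. Set $X' = X \times_{\Spec(A_0)} \Spec(A_0\langle \underline t\rangle)$ with projection $p'\colon X' \to \Spec(A_0\langle \underline t\rangle)$. Properness is stable under base change, so $p'$ is proper; and since $p$ is an isomorphism over $\Spec(A)$ and $A\langle \underline t\rangle = A\otimes_{A_0} A_0\langle \underline t\rangle$, the morphism $p'$ is an isomorphism over $\Spec(A\langle \underline t\rangle)$. The one point that needs an argument is that $X'$ is again regular. This is where regularity of the ring map $A_0 \to A_0\langle \underline t\rangle$ (Proposition~\ref{prop.adicexc}(ii)) does the work: a flat base change along a regular morphism of noetherian rings preserves regularity of schemes. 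Concretely, regularity of $X$ plus regularity (hence geometric regularity of the fibers, by the standard characterization for maps between noetherian schemes) of $\Spec(A_0\langle \underline t\rangle) \to \Spec(A_0)$ implies, via \cite[0AH2]{stacks-project} or the local criterion for regularity under flat base change, that $X'$ is regular. Thus $A_0\langle \underline t\rangle$ together with $p'$ witnesses $(\dagger)_{A\langle \underline t\rangle}$.

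Finally, the last assertion that $A \to A\langle \underline t\rangle$ is regular follows by localizing the regular map $A_0 \to A_0\langle \underline t\rangle$ at $\pi$: inverting an element preserves regularity of a ring homomorphism (it is a composite of a localization, which is regular, with a regular map), and $A = A_0[1/\pi]$, $A\langle \underline t\rangle = A_0\langle \underline t\rangle[1/\pi]$.

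The main obstacle is the regularity of the base-changed scheme $X'$; everything else is formal manipulation with rings of definition and stability of properness under base change. Once one knows that $A_0 \to A_0\langle \underline t\rangle$ is a regular morphism of noetherian rings — which is exactly Proposition~\ref{prop.adicexc}(ii) — the preservation of regularity under this flat base change is standard, so in practice even this step is short.
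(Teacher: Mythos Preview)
Your proposal is correct and follows essentially the same approach as the paper: take $A_0\langle \underline t\rangle$ as ring of definition, base change the resolution $X \to \Spec(A_0)$, and use regularity of $A_0 \to A_0\langle \underline t\rangle$ from Proposition~\ref{prop.adicexc}(ii) to ensure the base-changed scheme stays regular. The paper's proof is simply a terser version of exactly this argument.
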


Here we abbreviate $A\langle t_1 , \ldots , t_m \rangle$ by $A\langle \underline t
\rangle$ again.

\begin{proof}
This is immediately clear from Proposition~\ref{prop.adicexc}. Note that in the situation
of $(\dagger)_A$ the scheme $X\otimes_{A_0} A_0\langle \underline t
\rangle$ is
regular as the ring homomorphism $A_0\to  A_0\langle \underline t \rangle$ is regular
by~Proposition~\ref{prop.adicexc}(ii) and $X$ is regular.
\end{proof}

\begin{lem}
If $A$ is an affinoid algebra over a complete discretely valued field $\kappa$, then condition
$(\ddagger)_A$ holds. If moreover $A$ is regular and the residue field of $\kappa$ has
characteristic zero, then condition
$(\dagger)_A$ holds.
\end{lem}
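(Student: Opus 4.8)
The plan is to exhibit a convenient ring of definition and then invoke, for the first assertion, the excellence input already recorded in Proposition~\ref{prop.adicexc}, and for the second, Temkin's resolution of singularities.

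First I would fix a ring of definition. Let $\pi\in\kappa_{0}$ be a prime element. By \cite{bosch} (recalled in Subsection~\ref{intro:contK}) there is a $\kappa_{0}$-algebra $A_{0}$ which is flat and topologically of finite type over $\kappa_{0}$ and satisfies $A_{0}[1/\pi]\cong A$; such an $A_{0}$ is a ring of definition of $A$. Being a quotient of some $\kappa_{0}\langle t_{1},\dots,t_{n}\rangle$, it is a noetherian complete adic ring, and by Proposition~\ref{prop.adicexc}(iii) it is excellent, in particular quasi-excellent. Hence $(\ddagger)_{A}$ holds. This settles the first claim.

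Now suppose in addition that $A$ is regular and that the residue field of $\kappa$ has characteristic zero. Then $\kappa$ itself has characteristic zero, since $\mathrm{char}(\kappa)>0$ would force the residue field to have positive characteristic. As every nonzero integer lies outside the maximal ideal of the valuation ring $\kappa_{0}$, it is a unit there, so $\mathbb{Q}\subseteq\kappa_{0}\subseteq A_{0}$. Flatness of $A_{0}$ over $\kappa_{0}$ makes $\pi$ a nonzerodivisor on $A_{0}$, so $A_{0}\hookrightarrow A$; since $A$ is regular, hence reduced, $A_{0}$ is reduced as well. Let $Z\subseteq\Spec(A_{0})$ be the singular locus with its reduced closed subscheme structure: it is closed because $A_{0}$ is excellent, and nowhere dense because $A_{0}$ is reduced. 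Finally, any prime $\mathfrak{q}$ of $A_{0}$ not containing $\pi$ corresponds to a prime of $A=A_{0}[1/\pi]$, and $(A_{0})_{\mathfrak{q}}$ is the corresponding localization of the regular ring $A$, hence regular; therefore the open subscheme $\Spec(A)\subseteq\Spec(A_{0})$ is disjoint from $Z$.

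It then remains to desingularize $\Spec(A_{0})$ without touching $\Spec(A)$. Since $A_{0}$ is a noetherian quasi-excellent $\mathbb{Q}$-algebra and $Z$ is a nowhere dense closed subset containing the singular locus, Temkin's resolution of singularities \cite{temkin} yields a projective — in particular proper — morphism $p\colon X\to\Spec(A_{0})$ with $X$ regular and $p$ an isomorphism over $\Spec(A_{0})\setminus Z$. As $\Spec(A)\subseteq\Spec(A_{0})\setminus Z$, the morphism $p$ is an isomorphism over $\Spec(A)$, so $(\dagger)_{A}$ holds. The only substantive ingredient is Temkin's theorem; checking its hypotheses — that $A_{0}$ is a quasi-excellent $\mathbb{Q}$-algebra and that the relevant closed subset can be chosen nowhere dense and disjoint from $\Spec(A)$ — is exactly what the previous paragraph arranges, so I do not expect a genuine obstacle here.
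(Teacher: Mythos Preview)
Your proof is correct and follows the same approach as the paper: choose a flat, topologically finite type $\kappa_{0}$-model $A_{0}$, invoke Proposition~\ref{prop.adicexc}(iii) for excellence, and then apply Temkin's resolution in characteristic zero. You have simply filled in details the paper leaves implicit --- namely why $\mathbb{Q}\subseteq A_{0}$, why $A_{0}$ is reduced so that the singular locus is nowhere dense, and why the resolution is an isomorphism over $\Spec(A)$.
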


\begin{proof}
For an affinoid algebra $A$ over $\kappa$ choose a  ring $A_0$ which is flat and topologically of finite type
over $\kappa_0$. Then $A_0$ is a ring of definition for $A$ and it is excellent by
Proposition~\ref{prop.adicexc}(iii). For $\mathbb Q\subset A_0$ we know that the scheme
$\Spec (A_0)$ has a resolution of singularities \cite{temkin}.
\end{proof}

\subsection{Normed rings}
	\label{sec:normed-rings}

When we study the Karoubi--Villamayor analog of our analytic $K$-theory in Section~\ref{sec.kv} it is convenient
to do it for a more general class of non-archimedean rings than Tate rings, namely normed
rings. This material is more or less well-known, but for the reader's convenience 
we recall some details in this subsection.

\begin{defn}
Let $A$ be an associative (not necessarily commutative or unital) ring.
A \emph{non-archimedean norm} on $A$ is a map $\norm\colon A \to \R_{\geq 0}$ satisfying
\begin{enumerate}
\item $\|x\|=0 \Leftrightarrow x=0$,
\item $\|-x\|=\|x\|$,
\item $\|x+y\| \leq \max\{\|x\|,\|y\|\}$,
\item $\|xy\| \leq \|x\|\|y\|$,
\end{enumerate}
for all $x,y\in A$.
\end{defn}
In the following, all norms  will be non-archimedean, and for simplicity we will usually drop the adjective `non-archimedean'.
A ring equipped with a norm is called a \emph{normed ring}. The normed ring $(A, \norm)$ is called \emph{complete} if $A$ is complete for the metric induced by the norm, and it is called \emph{unital} if $A$ is unital and $\|1\| = 1$.

\begin{ex}\phantomsection\label{ex:normed-rings}
\begin{itemize}
\item[(i)] Any associative ring $A$ can be equipped with the trivial norm $\norm$ given by $\|a\|=1$ if $a\not=0$, $\|0\|=0$.
\item[(ii)] If $(A,\norm)$ is a normed ring, one can equip its unitalization $\widetilde A = A \rtimes \Z$
with the norm 
\[
\|(a,n)\| = \max\{\|a\|, \|n\|_{\Z}\}
\]
where $\norm_{\Z}$ is the trivial norm on $\Z$. Then  $(\widetilde A, \norm)$ is a unital  normed ring and the inclusion $A \hookrightarrow \widetilde A$ is isometric. It is complete if $A$ was.

\item[(iii)] Let $A$ be a Tate ring with a ring of definition $A_0\subset A$ and an element
  $\pi\in A_0$ which is a topologically nilpotent unit in $A$.
 Fix $0<\epsilon <1$.
We define the gauge norm associated to $A_{0}, \pi$, and $\epsilon$ by 
\[
\|a\|_{A_{0},\pi,\epsilon} = \epsilon^{\max\{i\in\Z\,|\, a\in\pi^{i}A_{0}\}}.
\]
A different choice of $A_{0},\pi$, and $\epsilon$ gives an equivalent norm in the sense defined below.
\end{itemize}
\end{ex}

\begin{defn}
	\label{def.s-bounded}
A homomorphism between normed rings $\phi\colon (A,\norm_{A}) \to (B, \norm_{B})$ is called \emph{$s$-bounded} for some real number $s>0$ if there is a $C >0$ such that 
$\|\phi(x)\|_{B} \leq C\|x\|_{A}^{s}$ for all $x\in A$. It is called \emph{bounded} if it is $s$-bounded for some $s$.

Two norms $\norm, \norm'$ on a ring $A$ are called \emph{equivalent} if the identities $(A,\norm) \to (A,\norm')$ and $(A,\norm') \to (A,\norm)$ are bounded.
\end{defn}

Let $(A, \norm)$ be a complete normed ring, and let $\rho>0$ be a real number. 
\newcommand{\var}{t}
We define the ring of $\rho$-convergent power series with variables $\var_1,\ldots , \var_m$ and with coefficients in $A$
\[
A\langle \var_1, \ldots , \var_m\rangle_{\rho} = \{\sum_{I} a_{I} \var^{I} \,|\, \|a_{I}\|\rho^{|I|} \xrightarrow{|I|\to\infty} 0\},
\]
where we  use  standard multi-index notation.
The variables commute with every element of the ring.
We set
\[
\|\sum_{I} a_{I}\var^{I}\|_{\rho} = \max\{\|a_{I}\|\rho^{|I|}\}.
\]
This defines a norm $\norm_{\rho}$  on $A\langle \var\rangle_{\rho}$ and, equipped with this norm,
$A\langle \var\rangle_{\rho}$ becomes again a  complete normed ring. (See \cite[Ch.~1, 1.4.1]{BGR}
for the case of a commutative unital complete normed ring and radius of convergence
$\rho=1$. Everything carries over to our situation mutatis mutandis.)

Let $(A,\norm)$ be a complete normed ring and let $\rho \ge 1$. We define
\[
A\langle \Delta^{m}_{\rho}\rangle = A\langle \var_{0}, \dots, \var_{m}\rangle_{\rho}/I_{n,\rho}
\]
where $I_{n,\rho}$ is the twosided ideal 
\[
I_{n,\rho} = \{ \var_{0}f + \dots + \var_{m}f -f\,|\, f\in A\langle \var_{0}, \dots, \var_{n}\rangle_{\rho}\}.
\]
It is then clear that $A\langle \Delta_{\rho}\rangle$ with
the usual face and degeneracy maps is a simplicial ring.   For radii $\rho' \geq \rho\geq 1$ there is a natural restriction map
$A\langle \Delta_{\rho'}\rangle \to A\langle \Delta_{\rho}\rangle$ of simplicial $A$-algebras.

\begin{rmk}
	\label{rmk.Tatenormedsimplicial}
  For a Tate ring $A$ as in Example~\ref{ex:normed-rings}(iii) and for
   $\rho=\epsilon^{-j}$ with $j\ge 0$ there exists an isomorphism of simplicial $A$-algebras
   \[
     \Theta\colon A \langle \Delta_{\pi^j} \rangle \xrightarrow{\simeq}  A \langle
     \Delta_{\rho} \rangle, \quad \quad t_i \mapsto \pi^j \var_i.
   \]
For $\rho' = \epsilon^{-j-1} $ we obtain a commutative square of simplicial rings
\[
  \xymatrix{
A\langle \Delta_{\pi^{j+1}}\rangle \ar[r] ^{\Psi^{\pi^j}_\pi}  \ar[d]_\Theta^\simeq & A\langle \Delta_{\pi^j}\rangle  \ar[d]^\Theta_\simeq    \\
A\langle \Delta_{\rho'}\rangle \ar[r] & A\langle \Delta_{\rho}\rangle.
}
\]
\end{rmk}


\section{Algebraic \textit{K}-theory of non-archimedean rings} 
	\label{sec:K-theory-of-na-rings}

In this section we summarize some background material on algebraic $K$-theory and we
study algebraic $K$-theory of non-archimedean rings.

\subsection{\textit{K}-theory spectra and the plus-construction}
\label{sec:K-theory-spectra-and-plus-construction}

Let $R$ be a unital ring.
We denote by 
\[
k(R) \in \Sp_{\geq 0}
\] 
the connective algebraic $K$-theory spectrum associated to the exact category of finitely generated projective (right) $R$-modules.
Later we will also use the 
non-connective algebraic $K$-theory spectrum $K(R)$.
Its connected cover $\tau_{\geq 1} k(R)$ is denoted by $\KGL(R)$.
We have a natural equivalence
\[
\BGL(R)^{+} \simeq \Omega^{\infty}\KGL(R)
\]
where the left-hand side is Quillen's plus construction.

Now let $R$ be a simplicial ring.

\begin{defn}
We write $k(R)$ for the \emph{degree-wise $K$-theory} of $R$, i.e.~the geometric realization of the simplicial spectrum $[p] \mapsto k(R_{p})$. Similarly, we write $\KGL(R)$ for the geometric realization of the simplicial spectrum $[p] \mapsto \KGL(R_{p})$.
\end{defn}

Note that in general $\KGL(R)$ is  not the connected cover of $k(R)$. The infinite loop space  $\Omega^{\infty}\KGL(R)$ can again be described as a plus construction.
We write
$\BGL(R)$ for the diagonal of the bisimplicial set $[p]\mapsto \BGL(R_{p})$. Note that in general the diagonal is weakly equivalent to the geometric realization of a bisimplicial set.
Let $(-)^{+}\colon \sSet \to \sSet$ be the functorial plus construction as defined in \cite[Def.~3.1.1.5]{Dundas}. 

\begin{lem}
	\label{lem:plus-simplicial}
There are natural weak equivalences
\[
\BGL(R)^{+} \simeq    \diag([p]\mapsto \BGL(R_{p})^{+})
\]
and 
\[
\BGL(R)^{+} \simeq \Omega^{\infty} \KGL(R).
\]
\end{lem}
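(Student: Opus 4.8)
The plan is to deduce both equivalences from the corresponding statements for ordinary (discrete) rings, which were recalled at the start of Subsection~\ref{sec:K-theory-spectra-and-plus-construction}, by applying them degree-wise and then passing to the diagonal/realization. For the first equivalence, I would argue that Quillen's plus construction on simplicial sets, in the functorial form of \cite[Def.~3.1.1.5]{Dundas}, commutes with geometric realization of simplicial objects up to weak equivalence. Concretely: the bisimplicial set $[p]\mapsto \BGL(R_{p})$ has a diagonal $\BGL(R)$, and the functorial plus construction $(-)^{+}$ applied degree-wise gives a bisimplicial set $[p]\mapsto \BGL(R_{p})^{+}$ whose diagonal I claim is $\BGL(R)^{+}$. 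The key input is that $(-)^{+}$ is a functor $\sSet\to\sSet$ which preserves weak equivalences and is characterized by an acyclic-cofibration-type map inducing a homology isomorphism with the appropriate local coefficients; since diagonals of bisimplicial sets commute with such homotopy-colimit-type constructions and with homology (via the spectral sequence / the fact that $\diag$ computes the homotopy colimit over $\Delta^{\op}$), one checks that $\diag([p]\mapsto \BGL(R_{p})^{+})$ has the universal property of $\BGL(R)^{+}$: the natural map from $\BGL(R)=\diag([p]\mapsto\BGL(R_{p}))$ is acyclic (each $\BGL(R_{p})\to\BGL(R_{p})^{+}$ is, and acyclic maps are stable under diagonal), and the target has perfect (hence trivial) fundamental group acting trivially on homology of the universal cover — this can be read off from the long exact sequences of the fibrations assembling the bisimplicial data, using that $E(R_{p})\subset\GL(R_{p})$ is perfect in each degree.

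For the second equivalence, I would invoke the degree-wise statement $\BGL(R_{p})^{+}\simeq\Omega^{\infty}\KGL(R_{p})$ for each discrete ring $R_{p}$, which holds by the already-cited equivalence for unital rings. Applying $\diag$ (equivalently, geometric realization) to the simplicial object $[p]\mapsto(\BGL(R_{p})^{+}\simeq\Omega^{\infty}\KGL(R_{p}))$, the left side becomes $\BGL(R)^{+}$ by the first part, and the right side becomes $|[p]\mapsto\Omega^{\infty}\KGL(R_{p})|$. It remains to identify this with $\Omega^{\infty}\KGL(R)=\Omega^{\infty}|[p]\mapsto\KGL(R_{p})|$, i.e.\ to commute geometric realization past $\Omega^{\infty}$. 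This is legitimate here because all the spectra $\KGL(R_{p})$ are connected (they are connected covers), and geometric realization of a simplicial diagram of connected — more generally uniformly bounded-below — spectra commutes with $\Omega^{\infty}$ up to weak equivalence; one can see this from the stable-to-unstable comparison together with a connectivity/convergence estimate for the realization (the skeletal filtration spectral sequence converges since connectivity increases). Alternatively, since $\KGL(R_p)$ is an infinite loop space in each degree and the realization of a simplicial space levelwise an $H$-space / grouplike $E_\infty$-object is again such, the two realizations agree.

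The main obstacle I expect is the first equivalence: making precise that the functorial plus construction commutes with the diagonal. The subtlety is that $(-)^{+}$ is defined one simplicial set at a time and is not obviously a ``homotopy colimit preserving'' functor; one must genuinely verify the two defining properties (acyclicity of the comparison map and killing of $E$ in $\pi_1$) for the diagonal of the degree-wise plus construction, using the homology Serre/Bousfield–Kan spectral sequence for the bisimplicial set and the perfectness of the elementary subgroup degree-wise. Once that bookkeeping is done — it is essentially the simplicial-ring analog of the standard argument and is where \cite{Dundas} is cited — the rest is formal. The commutation of realization with $\Omega^{\infty}$ in the second part is standard but should be justified by the connectivity of the $\KGL(R_p)$.
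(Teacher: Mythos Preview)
Your approach is essentially the same as the paper's. The paper simply cites \cite[Lemma~3.1.3.1]{Dundas} for the first equivalence (which packages exactly the acyclicity-under-diagonal argument you sketch) and then deduces the second from the first together with the fact that geometric realization commutes with $\Omega^{\infty}$ for connective spectra \cite[Prop.~1.4.3.9]{HA}; your unpacking of both steps is accurate, and the obstacle you flag is precisely what the Dundas reference handles.
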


\begin{proof}
The first equivalence follows from \cite[Lemma 3.1.3.1]{Dundas}. It implies the second one, since geometric realization commutes with $\Omega^{\infty}$ for connective spectra \cite[Prop.~1.4.3.9]{HA}.
\end{proof}

This can be simplified even more for connected simplicial rings,  i.e.~simplicial rings whose underlying simplicial set is connected:
\begin{lem}
	\label{lem.KGLinfiniteloops}
Assume that the simplicial ring $R$ is connected. Then there is a natural weak equivalence
\[
\BGL(R) \simeq \Omega^{\infty} \KGL(R).
\]
\end{lem}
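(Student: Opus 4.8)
The plan is to reduce the claim to the previously established equivalence for connected simplicial rings via the plus construction, using the hypothesis of connectivity to identify $\BGL(R)$ with $\BGL(R)^+$. The statement to prove is: if $R$ is a connected simplicial ring, then $\BGL(R) \simeq \Omega^{\infty}\KGL(R)$ naturally. By Lemma~\ref{lem:plus-simplicial} we already know $\BGL(R)^+ \simeq \Omega^{\infty}\KGL(R)$, so it suffices to show that the plus construction map $\BGL(R) \to \BGL(R)^+$ is a weak equivalence when $R$ is connected.

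First I would recall the key input on $\BGL(R)^+$ from the same lemma: it is the diagonal $\diag([p]\mapsto \BGL(R_p)^+)$. Since $R$ is connected, $\pi_0(R_p)$ is the same ring for all $p$ (being a connected simplicial set, $R$ has $\pi_0(R) = \pi_0(R_p)$ independent of $p$ up to canonical isomorphism via the degeneracies and faces), and more importantly the whole point is that $\BGL$ applied degreewise produces a bisimplicial set whose plus construction in the $p$-direction does not change the homotopy type. Concretely, for each fixed $p$ the map $\BGL(R_p) \to \BGL(R_p)^+$ is a homology isomorphism with nilpotent (indeed perfect) kernel on $\pi_1$; one then argues that taking diagonals of the bisimplicial map $[p]\mapsto(\BGL(R_p)\to\BGL(R_p)^+)$ preserves this property. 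The cleanest route is to observe that both $\diag([p]\mapsto \BGL(R_p))$ and $\diag([p]\mapsto \BGL(R_p)^+)$ are connected (the first because $R$ connected forces $\BGL(R_0)$ — wait, one must be careful: $\BGL(R_p)$ is connected for every $p$ since it is a classifying-space-type construction, so the diagonal is connected), and the map between them is a homology equivalence because geometric realization / diagonal of a degreewise homology equivalence of simplicial spaces is a homology equivalence (a spectral sequence comparison argument, e.g.\ the Bousfield--Friedlander or bisimplicial homology spectral sequence).

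The remaining point is to upgrade the homology equivalence $\BGL(R) \to \BGL(R)^+$ to a weak equivalence. For this I would check that $\BGL(R)$ has the right fundamental group, or more directly that the map is a plus construction in its own right: the target $\BGL(R)^+$ is simple (its fundamental group is $K_1$ of the appropriate simplicial ring, abelian), and the map $\BGL(R)\to \BGL(R)^+$ kills a perfect normal subgroup of $\pi_1\BGL(R)$ and is a homology iso, so by the universal property/uniqueness of the plus construction it exhibits the target as $\BGL(R)^+$ — but we want the reverse, namely that no plus construction is needed. The honest statement is that for $R$ connected the group $\pi_1 \BGL(R)$ is already such that the map to $\BGL(R)^+$ is an equivalence; this should follow because $\BGL(R)$, being the diagonal of $[p]\mapsto\BGL(R_p)$ where each $\BGL(R_p) = B(\GL(R_p))$ with $\GL$ the stable infinite general linear group, already incorporates the group completion through the simplicial direction when $R$ is connected — essentially an Eilenberg--Zilber/homotopy-colimit argument combined with the group-completion theorem.

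I expect the main obstacle to be making precise \emph{why connectivity of $R$} removes the need for the plus construction, i.e.\ the step identifying $\diag([p]\mapsto\BGL(R_p))$ with $\diag([p]\mapsto\BGL(R_p)^+)$. The subtle issue is that the plus construction in the simplicial ($p$) direction interacts nontrivially with the diagonal, and one cannot naively commute them; the resolution uses that $\BGL(R_p)\to\BGL(R_p)^+$ is an acyclic map (homology iso with perfect kernel) for each $p$, and that a bisimplicial map which is a homology equivalence in each degree induces a homology equivalence on diagonals, combined with a $\pi_1$-computation showing the diagonal source has the same fundamental group quotient. I would cite \cite[Prop.~1.4.3.9]{HA} or the relevant statement in \cite{Dundas} for the interchange of realization with the relevant constructions, and reduce everything to Lemma~\ref{lem:plus-simplicial} plus the observation that the acyclic map property is preserved under diagonals.
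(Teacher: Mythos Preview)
You correctly reduce the claim to showing that the plus construction map $\BGL(R)\to\BGL(R)^{+}$ is a weak equivalence, and you correctly observe that it is always an integral homology isomorphism. But the remainder of your argument never closes this gap: a homology isomorphism between connected spaces is a weak equivalence only under an additional hypothesis on $\pi_{1}$ (nilpotence, simplicity, or a $\pi_{1}$-isomorphism), and you never establish any such condition for the source $\BGL(R)$. Your appeals to the group-completion theorem and to ``incorporating the group completion through the simplicial direction'' remain heuristic, and your final summary admits that the crux --- why connectivity of $R$ obviates the plus construction --- is still unresolved.

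The paper's proof is much shorter and supplies exactly the missing $\pi_{1}$-computation. Since $R$ is connected, any element $a\in R_{0}$ is joined to $0$ by a $1$-simplex in $R$; applying this entrywise, every elementary matrix $e_{ij}(a)\in\GL(R_{0})$ is joined to the identity by a $1$-simplex in the simplicial group $\GL(R)$. Hence the image of $E(R_{0})$ in $\pi_{0}\GL(R)$ is trivial, so $\pi_{1}\BGL(R)\cong\pi_{0}\GL(R)$ is a quotient of $\GL(R_{0})/E(R_{0})=K_{1}(R_{0})$ and in particular abelian. An abelian group has trivial perfect radical, so the plus construction $\BGL(R)\to\BGL(R)^{+}$ is a weak equivalence, and one concludes via Lemma~\ref{lem:plus-simplicial}. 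Your spectral-sequence and bisimplicial detour is unnecessary once you have this one-line observation.
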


\begin{proof}
Since $R$ is connected, every elementary matrix in $\GL(R_{0})$ can be connected with the identity matrix by a $1$-simplex in $\GL(R)$. This implies that $\pi_{1}\BGL(R) \cong \pi_{0}\GL(R)$ is abelian, and hence $\BGL(R) \to \BGL(R)^{+}$ is a weak equivalence. Now the claim follows from Lemma~\ref{lem:plus-simplicial}.
\end{proof}

\subsection{Pro\texorpdfstring{-$\mathbb A^1$-}{-}homotopy invariance for  algebraic  \textit{K}-theory}
\label{subsec.alghomo}

The observations of this subsection resulted from a discussion with M.~Morrow.
Let $R$ be a commutative noetherian ring and let $\pi\in R$. Let $\Sch_R$ be the category of schemes
of finite type over $R$. In this subsection we study the $K$-theoretic properties of the
pro-system of $R$-algebras $\lprolim{n\in \mathbb N} S_n$ with $S_n$ the polynomial ring
$R[\underline t] = R[t_1,\ldots, t_m]$ for all $n\in \mathbb N$ and with transition maps $S_{n+1}\to S_n$  characterized
by $t_i\mapsto \pi t_i$ ($1\le i\le m$). We shall abbreviate this pro-system also as
$\lprolim{\underline t\mapsto \pi \underline t} R[\underline t]$.

Let $F$ be a contravariant functor from $\Sch_R$ to $\Pro(\Sp)$. We define a new functor
$N^\pi F$ from $\Sch_R $ to $\Pro(\Sp)$ by
\begin{equation}\label{eqdefN}
N^\pi F(X) = \cofib ( F(X) \xrightarrow{p^*} \prolim{\underline t\mapsto \pi \underline t}
F(X\otimes_R R[\underline t] )),
\end{equation}
where $p:X\otimes_R R[\underline t] \to X$ is the canonical map. Note that this $p^*$ has a left
inverse given by the zero section.

We are interested in criteria, which guarantee that $N^\pi F(X)$ is contractible resp.\
weakly contractible, see Definition~\ref{def:weak-equivalence}.
For $X$ in $\Sch_R$ we write $X_n$ for the scheme $X\otimes_R R/(\pi^n)$.

\begin{lem}\label{lem.Xncontr}
Let $X$ be in $\Sch_R$ such that there exists $n>0$ with $X=X_n$. Then $N^\pi
F(X)$ is contractible.
\end{lem}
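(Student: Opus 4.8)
The key point is that if $X = X_n$ for some $n$, then $\pi^n = 0$ on $X$, so multiplication by $\pi$ on the coordinates $t_i$ is a nilpotent operation in a precise sense at the level of the pro-system. The plan is to show that the transition maps in the pro-system $\lprolim{\underline t \mapsto \pi\underline t} F(X \otimes_R R[\underline t])$ eventually factor through the zero section, up to reindexing, so that the pro-object is isomorphic to the constant pro-object $F(X)$, and the map $p^*$ becomes an isomorphism of pro-objects with inverse the zero section; hence its cofibre is contractible.

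Concretely, since $\pi^n = 0$ on $X$, the composite of $n$ transition maps $R[\underline t] \to R[\underline t]$, $t_i \mapsto \pi t_i$, sends each $t_i$ to $\pi^n t_i = 0$ when we work over $\mathcal O_X$; that is, over $X$ the $n$-fold composite $X \otimes_R R[\underline t] \to X \otimes_R R[\underline t]$ factors as $X \otimes_R R[\underline t] \to X \xrightarrow{\text{zero section}} X \otimes_R R[\underline t]$, where the first map is $p$. Applying the functor $F$, the $n$-fold composite of transition maps in the pro-spectrum factors through $F(X)$. This is exactly the situation of a pro-object whose transition maps factor through a fixed object through which the structure map also factors: a standard pro-category argument (e.g.\ the ``telescope'' trick, cf.\ \cite[App.]{AM} or \cite{Isaksen-Calculating}) shows that such a pro-object is isomorphic in $\Pro(\Sp)$ to the constant object $F(X)$, and that under this identification $p^* \colon F(X) \to \prolim{\underline t\mapsto \pi\underline t} F(X\otimes_R R[\underline t])$ becomes the identity. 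Therefore $N^\pi F(X) = \cofib(p^*)$ is a zero object of $\Pro(\Sp)$, i.e.\ contractible. (Note this argument does not even use weak equivalences; it gives contractibility on the nose.)

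The main thing to be careful about is the bookkeeping in the pro-category: one must check that ``the $n$-fold transition map factors through $p$ followed by the zero section $s$'' together with ``$p \circ s = \id$'' really does force the pro-object to be constant. The cleanest way is to exhibit mutually inverse maps in $\Pro(\Sp)$ between $F(X)$ (constant) and the pro-system: the map $p^*$ in one direction, and in the other direction the map induced at level $i$ by $s^* \colon F(X \otimes_R R[\underline t]) \to F(X)$ (which is a map of pro-objects from the system to the constant one since $s$ is compatible with the transition maps up to the identity on $X$); one then checks the two composites are the respective identities using $p \circ s = \id$ and using that the $n$-fold transition map equals $s^* \circ p^*$'s geometric counterpart, which is precisely where $X = X_n$ enters. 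This is entirely formal once the factorization $\underline t \mapsto \pi^n \underline t = 0$ over $X$ is observed, so I expect no serious obstacle; the only subtlety is making the reindexing in $\Pro(\Sp)$ precise, for which formula \eqref{eq:mapping-spaces-in-Pro-cat} suffices.
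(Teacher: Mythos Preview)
Your proposal is correct and is precisely the argument the paper has in mind; the paper's one-line proof just records the key factorization (``the map $X\otimes_R R[\underline t]$ given by $t\mapsto \pi^n t$ factors through $X$'') and leaves the routine pro-category bookkeeping you spell out as an exercise.
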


\begin{proof}
This is clear as the map $X\otimes_R R[\underline t]$ given by $t\mapsto \pi^n t$ factors through $X$.
\end{proof}

A morphism $f:\tilde X\to X$ in $\Sch_R$ is called \emph{admissible} if $f$ is proper and an
isomorphism over $X\otimes_R R[1/\pi]$.
Consider the following conditions on the functor $F$:

\begin{itemize}
\item[$({\rm P1})_F$] For any admissible morphism $f:\tilde X\to X$ the functor $F$ satisfies
  pro-descent, i.e.\ we have a weak equivalence
  \[
    \prolim{n} F(X,X_n) \to \prolim{n} F(\tilde X, \tilde X_n).
  \]
\item[$({\rm P2})_F$] For any regular scheme $X$ in $\Sch_R$ the pro-spectrum $N^\pi F(X)$ is
  weakly contractible.
\end{itemize}
Here for a morphism of schemes $Y' \to Y$ in $\Sch_R$ we use the notation
\[
  F(Y,Y') = \fib( F(Y) \to F(Y')).
\]

\begin{lem}\label{lem.Nindep}
 Let $F$ be a functor satisfying  condition $({\rm P1})_F$ and let $p:\tilde X \to X$ be an
 admissible morphism in $\Sch_R$. Then $p$ induces a weak equivalence
 \[
N^\pi F(X) \xrightarrow{\simeq} N^\pi F (\tilde X).
 \]
\end{lem}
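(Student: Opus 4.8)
The plan is to exploit the fact that $N^\pi F(X)$ is, essentially by definition, a cofiber of a map built from the pro-system $\lprolim{\underline t \mapsto \pi\underline t} F(X \otimes_R R[\underline t])$, and to rewrite everything in terms of the relative spectra $F(Y, Y_n)$ so that condition $({\rm P1})_F$ can be applied directly. First I would observe that the fiber of $p^* \colon F(X) \to \lprolim{\underline t \mapsto \pi\underline t} F(X\otimes_R R[\underline t])$ can be identified, using that $p^*$ is split by the zero section and that $p$ is compatible with reduction mod $\pi^n$, with the pro-system $\lprolim{n}$ of relative terms $F(X\otimes_R R[\underline t], (X\otimes_R R[\underline t])_n)$; the key point is that under the transition map $\underline t \mapsto \pi\underline t$, the scheme $(X\otimes_R R[\underline t])_n$ is "shifted" in a way compatible with the $\pi^n$-adic filtration, so that the pro-system in $n$ and the pro-system in the scaling parameter interact as needed. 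Alternatively, and perhaps more cleanly, I would note that $N^\pi F$ is itself a functor on $\Sch_R$ and that $p^* \colon N^\pi F(X) \to N^\pi F(\tilde X)$ fits into a commutative diagram of cofiber sequences together with the analogous map for $F$ itself and for the pro-system $n \mapsto (-)_n$.

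The cleanest route I can see is the following. Since $p$ is an isomorphism over $X \otimes_R R[1/\pi]$, so is its base change $\tilde X \otimes_R R[\underline t] \to X \otimes_R R[\underline t]$ along the polynomial extension, and likewise after the scaling $\underline t \mapsto \pi \underline t$. Thus $({\rm P1})_F$, applied both to $f\colon \tilde X \to X$ and to its base changes along $R \to R[\underline t]$, yields weak equivalences
\[
\prolim{n} F(X, X_n) \xrightarrow{\simeq} \prolim{n} F(\tilde X, \tilde X_n), \qquad
\prolim{n} F(X \otimes_R R[\underline t], (X\otimes_R R[\underline t])_n) \xrightarrow{\simeq} \prolim{n} F(\tilde X \otimes_R R[\underline t], (\tilde X\otimes_R R[\underline t])_n),
\]
and I would then need to assemble these, compatibly over the scaling pro-system in $\underline t$, into the statement that $N^\pi F(X) \to N^\pi F(\tilde X)$ is a weak equivalence. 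Concretely, I would express $N^\pi F(Y)$ for $Y \in \{X, \tilde X\}$ via a diagram relating $F(Y)$, $\lprolim{\underline t \mapsto \pi\underline t} F(Y\otimes_R R[\underline t])$, their reductions mod $\pi^n$, and the relative terms; since weak equivalences of pro-spectra are closed under cofibers and under the relevant (co)limits, and since on the mod-$\pi^n$ reductions $p^*$ is already an isomorphism of schemes after passing to the pro-system (because $Y_n = (Y\otimes_R R/(\pi^n))$ and $\tilde Y_n$ only see the locus where $p$ need not be an isomorphism, which is controlled by $({\rm P1})_F$), the two-out-of-three property gives the claim.

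The main obstacle I anticipate is the bookkeeping of the two distinct pro-directions: the $n$-direction coming from the $\pi^n$-adic reductions (which is where $({\rm P1})_F$ lives) and the scaling direction $\underline t \mapsto \pi\underline t$ that defines $N^\pi$. One must check that $({\rm P1})_F$, which a priori only concerns the single admissible morphism $f$ and its reductions, can be applied uniformly to the base changes $f \otimes_R R[\underline t]$ — this is fine since base change of an admissible morphism along $R \to R[\underline t]$ is again admissible — and then that the resulting equivalences are natural enough in the scaling parameter to survive the cofiltered limit $\lprolim{\underline t \mapsto \pi\underline t}$. Granting that compatibility, which is formal from functoriality of $F$ and the fact that weak equivalences in $\Pro(\Sp)$ are detected after the truncations $\tau_{\le n}$ by Lemma~\ref{lem:description-of-weak-equivs} and are stable under limits and cofibers, the lemma follows by the two-out-of-three property applied to the map of cofiber sequences defining $N^\pi F(X) \to N^\pi F(\tilde X)$.
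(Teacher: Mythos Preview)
Your overall strategy---compare $N^\pi F(X)$ and $N^\pi F(\tilde X)$ via a map of fibre sequences and invoke $(\mathrm{P1})_F$ on the relative terms---is exactly the paper's approach. However, there is a genuine gap in your ``assembly'' step, and the sentence you use to justify it is incorrect.

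The paper organizes the argument via the fibre sequence
\[
\prolim{n} N^\pi F(Y,Y_n) \longrightarrow N^\pi F(Y) \longrightarrow \prolim{n} N^\pi F(Y_n)
\]
for $Y\in\{X,\tilde X\}$, and then observes that the right-hand term is \emph{contractible} by Lemma~\ref{lem.Xncontr}: since $Y_n=Y_n\otimes_R R/(\pi^n)$, the $n$-fold transition map in the scaling pro-system $\underline t\mapsto \pi\underline t$ factors through $Y_n$ itself, so $N^\pi F(Y_n)\simeq 0$. This is what reduces the comparison to the relative terms $\prolim{n} N^\pi F(Y,Y_n)$, where $(\mathrm{P1})_F$ (applied to $p$ and to each base change $p\otimes_R R[\underline t]$, which is again admissible) gives the weak equivalence.

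You never invoke this contractibility. Instead you write that ``on the mod-$\pi^n$ reductions $p^*$ is already an isomorphism of schemes after passing to the pro-system,'' which is false if $p$ here denotes the admissible morphism $\tilde X\to X$: the map $\tilde X_n\to X_n$ is typically \emph{not} an isomorphism---that locus is precisely where $p$ may fail to be one. (If instead you meant the projection $X[\underline t]\to X$, then you are implicitly re-proving Lemma~\ref{lem.Xncontr}, but the parenthetical about $\tilde Y_n$ and $(\mathrm{P1})_F$ shows you are conflating the two maps called $p$.) The point is not that $p$ becomes an isomorphism modulo $\pi^n$, but that $N^\pi$ kills anything supported on $\pi^n=0$. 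Once you insert Lemma~\ref{lem.Xncontr} in place of that sentence, your two-out-of-three argument goes through and coincides with the paper's.
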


\begin{proof}
  Consider the commutative diagram of fibre sequences of pro-spectra
  \[
    \xymatrix{
\prolim{n} N^\pi  F(X,X_n) \ar[d]_{\simeq} \ar[r]^-{\simeq} & N^\pi F(X) \ar[d] \ar[r] & \prolim{n}  N^\pi
F(X_n) \ar[d] \\
\prolim{n} N^\pi  F(\tilde X,\tilde X_n) \ar[r]^-{\simeq}  & N^\pi F(\tilde X) \ar[r] & \prolim{n}
N^\pi F(\tilde X_n)
    }
  \]
The pro-spectra on the right are both contractible by Lemma~\ref{lem.Xncontr}, so the left horizontal maps are
equivalences. The left vertical map is a weak equivalence by condition $({\rm P1})_F$.
\end{proof}

\begin{prop}\label{khpro.prop} Let $F$ be a functor satisfying the conditions $({\rm P1})_F$ and $({\rm P2})_F$. Then
  for any $X$ in $\Sch_R$ such that there exists an admissible morphism $\tilde X\to X$
  with $\tilde X$ regular the pro-spectrum $N^\pi F(X)$ is weakly contractible.
\end{prop}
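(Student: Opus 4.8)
The plan is to combine Lemma~\ref{lem.Nindep} with condition $({\rm P2})_F$ directly. Suppose $X$ is in $\Sch_R$ and $p\colon \tilde X \to X$ is an admissible morphism with $\tilde X$ regular. Since $F$ satisfies $({\rm P1})_F$, Lemma~\ref{lem.Nindep} applies to $p$ and yields a weak equivalence
\[
N^\pi F(X) \xrightarrow{\simeq} N^\pi F(\tilde X)
\]
of pro-spectra. On the other hand, $\tilde X$ is regular, so condition $({\rm P2})_F$ tells us that $N^\pi F(\tilde X)$ is weakly contractible. Since weak contractibility is visibly invariant under weak equivalence (both notions are defined via $\iota^*$ landing in an equivalence, resp.\ in the zero object of $\Pro(\Sp^+)$, by Definition~\ref{def:weak-equivalence}), we conclude that $N^\pi F(X)$ is weakly contractible as well.

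That is essentially the whole argument; there is no real obstacle, only a point of care. The one thing to check is that Lemma~\ref{lem.Nindep} is genuinely applicable, i.e.\ that $({\rm P1})_F$ and $({\rm P2})_F$ are exactly the hypotheses assumed in the proposition, which they are. The only subtlety worth spelling out is that the weak equivalence of Lemma~\ref{lem.Nindep} is natural enough to transport weak contractibility: a map of pro-spectra $g\colon Y \to Z$ which is a weak equivalence induces an equivalence $\iota^* Y \simeq \iota^* Z$ in $\Pro(\Sp^+)$, so if $\iota^* Z \simeq 0$ then $\iota^* Y \simeq 0$, which is the definition of $Y$ being weakly contractible. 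No further computation is needed. In short: \emph{admissible modification plus regularity kills $N^\pi F$, up to weak equivalence}, and this is immediate from the two preceding results.
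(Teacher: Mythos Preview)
Your proof is correct and follows exactly the same approach as the paper: apply Lemma~\ref{lem.Nindep} (using $({\rm P1})_F$) to get a weak equivalence $N^\pi F(X) \xrightarrow{\simeq} N^\pi F(\tilde X)$, then invoke $({\rm P2})_F$ for the regular $\tilde X$. The paper's proof is a two-line version of what you wrote; your additional remarks about transporting weak contractibility along weak equivalences are correct but could be omitted.
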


\begin{proof}
  By Lemma~\ref{lem.Nindep} the map $N^\pi F(X) \to N^\pi F(\tilde X) $ is a weak
  equivalence of pro-spectra. As $N^\pi F(\tilde X)$ is weakly contractible by $({\rm P2})_F$
  the same is true for $N^\pi F(X) $.
\end{proof}

Using that non-connective algebraic $K$-theory spectrum $K$ satisfies the conditions
$({\rm P1})_F$ resp.\  $({\rm P2})_F$ by \cite[Thm.~A]{KST-Weibel} resp.\ \cite[Thm.~V.6.3]{K-book}  we get:

\begin{cor}\label{khpro.cor}
For $X$ as in Proposition~\ref{khpro.prop} we get a weak equivalence of pro-spectra
\[K(X) \xrightarrow{\simeq} \prolim{\underline t\mapsto \pi \underline t} K(X\otimes_R
  R[\underline t]).\]
\end{cor}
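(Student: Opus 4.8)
The statement to prove, Corollary~\ref{khpro.cor}, asserts that for $X \in \Sch_R$ admitting an admissible morphism $\tilde X \to X$ with $\tilde X$ regular, the canonical map
\[
K(X) \xrightarrow{\simeq} \prolim{\underline t\mapsto \pi \underline t} K(X\otimes_R R[\underline t])
\]
is a weak equivalence of pro-spectra. This is a formal consequence of Proposition~\ref{khpro.prop} applied to the functor $F = K$, the non-connective algebraic $K$-theory spectrum, regarded as a contravariant functor from $\Sch_R$ to $\Pro(\Sp)$ (viewing an ordinary spectrum as a constant pro-spectrum). Indeed, unwinding the definition~\eqref{eqdefN}, the cofibre $N^\pi K(X)$ is weakly contractible precisely when the map $p^* \colon K(X) \to \prolim{\underline t\mapsto \pi\underline t} K(X\otimes_R R[\underline t])$ is a weak equivalence (the cofibre sequence splits via the zero section, so weak contractibility of the cofibre is equivalent to the map being a weak equivalence). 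So the corollary is just the conclusion of Proposition~\ref{khpro.prop} spelled out, once we know $K$ satisfies the two hypotheses $({\rm P1})_K$ and $({\rm P2})_K$.

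First I would verify $({\rm P1})_K$: for any admissible morphism $f\colon \tilde X \to X$ in $\Sch_R$ — that is, a proper morphism which is an isomorphism after inverting $\pi$ — the non-connective $K$-theory spectrum satisfies pro-descent, meaning $\prolim{n} K(X, X_n) \to \prolim{n} K(\tilde X, \tilde X_n)$ is a weak equivalence, where $X_n = X\otimes_R R/(\pi^n)$. This is exactly the pro-cdh descent theorem for algebraic $K$-theory; it is the content of~\cite[Thm.~A]{KST-Weibel}. Next I would verify $({\rm P2})_K$: for a regular scheme $X$ in $\Sch_R$, the pro-spectrum $N^\pi K(X)$ is weakly contractible. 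Since $X$ is regular and $R$ is noetherian, each polynomial extension $X \otimes_R R[\underline t]$ is again regular, and $K$-theory of regular noetherian schemes is $\mathbb A^1$-invariant: $K(X) \xrightarrow{\simeq} K(X\otimes_R R[\underline t])$ for each fixed polynomial ring, by the fundamental homotopy-invariance theorem for $K$-theory of regular rings~\cite[Thm.~V.6.3]{K-book}. Taking the pro-limit over the transition maps $\underline t \mapsto \pi\underline t$, which are compatible with these equivalences, yields that $p^*$ is already an equivalence of spectra (hence of constant pro-spectra), so $N^\pi K(X)$ is contractible, in particular weakly contractible.

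With $({\rm P1})_K$ and $({\rm P2})_K$ in hand, Proposition~\ref{khpro.prop} applies verbatim to $X$: the given admissible morphism $\tilde X \to X$ with $\tilde X$ regular means $N^\pi K(\tilde X)$ is weakly contractible by $({\rm P2})_K$, and Lemma~\ref{lem.Nindep} (which uses $({\rm P1})_K$) transports this weak contractibility back to $N^\pi K(X)$ along the weak equivalence $N^\pi K(X) \xrightarrow{\simeq} N^\pi K(\tilde X)$. Reinterpreting $N^\pi K(X) \simeq 0$ via the split cofibre sequence~\eqref{eqdefN} gives the claimed weak equivalence $K(X) \xrightarrow{\simeq} \prolim{\underline t\mapsto \pi\underline t} K(X\otimes_R R[\underline t])$, completing the proof.

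**The main obstacle.** There is essentially no obstacle here at the level of this excerpt, since both inputs are cited black boxes and the deduction is purely formal via the already-established Proposition~\ref{khpro.prop} and Lemma~\ref{lem.Nindep}. The only point requiring a moment's care is checking that the transition maps $\underline t \mapsto \pi\underline t$ in the pro-system interact correctly with homotopy invariance — but since for regular $X$ each structure map $K(X) \to K(X\otimes_R R[\underline t])$ is already an equivalence and the transition maps are maps of $R[\underline t]$-algebras over $R$, the induced map on the pro-limit is an equivalence with no convergence subtleties. Thus the whole corollary reduces to correctly matching the hypotheses of the cited theorems to the definitions $({\rm P1})_F$ and $({\rm P2})_F$.
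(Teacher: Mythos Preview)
Your proposal is correct and follows exactly the paper's approach: verify $({\rm P1})_K$ via \cite[Thm.~A]{KST-Weibel} and $({\rm P2})_K$ via $\mathbb{A}^1$-invariance of $K$-theory for regular schemes \cite[Thm.~V.6.3]{K-book}, then apply Proposition~\ref{khpro.prop}. Your additional unwinding of why weak contractibility of $N^\pi K(X)$ is equivalent to the displayed map being a weak equivalence (via the zero-section splitting) is accurate and only makes explicit what the paper leaves implicit.
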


We finish this subsection with a variant of the above observations. Let now $R$ be a commutative not
necessarily noetherian ring and $\pi\in R$. For a covariant functor $F$ from $R$-algebras to $\Pro
(\Sp )$ we define by the same method as in~\eqref{eqdefN} a new functor $N^\pi F$ from
$R$-algebras to $\Pro (\Sp )$.

We consider the following excision property for $F$:
\begin{itemize}
\item[$({\rm E})_F$] For any extension of $\pi$-torsion free $R$-algebras $S_1\subset S_2$  such that there exists
  $m>0$ with $\pi^m S_2\subset S_1$ we have a weak equivalence
  \[
\prolim{n} F(S_1,S_1/(\pi^n)) \xrightarrow{\simeq} \prolim{n} F(S_2,S_2/(\pi^n)). 
  \]
\end{itemize}

The same argument as in the proof of Lemma~\ref{lem.Nindep}, replacing condition $({\rm P1})_F$
by condition $({\rm E})_F$  gives:

\begin{lem}\label{lem.exci}
Let $F$ satisfy condition $({\rm E})_F$ and let $S_1\subset S_2$ be an extension as in condition
$({\rm E})_F$. Then the map of pro-spectra
\[
N^\pi F(S_1) \xrightarrow{\simeq} N^\pi F (S_2)
\]
is a weak equivalence.
\end{lem}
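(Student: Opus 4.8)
The plan is to mimic the proof of Lemma~\ref{lem.Nindep} verbatim, substituting the excision property $({\rm E})_F$ for the pro-descent property $({\rm P1})_F$ and working with the $\pi$-adic reductions $S_i/(\pi^n)$ in place of the subschemes $X_n$. Concretely, for an extension $S_1 \subset S_2$ as in $({\rm E})_F$, I would form the commutative diagram of fibre sequences of pro-spectra
\[
  \xymatrix{
\prolim{n} N^\pi  F(S_1,S_1/(\pi^n)) \ar[d] \ar[r]^-{\simeq} & N^\pi F(S_1) \ar[d] \ar[r] & \prolim{n}  N^\pi
F(S_1/(\pi^n)) \ar[d] \\
\prolim{n} N^\pi  F(S_2,S_2/(\pi^n)) \ar[r]^-{\simeq}  & N^\pi F(S_2) \ar[r] & \prolim{n}
N^\pi F(S_2/(\pi^n)).
    }
\]
Here the rows are the defining fibre sequences $N^\pi F(S_i, S_i/(\pi^n)) \to N^\pi F(S_i) \to N^\pi F(S_i/(\pi^n))$ assembled over $n$; since $N^\pi F$ is built from $F$ by taking a cofibre (equivalently, in the stable setting, a fibre) of a split injection, it commutes with the formation of the relative terms $F(-,-)$.

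The two key observations are then exactly parallel to those in Lemma~\ref{lem.Nindep}. First, the right-hand vertical map is a weak equivalence because each term $N^\pi F(S_i/(\pi^n))$ is contractible: the $R$-algebra $S_i/(\pi^n)$ is killed by $\pi^n$, so the endomorphism $\underline t \mapsto \pi^n \underline t$ of $(S_i/(\pi^n))[\underline t]$ factors through the structure map, and hence $p^\ast$ is an equivalence on the pro-system $\lprolim{\underline t \mapsto \pi\underline t} F((S_i/(\pi^n))[\underline t])$, forcing its cofibre to vanish --- this is the algebra-level analog of Lemma~\ref{lem.Xncontr}, and I would either invoke that lemma after base change or spell out the one-line argument. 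It follows that the left horizontal arrows in the diagram are equivalences. Second, the left vertical map $\lprolim{n} N^\pi F(S_1, S_1/(\pi^n)) \to \lprolim{n} N^\pi F(S_2, S_2/(\pi^n))$ is a weak equivalence: applying $N^\pi$ to the weak equivalence furnished by $({\rm E})_F$ (for $S_1 \subset S_2$ itself, and also for the polynomial extensions $S_1[\underline t] \subset S_2[\underline t]$, which again satisfy the hypotheses of $({\rm E})_F$ with the same $m$ since $\pi^m S_2 \subset S_1$ implies $\pi^m S_2[\underline t] \subset S_1[\underline t]$), and passing to the cofibre in pro-spectra, yields this. Chasing the diagram, the middle vertical map $N^\pi F(S_1) \to N^\pi F(S_2)$ is therefore a weak equivalence.

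The only point requiring a little care --- and the step I expect to be the main obstacle, modest though it is --- is checking that the polynomial extension $S_1[\underline t] \subset S_2[\underline t]$ (with transition maps $\underline t \mapsto \pi \underline t$) again falls under condition $({\rm E})_F$, i.e.\ that both rings are $\pi$-torsion free and that $\pi^m S_2[\underline t] \subset S_1[\underline t]$ for the same $m$. Both are immediate: a polynomial ring over a $\pi$-torsion free ring is $\pi$-torsion free, and the containment of coefficients is inherited coefficientwise. One then has to make sure that $({\rm E})_F$ is being applied to the pro-system $\lprolim{\underline t \mapsto \pi\underline t}$ of these polynomial rings rather than to a single such ring; since the transition maps are ring homomorphisms and $N^\pi F$ is by construction a cofibre of a map out of $F$ evaluated on precisely this pro-system, the argument of Lemma~\ref{lem.Nindep} goes through levelwise and then in the (co)limit using that $\Pro(\Sp)$ is stable so that fibre and cofibre sequences agree. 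No further input is needed.
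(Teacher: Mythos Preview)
Your proposal is correct and follows exactly the approach the paper takes: the paper's proof is literally the one-line statement that the argument of Lemma~\ref{lem.Nindep} goes through with $({\rm E})_F$ in place of $({\rm P1})_F$, and you have reproduced that argument with the appropriate substitutions. Your explicit verification that the polynomial extensions $S_1[\underline t]\subset S_2[\underline t]$ again satisfy the hypotheses of $({\rm E})_F$ is a detail the paper leaves implicit (just as it leaves implicit in Lemma~\ref{lem.Nindep} that $\tilde X\otimes_R R[\underline t]\to X\otimes_R R[\underline t]$ is again admissible), but it is correct and worth recording.
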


\begin{rmk}\phantomsection\label{rmk.lem.exci}
  \begin{itemize}
    \item[(i)]
Algebraic $K$-theory satisfies the condition $({\rm E})_{K}$ by work of Suslin,
Geisser--Hesselholt and Morrow, see~\cite[Thm.~0.2]{Morrow}. Using the Main Theorem of \cite{LandTamme}, it is in fact possible to show that the map in condition $({\rm E})_{K}$ is an equivalence of pro-spectra. Since we only work with weak equivalences anyway, we refrain from writing out the details.
\item[(ii)] Combining (i) and
Lemma~\ref{lem.exci} we get a weak equivalence of pro-spectra
\[
N^\pi K(S_1) \xrightarrow{\simeq} N^\pi K (S_2).
\]
\end{itemize}
\end{rmk}

\subsection{\texorpdfstring{$K_0$}{K0} and \texorpdfstring{$K_1$}{K1} of non-archimedean rings}\label{algk.subKlow}

In this subsection we discuss some results about $K_0$ and $K_1$ of non-archimedean rings.

We define a continuous version of
$K_1$ of a Tate ring $A$ as the pro-group
\[
\Kco_1 (A) = \prolim{n} K_1(A)/(1+\pi^n A_0).
\]
Here $A_0$ is a ring of definition and $\pi\in A_0$ is a topologically nilpotent unit in $A$.
Following Morrow~\cite{morrow-hist} we shall in Section~\ref{sec.contK} give a definition of the continuous $K$-theory pro-group $\Kco_i(A)$ for
any $i$. Note that
the sequence
\[
0\to 1+\pi^n A_0 \to K_1(A_0) \to K_1(A_0 / (\pi^n)) \to 0
\]
is exact \cite[Lem.~III.2.4]{K-book}.

Let $F$ be a covariant functor from the category of Tate rings to an abelian category. Using a Bass type construction, compare~\cite[Sec.~III.4]{K-book}, we define a new functor
$\Sigma F$ on the same categories by
\[
\Sigma F(A)= \coker ( F(A\langle t \rangle ) \times F(A\langle t^{-1} \rangle ) 
\to  F(A\langle t,t^{-1} \rangle)).
\]
\begin{lem}\label{algK.lamdense}
For any Tate ring $A$ the natural map $\Sigma K_1 (A)\to \Sigma \Kco_1(A)$ is an
isomorphism of pro-groups.
\end{lem}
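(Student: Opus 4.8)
The plan is to analyze the four terms in the definition of $\Sigma$ separately and to observe that the difference between $K_1$ and $\Kco_1$ is controlled in each case by the subgroup $1+\pi^n A_0$. Recall that for a Tate ring $B$ with ring of definition $B_0$ we have $\Kco_1(B) = \prolim{n} K_1(B)/(1+\pi^n B_0)$, so the kernel of $K_1(B)\to \Kco_1(B)$ is the pro-system $\prolim{n}(1+\pi^n B_0)$, which as a pro-abelian group sits in the exact sequence $0\to 1+\pi^nB_0\to K_1(B_0)\to K_1(B_0/(\pi^n))\to 0$ quoted above. First I would fix compatible rings of definition for $A\langle t\rangle$, $A\langle t^{-1}\rangle$, and $A\langle t,t^{-1}\rangle$, namely $A_0\langle t\rangle$, $A_0\langle t^{-1}\rangle$, and $A_0\langle t,t^{-1}\rangle$, all with the same topologically nilpotent unit $\pi$; these are compatible with the transition maps defining the pro-structures. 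This reduces the statement to comparing, after applying $\coker$, the pro-system $1+\pi^n A_0\langle t,t^{-1}\rangle$ with the images of $1+\pi^n A_0\langle t\rangle$ and $1+\pi^n A_0\langle t^{-1}\rangle$.

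Next I would make the key algebraic observation: every element of $1+\pi^n A_0\langle t,t^{-1}\rangle$ can be written, modulo a higher power of $\pi$, as a product of an element of $1+\pi^n A_0\langle t\rangle$ and an element of $1+\pi^n A_0\langle t^{-1}\rangle$. Indeed, writing $1+\pi^n f$ with $f = \sum_{i\in\Z} a_i t^i \in A_0\langle t,t^{-1}\rangle$, one splits $f = f_+ + f_-$ with $f_+ = \sum_{i\geq 0} a_i t^i \in A_0\langle t\rangle$ and $f_- = \sum_{i<0}a_i t^i \in A_0\langle t^{-1}\rangle$; then $(1+\pi^n f_+)(1+\pi^n f_-) = 1 + \pi^n f + \pi^{2n} f_+ f_-$, so the two differ by an element of $1+\pi^{2n}A_0\langle t,t^{-1}\rangle$. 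Passing to the pro-system over $n$, this shows that the pro-subgroup $\prolim{n}(1+\pi^n A_0\langle t,t^{-1}\rangle)$ of $K_1(A\langle t,t^{-1}\rangle)$ equals the subgroup generated by the images of $\prolim{n}(1+\pi^n A_0\langle t\rangle)$ and $\prolim{n}(1+\pi^n A_0\langle t^{-1}\rangle)$. Taking cokernels, the natural surjection $\Sigma K_1(A)\to \Sigma\Kco_1(A)$ therefore has trivial kernel as a map of pro-groups, hence is an isomorphism; surjectivity is clear since $K_1\to \Kco_1$ is level-wise surjective and $\coker$ is right exact.

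The main obstacle I expect is bookkeeping rather than conceptual: one must be careful that the "modulo $\pi^{2n}$" error terms assemble correctly into a statement about pro-groups, i.e.\ that the relevant diagram of pro-systems (indexed by $n$, with the reindexing $n\mapsto 2n$) actually gives the claimed equality of pro-subgroups of $K_1$, and that this is compatible with the four-term structure defining $\Sigma$. One should also check that the decomposition $f = f_+ + f_-$ respects the norms/filtrations so that $f_\pm$ genuinely lie in the claimed rings of definition with the claimed $\pi$-adic valuation, which follows directly from the description of the Tate algebra coefficients. Once the pro-system indexing is set up cleanly, the argument is a short diagram chase combining right-exactness of $\coker$ with the explicit product decomposition above.
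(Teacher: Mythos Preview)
Your decomposition $(1+\pi^n f_+)(1+\pi^n f_-) = 1+\pi^n f + \pi^{2n}f_+f_-$ is the right starting point, and it is essentially the same first step the paper takes. The gap is in the sentence ``Passing to the pro-system over $n$, this shows that the pro-subgroup $\prolim{n}(1+\pi^n A_0\langle t,t^{-1}\rangle)$ equals the subgroup generated by the images \dots''. Writing $M_n = 1+\pi^n A_0\langle t,t^{-1}\rangle$ and $N_n$ for the image of $(1+\pi^n A_0\langle t\rangle)\times(1+\pi^n A_0\langle t^{-1}\rangle)$, what you have actually proved is $M_n = N_n \cdot M_{2n}$. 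This only says that the transition maps in the pro-system $\prolim{n} M_n/N_n$ are \emph{surjective}; it does not show the pro-system is zero. For a trivial counterexample to the implication you are using, take $M_n = M$ constant and $N_n = \{1\}$: then $M_n = N_n\cdot M_{2n}$ holds tautologically, yet $\prolim{n} M_n/N_n$ is the constant nonzero pro-group $M$. To get vanishing you would need, for each $n$, some $m$ with $M_m \subset N_n$, and one application of your identity does not give this.

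What you flag at the end as ``bookkeeping'' is in fact the heart of the proof. One must iterate the decomposition to write any $x\in M_n$ as $x = y_k z_k$ with $y_k\in N_n$ and $z_k\in M_{2^k n}$, and then use completeness of $A_0\langle t\rangle$ and $A_0\langle t^{-1}\rangle$ to see that the partial products defining $y_k$ converge, so that in the limit $x\in N_n$; in other words $M_n = N_n$ for every $n$, not just pro-equality. The paper carries this out by interpreting $M_n/N_n$ as the \v Cech $H^1$ of the sheaf $1+\pi^n\mathcal O$ for the standard covering of the formal $(\mathbb P^1_{A_0})^\wedge$, using the vanishing of $H^1(\mathbb P^1_{A_0/\pi},\mathcal O)$ to get surjectivity of the transition maps (your step), and then an explicit convergence argument to show $\lim_n (M_n/N_n)=0$; together these force $M_n/N_n=0$ for all $n$. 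Your approach becomes correct once you insert this iteration-plus-completeness argument, and then it is essentially the paper's proof stripped of the cohomological language.
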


\begin{proof}
  We are going to show that the product map
  \begin{equation*}\tag{$\check C_n$}
  (1+\pi^n A_0\langle t \rangle)\times  (1+\pi^n A_0\langle t^{-1} \rangle) \to  (1+\pi^n
  A_0\langle t,t^{-1} \rangle)
  \end{equation*}
  is surjective for all $n>0$. The cokernel of~$\check C_n$ is the \v Cech cohomology
  group $H^1(\sU,1+\pi^n \cO)$, where $\sU$ is the standard covering of the formal $\pi$-adic scheme
  $(\mathbb P^1_{A_0})^\wedge$ and $\cO$ denotes its structure sheaf.
  Together with the vanishing of $H^1(\sU, \cO/\pi\cO)\cong H^1(\mathbb P^1_{A_0/(\pi)}, \cO_{\mathbb P^1_{A_0/(\pi)}})$, see~\cite[Prop.~III.2.1.12]{EGA},
the long exact
  sequence of \v Cech
  cohomology corresponding to the exact sequence
  \[
0\to 1+\pi^{n+1} \cO \to 1+\pi^n \cO \to \cO/\pi\cO \to 0
\]
 implies that the map
\[
\coker( \check C_{n+1}) \to \coker( \check C_n)
\]
is surjective. A simple convergence argument shows  that     $\lim_n
\coker( \check C_n)$ vanishes. Indeed,  consider an element
  $(\overline x_n)_n $ in this limit with representing elements $x_n \in 1+\pi^n
  A_0\langle t,t^{-1} \rangle$. This means there exists  elements $y_n$ in the domain of
  $\check C_n$ with $\check C_n(y_n)=x_n/x_{n+1}$. The product $z_n= y_n y_{n+1} y_{n+2}
  \cdots$ converges and satisfies $C_n(z_n)=x_n$, so $\overline x_n=0$.
\end{proof}

 In analogy with \cite[p.~77]{Gruson} we define
a canonical homomorphism \[s:K_0(A) \to K_1(C),\quad\quad [P] \mapsto [P\otimes_A
  C\xrightarrow{t} P\otimes_A C  ]. \]
Here and in the following we let $C=A\langle t,t^{-1} \rangle$.

\begin{prop}\label{algk.bassfund}
  For any Tate ring $A$
  the map $s$ induces an isomorphism $\bar s:K_0(A) \xrightarrow{\sim} \Sigma K_1(A)$.
\end{prop}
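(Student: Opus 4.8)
The plan is to mimic the classical proof that $K_0$ of a ring is computed as the cokernel in the Bass fundamental sequence, adapting it to the Tate ring setting. The key inputs are the Fundamental Theorem for $K_1$ of the individual rings and the description of $\Sigma K_1$ via the gauge/filtration by $1 + \pi^n A_0$.

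First I would observe that the homomorphism $s$ is well-defined: for a finitely generated projective $A$-module $P$, the map $t\colon P\otimes_A C \to P\otimes_A C$ is an automorphism since $t$ is a unit in $C = A\langle t, t^{-1}\rangle$, so $[P\otimes_A C \xrightarrow{t}]$ is a class in $K_1(C)$; additivity in $P$ is clear. Moreover this class dies in $\Sigma K_1(A)$'s two "components" exactly when $t$ becomes trivial, which it does not, so $s$ descends to a map $\bar s\colon K_0(A) \to \Sigma K_1(A)$. The substance is to produce an inverse. Here I would use the classical strategy: given a class in $K_1(C) = \GL(C)/E(C)$ represented by some $g \in \GL_N(C)$, one wants to associate to it a projective $A$-module, by a "clutching" construction — $g$ describes how to glue the trivial bundle over $\Spec A\langle t\rangle$ to the trivial bundle over $\Spec A\langle t^{-1}\rangle$ along $\Spec C$, producing a vector bundle on the analytic $\mathbb{P}^1_A$, whose pushforward (or rather, an appropriate summand after twisting) is a projective $A$-module. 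This is precisely the content of the remark in the paper that (iv) is a Mayer–Vietoris sequence for the standard covering of $(\mathbb{P}^1_A)^\an$.

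The key steps, in order: (1) reduce to showing that the composite $\Sigma K_1(A) \to K_0(A) \xrightarrow{\bar s} \Sigma K_1(A)$ and $K_0(A) \xrightarrow{\bar s} \Sigma K_1(A) \to K_0(A)$ are identities, where the map $\Sigma K_1(A)\to K_0(A)$ is the clutching/connecting map; (2) establish the clutching map is well-defined, which amounts to a gluing statement for finitely generated projective modules over the Tate algebras — i.e. a Tate-ring version of the statement that an automorphism of a free module over $A\langle t, t^{-1}\rangle$ glues the free modules over $A\langle t\rangle$ and $A\langle t^{-1}\rangle$ into a projective $A$-module; (3) verify the two composites are the identity by the same matrix manipulations as in the classical case (Bass's computation), using that every element of $\GL(C)$ can, after stabilization, be written as a product of a matrix over $A\langle t\rangle$, a matrix over $A\langle t^{-1}\rangle$, and elementary matrices — this is where the analog of the classical "$K_1(A[t,t^{-1}])$ decomposition" must be invoked, presumably proven just before this proposition or reducible to $\Sigma K_1(A) \cong \Sigma \Kco_1(A)$ via Lemma~\ref{algK.lamdense}.

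The main obstacle I expect is step (2): making the gluing of projective modules precise over Tate algebras, since unlike the polynomial/Laurent polynomial case one cannot simply invoke Quillen's or Bass's theorems verbatim; one needs that $\Spec A\langle t\rangle \cup \Spec A\langle t^{-1}\rangle$ behaves like an affine cover of the proper curve $(\mathbb{P}^1_A)^\an$, with the expected descent for vector bundles, and that pushforward of a line-bundle twist of the glued bundle lands among finitely generated projective $A$-modules with the right $K_0$-class. I would handle this by passing to a ring of definition $A_0$ and the formal scheme $(\mathbb{P}^1_{A_0})^\wedge$, using formal GAGA / the fact that $H^1$ of the structure sheaf on this formal $\mathbb{P}^1$ vanishes (which already appeared in the proof of Lemma~\ref{algK.lamdense}), and then inverting $\pi$; alternatively, one reduces the whole statement to the continuous $K_1$ version $\Sigma\Kco_1(A)$ via Lemma~\ref{algK.lamdense} and works with the genuinely noetherian rings $A_0/(\pi^n)$, where classical results apply level-wise, assembling a pro-isomorphism at the end.
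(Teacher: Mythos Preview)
Your overall architecture matches the paper's: the clutching construction on $(\mathbb P^1_A)^{\an}$, via non-archimedean GAGA, provides a map $r\colon \Sigma K_1(A)\to K_0(A)$ which is a left inverse to $\bar s$ (this is what Gruson does, and the paper simply cites it). So injectivity of $\bar s$ is not the issue.

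The gap is in your step~(3), the surjectivity of $\bar s$. You hope that ``the same matrix manipulations as in the classical case'' or a level-wise argument over $A_0/(\pi^n)$ will do it, but neither works. The classical decomposition of $K_1(A[t,t^{-1}])$ relies on algebraic facts (Higman's trick, the localization sequence for polynomial rings) that do not transfer to $A\langle t,t^{-1}\rangle$, where power series replace polynomials. And the pro-approach you suggest at the end is a red herring: knowing $K_0(A_0/(\pi^n))\cong \Sigma K_1(A_0/(\pi^n))$ for each $n$ says nothing direct about $K_0(A)$, since $A$ is not a limit or colimit of the $A_0/(\pi^n)$---it sits on the other side of a localization sequence.

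What the paper actually does for surjectivity is genuinely non-archimedean. First, Lemma~\ref{algK.lamdense} is used not as an abstract isomorphism but as an \emph{approximation device}: since $\Sigma K_1(A)\cong \Sigma \Kco_1(A)$, any matrix in $\GL_n(C)$ can be perturbed by an element of $1+\pi^N A_0\langle t,t^{-1}\rangle$ without changing its class in $\Sigma K_1(A)$; by density of Laurent polynomials this lets one assume the matrix has polynomial entries. After multiplying by a power of $t$ (which shifts the class by $\bar s$ of a free module), one reduces to a matrix $M=b_0+b_1t$ with $b_0,b_1\in \mathrm M(n\times n;A)$ and $b_0+b_1=1$. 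The decisive step is then a splitting lemma of Gruson: because $A$ is a Tate ring, one can decompose $A^n=P_0\oplus Q_0$ stably under $b_0,b_1$, with $b_1$ \emph{topologically nilpotent} on $P_0$ and $b_0$ topologically nilpotent on $Q_0$. On $P_0$ the matrix $M$ becomes a unit in $\mathrm M(P_0)\langle t\rangle$, and on $Q_0$ the matrix $t^{-1}M$ becomes a unit in $\mathrm M(Q_0)\langle t^{-1}\rangle$; hence the original class equals $\bar s([Q_0])$. This topological-nilpotence splitting is the missing idea in your proposal.
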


\begin{proof}
Gruson constructs a left inverse $r$ to $\bar s$, see \cite[Sec. IV.4]{Gruson}. This map $r$ has
the following geometric description: The adic space $(\mathbb P^1_A)^\an$ has the open covering
$\mathcal U = \{ \Spc A\langle t\rangle  , \Spc A\langle t^{-1}\rangle  \}$. The set of
analytic vector bundles up to isomorphism   ${\rm Vec}( (\mathbb P^1_A)^\an)$ is in
bijection with the set of algebraic vector bundles up to
isomorphism ${\rm Vec}( \mathbb P^1_A)$  by   GAGA, see \cite[Satz 5.1]{Koe} for the
  affinoid case.

Consider an
element $\xi \in \Sigma K_1(A)$ induced by a matrix $M\in \GL_n(C)$. One defines $r
(\xi )$ as the composition  of the maps  
\begin{align*}
\GL_n (C)& \to H^1(\sU,\GL_n) \to {\rm Vec}( (\mathbb P^1_A)^\an) \cong {\rm Vec}( \mathbb
  P^1_A) \to \\
 & K_0( \mathbb P^1_A) \xleftarrow[\simeq]{(p^*,([\cO(1)]-[\cO(0)])\cdot p^* )} K_0(A)
   \times  K_0(A) \xrightarrow{{\rm pr}_2} K_0(A),
\end{align*}
where ${\rm pr}_2$ is the projection to the second component.

In order to finish the proof of Proposition~\ref{algk.bassfund} we thus have to show that
$\bar s$ is surjective. The argument is parallel to the proof of \cite[Prop.~IV.6]{Gruson}. We
start with an element $\xi \in \Sigma K_1(A)$ represented by a matrix $M\in \GL_n(C)$.
Using the density of Laurent polynomials in $C$ and Lemma~\ref{algK.lamdense}, we can
assume without loss of generality that the entries of $M$ lie in $A[t,t^{-1}]$. For any integer
$N>0$ the matrix $t^N M$ represents the element $\xi + \bar s (A^{n N})\in \Sigma K_1(A)$. So after making
this replacement, we can assume without loss of generality that the entries of $M$
lie in $A[t]$.

As explained in parts c.\ and d.\ of the proof of~\cite[Prop.~IV.6]{Gruson} we can furthermore
reduce to the case that $M=b_0 + b_1 t$ with $b_0,b_1\in \mathrm{M}(n\times n; A)$ and
$b_0+b_1=1$.

By~\cite[Lem.~IV.6]{Gruson} there exists a decomposition of $A$-modules $A^n=P_0\oplus Q_0$,
which is stable under the action of $b_0$ and $b_1$ and such that $b_1$ is topologically
nilpotent on $P_0$ and $b_0$ is topologically nilpotent on $Q_0$. Let $P=P_0\otimes_A C$,
$Q=Q_0 \otimes_A C $ and let $M'$ be the matrix which acts by $M$ on $P$ and by $t^{-1}M$ on
$Q$. Then the element induced by $M'|_P$ in $K_1(C)$ comes from $K_1(A\langle t\rangle )$
and the element induced by $M'|_Q$ in   $K_1(C)$ comes from $K_1(A\langle t^{-1}\rangle
)$, so $M'$ represents zero in $\Sigma K_1 (A)$. On the other hand $M'$ represents the
element $\xi - \bar s ( [Q_0] )\in \Sigma K_1 (A)$.
\end{proof}

\begin{rmk}
The non-archimedean negative $K$-groups $\Sigma^i K_0(A)$ are studied
by Calvo~\cite{calvo}, she denotes them $K^{\rm top}_{-i}(A)$. Karoubi established a result analogous to
Proposition~\ref{algk.bassfund} using  a different convergence condition  in~\cite{Karoubi1971}.
\end{rmk}

\subsection{Analytic Bass delooping}\label{sec.bassdel}

In this subsection we study an abstract spectral version of the analytic Bass 
construction which we used in Proposition~\ref{algk.bassfund}.

Consider a functor $E$ from adic rings or from Tate rings to $\Pro (\Sp)$.
We define
\[
\Gamma E (A) = E(A\langle t\rangle) \sqcup_{E(A)} E(A\langle t^{-1}\rangle)
\]
and 
\begin{equation*}
\Lambda E(A) := \fib(\Gamma E(A) \to E(A\langle t,t^{-1}\rangle)).
\end{equation*}

Similarly, for a functor $F$  from adic rings or from Tate rings to an abelian category  we write
\[ 
\Gamma F (A) :=\coker\big(F(A) \to F(\At) \oplus F(\Ati)\big).
\]
For $E$ as above we then have 
\[ 
\pi_i(\Gamma E(A) ) \cong \Gamma(\pi_i E)(A)
\]
and  a long exact sequence
\begin{equation}\label{LE.longexactsequence}
\dots\to \Gamma(\pi_{i+1} E)(A) \to \pi_{i+1} (E(\Atti)) \to \pi_i (\Lambda E(A)) \to \Gamma(\pi_{i}E)(A) \to \dots
\end{equation}

Connective algebraic $K$-theory refines to a functor from commutative rings to $E_{\infty}$-ring spectra, and we can consider functors $E$ as above which carry a $K$-module structure. Fix a map from the sphere spectrum to $\Omega K(\Z[t,t^{-1}])$ representing the class of $t$ in $K_{1}(\Z[t,t^{-1}])$.
If $E$ carries a $K$-module structure, we use that map to construct a map $\lambda\colon E \to \Lambda E$ given on an adic or Tate ring $A$ by the composition
\[
\lambda \colon E(A) \xrightarrow{\mycup t} \Omega E(A \langle t, t^{-1} \rangle) \xrightarrow{\partial} \Lambda E(A),
\]
where the second map comes from the definition of $\Lambda E(A)$ as a fibre.

\begin{prop}[Bass fundamental theorem] \label{prop.absbass}
	\label{prop:Bass-fundamental-theorem}
Let $E$ be a functor as above, and assume that $\lambda\colon E(A) \to \Lambda E(A)$ is a weak equivalence for every $A$. Then we have exact sequences\vspace{-3ex}%
\begin{small}
\[
\xymatrix@1@C-0.2cm{
0 \ar[r]&  E_{i}(A) \ar[r]&  E_{i}(A\langle t\rangle) \oplus E_{i}(A\langle t^{-1}\rangle) \ar[r] & E_{i}(A\langle t,t^{-1}\rangle) \ar[r] & E_{i-1}(A) \ar@/_1.3pc/[l]_{\mycup t} \ar[r] & 0
}
\]
\end{small}%
where the split is induced by cup-product with $t$ and, as usual, $E_{i}(A) =
\pi_{i}(E(A))$, etc. In particular, there is an isomorphism $\lambda: E_i (A) \to \Sigma
E_{i+1}(A)$ of pro-groups.
\end{prop}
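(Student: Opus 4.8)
The plan is to extract the exact sequence and the splitting directly from the long exact sequence \eqref{LE.longexactsequence} together with the hypothesis that $\lambda\colon E(A) \to \Lambda E(A)$ is a weak equivalence. First I would record the immediate consequence of the hypothesis: since $\lambda$ is a weak equivalence, it induces isomorphisms $\lambda\colon E_i(A) \xrightarrow{\simeq} \pi_i(\Lambda E(A))$ of pro-groups for all $i$ (using Lemma~\ref{lem:description-of-weak-equivs}, or simply the definition of weak equivalence together with the fact that weak equivalences induce isomorphisms on homotopy pro-groups). This is precisely the asserted isomorphism $\lambda\colon E_i(A) \to \Sigma E_{i+1}(A)$, provided one unwinds that $\pi_i(\Lambda E(A)) \cong \Sigma E_{i+1}(A)$; I would derive the latter from \eqref{LE.longexactsequence} once the relevant maps are shown to be surjective/injective as below, since the Bass functor $\Sigma$ is by definition the cokernel $\Gamma(\pi_{i+1}E)(A) \to \pi_{i+1}(E(\Atti))$.

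The core computation is to feed the isomorphism $\lambda$ back into the long exact sequence \eqref{LE.longexactsequence}. That sequence reads
\[
\dots\to \Gamma(\pi_{i+1} E)(A) \xrightarrow{\alpha_{i+1}} \pi_{i+1} (E(\Atti)) \xrightarrow{\partial} \pi_i (\Lambda E(A)) \xrightarrow{\beta_i} \Gamma(\pi_{i}E)(A) \xrightarrow{\alpha_i} \pi_i(E(\Atti)) \to\dots
\]
Now substitute $\pi_i(\Lambda E(A)) \cong E_i(A)$ via $\lambda$. The key point is to identify the composite $\beta_i \circ \lambda\colon E_i(A) \to \Gamma(\pi_i E)(A)$ with the natural map induced by the two inclusions $A \hookrightarrow \At$, $A\hookrightarrow \Ati$ (followed by the quotient defining $\Gamma$); this is a diagram-chase using the construction of $\lambda$ as $\partial \circ ({-}\mycup t)$ and the compatibility of the cup product with the restriction maps. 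Once this identification is made, exactness of \eqref{LE.longexactsequence} at $\Gamma(\pi_i E)(A)$ says exactly that the sequence
\[
E_i(A) \to E_i(\At) \oplus E_i(\Ati) \to E_i(\Atti)
\]
(the first map being the pair of restrictions, the second the difference of restrictions) is exact in the middle, and that $E_i(\Atti) \to E_{i-1}(A)$ is the connecting map $\partial$ composed with $\lambda^{-1}$. Injectivity of $E_i(A) \to E_i(\At)\oplus E_i(\Ati)$ follows because the zero-section splits $E(A) \to E(\At)$, so already the first component is split injective. Surjectivity of $E_i(\Atti) \to E_{i-1}(A)$ and exactness at the remaining spots is then a formal consequence of the long exact sequence together with the fact that $\Gamma(\pi_{i}E)(A) \to \pi_i(E(\Atti))$ has cokernel $\Sigma(\pi_i E)(A)$ which under $\lambda$ gets identified with $E_{i-1}(A)$ shifted — i.e.\ the four-term sequence with its splitting is just \eqref{LE.longexactsequence} cut into short pieces.

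The splitting by cup product with $t$ requires one extra remark: the map $E_{i-1}(A) \xrightarrow{\mycup t} E_i(\Atti)$ is well-defined because $E$ is a $K$-module and $t \in K_1(\Z[t,t^{-1}])$ maps to a unit in $K_1(\Atti)$; that this section splits the surjection $E_i(\Atti) \to E_{i-1}(A)$ is immediate from the definition of $\lambda = \partial\circ({-}\mycup t)$ and the fact that $\partial$ is, up to the identification $\lambda$, the very surjection in question. I would phrase this as: the surjection $E_i(\Atti)\to E_{i-1}(A)$ is $\lambda^{-1}\circ\partial$, and $\partial\circ({-}\mycup t) = \lambda$, so $(\lambda^{-1}\circ\partial)\circ({-}\mycup t) = \mathrm{id}$.

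I expect the main obstacle to be the bookkeeping in the second paragraph: carefully matching the maps appearing in the abstract long exact sequence \eqref{LE.longexactsequence} (which come from the fibre sequence defining $\Lambda E$ and the pushout defining $\Gamma E$) with the concrete restriction maps $E_i(\At)\oplus E_i(\Ati) \to E_i(\Atti)$ and with the geometric cup-product section. None of this is deep, but it is the step where signs, directions of arrows, and the precise meaning of $\Gamma$ versus $\Lambda$ must be tracked with care; everything else is either the hypothesis or a formal manipulation of an exact sequence, done levelwise in $\Pro(\Ab)$ so that no subtlety about pro-objects intervenes beyond what is already in Section~\ref{sec:prelims}.
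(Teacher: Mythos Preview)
Your proposal is correct and follows essentially the same route as the paper: use the hypothesis to break the long exact sequence \eqref{LE.longexactsequence} into short exact sequences, then splice with the split sequence $0\to E_i(A)\to E_i(\At)\oplus E_i(\Ati)\to\Gamma E_i(A)\to 0$ coming from the zero section. The paper's organisation is slightly more direct at the key step: rather than analysing $\beta_i\circ\lambda$, it simply observes that since $\partial\circ({-}\mycup t)=\lambda$ is an isomorphism, the boundary $\partial\colon E_i(\Atti)\to\Lambda E_{i-1}(A)\cong E_{i-1}(A)$ is split surjective with section $\mycup t$, which immediately yields $0\to\Gamma E_i(A)\to E_i(\Atti)\to E_{i-1}(A)\to 0$ together with the splitting in one stroke. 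Your identification of $\beta_i\circ\lambda$ with ``the natural map'' is not wrong but is a detour---both sides are zero for the formal reason that in any fibre sequence $F\to X\to Y$ the composite $\Omega Y\to F\to X$ is null, so no diagram chase involving cup products is needed there.
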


Here $\Sigma$ is as defined in Subsection~\ref{algk.subKlow}.

\begin{proof}
Since the map $A \to A\langle t\rangle$ is split, we get short exact sequences
\begin{equation}\label{seq:111}
0 \to E_{i}(A) \to E_{i}(A\langle t\rangle) \oplus E_{i}(A\langle t^{-1}\rangle) \to \Gamma E_{i}(A) \to 0.
\end{equation}
On the other hand, we have a long exact sequence
\[
\dots \to \Lambda E_{i}(A) \to \Gamma E_{i}(A) \to E_{i}(A\langle t,t^{-1}\rangle) \xrightarrow{\partial} \Lambda E_{i-1}(A) \to \dots 
\]
By assumption, the composition $E_{i-1}(A) \xrightarrow{\mycup t} E_{i}(A\langle t,t^{-1}\rangle) \xrightarrow{\partial} \Lambda E_{i-1}(A)$ is an isomorphism.
Hence the long exact sequence yields short exact sequences
\begin{equation}\label{seq:222}
0 \to \Gamma E_{i}(A) \to E_{i}(A\langle t,t^{-1}\rangle) \xrightarrow{\partial}  E_{i-1}(A) \to 0
\end{equation}
split by $\mycup t$.
Splicing the exact sequences \eqref{seq:111} and \eqref{seq:222} together gives the result.
\end{proof}

In the construction of non-connective analytic $K$-theory in Section~\ref{sec.anK} we will use the following analytic version of the Bass construction:
\begin{defn}
	\label{lem:bassification}
For a functor $E$ from adic or Tate rings to $\Pro(\Sp)$ which carries a module structure over connective algebraic $K$-theory we define its \emph{analytic Bass construction} by the formula
\begin{equation}\label{def;EB}
E^{B}(A) \simeq \colim (E(A) \xrightarrow{\lambda} \Lambda E(A) \xrightarrow{\Lambda(\lambda)} \Lambda^{2} E(A) \to \dots ).
\end{equation}
\end{defn}
Notice that we do not claim that $E^{B}$ satisfies the Bass fundamental theorem.
In Section~\ref{sec.anK} we will use the following simple observation.

\begin{lem}
	\label{lem:Bannihilates}
Assume that there is an integer $N$ such that $E_{i}(A) = 0$ for every $i > N$ and every
adic resp.\ Tate ring $A$ and that for any $A$ the pro-spectrum $E(A)$ is constant, i.e.\ is given by an ordinary
spectrum.
Then $E^{B}(A) \simeq 0$ for every such $A$.
\end{lem}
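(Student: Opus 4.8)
The plan is to reduce everything to understanding the effect of $\Lambda$ on a functor whose homotopy pro-groups vanish above a fixed degree $N$. The key point is that $\Lambda$ \emph{lowers} this bound by one. Indeed, from the defining fibre sequence $\Lambda E(A) \to \Gamma E(A) \to E(A\langle t, t^{-1}\rangle)$ and the long exact sequence \eqref{LE.longexactsequence}, one reads off that $\pi_i(\Lambda E(A))$ is built out of $\Gamma(\pi_i E)(A)$ and $\pi_{i+1}(E(A\langle t,t^{-1}\rangle))$. If $E_j(A') = 0$ for all $j > N$ and all adic resp.\ Tate rings $A'$ (note that $A\langle t\rangle$, $A\langle t^{-1}\rangle$, $A\langle t,t^{-1}\rangle$ are again of the same type), then $\Gamma(\pi_i E)(A) = 0$ for $i > N$ as a quotient of $\pi_i E(A\langle t\rangle) \oplus \pi_i E(A\langle t^{-1}\rangle)$, and $\pi_{i+1}(E(A\langle t,t^{-1}\rangle)) = 0$ for $i+1 > N$, i.e.\ for $i \geq N$. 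Hence $\pi_i(\Lambda E(A)) = 0$ for all $i \geq N$, so $\Lambda E$ satisfies the same hypothesis with $N$ replaced by $N-1$.

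Iterating, $\Lambda^k E$ has vanishing homotopy pro-groups in degrees $\geq N - k + 1$ (being slightly careful with the strict-versus-non-strict inequality: after the first application we get vanishing in degrees $\geq N$, after the second in degrees $\geq N-1$, and so on). In particular, for any fixed integer $i$, once $k$ is large enough we have $\pi_i(\Lambda^k E(A)) = 0$. I would also check that the maps $\Lambda(\lambda)\colon \Lambda^k E(A) \to \Lambda^{k+1} E(A)$ in the colimit system \eqref{def;EB} are compatible with these vanishing ranges, so that the colimit is computed in a range that is eventually zero. Since each $\Lambda^k E(A)$ is a constant pro-spectrum (the hypothesis that $E(A)$ is constant propagates through the finite-limit construction $\Lambda$, as finite limits of pro-spectra are computed level-wise by Lemma~\ref{lem:level-wise-limit}, and the relevant rings stay within the class for which constancy is assumed), the sequential colimit $E^B(A) = \colim_k \Lambda^k E(A)$ is again a constant pro-spectrum, and its homotopy groups are $\pi_i(E^B(A)) = \colim_k \pi_i(\Lambda^k E(A))$. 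For each $i$ this colimit is eventually zero, hence vanishes. A spectrum with all homotopy groups zero is contractible, so $E^B(A) \simeq 0$.

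The main obstacle I anticipate is purely bookkeeping: pinning down exactly how the vanishing bound degrades under $\Lambda$ (strict vs.\ non-strict inequalities, and whether the bound drops by one each time or only eventually), and verifying that the homotopy pro-groups of $\Lambda E$ are genuinely controlled by \eqref{LE.longexactsequence} rather than merely having a subquotient description — but since $\Gamma$ is exact on the relevant abelian categories and the long exact sequence is honest, this is straightforward. The only mild subtlety is confirming that "constant pro-spectrum" is preserved by $\Lambda$ and by filtered colimits of pro-spectra; the former follows because $\Lambda$ is a finite-limit construction applied levelwise, and the latter because filtered colimits in $\Pro(\Sp)$ commute with $\pi_i$ and with the constant-pro-object functor in the relevant sense.
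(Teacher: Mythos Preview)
Your argument has a gap at the crucial step. You claim that $\Lambda$ lowers the vanishing bound by one, i.e.\ that if $E_j(A') = 0$ for all $j > N$ and all $A'$, then $(\Lambda E)_i(A) = 0$ for all $i \geq N$. But the long exact sequence \eqref{LE.longexactsequence} only gives an exact piece
\[
\pi_{i+1}\bigl(E(A\langle t,t^{-1}\rangle)\bigr) \to \pi_i(\Lambda E(A)) \to \Gamma(\pi_i E)(A),
\]
so $\pi_i(\Lambda E(A))$ vanishes only when \emph{both} flanking terms do. For $i = N$ the left term vanishes, but $\Gamma(\pi_N E)(A)$ need not: it is a quotient of $E_N(A\langle t\rangle) \oplus E_N(A\langle t^{-1}\rangle)$, and nothing in the hypotheses forces $E_N$ to vanish. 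The correct conclusion is therefore only $(\Lambda E)_i(A) = 0$ for $i > N$ --- the same bound as for $E$ --- and iterating gains nothing.

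The paper's proof instead exploits the structure of the \emph{transition maps} rather than of $\Lambda$ itself. Recall that $\lambda\colon E(A) \to \Lambda E(A)$ is the composite
\[
E(A) \xrightarrow{\,\mycup t\,} \Omega E(A\langle t,t^{-1}\rangle) \xrightarrow{\,\partial\,} \Lambda E(A),
\]
so on $\pi_i$ it factors through $E_{i+1}(A\langle t,t^{-1}\rangle)$. More generally, the $n$-fold transition map $\Lambda^k E_i(A) \to \Lambda^{k+n} E_i(A)$ factors through $\Lambda^k E_{i+n}(A\langle t_1^{\pm 1},\dots,t_n^{\pm 1}\rangle)$, which vanishes once $i+n > N$ (using only that $\Lambda$ \emph{preserves} the bound, which your argument does correctly establish). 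Hence for each fixed $i$ the transition maps in $\colim_k \Lambda^k E_i(A)$ are eventually zero, so the colimit vanishes. Your remarks on constancy and the identification $E^B_i(A) \simeq \colim_k \Lambda^k E_i(A)$ are fine and match the paper's use of them.
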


\begin{proof}
From the long exact sequence \eqref{LE.longexactsequence} we deduce that also
$\Lambda E_{i}(A) = 0$ for $i > N$ and every $A$. Iterating this argument, we see that $\Lambda^{n}E_{i}(A) = 0$ for $i>N$ and every $n$ and $A$.
Now let $i$ be any integer. Since $E$ has values in spectra, we have
\[
E^{B}_{i}(A) \simeq \colim_{k} \Lambda^{k}E_{i}(A).
\]
By construction, the $n$-fold composition $\Lambda^{k}E_{i}(A) \to \Lambda^{k+n}E_{i}(A)$ of transition maps  factors through $\Lambda^{k}E_{i+n}(A\langle t_{1}^{\pm1}, \dots, t_{n}^{\pm 1}\rangle)$. By the above, the latter group vanishes for $i+n>N$. Hence the colimit vanishes.
\end{proof}


\section{Continuous \textit{K}-theory}\label{sec.contK}

\subsection{Construction}

Let $A_0$ be a complete $\pi$-adic ring for some $\pi\in A_0$.

\begin{defn}
  Continuous $K$-theory of $A_0$ is defined as the pro-spectrum
  \[
    \Kco (A_0) = \prolim{n} K(A_0 / (\pi^n ) ).
  \]
\end{defn}

Clearly,  $\Kco (A_0)$ is independent of the choice of $\pi$ up to canonical equivalence of pro-spectra.

\begin{lem}\label{lem.KcoA0low}
  We have
  \[
    \Kco_i(A_0) = \begin{cases} \prolim{n} K_1(A_0)/(1+\pi^n A_0), & \text{ for } i=1, \\
       K_{0}(A_{0}) = K_{0}(A_{0} / (\pi) ), & \text{ for } i = 0, \\
       K_i(A_0/(\pi)), & \text{ for } i < 0.   
       \end{cases}
  \]
\end{lem}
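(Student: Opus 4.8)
The plan is to unwind the definition: by construction $\Kco_i(A_0) = \pi_i\big(\prolim{n} K(A_0/(\pi^n))\big) = \prolim{n} K_i(A_0/(\pi^n))$ as a pro-abelian group, so it suffices to understand each tower $\big(K_i(A_0/(\pi^n))\big)_{n}$ with its transition maps. In the two cases $i\le 0$ I would show that \emph{all} transition maps are isomorphisms, so that the tower represents the constant pro-group on its common value; in the case $i=1$ I would instead identify the tower explicitly.

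For $i=1$ I would simply invoke the exact sequence $0 \to 1+\pi^n A_0 \to K_1(A_0) \to K_1(A_0/(\pi^n)) \to 0$ recalled above from \cite[Lem.~III.2.4]{K-book}, which applies because $\pi$ lies in the Jacobson radical of the $\pi$-adically complete ring $A_0$ (the series $\sum_k (\pi a)^k$ converges). This presents $K_1(A_0/(\pi^n))$ as $K_1(A_0)/(1+\pi^n A_0)$ compatibly with the maps in the tower, whence $\Kco_1(A_0)\cong \prolim{n} K_1(A_0)/(1+\pi^n A_0)$.

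For $i\le 0$ the key input is nil-invariance of $K_0$ and of negative $K$-theory: for a nilpotent ideal $J\subset R$ the map $K_i(R)\to K_i(R/J)$ is an isomorphism for all $i\le 0$. For $K_0$ this is the unique lifting of finitely generated projective modules along $J$; for $K_{-m}$ with $m\ge 1$ it follows inductively from the $K_0$ case via the Bass description $K_{-m}(R)=\coker\big(K_{-m+1}(R[t])\oplus K_{-m+1}(R[t^{-1}])\to K_{-m+1}(R[t,t^{-1}])\big)$ together with the nilpotence of $J[t]$, $J[t^{-1}]$, $J[t,t^{-1}]$. Applying this to the nilpotent kernels of the surjections $A_0/(\pi^n)\to A_0/(\pi)$ shows that every transition map in $\big(K_i(A_0/(\pi^n))\big)_n$ is an isomorphism, so $\Kco_i(A_0)$ is the constant pro-group on $K_i(A_0/(\pi))$. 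For $i=0$ one finally identifies this with $K_0(A_0)$: since $A_0$ is $\pi$-adically complete, idempotent matrices over $A_0/(\pi)$ lift to $A_0$, giving surjectivity, and Nakayama applied to the kernel and cokernel of a lift of an isomorphism gives injectivity, so $K_0(A_0)\xrightarrow{\sim}K_0(A_0/(\pi))$.

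I do not expect a genuine obstacle here: the argument is bookkeeping with towers of abelian groups plus the cited $K_1$-sequence. The only mildly non-formal ingredient is the invariance of negative $K$-theory under nilpotent extensions used in the case $i<0$, but that is a standard consequence of the Bass fundamental theorem.
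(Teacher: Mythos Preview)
Your proposal is correct and follows essentially the same route as the paper: the paper's proof is a two-line citation, invoking \cite[Lem.~III.2.4]{K-book} for $i=1$ and \cite[Lem.~II.2.2]{K-book} (idempotent lifting along radical ideals) for $i\le 0$. You unpack these citations in more detail---spelling out why $\pi$ is in the Jacobson radical, deriving negative-$K$ nil-invariance from the $K_0$ case via the Bass formula---but the underlying argument is the same.
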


\begin{proof}
  The first line is~\cite[Lem.~III.2.4]{K-book}, the second and third lines are a consequence of~\cite[Lemma~II.2.2]{K-book}.
\end{proof}

For a topological ring $R$ whose topology is generated by the powers of some ideal $I\subset
R$ we write
\begin{equation}
	\label{eq.def.Kbar}
\bar K (R) = K (R \text{ on } I)
\end{equation}
for the non-connective $K$-theory spectrum with support on $I$, see~\cite[Def.~6.4]{thomason}. By $\bark(R)=\tau_{\geq 0}\bar K(R)$ we denote the connective cover of $\bar K (R)$.

Let now $A$ be a Tate ring with a ring of definition $A_0\subset A$ and let $\pi\in A_0$ be a
topologically nilpotent unit in $A$.

\begin{defn}
	\label{def.cont.K}
  Continuous $K$-theory of $A$ (with respect to the ring of definition $A_0$) is defined as the pro-spectrum
  \[
    \Kco (A;A_0) = \cofib (\bar K(A_0 ) \to  \Kco(A_0 )).
  \]
  We also set
  \[
    \Kco (A) = \colim_{A_0}   \Kco (A;A_0),
  \]
  where the  colimit is over the partially ordered set of rings of definition $A_0\subset A$.
\end{defn}

Note that for a  Tate ring $A$  the partially ordered set of rings of
definition $A_0\subset A$ is directed, see  Remark~\ref{rmk.tateringofdef}(ii).

We are going to show in Proposition~\ref{prop.welldefinedcont}, following~\cite{morrow-hist}, that
\[
\Kco (A;A_0)\simeq \Kco (A).
\]
For this we need some preparations.
By the localization theorem~\cite[Thm.~7.4]{thomason} we have a fibre sequence of spectra
\begin{equation}\label{locseqA0}
\bar K(A_0 ) \to K(A_0 ) \to K(A ),
\end{equation}
so there exists a canonical morphism $K(A) \to \Kco (A)$. Moreover, the pro-spectrum  $\Kco(A; A_0)$ is part of
a cartesian square
\begin{equation}\label{squar.carcompcon}
  \xymatrix{
    K(A_0) \ar[r] \ar[d] & K(A)  \ar[d] \\
    \Kco (A_0) \ar[r]  &  \Kco (A; A_0) .
  }
\end{equation}

\begin{prop}\label{prop.welldefinedcont}
For a Tate ring $A$ with ring of definition $A_0$ the map $ \Kco (A; A_0) \xrightarrow{\sim} \Kco (A)$ is a weak equivalence.
\end{prop}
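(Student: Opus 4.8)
The plan is to show that $\Kco(A;A_0)$ does not depend on the choice of the ring of definition $A_0$, which, together with the definition $\Kco(A) = \colim_{A_0} \Kco(A;A_0)$ over the directed poset of rings of definition, immediately gives the claim. Since by Remark~\ref{rmk.tateringofdef}(ii) the poset of rings of definition is directed and since for two rings of definition $A_0 \subseteq A_0'$ one can interpolate, it suffices to treat the case of an inclusion $A_0 \subseteq A_0'$ of rings of definition and show that the induced map $\Kco(A;A_0) \to \Kco(A;A_0')$ is a weak equivalence. By Remark~\ref{rmk.tateringofdef}(i) we may assume both $A_0$ and $A_0'$ carry the $\pi$-adic topology for a common topologically nilpotent unit $\pi \in A_0$, and there is an integer $m$ with $\pi^m A_0' \subseteq A_0$, so that $A_0 \subseteq A_0'$ is exactly an extension of $\pi$-torsion free rings of the type appearing in condition $(\mathrm{E})_F$ of Subsection~\ref{subsec.alghomo}.

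The key step is then to identify $\Kco(A;A_0)$ with a relative term to which pro-excision applies. From the cartesian square \eqref{squar.carcompcon} and the fibre sequence \eqref{locseqA0}, the cofibre $\Kco(A;A_0) = \cofib(\bar K(A_0) \to \Kco(A_0))$ fits into a fibre sequence
\[
\bar K(A_0) \to \Kco(A_0) \to \Kco(A;A_0),
\]
and comparing with \eqref{locseqA0} one sees that the fibre of the natural map $K(A) \to \Kco(A;A_0)$ is $\fib(K(A_0) \to \Kco(A_0)) = \prolim{n} \fib(K(A_0) \to K(A_0/(\pi^n))) = \prolim{n} K(A_0, A_0/(\pi^n))$ in the notation of Subsection~\ref{subsec.alghomo}. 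Thus the map $\Kco(A;A_0) \to \Kco(A;A_0')$ sits in a map of fibre sequences whose base is the identity on $K(A)$ (note $A = A_0[1/\pi] = A_0'[1/\pi]$) and whose fibres are $\prolim{n} K(A_0, A_0/(\pi^n)) \to \prolim{n} K(A_0', A_0'/(\pi^n))$. This latter map is a weak equivalence precisely by the pro-excision property $(\mathrm{E})_K$ for algebraic $K$-theory, recorded in Remark~\ref{rmk.lem.exci}(i) (due to Suslin, Geisser--Hesselholt, Morrow, \cite[Thm.~0.2]{Morrow}). Hence $\Kco(A;A_0) \to \Kco(A;A_0')$ is a weak equivalence, and passing to the filtered colimit over all rings of definition shows $\Kco(A;A_0) \xrightarrow{\sim} \Kco(A)$.

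The main obstacle, and the only nontrivial input, is the pro-excision statement $(\mathrm{E})_K$: everything else is formal manipulation of the fibre sequences \eqref{locseqA0} and \eqref{squar.carcompcon} together with the directedness of the poset of rings of definition. One should be a little careful that the colimit defining $\Kco(A)$ is a filtered colimit in $\Pro(\Sp)$ and that weak equivalences are preserved under such colimits after applying $\iota^*$ — this follows since $\iota^*$ is a left adjoint (hence preserves colimits) and $\Pro(\Sp^+)$ admits all small colimits as recorded in Subsection~\ref{sec:weak-equivalences-of-pro-spectra}. It is also worth noting that the transition maps in the colimit are all weak equivalences by the argument above, so the colimit is computed by any single term, which is exactly the assertion $\Kco(A;A_0) \xrightarrow{\sim} \Kco(A)$.
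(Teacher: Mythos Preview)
Your proof is correct and takes essentially the same approach as the paper. The paper organizes the argument as a commutative cube whose back and front faces are the cartesian squares~\eqref{squar.carcompcon} for $A_0$ and $A_0'$ respectively, observes the left face is weakly cartesian by $(\mathrm{E})_K$ (Remark~\ref{rmk.lem.exci}(i)), and concludes the right face is weakly cartesian; your version simply extracts the horizontal fibre sequences from those cartesian squares and compares them directly, which is the same content unpacked.
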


\begin{proof}
   By Remark~\ref{rmk.tateringofdef} it is sufficient to show that for $A_0\subset A'_0$ two rings of
  definition of $A$ the map $\Kco (A;A_0) \to \Kco (A ;A'_0 )$ is a weak equivalence of
  pro-spectra. For this consider the commutative cube of pro-spectra
\begin{equation}\label{square.kcomprdef}
  \xymatrix@R=2.5ex@C=1.3ex{ 
  K(A_0) \ar[dd] \ar[dr] \ar[rr] &&  K(A) \ar@{=}[dr] \ar'[d][dd]  &  \\
&  K(A'_0 ) \ar[dd] \ar[rr] &&    \ar[dd]  K(A)  \\
\Kco (A_0)  \ar[dr] \ar'[r][rr]  & & \Kco (A;A_0) \ar[dr]^\alpha  &  \\
&  \Kco (A'_0)  \ar[rr] &&  \Kco (A;A'_0).
}
\end{equation}
The back square is the same as the cartesian square~\eqref{squar.carcompcon} and the
front square is the corresponding cartesian square with $A_0$ replaced by $A'_0$. The left square is weakly
cartesian by Remark~\ref{rmk.lem.exci}(i). So the right square is weakly cartesian as well or in other words $\alpha$ is a
weak equivalence of pro-spectra.
\end{proof}

In Subsection~\ref{algk.subKlow} we gave another definition of the pro-group $\Kco_1(A)$
for a Tate ring $A$. In fact both definitions agree.

\begin{lem}\label{lem.calK1cont}
  The natural map of pro-groups
  \[
   \prolim{n} K_1(A)/(1+\pi^n A_0)  \xrightarrow{\simeq} \Kco_1 (A)
  \]
  is an isomorphism.
\end{lem}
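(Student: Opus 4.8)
The goal is to identify the pro-group $\prolim{n} K_1(A)/(1+\pi^n A_0)$ with $\Kco_1(A) = \pi_1(\Kco(A))$. By Proposition~\ref{prop.welldefinedcont} I may compute with $\Kco(A;A_0)$, which by~\eqref{squar.carcompcon} sits in a cartesian square relating $K(A_0)$, $K(A)$, $\Kco(A_0)$, and $\Kco(A;A_0)$. The plan is to read off the low-degree part of the associated long exact (Mayer--Vietoris) sequence of homotopy pro-groups and combine it with the known description of $\Kco_1(A_0)$ from Lemma~\ref{lem.KcoA0low}.

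First I would write down the Mayer--Vietoris sequence attached to the homotopy cartesian square~\eqref{squar.carcompcon}: it gives an exact sequence of pro-groups
\[
K_1(A_0) \to K_1(\Kco(A_0)) \oplus K_1(A) \to \Kco_1(A;A_0) \to K_0(A_0) \to \Kco_0(A_0)\oplus K_0(A),
\]
where I abusively write $K_i(\Kco(A_0)) = \Kco_i(A_0)$. Here the leftmost group $K_1(A_0)$ is a constant pro-group and the relevant maps are the ones induced by $A_0 \to A$ and $A_0 \to \Kco(A_0) = \prolim{n}K(A_0/(\pi^n))$. The key input is that the final map $K_0(A_0) \to \Kco_0(A_0) \oplus K_0(A)$ is injective: indeed by Lemma~\ref{lem.KcoA0low} the map $K_0(A_0) \to \Kco_0(A_0)$ is an isomorphism (it is the identity $K_0(A_0) = K_0(A_0/(\pi))$). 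Hence $\Kco_1(A;A_0)$ is the cokernel of $K_1(A_0) \to \Kco_1(A_0) \oplus K_1(A)$. Using again Lemma~\ref{lem.KcoA0low} to identify $\Kco_1(A_0) = \prolim{n} K_1(A_0)/(1+\pi^n A_0)$, and observing that the map $K_1(A_0)\to \Kco_1(A_0)$ is exactly the canonical surjection onto this pro-system (so it is surjective), the cokernel computation collapses: $\Kco_1(A;A_0) \cong \coker\bigl(K_1(A_0) \to K_1(A)\bigr)$ amalgamated appropriately, and more precisely one gets the pushout $\prolim{n}\bigl(K_1(A) / \mathrm{im}(1+\pi^nA_0)\bigr)$. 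Concretely: since $1+\pi^nA_0 \subset K_1(A_0)$ maps into $K_1(A)$, and $K_1(A_0)\to K_1(A_0)/(1+\pi^nA_0)$ is surjective for each $n$, a diagram chase in the cofibre square shows $\Kco_1(A;A_0)\cong \prolim{n} K_1(A)/(1+\pi^n A_0)$, with the isomorphism being precisely the natural map in the statement.

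The main obstacle I anticipate is bookkeeping with pro-groups rather than groups: exactness of the Mayer--Vietoris sequence at the level of homotopy pro-groups requires the square to be cartesian in $\Pro(\Sp)$ (or at least that the relevant fibre sequence be taken in $\Pro(\Sp)$ and $\pi_i$ applied level-wise as in Subsection~\ref{sec:weak-equivalences-of-pro-spectra}), and one must be careful that forming cokernels commutes with the cofiltered limit here. This is controlled because the transition maps $K_1(A)/(1+\pi^{n+1}A_0) \to K_1(A)/(1+\pi^n A_0)$ are surjective, so the pro-system is Mittag--Leffler and there are no $\lim^1$ obstructions; the exact sequence~\eqref{seq.defn} relating $1+\pi^nA_0$, $K_1(A_0)$, and $K_1(A_0/(\pi^n))$ quoted after the definition of $\Kco_1$ in Subsection~\ref{algk.subKlow} furnishes exactly the level-wise exactness needed. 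Once that is in place the identification of the natural map is immediate from its construction via the fibre sequence~\eqref{locseqA0} and the square~\eqref{squar.carcompcon}.
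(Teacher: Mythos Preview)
Your proposal is correct and follows essentially the same approach as the paper: both arguments rest on Lemma~\ref{lem.KcoA0low} (the identifications $\Kco_1(A_0)\cong \prolim{n} K_1(A_0)/(1+\pi^n A_0)$ and $K_0(A_0)\cong \Kco_0(A_0)$) together with the comparison of the two fibre sequences underlying the cartesian square~\eqref{squar.carcompcon}. The only difference is packaging: the paper writes out the two long exact sequences with common $\bar K_*(A_0)$-terms and applies the five lemma, whereas you extract the Mayer--Vietoris sequence directly from the square and compute the cokernel level-wise; these are formally equivalent manoeuvres.
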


\begin{proof}
  This follows from the five lemma and the following commutative diagram with exact columns.
  \[
    \xymatrix{
      \bar K_1(A_0) \ar[d] \ar@{=}[r] & \bar K_1(A_0) \ar[d] \\
      \prolim{n} K_1(A_0)/(1+\pi^n A_0) \ar[d] \ar[r]^-\simeq  & \Kco_1(A_0) \ar[d] \\
      \prolim{n}
      K_1(A)/(1+\pi^n A_0) \ar[d] \ar[r]&  \Kco_1(A) \ar[d]\\
      \bar K_0(A_0) \ar[d]  \ar@{=}[r]& \bar K_0(A_0)   \ar[d]\\
      K_0(A_0) \ar[r]^-\simeq &  \Kco_0(A_0)  
    }
  \]
Here the left column is obtained from the long exact sequence of homotopy groups of the fibre sequence \eqref{locseqA0} by modding out by $1+\pi^{n}A_{0}$ and passing to the pro-system; the right column is obtained from the defining fibre sequence $\bar K(A_{0}) \to \Kco(A_{0}) \to \Kco(A;A_{0})$ and the weak equivalence of Proposition~\ref{prop.welldefinedcont}.
The second and the fifth horizontal arrows are isomorphisms by Lem\-ma~\ref{lem.KcoA0low}, hence so is the third one.
\end{proof}

An argument parallel to the proof of Lemma~\ref{lem.calK1cont}, using Theorems~2.5 and~2.13
from~\cite{stein}, shows:

\begin{lem}\label{lem.k2cont}
  For a Tate ring which has a local ring of definition $A_0\subset A$ the  map
  \[
    \prolim{n} K_2(A)/\{ 1+\pi^n A_0 , A_0^\times \} \xrightarrow{\simeq}   \Kco_2 (A)
  \]
  is an isomorphism of pro-groups.
\end{lem}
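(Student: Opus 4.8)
The plan is to mimic the proof of Lemma~\ref{lem.calK1cont}, replacing the input on $K_1$ by the corresponding results on $K_2$ of discretely valued rings from~\cite{stein}. Recall that the strategy of Lemma~\ref{lem.calK1cont} was: starting from the localization fibre sequence~\eqref{locseqA0}, pass to the appropriate quotient pro-systems and compare two exact columns via the five lemma, using the identifications of $\Kco_i(A_0)$ in low degrees from Lemma~\ref{lem.KcoA0low}. For the present statement we need analogues of the two facts that made the $K_1$-argument run: first, that the sequence $0 \to (1+\pi^n A_0) \to K_1(A_0) \to K_1(A_0/(\pi^n)) \to 0$ is exact (this is what gives the middle horizontal isomorphism of the diagram), and second, that $\bar K_0(A_0) \to K_0(A_0)$ together with $K_0(A_0) \xrightarrow{\sim} \Kco_0(A_0)$ closes off the bottom of the column. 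For $K_2$ the replacements are Theorems~2.5 and~2.13 of~\cite{stein}: Stein's relative $K_2$ computation identifies the kernel of $K_2(A_0) \to K_2(A_0/(\pi^n))$ with the Dennis--Stein-type symbols $\{1+\pi^n A_0, A_0^\times\}$ (using that $A_0$ is local so that $K_2$ is generated by Steinberg symbols with one entry a unit), and surjectivity of $K_2(A_0) \to K_2(A_0/(\pi^n))$ in the local case. This is exactly where locality of $A_0$ is used.

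Concretely, I would write down the commutative diagram with exact columns
\[
\xymatrix{
\bar K_2(A_0) \ar[d] \ar@{=}[r] & \bar K_2(A_0) \ar[d] \\
\prolim{n} K_2(A_0)/\{1+\pi^n A_0, A_0^\times\} \ar[d] \ar[r] & \Kco_2(A_0) \ar[d] \\
\prolim{n} K_2(A)/\{1+\pi^n A_0, A_0^\times\} \ar[d] \ar[r] & \Kco_2(A) \ar[d] \\
\bar K_1(A_0) \ar[d] \ar@{=}[r] & \bar K_1(A_0) \ar[d] \\
\prolim{n} K_1(A_0)/(1+\pi^n A_0) \ar[r]^-\simeq & \Kco_1(A_0)
}
\]
just as in Lemma~\ref{lem.calK1cont}: the left column is obtained from the long exact homotopy sequence of the localization fibre sequence~\eqref{locseqA0} by modding out by the subgroup generated by $\{1+\pi^n A_0, A_0^\times\}$ (this makes sense because the boundary map $K_2(A) \to \bar K_1(A_0)$ kills Steinberg symbols coming from $A_0$, so the quotiented sequence remains exact), then passing to the pro-system over $n$; the right column comes from the defining fibre sequence $\bar K(A_0) \to \Kco(A_0) \to \Kco(A;A_0)$ together with Proposition~\ref{prop.welldefinedcont}. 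The top horizontal map $\prolim{n} K_2(A_0)/\{1+\pi^n A_0, A_0^\times\} \to \Kco_2(A_0) = \prolim{n} K_2(A_0/(\pi^n))$ is an isomorphism of pro-systems precisely by~\cite[Thm.~2.5, 2.13]{stein}, and the bottom horizontal map is the isomorphism of Lemma~\ref{lem.calK1cont} (equivalently, Lemma~\ref{lem.KcoA0low}). Since the two outer horizontal maps (the $\bar K_i(A_0)$ rows) are identities, the five lemma in the abelian category $\Pro(\Ab)$ forces the middle horizontal map $\prolim{n} K_2(A)/\{1+\pi^n A_0, A_0^\times\} \to \Kco_2(A)$ to be an isomorphism.

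The main obstacle I anticipate is not the diagram chase but verifying that the left column is genuinely exact after quotienting, i.e.\ checking compatibility of the subgroups $\{1+\pi^n A_0, A_0^\times\} \subset K_2(A_0)$ and $1+\pi^n A_0 \subset K_1(A_0)$ with the connecting maps in the localization sequence. This amounts to knowing that the image of the Steinberg symbols $\{1+\pi^n A_0, A_0^\times\}$ in $K_2(A)$ maps to zero under $\partial\colon K_2(A) \to \bar K_1(A_0)$, equivalently that these symbols already lie in the image of $K_2(A_0)$ — which is immediate since they are defined from elements of $A_0$ — and dually that $\bar K_1(A_0) \to K_1(A_0)/(1+\pi^n A_0)$ has the right image. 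A secondary technical point is that~\cite[Thm.~2.5, 2.13]{stein} are stated for the individual rings $A_0/(\pi^n)$ and one must package them into a statement about pro-systems, which requires the transition maps to be compatible; this is formal once one notes the symbol description is functorial in $n$. One also needs $A_0$ noetherian (hence the hypothesis that $A_0$ is local, together with completeness, which is automatic for rings of definition) for Stein's results to apply, and the fact that $\bar K_i$ and $\Kco_i$ in the relevant degrees are controlled by Lemma~\ref{lem.KcoA0low}. Granting these, the argument is a verbatim transcription of the proof of Lemma~\ref{lem.calK1cont} one degree up.
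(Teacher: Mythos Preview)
Your proposal is correct and follows exactly the approach the paper indicates: the paper does not spell out a proof but simply states that the argument is parallel to that of Lemma~\ref{lem.calK1cont}, using Theorems~2.5 and~2.13 of~\cite{stein}, which is precisely what you have written out in detail. Your discussion of the exactness of the quotiented left column and the role of locality for Stein's results goes beyond what the paper makes explicit, but is the right verification to carry out.
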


\begin{ex}
Assume that the Tate ring $A$ admits a ring of definition $A_{0}$ which is noetherian and regular. 
Then $\bar K_{i}(A_{0}) = 0$ for $i<0$. By definition of $\Kco(A)$ and Lemma~\ref{lem.KcoA0low} we thus get isomorphisms
\[
K_{i}(A_{0}/(\pi)) \xrightarrow{\simeq} \Kco_{i}(A) \quad \text{ for } i<0.
\]
In particular, the negative continuous $K$-groups need not vanish even if $A$ admits a regular ring of definition.

A concrete example is the following. Let $\kappa$ be a discretely valued field with ring of integers $\kappa_{0}$ and uniformizer $\pi$. Set $A_{0} = \kappa_{0} \langle x,y \rangle/(y^{2} - x^{3} + x^{2} - \pi)$ and $A = A_{0} [1/\pi]$, a thickened version of a nodal curve. Then $A_{0}$ is regular, but $\Kco_{-1}(A) \cong K_{-1}(A_{0}/(\pi)) \cong \Z$ (see \cite[Exc.~III.4.12]{K-book}).
\end{ex}

 Continuous $K$-theory of a Tate ring $A$ can also
be calculated in terms of a Raynaud type geometric model of $A$. By this we mean the
following: Assume that there exists a noetherian ring of definition $A_0\subset A$ and let
$\pi\in A_0$ be a topologically nilpotent unit in $A$. Assume given an ``admissible morphism'' or more precisely a proper
morphism $p\colon X\to  \Spec (A_0)$ which is an isomorphism over $\Spec (A)$. For $n>0$ set
$X_n=X\otimes_{A_0} A_0/(\pi^n)$ and define continuous $K$-theory of $X$ as the pro-spectrum
\[
\Kco (X) = \prolim{n} K(X_n).
\]

\begin{prop}\label{prop.Kcoray}
  There is a weak fibre sequence of pro-spectra.
  \[
    \bar K(X) \to \Kco (X)  \to \Kco (A).
  \]
\end{prop}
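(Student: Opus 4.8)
The plan is to mimic the construction and proof of Proposition~\ref{prop.welldefinedcont}, using pro-excision for algebraic $K$-theory along the admissible morphism $p\colon X \to \Spec(A_0)$ in place of the comparison of two rings of definition. Recall that continuous $K$-theory of the geometric model is $\Kco(X) = \lprolim{n} K(X_n)$ with $X_n = X \otimes_{A_0} A_0/(\pi^n)$, and we want to show $\bar K(X) \to \Kco(X) \to \Kco(A)$ is a weak fibre sequence, where $\bar K(X) = K(X \text{ on } (\pi))$ denotes $K$-theory of $X$ with support on the closed subscheme defined by $\pi$.

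First I would set up the localization sequence on the scheme level: Thomason--Trobaugh localization (\cite[Thm.~7.4]{thomason}) applied to the open immersion $\Spec(A) = X \otimes_{A_0} A_0[1/\pi] \hookrightarrow X$ (which makes sense since $p$ is an isomorphism over $\Spec(A)$, so the generic fibre of $X$ really is $\Spec(A)$) gives a fibre sequence of spectra
\[
\bar K(X) \to K(X) \to K(A).
\]
Combined with the localization sequence \eqref{locseqA0} for $A_0$, namely $\bar K(A_0) \to K(A_0) \to K(A)$, and the canonical map $\bar K(A_0) \to \bar K(X)$, this yields a commutative ladder. Next, I would form the analog of the cartesian square \eqref{squar.carcompcon}: the cofibre $\cofib(\bar K(X) \to \lprolim{n} K(X_n))$ sits in a cartesian square with $K(X) \to K(A)$ on top and $\lprolim{n} K(X_n)$ in the bottom left. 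So it suffices to identify this cofibre with $\Kco(A)$ via a weak equivalence. Equivalently, one wants the square with corners $K(X), K(A), \lprolim{n}K(X_n), \lprolim{n}K(X_n/(\pi^n))$... more precisely, one compares the square for $X$ with the square for $\Spec(A_0)$.

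The key step, and the main obstacle, is a pro-excision statement: the map of pro-spectra
\[
\lprolim{n} \fib\big(K(A_0) \to K(A_0/(\pi^n))\big) \to \lprolim{n} \fib\big(K(X) \to K(X_n)\big)
\]
is a weak equivalence. This is exactly the pro-cdh/pro-excision property of algebraic $K$-theory along an admissible blow-up, i.e.\ condition $({\rm P1})_{K}$ from Subsection~\ref{subsec.alghomo}, which holds by \cite[Thm.~A]{KST-Weibel}; in the notation there this is the weak equivalence $\lprolim{n} K(\Spec A_0, (\Spec A_0)_n) \xrightarrow{\simeq} \lprolim{n} K(X, X_n)$ for the admissible morphism $p$. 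Granting this, I would run the cube argument exactly as in the proof of Proposition~\ref{prop.welldefinedcont}: place the cartesian square defining $\Kco(A;A_0)$ as the back face, the cartesian square built from $\bar K(X) \to \lprolim{n} K(X_n)$ and $K(X) \to K(A)$ as the front face, with vertical maps induced by $p^*$ and $\bar K(A_0) \to \bar K(X)$, $K(A_0) \to K(X)$; the top face commutes with the right edge the identity on $K(A)$, the left face is weakly cartesian by the pro-excision statement just quoted, hence the right face is weakly cartesian, which says precisely that $\cofib(\bar K(X) \to \Kco(X)) \xrightarrow{\simeq} \Kco(A;A_0) \simeq \Kco(A)$ (the last equivalence by Proposition~\ref{prop.welldefinedcont}). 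Unwinding, this gives the asserted weak fibre sequence $\bar K(X) \to \Kco(X) \to \Kco(A)$. The only subtlety to be careful about is that all limits over $n$ must be taken in $\Pro(\Sp)$ and that $\bar K(X)$, being an ordinary spectrum, is viewed as a constant pro-spectrum; the relevant finite (co)limits of pro-spectra are computed levelwise by Lemma~\ref{lem:level-wise-limit}, so forming fibres and cofibres commutes with the pro-structure as needed.
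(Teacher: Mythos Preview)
Your proposal is correct and follows essentially the same route as the paper's proof: set $W = \cofib(\bar K(X) \to \Kco(X))$, build the commutative cube with the cartesian square~\eqref{squar.carcompcon} for $A_0$ as the back face and its analogue for $X$ as the front face, invoke pro-cdh descent \cite[Thm.~A]{KST-Weibel} to make the left face weakly cartesian, and conclude that the induced map between $\Kco(A;A_0)$ and $W$ is a weak equivalence. The only cosmetic slip is the direction of the arrow in your conclusion: with the vertical maps going from $A_0$-objects to $X$-objects via $p^{*}$, the induced map on cofibres runs $\Kco(A;A_0) \to W$ rather than $W \to \Kco(A;A_0)$, but this does not affect the argument.
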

Here $\bar K(X)$ stands for the algebraic $K$-theory spectrum of $X$ with support in $X_1$.

\begin{proof}
The proof is very similar to the proof of Proposition~\ref{prop.welldefinedcont}.
 Let for the moment $W$ be the cofibre of $ \bar K(X) \to \Kco (X)$.
Then we get a commutative cube of pro-spectra 
\begin{equation}\label{squar.carcompcon2}
  \xymatrix@R=2.5ex@C=1.3ex{ 
  K(A_0) \ar[dd] \ar[dr] \ar[rr] &&  K(A) \ar@{=}[dr] \ar'[d][dd]  &  \\
&  K(X) \ar[dd] \ar[rr] &&    \ar[dd]  K(A)  \\
\Kco (A_0)  \ar[dr] \ar'[r][rr]  & & \Kco (A ; A_0) \ar[dr]^\alpha  &  \\
&  \Kco (X)  \ar[rr] &&  W.
}
\end{equation}
The back square is the same as the cartesian square~\eqref{squar.carcompcon}. The
front square is cartesian by the same reasoning. The left square is weakly
cartesian by~\cite[Thm.~A]{KST-Weibel}. So the right square is weakly cartesian as well or in other words $\alpha$ is a
weak equivalence of pro-spectra.
\end{proof}

\begin{rmk}
Proposition~\ref{prop.Kcoray} can be used to show that continuous $K$-theory satisfies
Mayer--Vietories for finite affinoid coverings of affinoid spaces as indicated in~\cite{morrow-hist}.
\end{rmk}

\subsection{Bass fundamental theorem}
	\label{sec.Bass.fundamental}

Let $A$ be a Tate ring with a ring of definition $A_0$ and let $\pi\in A_0$ be a
topologically nilpotent unit in $A$. As both $\bar K(A_0)$ and $\Kco (A_0)$
carry a module structure over $K(A_0)$ and $\Kco (A) $ carries a compatible module
structure over $K(A)$ the techniques of Subsection~\ref{sec.bassdel} apply. In particular
we obtain a commutative diagram of fibre sequences
\[
  \xymatrix{
    \bar K(A_0 ) \ar[r] \ar[d]_{\lambda_1} &  \Kco (A_0)\ar[d]_{\lambda_2}  \ar[r] & \Kco ( A;A_0 ) \ar[d]_{\lambda_3} \\
    \Lambda \bar K(A_0 ) \ar[r] & \Lambda \Kco (A_0) \ar[r] & \Lambda\Kco ( A;A_0 )
  }
\]
in which the vertical maps are equivalences of pro-spectra. Indeed, in order to show that
$\lambda_1$ and $\lambda_2$ are equivalences  one can replace in the definition of $\Lambda$ the rings $A_0\langle
t^{\pm} \rangle$ and $A_0 \langle t, t^{-1} \rangle $ by their dense subrings $A_0[
t^{\pm} ]$ and $A_0 [ t, t^{-1} ]$. This is obvious for $\Kco$, and for $\bar K$ it follows from~\cite[Prop.~3.19]{thomason}. So 
$\lambda_1$ and $\lambda_2$ are  equivalences as a consequence of~\cite[Thm.~6.6]{thomason}.
This means the canonical map $\lambda\colon \Kco  \to \Lambda \Kco $ is an equivalence, so we deduce that
the Bass fundamental theorem holds  for $\Kco $, see Proposition~\ref{prop.absbass}.

\begin{prop}\label{prop.comalgcont}  Continuous $K$-theory of a Tate ring $A$ satisfies:
  \begin{itemize}
  \item[(i)] $\lambda \colon \Kco_i(A)\to \Sigma \Kco_{i+1}(A) $ is an isomorphism of pro-groups.
  \item[(ii)] The canonical map $K_0(A)\to \Kco_0(A)$ is an isomorphism of pro-groups.
  \item[(iii)] The pro-groups $\Kco_i(A)$ are constant for $i\le 0$ and agree with the
    groups $K^{\rm top}_i (A) $ defined and studied in~\cite{calvo}.
  \end{itemize}
\end{prop}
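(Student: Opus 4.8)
The plan is to prove the three assertions in order, deducing each from results already established in Section~\ref{sec.bassdel} together with the low-degree computations of Subsection~\ref{algk.subKlow}. For part~(i), the preceding discussion already shows that the canonical map $\lambda\colon \Kco \to \Lambda\Kco$ is an equivalence of functors to $\Pro(\Sp)$, so the hypothesis of Proposition~\ref{prop:Bass-fundamental-theorem} is satisfied. I would simply invoke that proposition: its final statement gives precisely the isomorphism of pro-groups $\lambda\colon \Kco_i(A) \to \Sigma\Kco_{i+1}(A)$ for all $i$. (One should check that the $K(A_0)$-module structures on $\bar K(A_0)$, $\Kco(A_0)$ and the compatible $K(A)$-module structure on $\Kco(A;A_0)$ assemble to a $K$-module structure in the sense of Subsection~\ref{sec.bassdel} on the functor $A \mapsto \Kco(A)$; this is routine since all the constructions are natural and the colimit over rings of definition is filtered.)

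For part~(ii), I would run an induction downward using part~(i). First, I establish the base case directly: by Definition~\ref{def.cont.K} we have a fibre sequence $\bar K(A_0) \to \Kco(A_0) \to \Kco(A;A_0)$, and Proposition~\ref{prop.welldefinedcont} identifies the last term weakly with $\Kco(A)$. Taking the long exact sequence of homotopy pro-groups in low degrees and using Lemma~\ref{lem.KcoA0low} (which gives $\Kco_0(A_0) = K_0(A_0)$ and identifies $\bar K_1(A_0)$, $\bar K_0(A_0)$ with the relevant $K$-groups of $A_0$ sitting in the localization sequence~\eqref{locseqA0}), one sees that $K_0(A) \to \Kco_0(A)$ is an isomorphism of pro-groups — this is the same diagram chase already carried out at the bottom of the proof of Lemma~\ref{lem.calK1cont}, so I would either repeat the relevant portion or cite it. That settles~(ii).

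For part~(iii), constancy for $i \le 0$ follows by descending induction from~(ii): if $\Kco_{i+1}(A')$ is constant for every Tate ring $A'$, then by part~(i) $\Kco_i(A) \cong \Sigma\Kco_{i+1}(A)$, and $\Sigma$ applied to a constant pro-group (equivalently, to an honest functor to abelian groups) is again constant, since $\Sigma$ is defined degreewise via the cokernel of $F(\At) \oplus F(\Ati) \to F(\Atti)$ and involves no limit. Combined with the base case $i=0$ from~(ii), this gives constancy of $\Kco_i(A)$ for all $i \le 0$. Finally, to identify these constant groups with Calvo's $K^{\mathrm{top}}_i(A)$, I would observe that Calvo's negative groups are defined in~\cite{calvo} precisely by the iterated analytic Bass construction $\Sigma^{-i}K_0$ (as already noted in the Remark following Proposition~\ref{algk.bassfund}), and that Proposition~\ref{algk.bassfund} gives $\Sigma K_1(A) \cong K_0(A)$ while the Bass fundamental theorem for $\Kco$ from part~(i) gives $\Sigma\Kco_{i+1}(A) \cong \Kco_i(A)$; matching these recursions, with the common value $\Kco_0(A) \cong K_0(A) \cong \Sigma K_1(A)$ at the top, yields $\Kco_i(A) \cong \Sigma^{-i}K_0(A) = K^{\mathrm{top}}_i(A)$ for $i \le 0$.

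The main obstacle I anticipate is purely bookkeeping rather than conceptual: making sure the $K$-module structures are genuinely present and natural so that Proposition~\ref{prop:Bass-fundamental-theorem} applies verbatim to $\Kco$ (the preceding paragraph in the paper asserts this, so I would lean on it), and being careful that the weak equivalence $\Kco(A;A_0) \simeq \Kco(A)$ of Proposition~\ref{prop.welldefinedcont} is compatible with all the module structures and with $\lambda$, so that the Bass machinery descends to $\Kco(A)$ itself and not merely to $\Kco(A;A_0)$. Once that compatibility is in hand, parts~(i)--(iii) are short diagram chases and inductions as sketched above.
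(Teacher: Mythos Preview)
Your treatment of parts~(i) and~(iii) is correct and matches the paper. The issue is part~(ii): your proposed direct diagram chase does not go through, and the paper takes a different route.

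You suggest extending the five-term diagram from Lemma~\ref{lem.calK1cont} one step further to conclude that $K_0(A)\to\Kco_0(A)$ is an isomorphism. But that diagram ends at $K_0(A_0)\cong\Kco_0(A_0)$, which is an \emph{input} (from Lemma~\ref{lem.KcoA0low}), not a conclusion about $A$. Extending by one row places $K_0(A)\to\Kco_0(A)$ in the middle of a five-lemma configuration whose last vertical map is
\[
K_{-1}(A_0)\longrightarrow \Kco_{-1}(A_0)\cong K_{-1}(A_0/(\pi)).
\]
For the five lemma you would need this map to be injective, and nothing in the paper establishes that for an arbitrary $\pi$-adic ring $A_0$. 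Equivalently, from the cartesian square~\eqref{squar.carcompcon} one finds $\pi_1$ of the cofibre $\cofib(K(A)\to\Kco(A;A_0))$ vanishes (giving injectivity of $K_0(A)\to\Kco_0(A)$), but $\pi_0$ of that cofibre is precisely $\ker(K_{-1}(A_0)\to K_{-1}(A_0/(\pi)))$, so surjectivity is not available from this diagram alone.

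The paper instead deduces~(ii) from~(i) together with the low-degree computations of Subsection~\ref{algk.subKlow}: part~(i) gives $\Kco_0(A)\cong\Sigma\Kco_1(A)$; Lemma~\ref{algK.lamdense} identifies $\Sigma K_1(A)\cong\Sigma\Kco_1(A)$; and Proposition~\ref{algk.bassfund} gives $K_0(A)\cong\Sigma K_1(A)$. Chaining these yields the canonical isomorphism $K_0(A)\cong\Kco_0(A)$ without any control on negative $K$-theory of $A_0$.
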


\begin{proof}
Part (i) is clear from what is explained above. Part (ii) follows from part (i),
Lemma~\ref{algK.lamdense} and Proposition~\ref{algk.bassfund}. Part (iii) is clear in view
of part (ii) and the Bass fundamental theorem for $\Kco$.
\end{proof}

\begin{rmk}
We have the following exact sequence in the category of $\Ab$ of abelian groups.
\[
0\to {\lim}^{1} \, \Kco_1(A) \to \pi_0 \lim \Kco(A) \to {\lim}^{0} \, \Kco_0(A) \to 0 
\]
The ${\lim}^{1}$-term vanishes by Lemma~\ref{lem.calK1cont}. So using Proposition~\ref{prop.comalgcont}(ii)
we deduce that $K_0(A)\xrightarrow{\simeq} \pi_0 \lim \Kco(A)$ is an isomorphism. Here
$\lim^0$ and $\lim^1$ are the usual limit functors from the category $\Pro (\Ab ) $  to
$\Ab$ and
$\lim\colon\Pro(\Sp) \to \Sp$ is the right adjoint to the inclusion.
\end{rmk}

Using Gabber's rigidity theorem we can compute continuous $K$-theory of Tate rings with finite coefficients.
Let $\ell$ be a positive integer. If $F$ is a functor from adic rings or from Tate rings to spectra or pro-spectra, we write $F(A; \Z/\ell)$ for the (level-wise) smash product of $F(A)$ with the mod-$\ell$ Moore spectrum $\mathbb{S}/\ell$.

Let $A$ be a Tate ring with a ring of definition $A_{0}$, and let $\ell$ be a positive integer which is invertible in $A_{0}$.
\begin{lem}
	\label{lem.Kcont-finite-coeffs}
The canonical map $K(A) \to \Kco(A)$ induces isomorphisms $K_{i}(A; \Z/\ell)
\xrightarrow{\simeq}  \Kco_{i}(A; \Z/\ell)$ and $K_{0}(A)/\ell \xrightarrow{\simeq}
\Kco_{0}(A)/\ell$ for $i\ge 1$.
\end{lem}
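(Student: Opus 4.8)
The statement to prove is Lemma~\ref{lem.Kcont-finite-coeffs}: for a Tate ring $A$ with ring of definition $A_{0}$ and $\ell$ invertible in $A_{0}$, the canonical map $K(A)\to \Kco(A)$ induces isomorphisms $K_{i}(A;\Z/\ell)\xrightarrow{\simeq}\Kco_{i}(A;\Z/\ell)$ for $i\ge 1$ and $K_{0}(A)/\ell\xrightarrow{\simeq}\Kco_{0}(A)/\ell$. The plan is to reduce everything to Gabber's rigidity theorem applied to the $\pi$-adic pairs $(A_{0},(\pi^{n}))$. First I would use the cartesian square~\eqref{squar.carcompcon} and the compatible weak equivalence of Proposition~\ref{prop.welldefinedcont}, smashed with the Moore spectrum $\mathbb{S}/\ell$ (smashing preserves (co)cartesian squares and, level-wise, commutes with $\prolim{}$), to identify the fibre of $K(A;\Z/\ell)\to\Kco(A;\Z/\ell)$ with the fibre of $\bar K(A_{0};\Z/\ell)\to \Kco(A_{0};\Z/\ell) = \prolim{n}K(A_{0}/(\pi^{n});\Z/\ell)$. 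Thus it suffices to show that this latter map is an isomorphism on $\pi_{i}$ for $i\ge 1$ and that $\bar K_{0}(A_{0};\Z/\ell)\to \prolim{n}K_{0}(A_{0}/(\pi^{n});\Z/\ell)$ has a cokernel that dies after we further mod out (this last point is what produces the $K_{0}(A)/\ell$, rather than $K_{0}(A)$, statement in degree $0$).

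\textbf{The rigidity input.} The heart of the argument is Gabber rigidity: for $\ell$ invertible in $A_{0}$ and for each pair $(A_{0},(\pi^{n}))$ with $A_{0}$ complete (hence henselian along $(\pi)$), the map $K(A_{0};\Z/\ell)\to K(A_{0}/(\pi^{n});\Z/\ell)$ is an isomorphism on homotopy groups in degrees $\ge 1$ (and on $\pi_{0}$ after suitable identification, using $K_{0}(A_{0})=K_{0}(A_{0}/(\pi))$ from Lemma~\ref{lem.KcoA0low}). Here one must be slightly careful: classical Gabber rigidity is usually stated for the pair $(A_{0},(\pi))$ or for a henselian pair; for the ideal $(\pi^{n})$ one notes that $A_{0}$ is also henselian along $(\pi^{n})$ since $(\pi)$ and $(\pi^{n})$ define the same topology, so rigidity applies verbatim to each $n$. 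Taking $\prolim{n}$ and using that $\pi_{i}$ commutes with cofiltered limits of pro-spectra by definition, we get that $K(A_{0};\Z/\ell)\to \Kco(A_{0};\Z/\ell)$ is a $\pi_{i}$-iso for $i\ge 1$ in $\Pro(\Ab)$. Since $\bar K(A_{0};\Z/\ell)$ is the fibre of $K(A_{0};\Z/\ell)\to K(A;\Z/\ell)$, the long exact sequence then shows $\bar K_{i}(A_{0};\Z/\ell)$ agrees with the relative groups of $K(A;\Z/\ell)\to\Kco(A;\Z/\ell)$, which therefore vanish for $i\ge 1$. Combined with the first paragraph this gives the isomorphism $K_{i}(A;\Z/\ell)\xrightarrow{\simeq}\Kco_{i}(A;\Z/\ell)$ for $i\ge 2$ directly, and for $i=1$ after checking the $\pi_{0}$-behaviour.

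\textbf{Degree zero.} The low-degree part needs the boundary term. From the fibre sequence $\bar K(A_{0})\to \Kco(A_{0})\to\Kco(A;A_{0})\simeq\Kco(A)$ smashed with $\mathbb{S}/\ell$, and the cartesian square identifying $\Kco(A;A_{0})$ with the pushout of $\Kco(A_{0})\leftarrow K(A_{0})\to K(A)$, the relevant exact sequence near degree $0$ reads
\[
\Kco_{1}(A;\Z/\ell)\to \bar K_{0}(A_{0};\Z/\ell)\to K_{0}(A;\Z/\ell)\to \Kco_{0}(A;\Z/\ell)\to \bar K_{-1}(A_{0};\Z/\ell).
\]
Rigidity gives $\bar K_{0}(A_{0};\Z/\ell)=0$ (using $K_{0}(A_{0})\cong K_{0}(A_{0}/(\pi))$) and $\bar K_{-1}(A_{0};\Z/\ell)=0$ (the map $K_{-1}(A_{0})\to K_{-1}(A_{0}/(\pi))$ is an iso by Lemma~\ref{lem.KcoA0low}, and $\ell$ is invertible so the Moore-spectrum correction terms behave). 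Feeding this in, $K_{0}(A;\Z/\ell)\to\Kco_{0}(A;\Z/\ell)$ is an isomorphism; untwisting the universal-coefficient sequence $0\to K_{0}(A)/\ell\to K_{0}(A;\Z/\ell)\to K_{-1}(A)[\ell]\to 0$ and the analogous one for $\Kco$, together with the already-established iso on $K_{-1}$-level terms (via Proposition~\ref{prop.comalgcont}(iii), the negative continuous groups are the Karoubi--Villamayor ones and the comparison with $\bar K$ of $A_{0}$ is controlled), one extracts the clean statement $K_{0}(A)/\ell\xrightarrow{\simeq}\Kco_{0}(A)/\ell$.

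\textbf{Main obstacle.} The conceptual core — smashing the cartesian square with $\mathbb{S}/\ell$ and applying rigidity — is routine; the delicate bookkeeping is at the bottom of the spectral sequence / long exact sequence. Specifically, the hard part will be keeping track of the mod-$\ell$ Moore spectrum torsion-correction terms in the non-connective negative degrees: $K$-theory with $\Z/\ell$-coefficients mixes $K_{0}/\ell$ with $K_{-1}[\ell]$, and one has to verify that the contributions from $\bar K_{-1}(A_{0})$ and $K_{-1}(A_{0}/(\pi))$ cancel correctly so that the degree-$0$ statement comes out as $K_{0}(A)/\ell$ rather than as the full $K_{0}(A;\Z/\ell)$. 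This is why the lemma is stated asymmetrically between $i\ge 1$ and $i=0$, and getting that asymmetry right — rather than the rigidity input itself — is the main point requiring care.
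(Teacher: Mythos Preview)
Your overall strategy for $i\ge 2$ is exactly the paper's: smash the weakly cartesian square~\eqref{squar.carcompcon} with $\mathbb{S}/\ell$, then use Gabber rigidity on the left vertical map $K(A_{0};\Z/\ell)\to\Kco(A_{0};\Z/\ell)$ to conclude that $K_{i}(A;\Z/\ell)\to\Kco_{i}(A;\Z/\ell)$ is an isomorphism for $i\ge 2$ and injective for $i=1$. However, your execution in low degrees contains genuine errors, and the paper's route there is both simpler and correct.

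\textbf{Confusion of fibres.} Throughout you conflate two different relative objects: $\bar K(A_{0})=\fib(K(A_{0})\to K(A))$ and $\fib(K(A_{0})\to\Kco(A_{0}))$. The cartesian square identifies the fibre of $K(A;\Z/\ell)\to\Kco(A;\Z/\ell)$ with the \emph{latter}, not with $\bar K(A_{0};\Z/\ell)$. Your exact sequence near degree~$0$ and the claims ``$\bar K_{0}(A_{0};\Z/\ell)=0$'' and ``$\bar K_{-1}(A_{0};\Z/\ell)=0$'' are all casualties of this mix-up. Lemma~\ref{lem.KcoA0low} says nothing about $K_{-1}(A_{0})\to K_{-1}(A_{0}/(\pi))$; it only computes the pro-group $\Kco_{-1}(A_{0})$. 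In particular $\bar K_{-1}(A_{0};\Z/\ell)$ has no reason to vanish.

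\textbf{The degree-$0$ claim is too strong.} You assert that $K_{0}(A;\Z/\ell)\to\Kco_{0}(A;\Z/\ell)$ is an isomorphism, then descend to the $/\ell$ statement. But the Bockstein sequence shows this would force $K_{-1}(A)[\ell]\cong\Kco_{-1}(A)[\ell]$, which fails in general: the nodal-curve example in the paper has $A$ regular (so $K_{-1}(A)=0$) while $\Kco_{-1}(A)\cong\Z$. This is precisely why the lemma only claims the isomorphism on $K_{0}/\ell$.

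\textbf{The paper's low-degree argument.} For $K_{0}(A)/\ell$ the paper simply invokes Proposition~\ref{prop.comalgcont}(ii), which already gives the integral isomorphism $K_{0}(A)\cong\Kco_{0}(A)$; modding out by $\ell$ is then immediate. For surjectivity at $i=1$ the paper uses the Bockstein diagram: the map on $K_{1}/\ell$ is surjective because $K_{1}(A)\twoheadrightarrow\Kco_{1}(A)$ by Lemma~\ref{lem.calK1cont}, and the map on $K_{0}[\ell]$ is an isomorphism by Proposition~\ref{prop.comalgcont}(ii) again. No analysis of $\bar K$ in negative degrees is needed.
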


\begin{proof}
The statement about $K_{0}$ follows directly from Proposition~\ref{prop.comalgcont}(ii).
Since smashing with $\mathbb{S}/\ell$ is an exact functor, the cartesian  square \eqref{squar.carcompcon}  and Proposition~\ref{prop.welldefinedcont} yield the weakly cartesian square
\[
\xymatrix{
K(A_{0}; \Z/\ell) \ar[r] \ar[d] & K(A; \Z/\ell) \ar[d] \\
\Kco(A_{0}; \Z/\ell) \ar[r] & \Kco(A; \Z/\ell).
}
\]
Since $A_{0}$ is $\pi$-adically complete (where $\pi$ generates the topology on $A_{0}$), Gabber's rigidity theorem \cite[Thm.~1]{Gabber} implies that the left vertical map induces an isomorphism on $\pi_{i}$ for every $i>0$. This implies that  $K_{i}(A;\Z/\ell) \to \Kco_{i}(A; \Z/\ell)$ is an isomorphism for $i>1$ and  injective for $i=1$. In order to show that it is also surjective for $i=1$, we consider the following diagram in which the exact rows come from the Bockstein sequence and $[\ell]$ denotes the $\ell$-torsion subgroup:
\[
\xymatrix{
0 \ar[r] & K_{1}(A)/\ell \ar[r]\ar[d] & K_{1}(A;\Z/\ell) \ar[r] \ar[d] & K_{0}(A)[\ell] \ar[r]\ar[d] & 0    \\
0 \ar[r] & \Kco_{1}(A)/\ell \ar[r] & \Kco_{1}(A;\Z/\ell) \ar[r] & \Kco_{0}(A)[\ell] \ar[r] & 0
}
\]
The right vertical map is an isomorphism by Proposition~\ref{prop.comalgcont}(ii). The left vertical map is surjective by Lemma~\ref{lem.calK1cont}. Hence the middle vertical map is surjective, too.
\end{proof}

\subsection{Analytic pro-homotopy invariance}\label{subsec.anahom}

Let $A$ be an adic ring resp.\ a Tate ring and let $\pi\in A$ be an element generating the
topology on $A$ resp.\ a topologically nilpotent unit.
Let $\mathbf C_A$ be the comma category of adic rings resp.\ Tate rings over $A$. In the
following we fix an integer $m>0$ and we write $A\langle \underline t \rangle = A\langle
t_1, \ldots , t_m \rangle   $.
For a functor $F$ from $\mathbf C_A$ to $\Pro(\Sp)$ we define the new
functor $\hat N^\pi F=\hat N^\pi_m F$ on the same categories by
\[
\hat N^\pi F(A) = \cofib (  F(A) \to \prolim{\underline t\mapsto \pi \underline t} F( A\langle
\underline t \rangle )).
\]
Recall that the polynomial version $N^\pi$ of $\hat N^\pi$ has been studied in Subsection~\ref{subsec.alghomo}.

\smallskip

Let $A$ be a Tate ring, $A_0\subset A$ be a ring of definition and $\pi\in A_0$ an element
which is a topologically nilpotent unit in $A$.

\begin{lem}\label{lem.homintateco}
  We have:
  \begin{itemize}
    \item[(i)] $\hat N^\pi  \Kco(A_0)$ is a contractible pro-spectrum.
   \end{itemize}  
 If moreover $A$ is regular and satisfies condition $(\dagger)_A$ we have:
 \begin{itemize}
  \item[(ii)] $\hat  N^\pi \bar K(A_0)$ is a weakly contractible pro-spectrum.
  \item[(iii)] There is a natural weak equivalence
\[   \hNpi K(A) \simeq 
\cofib \big(  K(\Ao) \to \prolim{t\mapsto \pi t} K(\Ao[[\underline t]]) \big).\]
\item[(iv)]
In the case of one variable, i.e.\ $m=1$,
there is a natural isomorphism of pro-abelian groups
\[ \pi_1(\hNpi K(A)) \cong \prolim{[\pi]} W(\Ao),\]
where $W(\Ao)= (1+ t\Ao[[t]])^\times$ is the ring of big Witt vectors and the right hand side denotes the pro-system
\[ \xymatrix{
W(\Ao)& \ar[l]_{[\pi]} W(\Ao) & \ar[l]_{[\pi]} W(\Ao)& \ar[l]_{[\pi]}\cdots
}\]
with $[\pi]=(1-\pi t)^{-1}\in W(\Ao)$.
\end{itemize}  
\end{lem}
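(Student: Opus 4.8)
The plan is to prove the four assertions (i)--(iv) in increasing order of difficulty, reducing each to results established earlier in the paper. For (i), I would unwind the definition $\Kco(A_0) = \prolim{n} K(A_0/(\pi^n))$ and observe that $\hat N^\pi$ commutes with the pro-limit over $n$, so it suffices to show $\hat N^\pi K(A_0/(\pi^n))$ is contractible for each $n$. But $A_0/(\pi^n)$ is $\pi$-adically discrete in the sense that $\pi^n = 0$, so $(A_0/(\pi^n))\langle \underline t\rangle = (A_0/(\pi^n))[\underline t]$ is an honest polynomial ring and the transition map $\underline t\mapsto \pi\underline t$ is nilpotent in an appropriate sense; this is exactly the situation of Lemma~\ref{lem.Xncontr}, so the argument there applies verbatim to give contractibility.

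For (ii), the key is condition $(\dagger)_A$: it furnishes a noetherian ring of definition $A_0$ together with an admissible morphism $p\colon X\to \Spec(A_0)$ with $X$ regular. Since $A$ is assumed regular, $\Spec(A) = \Spec(A_0[1/\pi])$ is regular, so $p$ is an isomorphism over the regular locus. I would apply the machinery of Subsection~\ref{subsec.alghomo}: non-connective $K$-theory $K$ satisfies $({\rm P1})_K$ and $({\rm P2})_K$ by \cite{KST-Weibel} and \cite{K-book}, hence by Proposition~\ref{khpro.prop}, $N^\pi K(\Spec A_0)$ is weakly contractible. The point is then to pass from the polynomial statement $N^\pi$ to the adic-completion statement $\hat N^\pi$ for $\bar K$; here one uses the localization sequence $\bar K(A_0) \to K(A_0) \to K(A)$ together with the continuity of $\bar K$ (its values on $A_0$ are recovered from $A_0/(\pi^n)$ via \cite{thomason}), so that $\hat N^\pi \bar K(A_0)$ can be compared with $N^\pi \bar K$ evaluated on the reduction, whose weak contractibility follows from the regularity of $X\otimes_{A_0}A_0\langle\underline t\rangle$ noted in the Lemma right after Proposition~\ref{prop.adicexc}.

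For (iii), I would use the defining cofibre sequence $\bar K(A_0) \to \Kco(A_0) \to \Kco(A;A_0)$ and Proposition~\ref{prop.welldefinedcont} identifying $\Kco(A;A_0)$ with $\Kco(A)$; since $\Kco(A)$ is built from $K(A)$ and $\bar K(A_0)$, and $\hat N^\pi$ is exact (being a cofibre), parts (i) and (ii) kill the contributions of $\Kco(A_0)$ and $\bar K(A_0)$ up to weak equivalence. Concretely, applying $\hat N^\pi$ to the cartesian square \eqref{squar.carcompcon} and using that $\hat N^\pi \Kco(A_0) \simeq 0$ and $\hat N^\pi \bar K(A_0)$ is weakly contractible, one gets $\hat N^\pi K(A) \simeq \hat N^\pi K(A_0)$, and $\hat N^\pi K(A_0) = \cofib(K(A_0) \to \prolim{t\mapsto\pi t} K(A_0\langle\underline t\rangle))$; the final step is to replace $A_0\langle\underline t\rangle$ by $A_0[[\underline t]]$, which is legitimate in the pro-system because $\prolim{n} A_0\langle\underline t\rangle/(\pi^n) = \prolim{n} A_0[[\underline t]]/(\pi^n)$ after the $\pi$-adic reductions — wait, more carefully, one reduces mod $\pi^n$ where the two agree, then reassembles; continuity of $K$-theory along these pro-systems combined with Remark~\ref{rmk.lem.exci} (excision) handles the comparison.

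For (iv), the one-variable case $m=1$: by (iii), $\pi_1(\hat N^\pi K(A)) \cong \pi_1$ of the cofibre of $K(A_0) \to \prolim{[\pi]} K(A_0[[t]])$, which by the long exact sequence is the pro-system $\prolim{[\pi]} \coker(K_1(A_0)\to K_1(A_0[[t]]))$ together with a $\ker$ contribution in degree zero that I expect to vanish because $K_0(A_0) \to K_0(A_0[[t]])$ is an isomorphism (the ring $A_0[[t]]$ being local-like over $A_0$ along $t$, or using that $A_0[[t]]$ is the completion and $K_0$ is insensitive). The relative $K_1$ term $\ker(K_1(A_0[[t]]) \to K_1(A_0))$ for the $t\mapsto 0$ augmentation is by definition the relative $K_1$ of the pair $(A_0[[t]], (t))$, which is well known to be $(1+tA_0[[t]])^\times = W(A_0)$, the big Witt vectors, via the determinant/units identification (the abelianized relative $K_1$ of a split nilpotent-complete extension); the transition map induced by $t\mapsto\pi t$ sends a power series to its reparametrization, which on the Witt vector side is precisely multiplication by the Teichmüller-type element $[\pi]=(1-\pi t)^{-1}$ under the ghost/exponential coordinates. \textbf{The main obstacle} I anticipate is step (iii): carefully justifying the replacement of the Tate algebra $A_0\langle\underline t\rangle$ by the power series ring $A_0[[\underline t]]$ inside the pro-system, and the bookkeeping needed to transfer the weak-contractibility statements for $\bar K$ and $\Kco(A_0)$ through the (weakly) cartesian square to isolate $\hat N^\pi K(A) \simeq \hat N^\pi K(A_0)$; everything else is either a direct citation or a standard relative-$K_1$ computation.
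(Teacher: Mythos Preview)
Your outline for (i), (ii), and (iv) matches the paper's, though (ii) is stated more cleanly there: the comparison $N^\pi \bar K(A_0) \xrightarrow{\simeq} \hat N^\pi \bar K(A_0)$ is immediate from Thomason's excision for $K$-theory with support \cite[Thm.~7.1]{thomason}, since $A_0[\underline t]$ and $A_0\langle \underline t\rangle$ have the same quotients modulo every $\pi^n$; one then uses the localization fibre sequence $N^\pi \bar K(A_0) \to N^\pi K(A_0) \to N^\pi K(A)$, with the middle term weakly contractible by Corollary~\ref{khpro.cor} (this is where $(\dagger)_A$ enters) and the right term contractible by $\mathbb A^1$-invariance for the regular ring $A$.

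There is a genuine gap in (iii), precisely at the step you flag as the main obstacle. Your proposal to compare $A_0\langle \underline t\rangle$ with $A_0[[\underline t]]$ by reducing modulo $\pi^n$ ``where the two agree'' fails: $A_0\langle \underline t\rangle/(\pi^n) \cong (A_0/\pi^n)[\underline t]$ is a polynomial ring, whereas $A_0[[\underline t]]/(\pi^n) \cong (A_0/\pi^n)[[\underline t]]$ is a power series ring. Pro-excision (Remark~\ref{rmk.lem.exci}) does not help either, since no power of $\pi$ carries $A_0[[\underline t]]$ into $A_0\langle \underline t\rangle$ (e.g.\ $\pi^m \sum_{i\ge 0} t_1^i$ still has non-decaying coefficients). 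The paper's fix is elementary and avoids $K$-theory altogether at this step: the substitution $\underline t \mapsto \pi \underline t$ on $A_0[[\underline t]]$ already lands in $A_0\langle \underline t\rangle$, because $\sum_I a_I (\pi \underline t)^I = \sum_I a_I \pi^{|I|} \underline t^I$ has $\pi$-adically decaying coefficients. This gives a factorization
\[
A_0[[\underline t]] \xrightarrow{\ \underline t \mapsto \pi \underline t\ } A_0\langle \underline t\rangle \hookrightarrow A_0[[\underline t]]
\]
interleaving the two towers, hence an isomorphism of pro-rings $\lprolim{\underline t\mapsto \pi\underline t} A_0\langle \underline t\rangle \cong \lprolim{\underline t\mapsto\pi\underline t} A_0[[\underline t]]$. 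Applying $K$ then finishes (iii), once one knows $\hat N^\pi K(A_0) \simeq \hat N^\pi K(A)$ from the localization sequence and (ii) --- which is the part of your sketch that is correct.
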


\begin{proof}
Part~(i) is clear as for fixed $j>0$ the map \[A_0/(\pi^j)\to \prolim{\underline t \mapsto
  \pi \underline t
  } A\langle \underline t
\rangle /(\pi^j)\] is an isomorphism of pro-rings.
For (ii) we use that the canonical map $N^\pi \bar K(A_0)\xrightarrow{\simeq}\hat  N^\pi \bar K(A_0)  $ is
an isomorphism by excision~\cite[Thm.~7.1]{thomason}, where the left side is as defined in
Subsection~\ref{subsec.alghomo}. From localization theory~\cite[Thm.~7.4]{thomason} we get a fibre
sequence
\begin{equation}\label{eq.fibseNsup}
N^\pi \bar K(A_0) \to N^\pi  K(A_0) \to N^\pi K(A).
\end{equation}
The right pro-spectrum in~\eqref{eq.fibseNsup} is contractible as $A$ is regular and $K$-theory of regular rings is
$\mathbb A^1$-homotopy invariant~\cite[Thm.~V.6.3]{K-book}. Up to weak equivalence  the pro-spectrum in the 
middle of~\eqref{eq.fibseNsup} is independent of the choice of the ring of definition
$A_0$ by Remark~\ref{rmk.lem.exci}. So we can assume by condition $(\dagger)_A$ that $A_0$ is noetherian and that
there exists a proper morphism $\tilde X\to \Spec (A_0)$ which is an isomorphism over
$\Spec (A)$ and such that $\tilde X$ is regular. Corollary~\ref{khpro.cor} finally implies that
$N^\pi  K(A_0)$ is weakly contractible. Together this implies that also the pro-spectrum on the left of~\eqref{eq.fibseNsup} is weakly contractible proving (ii).
For (iii), note that the natural map
\begin{equation*}
\hNpi K(A_0) \to \hNpi K(A)
\end{equation*}
is a weak equivalence by (ii). From the commutative diagram
\[\xymatrix{
\Ao \langle \underline t \rangle  \ar[r]\ar[d]_{\underline t\to \pi t} & \Ao[[\underline t ]]\ar[d]^{\underline t\mapsto \pi
  \underline t}\ar[dl]^{\underline t\mapsto \pi \underline t} \\
\Ao \langle \underline t \rangle \ar[r] & \Ao[[\underline t]] \\
}\]
we obtain an isomorphism of pro-simplicial rings
\[ \prolim{\underline t\mapsto \pi \underline t} \Ao \langle \underline t \rangle  \
  \simeq \prolim{\underline t\mapsto \pi  \underline t} \Ao[[\underline  t]].\]
This proves (iii). Part (iv) follows from part (iii) and the isomorphism
\[ K_1(\Ao[[t]])/K_1(\Ao ) \simeq (1+ t\Ao[[t]])^\times,\]
see \cite[Lem.\ III.2.4]{K-book}.
\end{proof}

From the fibre sequence
\[
\hat  N^\pi \bar K(A_0) \to \hat  N^\pi  \Kco (A_0) \to \hat  N^\pi \Kco (A)  
\]
and Lemma~\ref{lem.homintateco} we immediately deduce:

\begin{prop}\label{cont.homotopprop}
   For a regular Tate ring $A$ which satisfies condition $(\dagger)_A$ the
   canonical map
   \[
     \Kco(A)\xrightarrow{\simeq} \prolim{\underline t\mapsto \pi \underline t} \Kco(A\langle \underline t \rangle )
   \]
   is a weak equivalence of pro-spectra.
 \end{prop}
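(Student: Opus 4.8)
The plan is to derive Proposition~\ref{cont.homotopprop} directly from the fibre sequence relating $\bar K$, $\Kco(A_0)$, and $\Kco(A)$ together with the three vanishing/identification statements already packaged in Lemma~\ref{lem.homintateco}. First I would note that the operator $\hat N^\pi$ is exact: since it is defined as a cofibre of a map of functors to $\Pro(\Sp)$ and cofibres (hence fibres, up to shift, in the stable setting) can be computed level-wise in $\Pro(\Sp)$ by Lemma~\ref{lem:level-wise-limit}, applying $\hat N^\pi$ to the fibre sequence $\bar K(A_0) \to \Kco(A_0) \to \Kco(A;A_0)$ yields a fibre sequence
\[
\hat N^\pi \bar K(A_0) \to \hat N^\pi \Kco(A_0) \to \hat N^\pi \Kco(A;A_0)
\]
of pro-spectra. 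By Proposition~\ref{prop.welldefinedcont} the third term is weakly equivalent to $\hat N^\pi \Kco(A)$.

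Next I would invoke Lemma~\ref{lem.homintateco}: part~(i) says the middle term $\hat N^\pi \Kco(A_0)$ is (actually) contractible, and part~(ii) — which uses regularity of $A$ and condition $(\dagger)_A$ — says the left term $\hat N^\pi \bar K(A_0)$ is weakly contractible. Given a fibre sequence $X \to Y \to Z$ in $\Pro(\Sp)$ with $X$ weakly contractible and $Y$ contractible, the long exact sequence of homotopy pro-groups (applied after $\iota^*$, or equivalently using Lemma~\ref{lem:description-of-weak-equivs}) forces $Z$ to be weakly contractible as well. Hence $\hat N^\pi \Kco(A)$ is weakly contractible. Unwinding the definition of $\hat N^\pi$ as the cofibre of $\Kco(A) \to \prolim{\underline t \mapsto \pi \underline t} \Kco(A\langle \underline t\rangle)$, and using that this map is split by the zero section (so the cofibre being weakly contractible is equivalent to the map being a weak equivalence), gives exactly the asserted weak equivalence
\[
\Kco(A) \xrightarrow{\simeq} \prolim{\underline t\mapsto \pi \underline t} \Kco(A\langle \underline t\rangle).
\]

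The only genuine content is bookkeeping: checking that $\hat N^\pi$ really is exact on $\Pro(\Sp)$ (immediate from level-wise computation of finite limits/colimits, Lemma~\ref{lem:level-wise-limit}) and that $\prolim{\underline t\mapsto\pi\underline t}$ commutes with the relevant cofibre sequences, which again follows because cofibres in $\Pro(\Sp)$ are level-wise. I expect no real obstacle here — all the hard work (the regularity input via $(\dagger)_A$, pro-$\mathbb A^1$-invariance of $K$ of regular rings, and the pro-descent used in Corollary~\ref{khpro.cor}) has already been absorbed into Lemma~\ref{lem.homintateco}(ii), so the proof of the proposition itself is a short formal deduction. The proof would simply read: "This follows immediately from the fibre sequence $\hat N^\pi \bar K(A_0) \to \hat N^\pi \Kco(A_0) \to \hat N^\pi \Kco(A)$, Proposition~\ref{prop.welldefinedcont}, and Lemma~\ref{lem.homintateco}(i),(ii)."
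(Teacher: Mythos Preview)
Your proposal is correct and matches the paper's own proof essentially verbatim: the paper simply writes down the fibre sequence $\hat N^\pi \bar K(A_0) \to \hat N^\pi \Kco(A_0) \to \hat N^\pi \Kco(A)$ and invokes Lemma~\ref{lem.homintateco}. Your version is in fact slightly more careful in that you make explicit the use of Proposition~\ref{prop.welldefinedcont} and the exactness of $\hat N^\pi$, which the paper leaves implicit.
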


 As a corollary to Proposition~\ref{cont.homotopprop} and
 Proposition~\ref{prop.comalgcont}(ii) we recover the following pro-homotopy invariance result for $K_{0}$, which has already
 been shown in~\cite{proHIPic} using a more direct approach.

 \begin{cor}\label{cor.k0homotopyi}
   For a regular Tate ring $A$ which satisfies condition $(\dagger)_A$ the
   canonical map
   \[
     K_0(A)\xrightarrow{\simeq} \prolim{\underline t\mapsto \pi \underline t} K_0(A\langle  \underline t\rangle )
   \]
is an isomorphism of pro-groups.
\end{cor}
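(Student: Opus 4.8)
The plan is to deduce Corollary~\ref{cor.k0homotopyi} directly from the two results that immediately precede it. First I would observe that by Proposition~\ref{cont.homotopprop}, for a regular Tate ring $A$ satisfying $(\dagger)_A$, the canonical map $\Kco(A) \to \prolim{\underline t\mapsto \pi \underline t} \Kco(A\langle \underline t \rangle)$ is a weak equivalence of pro-spectra; here I use the Lemma in Subsection~\ref{subsec:nonarch.tate} which guarantees that $A\langle \underline t \rangle$ is again regular and satisfies $(\dagger)_{A\langle \underline t \rangle}$, so that the target is a pro-object of pro-spectra of the same good type. In particular, passing to $\pi_0$ and using that $\pi_0$ commutes with cofiltered limits of pro-spectra (Definition~\ref{def:weak-equivalence} and the discussion of homotopy pro-groups), the induced map $\Kco_0(A) \to \prolim{\underline t\mapsto \pi \underline t} \Kco_0(A\langle \underline t \rangle)$ is an isomorphism of pro-groups.

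Next I would invoke Proposition~\ref{prop.comalgcont}(ii), which says that for \emph{any} Tate ring the canonical map $K_0 \to \Kco_0$ is an isomorphism of pro-groups. Applying this both to $A$ and, levelwise, to each $A\langle \underline t \rangle$ in the pro-system, we get a commutative square of pro-groups
\[
\xymatrix{
K_0(A) \ar[r] \ar[d]_{\simeq} & \prolim{\underline t\mapsto \pi \underline t} K_0(A\langle \underline t \rangle) \ar[d]^{\simeq} \\
\Kco_0(A) \ar[r]^-{\simeq} & \prolim{\underline t\mapsto \pi \underline t} \Kco_0(A\langle \underline t \rangle)
}
\]
in which the left vertical map, the right vertical map, and the bottom horizontal map are all isomorphisms. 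Hence the top horizontal map is an isomorphism as well, which is the assertion of the corollary.

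The only genuinely non-formal input is Proposition~\ref{cont.homotopprop}, which is already available; everything else is diagram-chasing and the bookkeeping of homotopy pro-groups. The one point that deserves a word of care is the claim that each term $A\langle t_1,\ldots,t_m\rangle$ appearing in the pro-system is itself regular and satisfies $(\dagger)$, so that Proposition~\ref{prop.comalgcont}(ii) applies levelwise — but this is exactly the content of the Lemma in Subsection~\ref{subsec:nonarch.tate}, combined with the fact that regularity is preserved under the regular ring homomorphism $A\to A\langle \underline t \rangle$ of Proposition~\ref{prop.adicexc}(ii). So I do not expect any real obstacle; the corollary is a clean consequence of the spectrum-level statement together with the computation of $\Kco_0$.
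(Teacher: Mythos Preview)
Your proof is correct and follows exactly the paper's approach: the corollary is stated there as an immediate consequence of Proposition~\ref{cont.homotopprop} together with Proposition~\ref{prop.comalgcont}(ii). Your final ``word of care'' is unnecessary, though, since Proposition~\ref{prop.comalgcont}(ii) holds for \emph{any} Tate ring, with no regularity or $(\dagger)$ assumption required.
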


For $K_{1}$ we at least have the following weak pro-homotopy invariance result:
 \begin{prop}
If $A$ is regular and satisfies condition $(\dagger)_A$, then
\[R^i\underset{t\to \pi t}{\lim}\; \pi_1\hNpi K(A)=0\quad \hbox{ for } i\geq 0. \]
\end{prop}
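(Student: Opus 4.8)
The plan is to reduce everything to Lemma~\ref{lem.homintateco}(iv) and then analyse the resulting pro-abelian group by hand. Since we are in the one-variable case $m=1$, that lemma identifies $\pi_1(\hNpi K(A))$ with the pro-abelian group $\prolim{[\pi]}W(\Ao)$, i.e.\ the tower
\[
\cdots\xrightarrow{[\pi]} W(\Ao)\xrightarrow{[\pi]} W(\Ao)\xrightarrow{[\pi]} W(\Ao),
\]
where $W(\Ao)=(1+t\Ao[[t]])^\times$ is the big Witt ring (addition being multiplication of power series) and the transition map is multiplication by the Teichm\"uller element $[\pi]=(1-\pi t)^{-1}$. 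As this is a tower indexed by $\N$, the only possibly non-zero derived limits are $\lim$ and $\lim^1$, so it suffices to show that the map
\[
\partial\colon \prod_{n\geq 0}W(\Ao)\longrightarrow \prod_{n\geq 0}W(\Ao),\qquad (w_n)_n\longmapsto \bigl(w_n-[\pi]\cdot w_{n+1}\bigr)_n
\]
(subtraction taken in the Witt ring) is both injective and surjective.

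The one computation needed is the action of $[\pi]$ in ``Witt coordinates''. Writing $w\in W(\Ao)$ uniquely as $\prod_{m\geq 1}(1-a_m t^m)^{-1}$ with $a_m\in\Ao$, the projection formula $x\cdot V_m(y)=V_m(F_m(x)\cdot y)$ together with $F_m([\pi])=[\pi^m]$ shows that $[\pi]\cdot w=\prod_{m\geq 1}(1-\pi^m a_m t^m)^{-1}$; iterating, $[\pi]^k$ multiplies the $m$-th Witt coordinate by $\pi^{mk}$. Since, for $j\geq 1$, the $t^j$-coefficient of $\prod_m(1-a_m t^m)^{-1}$ is an integral polynomial in the $a_m$ each of whose monomials contains at least one factor $a_m$, it follows that $[\pi]^k\cdot w\in 1+\pi^{k}t\Ao[[t]]$ for every $w$. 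From this I would draw two consequences. First, any $(w_n)_n\in\ker\partial$ satisfies $w_0=[\pi]^n\cdot w_n$ for all $n$, so every Witt coordinate of $w_0$ lies in $\bigcap_n\pi^n\Ao=0$ (here one uses $m\geq 1$ and that the ring of definition $\Ao$ is $\pi$-adically separated); hence $w_0=1$, and the same argument applied to each $w_n$ gives $\ker\partial=0$, i.e.\ $\lim=0$. Second, since $[\pi]^k\cdot y\to 1$ $\pi$-adically as $k\to\infty$ and $W(\Ao)=1+t\Ao[[t]]$ is $\pi$-adically complete (because $\Ao$ is), for any $(y_n)_n$ the Witt-series $w_n:=\prod^{W}_{k\geq 0}[\pi]^k\cdot y_{n+k}$ converges in $W(\Ao)$ and satisfies $w_n-[\pi]\cdot w_{n+1}=y_n$; thus $\partial$ is surjective and $\lim^1=0$.

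The main point where one has to be slightly careful is the convergence of this ``geometric'' Witt-series: for the Witt addition, which is the $t$-adic multiplication of power series, the terms $[\pi]^k\cdot y$ do not tend to the unit, so one must instead observe that they converge to $1$ in the $\pi$-adic topology on coefficients, and that $1+t\Ao[[t]]$ is complete and separated for that topology precisely because $\Ao$ is $\pi$-adically complete. Everything else is bookkeeping with Witt coordinates and the completeness of a ring of definition of $A$, which exists and is $\pi$-adically complete since $A$ is a Tate ring; the hypotheses that $A$ is regular and satisfies $(\dagger)_A$ enter only through Lemma~\ref{lem.homintateco}(iv), which supplies the initial identification.
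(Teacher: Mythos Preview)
Your proof is correct and follows essentially the same route as the paper: both invoke Lemma~\ref{lem.homintateco}(iv) to reduce to the tower $\prolim{[\pi]}W(\Ao)$ and then exploit that $[\pi]^k$ multiplies the $m$-th Witt coordinate by $\pi^{mk}$, together with the $\pi$-adic completeness of~$\Ao$. The only difference is packaging: the paper rephrases the vanishing of $R^0\lim$ and $R^1\lim$ as the assertion that $W(\Ao)$ is $[\pi]$-adically complete (proved via a coefficient-wise description of $[\pi]^n W(\Ao)$), whereas you verify bijectivity of the difference map $\partial$ directly by building the telescoping Witt-series $\sum_k^{W}[\pi]^k y_{n+k}$; the two arguments are equivalent, and your version makes the role of the $\pi$-adic (rather than $t$-adic) topology a bit more explicit.
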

\begin{proof}
We have a commutative diagram with exact rows
\[\xymatrix{
0\ar[r] & W(\Ao) \ar[r]^{[\pi]^{n+1}} \ar[d]^{[\pi]} 
& W(\Ao) \ar[r]\ar[d]^{=} & W(\Ao)/[\pi]^{n+1} \ar[r]\ar[d]& 0 \phantom{\ .}\\
0\ar[r] & W(\Ao) \ar[r]^{[\pi]^{n}} & W(\Ao) \ar[r] & W(\Ao)/[\pi]^{n} \ar[r]& 0\ .\\
}
\]
From this we get
\[ R^i\underset{t\to \pi t}{\lim} \; W(\Ao) \cong 
\begin{cases} 
\ker(W(\Ao) \to \widehat{W(\Ao)}_{[\pi]}),&\text{ for } i=0, \\
\coker(W(\Ao) \to \widehat{W(\Ao)}_{[\pi]}),&\text{ for } i=1, \\
0,&\text{ otherwise, }
\end{cases}
\]
where $\widehat{W(\Ao)}_{[\pi]}$ is the $[\pi]$-adic completion of $W(\Ao)$.
Hence the assertion follows from Lemma \ref{lem.homintateco} and the following lemma.
\end{proof}

\begin{lemma}
The ring $W(\Ao)$ is $[\pi]$-adically complete.
\end{lemma}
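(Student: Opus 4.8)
The plan is to unwind the definition: I must show that the canonical map $W(\Ao)\to\varprojlim_{n}W(\Ao)/[\pi]^{n}W(\Ao)$ is an isomorphism, where $[\pi]^{n}$ means the $n$-th power for the Witt multiplication $\otimes$ and the quotient is by the associated principal ideal. The first and crucial step is to make the ideals $[\pi]^{n}W(\Ao)$ completely explicit. Under the identification $W(\Ao)=1+t\Ao[[t]]$, with group law the multiplication of power series, multiplication by the Teichm\"uller element $[\pi]=(1-\pi t)^{-1}$ is the substitution $g(t)\mapsto g(\pi t)$; this follows by writing an arbitrary element as a convergent product $\prod_{m\ge 1}(1-r_{m}t^{m})^{-1}$, together with bilinearity and continuity of $\otimes$ and the standard identity $[\pi]\otimes V_{m}([r])=V_{m}([\pi^{m}r])$ for the Verschiebung $V_{m}$ (which in power-series terms is $f(t)\mapsto f(t^{m})$). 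Since $[\pi]^{\otimes n}=[\pi^{n}]$, multiplication by $[\pi]^{n}$ is $g(t)\mapsto g(\pi^{n}t)$, whence $[\pi]^{n}W(\Ao)$ is exactly the set $V_{n}$ of power series $1+\sum_{m\ge 1}b_{m}t^{m}$ with $b_{m}\in\pi^{nm}\Ao$ for every $m\ge 1$; in particular $V_{n}$ is an open subgroup of $(1+t\Ao[[t]],\times)$ and $V_{n+1}\subseteq V_{n}$.

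Separatedness of the $[\pi]$-adic topology is then immediate: if $g=1+\sum a_{m}t^{m}$ lies in every $V_{n}$, then for each fixed $m$ its coefficient $a_{m}$ lies in $\bigcap_{n}\pi^{nm}\Ao\subseteq\bigcap_{k}\pi^{k}\Ao=0$, since $\Ao$ is $\pi$-adically complete and hence $\pi$-adically separated; so $g=1$. For completeness I would take a compatible system in the inverse limit, lift it to a sequence $g_{n}\in W(\Ao)$ with $g_{n}^{-1}g_{n+1}\in V_{n}$, and deduce by telescoping (using $V_{n+1}\subseteq V_{n}$) that $g_{n}^{-1}g_{N}\in V_{n}$ for all $N\ge n$. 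Expanding the power-series product $g_{N}=g_{n}\cdot(g_{n}^{-1}g_{N})$ coefficientwise and using that the $t^{j}$-coefficient of $g_{n}^{-1}g_{N}$ lies in $\pi^{nj}\Ao\subseteq\pi^{n}\Ao$ for $j\ge 1$, one finds $a_{m}(g_{N})\equiv a_{m}(g_{n})$ modulo $\pi^{n}\Ao$; hence for each fixed $m$ the sequence $\bigl(a_{m}(g_{N})\bigr)_{N}$ is $\pi$-adically Cauchy in $\Ao$ and converges. Let $g=1+\sum_{m}a_{m}t^{m}$ be the power series with these limit coefficients.

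It then remains to check that $g_{n}^{-1}g\in V_{n}$ for every $n$. For fixed $n$ and $m$, the $t^{m}$-coefficient of $g_{n}^{-1}g_{N}$ is a fixed finite polynomial over $\Ao$ in the coefficients of $g_{n}^{-1}$ and of $g_{N}$, so it converges $\pi$-adically to the $t^{m}$-coefficient of $g_{n}^{-1}g$ as $N\to\infty$; since it lies in the closed subgroup $\pi^{nm}\Ao$ for every $N\ge n$, the limit lies there as well. Thus $g_{n}^{-1}g\in V_{n}$ for all $n$, $g$ maps to the prescribed compatible system, and the canonical map is bijective, i.e.\ $W(\Ao)$ is $[\pi]$-adically complete.

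The point that forces the somewhat indirect argument above — and the main thing to get right — is that $W(\Ao)/V_{n}$ is \emph{not} $\prod_{m\ge 1}\Ao/\pi^{nm}\Ao$: multiplication of power series mixes coefficients in a way that is incompatible with the naive product filtration, so one cannot compute $\varprojlim_{n}W(\Ao)/V_{n}$ componentwise. The Cauchy-sequence argument, combined with the fact that each $\pi^{nm}\Ao$ is closed in the $\pi$-adic topology of $\Ao$, is what replaces that shortcut; everything else (the description of the $V_{n}$, separatedness) is routine once the action of $[\pi]$ is identified.
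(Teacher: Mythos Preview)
Your proof is correct and follows the same line as the paper's: both identify $[\pi]^{n}W(\Ao)$ explicitly as $\{1+\sum_{m\ge 1}a_{m}t^{m}\mid a_{m}\in\pi^{nm}\Ao\}$ and then check completeness directly. The paper's proof is two sentences and leaves everything after this identification as ``direct computations''; you have supplied those computations in full (including a justification for why $[\pi]\otimes g=g(\pi t)$, which the paper does not spell out either).
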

\begin{proof}
We have 
\[
f \in [\pi]^n\cdot W(\Ao)
\;\;\text{ for }
f=1+\underset{i\geq 1}{\sum} a_i t^i\; \in W(\Ao)
\]
if and only if there are $c_i\in \Ao$, $i\geq 1$, such that
\[ 
a_{i} = \pi^{ni}c_{i} \text{ for } i \geq 1.
\]
The lemma follows from this by direct computations.
\end{proof}


\section{Analytic \textit{K}-theory}
	\label{sec.anK}

\subsection{Basic constructions}
	\label{sec.basic}

Let $A$ be a $\pi$-adic ring for some $\pi \in A$. In order to define analytic $K$-theory
we consider the system of simplicial rings
\begin{equation}\label{eq.an.pros}
  \xymatrix{
\cdots \ar[r]  &  A\langle \Delta_{\pi^3}\rangle \ar[r]^{\Psi_\pi^{\pi^2}}& A\langle \Delta_{\pi^2}\rangle \ar[r]^{\Psi_\pi^{\pi^1}} & A\langle \Delta_{\pi^1}\rangle
\ar[r]^{\Psi_\pi^{\pi^0}} & A\langle \Delta_{1}\rangle  ,
  }
\end{equation}
see Subsection~\ref{sec.adicrin}. We would like to see that the pro-system of simplicial
rings~\eqref{eq.an.pros} does not depend on the choice of $\pi$ up to canonical
isomorphism. Indeed, let $\varpi   \in A$ be another element generating the topology of $A$
and consider the analog of the pro-system \eqref{eq.an.pros} with $\pi$ replaced by
$\varpi$. We have $\varpi^n=a\pi$ and that $\pi^n =b
\varpi$ for some positive integer $n$ and $a,b\in A$.
In view of the commutative square
\[
  \xymatrix{
     A\langle \Delta_{\varpi^{n(j+1)}}\rangle \ar[r]^-{\Psi_{\varpi^n}^{\varpi^{nj}}}
     \ar[d]_{\Psi_{a^{j+1}}^{\pi^{j+1}}}&  A\langle
     \Delta_{\varpi^{nj}}\rangle   \ar[d]^{\Psi_{a^{j}}^{\pi^{j}}}\\
    A\langle \Delta_{\pi^{j+1}}\rangle \ar[r]^-{\Psi_\pi^{\pi^j}} &  A\langle \Delta_{\pi^j}\rangle
  }
\]
and the analogous square with $\pi$ and $\varpi$ interchanged and $a$ replaced by $b$ we
get a canonical isomorphism of pro-simplicial rings $\Phi^\pi_\varpi$ such that the
diagram
\[
  \xymatrix@R=2.5ex@C=1.3ex{
\prolim{j} A\langle \Delta_{\pi^j}\rangle \ar[rr]_\simeq^{\Phi^\pi_\varpi}  \ar[rd] &  & \prolim{j} A\langle
\Delta_{\varpi^j}\rangle \ar[dl] \\
& A\langle \Delta_{1}\rangle  &
  }
\]
commutes.

If $A$ is a Tate ring and $\pi \in A$ is a topologically nilpotent unit, one similarly constructs a pro-simplicial ring 
$\lprolim{j} A\langle \Delta_{\pi^j}\rangle$ and one shows that  it does not
depend on the choice of $\pi$ up to canonical isomorphism.

In the remainder of this subsection we assume that $A$ is either an adic ring or a Tate ring.
Let $F$ be a functor from the category of adic rings or from the category of
Tate rings to connective spectra. We set 
\[
F^{(j)}(A) = F(A\langle \Delta_{\pi^j}\rangle )
\]
and
define a new functor $F^\an$ to the $\infty$-category of
pro-spectra by
\[
F^\an (A) = \prolim{j} F^{(j)}(A).  
\]
Here $F(A\langle \Delta_{\pi^j}\rangle)$ denotes the geometric realization of the simplicial spectrum $[p] \mapsto F(A\langle \Delta^{p}_{\pi^j}\rangle)$.  

\begin{lem}
	\label{lem.an-ss}
For any $A$ and any functor $F$ as above, there is a convergent spectral sequence of pro-abelian groups
    \[
	E^{1}_{pq} = \prolim{j} \pi_{q} F(A \langle \Delta^{p}_{\pi^{j}} \rangle ) \Longrightarrow \pi_{p+q} F^{\an}(A) 
    \]
which yields an isomorphism of pro-groups
   \[
  \pi_0 F^\an(A) = \prolim{j} \coker( \pi_0 F(A \langle \Delta_{\pi^j}^1\rangle)
      \xrightarrow{ \partial_{0} - \partial_{1} } \pi_0 F(A)  ).
    \]
\end{lem}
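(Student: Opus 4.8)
The plan is to construct the spectral sequence from the simplicial filtration and then extract the low-degree consequence. Recall that $F^{\an}(A) = \prolim{j} F^{(j)}(A)$ where $F^{(j)}(A) = |[p] \mapsto F(A\langle \Delta^p_{\pi^j}\rangle)|$ is a geometric realization of a simplicial spectrum. For each fixed $j$, the skeletal filtration of the simplicial spectrum gives the usual spectral sequence of a simplicial object, with $E^1_{pq} = \pi_q F(A\langle\Delta^p_{\pi^j}\rangle)$ converging to $\pi_{p+q}F^{(j)}(A)$; since $F$ takes values in connective spectra, this is a first-quadrant (hence conditionally and in fact strongly) convergent spectral sequence. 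The first step is to check that this construction is functorial in $j$, i.e.\ that the transition maps $\Psi^{\pi^{j}}_{\pi}$ induce a map of towers of spectral sequences, so that we may pass to the pro-category. Since $\Pro(\Ab)$ is abelian and filtered limits are exact there (indeed, the spectral sequence construction commutes with the formation of pro-objects because each $E^r$-page is built from kernels and cokernels, which exist level-wise in $\Pro(\Ab)$ by Lemma~\ref{lem:level-wise-limit} applied to $\mathbf{C} = \Ab$), we obtain a spectral sequence of pro-abelian groups $E^1_{pq} = \prolim{j}\pi_q F(A\langle\Delta^p_{\pi^j}\rangle) \Rightarrow \pi_{p+q}F^{\an}(A)$. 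For convergence one notes that, working in a fixed cofiltered index category $I$ and representing everything level-wise, the spectral sequences in each degree are first-quadrant, so in each total degree only finitely many columns contribute and the abutment filtration is finite; taking the pro-object of these does not disturb strong convergence.

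The second step is to read off the statement about $\pi_0$. The abutment filtration on $\pi_0 F^{\an}(A)$ is finite with a single nonzero graded piece, namely $E^{\infty}_{00}$, so $\pi_0 F^{\an}(A) \cong E^{\infty}_{00}$. In a first-quadrant spectral sequence the entry $E^r_{00}$ receives no incoming differentials and has only the outgoing differential $d^1\colon E^1_{10} \to E^1_{00}$ (all higher $d^r$ out of the $(0,0)$ spot land in negative bidegree and vanish), so $E^{\infty}_{00} = E^2_{00} = \coker(d^1\colon E^1_{10} \to E^1_{00})$. Now $E^1_{10} = \prolim{j}\pi_0 F(A\langle\Delta^1_{\pi^j}\rangle)$ and $E^1_{00} = \prolim{j}\pi_0 F(A\langle\Delta^0_{\pi^j}\rangle) = \prolim{j}\pi_0 F(A)$, using that $A\langle\Delta^0_{\pi^j}\rangle$ is the constant simplicial degree-zero term, i.e.\ $A$ itself (the simplex of dimension $0$ is a point). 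The differential $d^1$ is the alternating sum of the two face maps $\partial_0,\partial_1\colon A\langle\Delta^1_{\pi^j}\rangle \to A\langle\Delta^0_{\pi^j}\rangle = A$, which gives exactly $\partial_0 - \partial_1$ on $\pi_0$. Hence $\pi_0 F^{\an}(A) \cong \prolim{j}\coker(\partial_0 - \partial_1\colon \pi_0 F(A\langle\Delta^1_{\pi^j}\rangle) \to \pi_0 F(A))$, as claimed.

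The main obstacle I anticipate is not any single hard computation but the bookkeeping needed to make the passage to pro-objects rigorous: one must verify that the spectral sequence of a simplicial spectrum can be constructed sufficiently functorially (as a functor on the relevant diagram category) that applying $\Pro(-)$ yields a spectral sequence of pro-groups with the correct $E^1$-page and abutment, and that convergence is preserved. The cleanest route is to fix a level representation of the pro-system, run the standard simplicial spectral sequence level-wise, and then invoke that each page and the abutment filtration are built from finite limits/colimits in $\Ab$, which commute with the cofiltered limit defining the pro-object by Lemma~\ref{lem:level-wise-limit} (for finite limits) together with the exactness of filtered colimits in $\Ab$ (for the cokernels); strong convergence then follows from the first-quadrant property, which is where connectivity of $F$ is used. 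A minor point to address is the identification $A\langle\Delta^0_{\pi^j}\rangle \cong A$, which is immediate from~\eqref{comppolyadic} since $A\langle\Delta^0_{a_0}\rangle = A\langle t_0\rangle/(t_0 - a_0) \cong A$, and the corresponding identification of the face maps in simplicial degree $1$.
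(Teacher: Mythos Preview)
Your proposal is correct and follows essentially the same approach as the paper: build the classical $E^{1}$-spectral sequence of a simplicial spectrum level-wise in $j$ (the paper cites \cite[Prop.~1.2.4.5]{HA}), use connectivity of $F$ to see these are uniformly first-quadrant so that passing to pro-objects preserves convergence, and read off the $\pi_{0}$-statement from $E^{\infty}_{00} = E^{2}_{00}$. The paper's argument is just a terse version of yours.
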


\begin{proof}
For fixed $j$ we have the classical convergent  $E^{1}$-spectral sequence of a simplicial spectrum (see e.g.~\cite[Prop.~1.2.4.5]{HA})
\[
 \pi_{q} F(A \langle \Delta^{p}_{\pi^{j}} \rangle ) \Longrightarrow \pi_{p+q} F^{(j)}(A). 
\]
Since $F$ has valued in connective spectra, 
these are bounded uniformly in~$j$. They hence yield the desired convergent spectral sequence in the abelian category $\Pro(\Ab)$.
The asserted isomorphism is an immediate consequence.
\end{proof}

\begin{lem}
	\label{lem:simplicial-homotopy}
For fixed $j$, the maps of simplicial rings $A\langle t \rangle \langle \Delta_{\pi^{j}} \rangle  \to  A\langle t \rangle \langle \Delta_{\pi^{j}} \rangle$ induced by $t \mapsto \pi^{j}t$ and $t \mapsto 0$, respectively, are simplicially homotopic.
\end{lem}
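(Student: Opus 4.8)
The plan is to write down, by hand, a simplicial homotopy from $\phi_{0}$ to $\phi_{1}$. This will be nothing but the algebraic incarnation of the standard homotopy associated with the simplicial interval $\Delta^{1}$: the two ``endpoints'' $t\mapsto\pi^{j}t$ and $t\mapsto 0$ are the restrictions of the interpolation $t\mapsto u\cdot t$ to the two vertices $u=\pi^{j}$ and $u=0$ of the $1$-simplex of diameter $\pi^{j}$, and that $1$-simplex already sits inside our simplicial ring in degree $1$.

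Write $B=A\langle t\rangle$ and $R_{m}=B\langle t_{0},\dots,t_{m}\rangle/(t_{0}+\dots+t_{m}-\pi^{j})$, so $R_{\bullet}=A\langle t\rangle\langle\Delta_{\pi^{j}}\rangle$, with face maps $d_{\ell}$ sending $t_{\ell}\mapsto 0$ (reindexing the remaining $t_{k}$, and $t\mapsto t$) and degeneracies $s_{\ell}$ sending $t_{\ell}\mapsto t_{\ell}+t_{\ell+1}$ (reindexing, $t\mapsto t$). Degreewise, $\phi_{0}$ and $\phi_{1}$ are the $A$-algebra endomorphisms of $R_{m}$ fixing all $t_{k}$ with $t\mapsto\pi^{j}t$, resp.\ $t\mapsto 0$. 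First I would define, for $0\le i\le m$, the $A$-algebra homomorphism
\[
h_{i}\colon R_{m}\to R_{m+1},\qquad h_{i}(t_{k})=s_{i}(t_{k})\ \ (0\le k\le m),\qquad h_{i}(t)=(t_{i+1}+t_{i+2}+\dots+t_{m+1})\,t.
\]
This is well defined: the defining relation is preserved because $\sum_{k=0}^{m}s_{i}(t_{k})=\sum_{l=0}^{m+1}t_{l}=\pi^{j}$ in $R_{m+1}$, and the substitution $t\mapsto(t_{i+1}+\dots+t_{m+1})t$ extends continuously to the completion since $t_{i+1}+\dots+t_{m+1}$ is power bounded (in the adic case this is immediate).

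Then I would check that $(h_{i})_{0\le i\le m}$ satisfies the simplicial homotopy identities relating it to the $d_{\ell}$ and $s_{\ell}$. This splits into two parts. On the simplex variables $t_{0},\dots,t_{m}$ there is nothing to do: by construction the ``$t_{k}$-component'' of $h_{i}$ is exactly $s_{i}$, so each of these identities reduces to one of the standard simplicial identities, which hold because $R_{\bullet}$ is a simplicial ring. On the variable $t$ one performs a short direct computation with the sums $t_{i+1}+\dots+t_{m+1}$; in particular $d_{0}h_{0}$ sends $t\mapsto(t_{0}+\dots+t_{m})t=\pi^{j}t$ and fixes every $t_{k}$, so $d_{0}h_{0}=\phi_{0}$, while $d_{m+1}h_{m}$ sends $t\mapsto 0\cdot t=0$ and fixes every $t_{k}$, so $d_{m+1}h_{m}=\phi_{1}$. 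Hence the $h_{i}$ exhibit $\phi_{0}$ and $\phi_{1}$ as simplicially homotopic.

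The only genuine content is choosing the interpolating formula correctly; once it is in place the verification is entirely formal, the simplex-variable part being a tautology and the $t$-part a one-line computation. The sole step deserving a word of care is the continuity of the substitution $t\mapsto(t_{i+1}+\dots+t_{m+1})t$ on the completed (Tate or adic) ring, which I would settle using that each $t_{l}$, hence each finite sum of them, is power bounded.
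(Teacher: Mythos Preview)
Your proof is correct and takes essentially the same approach as the paper: both write down an explicit simplicial homotopy $h_{i}\colon R_{m}\to R_{m+1}$ acting as $s_{i}$ on the simplex variables and multiplying $t$ by a partial sum of the $t_{l}$. The only difference is a convention: the paper uses $h_{\ell}(t)=t(t_{0}+\cdots+t_{\ell})$ whereas you use the complementary partial sum $h_{i}(t)=(t_{i+1}+\cdots+t_{m+1})t$, so your homotopy runs from $\phi_{0}$ to $\phi_{1}$ while the paper's runs the other way. Your write-up is in fact a bit more careful, since you point out the continuity/power-boundedness check needed to know the substitution extends to the completion.
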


\begin{proof}
For fixed $n$ and $0 \leq \ell \leq n$ we have ring homomorphisms 
\[
h_{\ell}\colon   A\langle t \rangle \langle \Delta^{n}_{\pi^{j}} \rangle  \to  A\langle t \rangle \langle \Delta^{n+1}_{\pi^{j}} \rangle
\]
induced by $h_{\ell}(t) = t(t_{0} + \dots + t_{\ell})$ and $h_{\ell}(t_{i}) = s_{\ell}(t_{i})$, where $s_{\ell}$ is the $\ell$-th degeneracy map and the $t_{i}$ are the coordinates on $\Delta^{n}_{\pi^{j}}$. These give the desired homotopy.
\end{proof}

Given variables $\underline t =(t_1,\ldots , t_m)$, we let 
\[
\hat N^\pi_m F(A)  = {\rm cofib} (F(A)  \to  \prolim{\underline t\mapsto \pi \underline t}
  F(A\langle \underline t \rangle ) )
  \]
as in Subsection~\ref{subsec.anahom}.

\begin{prop}
	\label{prop.Fan-prohi}
For each $j$, the natural map   
\[
F^{(j)}(A) \to \prolim{t \mapsto \pi t} F^{(j)}(A \langle t \rangle)
\]
is an equivalence of pro-spectra.
In particular, the 
   pro-spectrum $\hat N^\pi_m F^\an (A) $ is contractible for any $m\ge 0$.
\end{prop}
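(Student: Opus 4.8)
The plan is to prove the ``in particular'' clause as a formal consequence of the main assertion, and then to concentrate on the main assertion, namely that $F^{(j)}(A) \to \prolim{t \mapsto \pi t} F^{(j)}(A\langle t\rangle)$ is an equivalence for each fixed $j$. For the reduction: by Lemma~\ref{lem:level-wise-limit} the level-wise cofibre computes the cofibre in $\Pro(\Sp)$, so $\hat N^\pi_m F^\an(A) = \lprolim{j} \hat N^\pi_m F^{(j)}(A)$, and it suffices to see each $\hat N^\pi_m F^{(j)}(A)$ is contractible. For one variable this is exactly the statement that $F^{(j)}(A) \to \prolim{t\mapsto \pi t} F^{(j)}(A\langle t\rangle)$ is an equivalence; for $m$ variables one iterates, using that $A\langle t_1,\dots,t_m\rangle = A\langle t_1,\dots,t_{m-1}\rangle\langle t_m\rangle$ and that the construction $B \mapsto F^{(j)}(B)$ makes sense on the category of adic (resp.\ Tate) rings, applying the one-variable case to the base ring $A\langle t_1,\dots,t_{m-1}\rangle$ and its powers under the transition maps.

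For the one-variable statement, the key input is Lemma~\ref{lem:simplicial-homotopy}: for fixed $j$, the two ring maps $A\langle t\rangle\langle \Delta_{\pi^j}\rangle \to A\langle t\rangle\langle \Delta_{\pi^j}\rangle$ given by $t\mapsto \pi^j t$ and $t\mapsto 0$ are simplicially homotopic. Since $F$ is a functor to spectra and $F^{(j)}$ is formed by geometric realization of the associated simplicial spectrum, simplicially homotopic maps of simplicial rings induce homotopic maps of spectra; hence the endomorphism of $F^{(j)}(A\langle t\rangle)$ induced by $t\mapsto \pi^j t$ agrees, up to homotopy, with the one induced by $t \mapsto 0$, which factors through $F^{(j)}(A)$ via the zero section. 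The transition maps in the pro-system $\prolim{t\mapsto \pi t} F^{(j)}(A\langle t\rangle)$ are induced by $t \mapsto \pi t$; composing $j$ (or, more simply, using the cofinal subsystem indexed so that) consecutive transition maps amounts to $t \mapsto \pi^j t$. Thus in the pro-system, the $j$-fold transition map is homotopic to a map that factors through $F^{(j)}(A)$, with the factorization through the zero section splitting $F^{(j)}(A) \to F^{(j)}(A\langle t\rangle)$.

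I would then finish by the standard ``telescoping'' argument for pro-objects: the inclusion $F^{(j)}(A) \to \prolim{t\mapsto\pi t} F^{(j)}(A\langle t\rangle)$ has the zero section as a retraction, and on the other hand the composite $\prolim{t\mapsto\pi t} F^{(j)}(A\langle t\rangle) \to F^{(j)}(A) \to \prolim{t\mapsto\pi t} F^{(j)}(A\langle t\rangle)$ is, levelwise up to the shift by the homotopy just described, the identity of the pro-object; since a pro-object is isomorphic to any of its cofinal reindexings and the transition maps of the shifted system are homotopic to the original ones, this composite is homotopic to the identity. Concretely one checks that the two maps between $F^{(j)}(A)$ (constant pro-spectrum) and $\prolim{t\mapsto \pi t}F^{(j)}(A\langle t\rangle)$ are mutually inverse using formula \eqref{eq:mapping-spaces-in-Pro-cat} for mapping spaces in $\Pro(\Sp)$, where the homotopy of Lemma~\ref{lem:simplicial-homotopy} supplies the needed nullhomotopy of $(t\mapsto \pi^j t) - (t\mapsto 0)$ after passing to the colimit over the index set. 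The main obstacle is bookkeeping: making precise, within the $\infty$-categorical pro-formalism, that ``the $j$-fold transition map is homotopic to a map factoring through the constant term'' gives a genuine inverse in $\Pro(\Sp)$ rather than merely a levelwise statement; this is handled by writing both candidate maps as morphisms of pro-spectra and comparing them via \eqref{eq:mapping-spaces-in-Pro-cat}, using that geometric realization of the simplicial homotopy in Lemma~\ref{lem:simplicial-homotopy} produces a genuine path in the relevant mapping space, compatibly with the cofiltered index set.
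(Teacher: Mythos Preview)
Your proposal is correct and follows essentially the same route as the paper: use the zero section as a left inverse, invoke Lemma~\ref{lem:simplicial-homotopy} to see that the $j$-fold transition map $t\mapsto\pi^j t$ on $F^{(j)}(A\langle t\rangle)$ is homotopic to the map factoring through $F^{(j)}(A)$, and conclude by the standard pro-object argument; the multi-variable case is then obtained by induction. The only difference is cosmetic: the paper dispatches the pro-object step in a single sentence (``this implies the first claim''), whereas you spell out the telescoping/bookkeeping in detail.
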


\begin{proof}
The map $A \langle t \rangle \to A$, $t \mapsto 0$, induces a left inverse. As any functor preserves simplicial homotopies, Lemma~\ref{lem:simplicial-homotopy} implies that each $j$-fold composition of transition maps $F^{(j)}(A \langle t \rangle) \to F^{(j)}(A \langle t \rangle)$ factors through $F^{(j)}(A)$ up to homotopy. This implies the first claim. Taking $\prolim{}$ over $j$ implies that 
\[
F^{\an}(A) \xrightarrow{\simeq} \prolim{t \mapsto \pi t} F^{\an}(A \langle t \rangle)
\]
is also an equivalence. By induction on $m$  this gives the second assertion.
\end{proof}

If $G\to F$ is the connected covering of $F$,  we consider the fibre sequence
\[
G\to F \to  F_0
\]
where
$F_0 = { H}(\pi_0 F)$ denotes the Eilenberg--MacLane spectrum living in homotopy degree $0$.

\begin{lem} 
	\label{lem.connected.cover.an}
For an adic ring resp.\ Tate ring $A$
  there is a fibre sequence of pro-spectra
  \[
G^\an(A) \to F^\an (A) \to \prolim{j}  F_0 (  A\langle \Delta_{\pi^j}\rangle ). 
\]
If $\hat N^\pi_m F_0(A)$ is  contractible for any number of variables
$m\ge 0$,
then there is a weak fibre sequence of pro-spectra
\[
G^\an(A) \to F^\an(A) \to  F_0 (  A ). 
\]
\end{lem}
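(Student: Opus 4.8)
The plan is to establish the fibre sequence in two steps. First I would observe that $G \to F \to F_0$ is a fibre sequence of functors to connective spectra, and since the construction $(-)^\an$ is defined by applying the functor level-wise to the simplicial ring $A\langle \Delta_{\pi^j}\rangle$ and then taking $\lprolim{j}$, it preserves fibre sequences: geometric realization of simplicial spectra preserves fibre sequences of connective spectra (this is where connectivity is used, via the convergent spectral sequence of Lemma~\ref{lem.an-ss}), and level-wise cofiltered limits of pro-spectra are exact by Lemma~\ref{lem:level-wise-limit}. Hence we get the first asserted fibre sequence
\[
G^\an(A) \to F^\an(A) \to F_0^\an(A) = \prolim{j} F_0(A\langle \Delta_{\pi^j}\rangle).
\]

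For the second part, the point is to identify the pro-spectrum $F_0^\an(A)$ with the constant pro-spectrum $F_0(A)$, at least up to weak equivalence, under the hypothesis that $\hat N^\pi_m F_0(A)$ is contractible for all $m \geq 0$. First I would note that $F_0$ is an Eilenberg--MacLane functor concentrated in degree $0$, so $F_0^\an(A)$ has homotopy pro-groups computed by the spectral sequence of Lemma~\ref{lem.an-ss}, which here degenerates: $E^1_{pq} = 0$ for $q \neq 0$, and $E^1_{p,0} = \pi_0 F_0(A\langle \Delta^p_{\pi^j}\rangle) = \pi_0 F(A\langle \Delta^p_{\pi^j}\rangle)$. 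So $\pi_i F_0^\an(A) = 0$ for $i<0$ and $\pi_i F_0^\an(A)$ is the $i$-th homotopy pro-group of the cosimplicial... rather, of the simplicial abelian group $[p]\mapsto \pi_0 F(A\langle\Delta^p_{\pi^j}\rangle)$, in the pro-direction over $j$. Using the isomorphism $A\langle x_1,\dots,x_p\rangle_{\pi^j} \cong A\langle \Delta^p_{\pi^j}\rangle$, this simplicial pro-abelian group is exactly the one computing $\pi_* \hat N^\pi_m F_0(A)$-type invariants; more precisely, $F_0^\an$ is obtained from $F_0$ by the same "analytic homotopification" procedure, and Proposition~\ref{prop.Fan-prohi} already tells us $\hat N^\pi_m F_0^\an(A)$ is contractible. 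Combined with the hypothesis that $\hat N^\pi_m F_0(A)$ is already contractible, a standard argument (the simplicial object $[p]\mapsto F_0(A\langle \Delta^p_{\pi^j}\rangle)$ becomes "constant" in the relevant sense because each simplicial coordinate contributes a contractible direction) shows that the augmentation $F_0(A) \to F_0^\an(A)$ is a weak equivalence of pro-spectra. Splicing this weak equivalence into the first fibre sequence yields the weak fibre sequence
\[
G^\an(A) \to F^\an(A) \to F_0(A).
\]

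Concretely, for the identification $F_0(A)\xrightarrow{\simeq} F_0^\an(A)$ I would argue as follows. Since $F_0$ takes values in $H\Ab$, write $M([p]) = \pi_0 F(A\langle \Delta^p_{\pi^j}\rangle)$ for the simplicial pro-abelian group. The augmentation to the constant simplicial object $\pi_0 F(A)$ has a section (the unit $A \to A\langle\Delta^p_{\pi^j}\rangle$), so it suffices to show the augmented simplicial pro-abelian group is aspherical, i.e.\ that the normalized chain complex of the kernel of $M \to \pi_0 F(A)$ has vanishing homotopy pro-groups. The hypothesis on $\hat N^\pi_m F_0$ says precisely that adjoining disc-coordinates of radius $\pi^j$ (in the pro-system over $j$) does not change $\pi_0 F$; iterating over the $p+1$ coordinates of $\Delta^p$ and using the linear relation cutting out the simplex, one sees each extra simplicial degree is "collapsible". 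This is the technical heart.

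The main obstacle I expect is precisely this last identification: transferring the hypothesis "$\hat N^\pi_m F_0(A)$ contractible for all $m$" into the statement "the simplicial pro-abelian group $[p]\mapsto \pi_0 F(A\langle\Delta^p_{\pi^j}\rangle)$ is aspherical over the pro-system in $j$". One has to be careful that $\hat N^\pi_m$ is phrased in terms of \emph{polydiscs} $A\langle t_1,\dots,t_m\rangle$ with the rescaling $\underline t \mapsto \pi\underline t$, whereas the simplicial rings involve \emph{simplices} $A\langle\Delta^p_{\pi^j}\rangle$; the coordinate change $A\langle x_1,\dots,x_p\rangle \cong A\langle \Delta^p_{a_0}\rangle$ from Subsection~\ref{sec.adicrin} bridges these, but one must check it is compatible with the transition maps $\Psi^{\pi^j}_\pi$ on the simplex side and $\underline t\mapsto\pi\underline t$ on the polydisc side (this is essentially Remark~\ref{rmk.Tatenormedsimplicial}). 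Once that bookkeeping is in place, the asphericity follows from the usual "extra degeneracy"/contracting-homotopy type argument, analogous to the proof of Proposition~\ref{prop.Fan-prohi} but at the level of $\pi_0$.
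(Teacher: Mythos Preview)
Your approach is essentially the same as the paper's, but you make the second part harder than necessary.

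For the first fibre sequence, note that geometric realization of simplicial spectra preserves fibre sequences simply because in a stable $\infty$-category colimits commute with finite limits; connectivity is not needed here, and the spectral sequence plays no role. The paper just says ``fibre sequences of spectra are preserved by geometric realization'' and passes to $\prolim{j}$.

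For the second part, you correctly observe that the homotopy pro-groups of $\lprolim{j} F_{0}(A\langle\Delta_{\pi^{j}}\rangle)$ agree with those of the simplicial pro-abelian group $[m]\mapsto \lprolim{j}\pi_{0}F(A\langle\Delta^{m}_{\pi^{j}}\rangle)$; this is exactly what the paper records. But then no asphericity or extra-degeneracy argument is required. The isomorphism $A\langle x_{1},\ldots,x_{m}\rangle \xrightarrow{\simeq} A\langle\Delta^{m}_{\pi^{j}}\rangle$ intertwines the transition map $\underline{t}\mapsto\pi\underline{t}$ with $\Psi^{\pi^{j}}_{\pi}$, so the pro-rings $\lprolim{\underline{t}\mapsto\pi\underline{t}} A\langle\underline{t}\rangle$ and $\lprolim{j} A\langle\Delta^{m}_{\pi^{j}}\rangle$ are canonically isomorphic for each fixed $m$. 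The hypothesis ``$\hat N^{\pi}_{m}F_{0}(A)$ contractible for all $m$'' therefore says directly that the augmentation $\pi_{0}F(A)\to \lprolim{j}\pi_{0}F(A\langle\Delta^{m}_{\pi^{j}}\rangle)$ is an isomorphism of pro-groups \emph{for every simplicial degree $m$}. A level-wise isomorphism of simplicial pro-abelian groups is a weak equivalence; there is nothing more to show. The iteration over coordinates and the contracting-homotopy analogy you sketch are unnecessary detours.
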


Note that $\hat N^\pi_m F_0(A)$ is  contractible if and only if the natural map $F_{0}(A) \to \lprolim{j} F_{0}(A \langle \Delta^{m}_{\pi^{j}} \rangle)$ is an equivalence.

\begin{proof}
For every $j$ and $m$ there is a fibre sequence of spectra
\[
G(A\langle \Delta^{m}_{\pi^{j}} \rangle )  \to F( A\langle \Delta^{m}_{\pi^{j}} \rangle ) \to F_{0}( A\langle \Delta^{m}_{\pi^{j}} \rangle ).
\]
Since fibre sequences of spectra are preserved by geometric realization, this induces a fibre sequence
\[
G^{(j)}(A) \to F^{(j)}(A) \to F_{0}(A\langle \Delta_{\pi^{j}} \rangle )
\]
for every $j$. The first claim follows by taking $\lprolim{}$ over $j$.

For the second claim we use the fact that the homotopy pro-groups of the pro-spectrum $\lprolim{j} F_{0}(A \langle \Delta_{\pi^{j}} \rangle )$ are isomorphic to the homotopy pro-groups of the pro-simplicial abelian group $\lprolim{j} (\pi_{0}F)(A \langle \Delta_{\pi^{j}} \rangle)$.
\end{proof}

\subsection{Analytic \textit{K}-theory of adic rings}
	\label{sec.adicank}

We apply the construction of the previous subsection to the connective $K$-theory of adic rings. Let $A$ be an adic ring, and let $\pi \in A$ generate the topology of $A$.

\begin{defn}
The \emph{analytic $K$-theory of $A$} is defined as the pro-spectrum
\[
\ka (A) = \prolim{j} k(A\langle \Delta_{\pi^j}\rangle ).
\]
\end{defn}
By the discussion in Subsection~\ref{sec.basic} this is independent of the choice of $\pi$ up to canonical equivalence.
In the following, we also use the connective covering
\[
\kco(A) = \prolim{n} k(A/(\pi^{n}))
\]
of the continuous $K$-theory  $\Kco(A)$ of $A$. 
As a bridge between continuous and analytic $K$-theory we further define the `mixed' version
\[
k^{\cont,\an} = \prolim{j,n} k (A\langle  \Delta_{\pi^j}  \rangle /(\pi^n) ) .
\]

\begin{lem}\label{kan.adic.lem1}
  The natural map
  \[
   \kco (A) \xrightarrow{\simeq}    k^{\cont,\an}(A)  
 \]
is an equivalence of pro-spectra.
\end{lem}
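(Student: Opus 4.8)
The plan is to push the outer limit over $n$ through, reducing to a statement about a pro-system of polynomial rings in which $\pi$ has become nilpotent. Unwinding the definitions, $\kco(A) = \prolim{n} k(A/(\pi^n))$, while $k^{\cont,\an}(A)$ is the iterated limit $\prolim{n}\bigl(\prolim{j} k(A\langle\Delta_{\pi^j}\rangle/(\pi^n))\bigr)$, and the natural map respects the index $n$. So it suffices to show that for each fixed $n$ the map
\[
k(A/(\pi^n)) \longrightarrow \prolim{j} k\bigl(A\langle\Delta_{\pi^j}\rangle/(\pi^n)\bigr),
\]
induced by the structure maps $\iota_j\colon A/(\pi^n)\to A\langle\Delta^m_{\pi^j}\rangle/(\pi^n)$, is an equivalence in $\Pro(\Sp)$, and that this is compatible with $n$.

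Next I would fix $n$ and note that reduction modulo $\pi^n$ kills the completion, so $A\langle\Delta^m_{\pi^j}\rangle/(\pi^n)$ is just $(A/(\pi^n))[t_0,\dots,t_m]/(t_0+\cdots+t_m-\pi^j)$, naturally in $[m]$. For $j\ge n$ the element $\pi^j$ vanishes in $A/(\pi^n)$, so "set every $t_i$ to $0$" is a well-defined ring homomorphism $\epsilon_j\colon A\langle\Delta^m_{\pi^j}\rangle/(\pi^n)\to A/(\pi^n)$, natural in $[m]$, hence a morphism of simplicial rings onto the constant simplicial ring $A/(\pi^n)$. It splits $\iota_j$, i.e.\ $\epsilon_j\circ\iota_j=\id$; and since $\Psi_\pi^{\pi^j}$ sends $t_i$ to $\pi t_i$, the $n$-fold transition map $A\langle\Delta_{\pi^{j+n}}\rangle/(\pi^n)\to A\langle\Delta_{\pi^j}\rangle/(\pi^n)$ sends every $t_i$ to $\pi^n t_i = 0$, so it factors as $\iota_j\circ\epsilon_{j+n}$. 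Since the subsystem $\{j\ge n\}$ is cofinal, the families $(\iota_j)$ and $(\epsilon_j)_{j\ge n}$ — which are compatible with the transition maps — assemble into mutually inverse morphisms in $\Pro(\Fun(\Delta^{\op},\CRing))$, identifying the pro-system of simplicial rings $\prolim{j} A\langle\Delta_{\pi^j}\rangle/(\pi^n)$ with the constant simplicial ring $A/(\pi^n)$. This is the analytic analogue of Lemma~\ref{lem.Xncontr}: nilpotence of $\pi$ modulo $\pi^n$ collapses the pro-system, and no regularity of $A$ is needed, consistent with the hypotheses of the lemma.

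Finally I would apply the functor "degree-wise connective $K$-theory followed by geometric realization", $\Fun(\Delta^{\op},\CRing)\to\Sp$, $R_\bullet\mapsto k(R_\bullet)$; by the material of Section~\ref{sec:prelims} it induces a functor on pro-categories which preserves equivalences, and it sends the constant simplicial ring $A/(\pi^n)$ to $k(A/(\pi^n))$. Applied to the identification of the previous paragraph this yields the desired equivalence $k(A/(\pi^n))\xrightarrow{\simeq}\prolim{j} k(A\langle\Delta_{\pi^j}\rangle/(\pi^n))$, visibly induced by the structure maps $\iota_j$; since $\iota_j$ and $\epsilon_j$ are compatible with varying $n$, passing to $\prolim{n}$ gives the lemma. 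The one point that deserves care is the verification that $\epsilon_j$ really is natural in the simplicial variable $[m]$ — which is exactly where the hypothesis $j\ge n$ enters, since the relation $\sum t_i-\pi^j$ must map to $0$ under "all $t_i\mapsto 0$" — together with the bookkeeping that one is comparing pro-objects of simplicial rings before applying $k$; the rest is formal.
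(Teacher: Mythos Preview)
Your proof is correct and follows exactly the same approach as the paper's: for fixed $n$ the map $A/(\pi^n)\to\lprolim{j}A\langle\Delta_{\pi^j}\rangle/(\pi^n)$ is an isomorphism of pro-simplicial rings, hence induces an equivalence after applying $k$, and one then passes to $\lprolim{n}$. The paper states this in two sentences; you have simply unpacked the verification that the pro-system of simplicial rings collapses (via the explicit retraction $\epsilon_j$ and the observation that the $n$-fold transition map is $\iota_j\circ\epsilon_{j+n}$), which is precisely the content behind the paper's assertion.
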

\begin{proof}
For fixed $n$ the map $A/(\pi^{n}) \to \lprolim{j} A \langle \Delta_{\pi^{j}} \rangle / (\pi^{n})$ is an isomorphism of pro-simplicial rings. 
Hence it induces an equivalence of pro-spectra 
\[
k(A/(\pi^{n})) \xrightarrow{\simeq} \prolim{j} k( A \langle \Delta_{\pi^{j}} \rangle / (\pi^{n}) )
\]
The claim follows by taking the limit over $n$.
\end{proof}

\begin{prop}\label{kan.adic.prop2}
  The map
  \[
    \ka (A) \to   k^{\cont,\an}(A)  
  \]
 is a weak equivalence of pro-spectra.
\end{prop}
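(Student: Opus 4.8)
The plan is to compare the three pro-spectra $\ka(A)$, $k^{\cont,\an}(A)$, and $\kco(A)$ by exploiting the fact that the `mixed' version $k^{\cont,\an}$ is obtained from $\ka$ by a further $\pi$-adic completion in the $\Delta$-direction. More precisely, write $k^{\cont,\an}(A) = \lprolim{j}\bigl(\lprolim{n} k(A\langle\Delta_{\pi^j}\rangle/(\pi^n))\bigr)$, and for each fixed $j$ consider the natural map
\[
k(A\langle\Delta_{\pi^j}\rangle) \to \prolim{n} k(A\langle\Delta_{\pi^j}\rangle/(\pi^n)).
\]
Taking $\lprolim{j}$ of these maps yields exactly the map of the proposition. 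So it suffices to understand, after passing to the pro-system over $j$, the homotopy fibre $G^{(j)}(A)$ of $k(A\langle\Delta_{\pi^j}\rangle) \to \lprolim{n} k(A\langle\Delta_{\pi^j}\rangle/(\pi^n))$, which is the degree-wise $K$-theory with supports, i.e.\ (the connective cover of) $\bar K$ of the simplicial ring $A\langle\Delta_{\pi^j}\rangle$ along the ideal generated by $\pi$.

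The key step is then to show that the pro-spectrum $\lprolim{j} \bar k(A\langle\Delta_{\pi^j}\rangle)$ is weakly contractible. Here I would use the framework of Subsection~\ref{sec.basic}: the assignment $B \mapsto \bar k(B)$ (degree-wise $K$-theory with supports along $\pi$) is a functor $F$ from adic rings to connective spectra, and $\lprolim{j}\bar k(A\langle\Delta_{\pi^j}\rangle)$ is precisely $F^{\an}(A)$ in the notation there. By excision \cite[Thm.~7.1]{thomason} the support term $\bar K(B)$ depends only on the ideal $B/\pi B$ (more precisely on the non-unital ring $\pi B$), so for fixed $j$ and $n$ one has $\bar k(A\langle\Delta^n_{\pi^j}\rangle) \simeq \bar k(A\langle\Delta^n_{1}\rangle)$ via the isomorphism $A\langle\Delta^n_{\pi^j}\rangle/\pi \cong A\langle\Delta^n_1\rangle/\pi$ coming from Remark~\ref{rmk.Tatenormedsimplicial} (or directly from the identification $A\langle\Delta^m_{a_0}\rangle \cong A\langle x_1,\dots,x_m\rangle$). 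Thus the transition maps $\Psi^{\pi^{j}}_\pi$ in the pro-system become, on $\bar k$, compatible with multiplication by a power of $\pi$ on the polynomial variables; combined with the simplicial homotopy of Lemma~\ref{lem:simplicial-homotopy}, the argument of Proposition~\ref{prop.Fan-prohi} applies verbatim to $F = \bar k$, showing that $\hat N^\pi_m F^{\an}(A)$ is contractible for all $m$.

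Granting that, the fibre sequence $G^{(j)}(A) \to k^{(j)}(A) \to \bigl(\lprolim{n} k(A\langle\Delta_{\pi^j}\rangle/(\pi^n))\bigr)$ for each $j$, with $G^{(j)} = \bar k \circ (A\langle\Delta_{\pi^j}\rangle)$, passes to a fibre sequence of pro-spectra after $\lprolim{j}$ (finite limits commute, and fibre sequences of connective spectra are preserved by geometric realization). Since the fibre term $\lprolim{j}\bar k(A\langle\Delta_{\pi^j}\rangle)$ is weakly contractible, the map $\ka(A) \to k^{\cont,\an}(A)$ is a weak equivalence. I expect the main obstacle to be bookkeeping the support term carefully: one must check that the excision isomorphisms $\bar k(A\langle\Delta^n_{\pi^j}\rangle)\simeq \bar k(A\langle\Delta^n_1\rangle)$ are compatible with the simplicial structure maps and with the transition maps $\Psi^{\pi^j}_\pi$ in a way that lets the factorization-through-a-constant argument of Proposition~\ref{prop.Fan-prohi} go through for $\bar k$ rather than $k$ — the subtlety being that the transition maps act on the $\Delta$-variables by multiplication by $\pi$, which is invertible modulo nothing but becomes nilpotent-like only on the associated graded, so one genuinely needs to invoke Lemma~\ref{lem:simplicial-homotopy} at the level of the functor $\bar k$.
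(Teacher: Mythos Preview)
Your proposal has two genuine gaps.

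First, the identification of the fibre is incorrect. For a $\pi$-adically complete ring $B$, the fibre of $k(B) \to \lprolim{n} k(B/(\pi^n))$ is the pro-spectrum of \emph{relative} connective $K$-theory $\lprolim{n}\fib\bigl(k(B)\to k(B/(\pi^n))\bigr)$, not the connective cover of $\bar K(B)=K(B\text{ on }(\pi))=\fib(K(B)\to K(B[1/\pi]))$. These are quite different objects; Thomason's excision theorem does not give what you claim here. In particular, the assertion that $\bar K(B)$ ``depends only on the non-unital ring $\pi B$'' is not a consequence of \cite[Thm.~7.1]{thomason}.

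Second, and more seriously, even if the fibre term $F^{\an}(A)$ were correctly identified, your argument for its weak contractibility does not go through. Proposition~\ref{prop.Fan-prohi} shows that for \emph{any} functor $F$ the pro-spectrum $\hat N^\pi_m F^{\an}(A)$ is contractible; this says that $F^{\an}$ is pro-homotopy invariant, not that $F^{\an}(A)$ itself vanishes. You conflate these two statements. The factorization-through-a-constant trick of Lemma~\ref{lem:simplicial-homotopy} produces a retraction of $F^{(j)}(A\langle t\rangle)$ onto $F^{(j)}(A)$ in the pro-direction over $t\mapsto\pi t$, but it gives no information about the pro-system $\lprolim{j}F^{(j)}(A)$ on its own.

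The paper's proof proceeds differently. It first splits off $K_0$ using that $K_0(B)\cong K_0(B/(\pi^n))$ for $\pi$-adically complete $B$, reducing to the connected cover $\KGL$. It then works on the level of infinite loop spaces: for fixed $j$ one must show that $\BGL(A\langle\Delta_{\pi^j}\rangle)^+\to\lprolim{n}\BGL(A\langle\Delta_{\pi^j}\rangle/(\pi^n))^+$ is a weak equivalence of pro-spaces. The key computation is done \emph{before} the plus construction: the fibre of $\GL(A\langle\Delta_{\pi^j}\rangle)\to\GL(A\langle\Delta_{\pi^j}\rangle/(\pi^n))$ is the simplicial group $1+\pi^n\Mat(A\langle\Delta_{\pi^j}\rangle)$, and one writes down an explicit contracting homotopy showing that the inclusion $1+\pi^{n+j}\Mat\hookrightarrow 1+\pi^{n}\Mat$ is null-homotopic, using the simplicial coordinate $t_1$. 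This gives the weak equivalence on $\BGL$; one then transfers it to $\BGL^+$ via integral homology and the pro-Whitehead theorem (Proposition~\ref{prop:pro-Whitehead}). The essential idea---contracting the fibre using the simplicial structure---is related in spirit to Lemma~\ref{lem:simplicial-homotopy}, but it is applied to the correct fibre (a unipotent matrix group) rather than to a support $K$-theory term.
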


\begin{proof}  
We first reduce to connected $K$-theory. For fixed $j$, $n$, and $m$ we have, as in the
discussion preceding Lemma~\ref{lem.connected.cover.an}, a map of fibre sequences of spectra
\[
\xymatrix@R-1.6ex{
\KGL( A\langle \Delta^{m}_{\pi^{j}} \rangle)  \ar[r] \ar[d] & k( A\langle \Delta^{m}_{\pi^{j}} \rangle ) \ar[d] \ar[r] & K_{0}(A\langle \Delta^{m}_{\pi^{j}} \rangle)   \ar[d]^{\cong}  \\
\KGL( A\langle \Delta^{m}_{\pi^{j}} \rangle / (\pi^{n}) )  \ar[r] & k( A\langle \Delta^{m}_{\pi^{j}} \rangle / (\pi^{n}) )  \ar[r] & K_{0}(A\langle \Delta^{m}_{\pi^{j}} \rangle / (\pi^{n})).
}
\]
The right vertical map is an isomorphism because $A\langle \Delta^{m}_{\pi^{j}} \rangle$
is $\pi$-adically complete, see~\cite[Lemma II.2.2]{K-book}.

It is now enough to show that for each $j$ the map 
\[
\KGL( A\langle \Delta_{\pi^{j}} \rangle ) \to \prolim{n} \KGL( A\langle \Delta_{\pi^{j}} \rangle / (\pi^{n}) )
\]
is a weak equivalence of pro-spectra. We check this on the associated infinite loop spaces, see Lemma~\ref{lemma:weak-equivs-infinite-loop-space}. 
By Lemma~\ref{lem:plus-simplicial} this means we have to show that
the bottom horizontal map in the commutative diagram
\[
\xymatrix@R-1.6ex{
\BGL( A\langle \Delta_{\pi^{j}} \rangle ) \ar[d] \ar[r]^-{\phi}   & \prolim{n} \BGL( A\langle \Delta_{\pi^{j}} \rangle / (\pi^{n}) )  \ar[d]  \\
\BGL( A\langle \Delta_{\pi^{j}} \rangle )^{+} \ar[r]^-{\phi^{+}}   & \prolim{n} \BGL( A\langle \Delta_{\pi^{j}} \rangle / (\pi^{n}) )^{+} 
}
\]
is a weak equivalence in $\Pro(\Spaces_{*}^{0})$. Since the map to the plus-construction is acyclic, the vertical maps induce isomorphisms on integral homology.
\begin{claim}
	\label{claim.phiwe}
The map $\phi$ is a weak equivalence in $\Pro(\Spaces_{*}^{0})$.
\end{claim}
By Lemma~\ref{lem:we-induces-homology-iso} this claim implies that $\phi^{+}$  induces an isomorphism on integral homology. 
Since $\phi^{+}$ is a morphism between countable pro-systems of  connected H-spaces, Proposition~\ref{prop:pro-Whitehead} implies that $\phi^{+}$ is a weak equivalence.
\end{proof}
\begin{proof}[Proof of the Claim]
We have to show that $\phi$ induces an isomorphism on homotopy pro-groups. For this we use that
\[
\pi_{i}(\BGL(A\langle \Delta_{\pi^{j}} \rangle )) \cong \pi_{i-1}(\GL( A\langle \Delta_{\pi^{j}} \rangle )) \quad \text{ for }i\geq 1.
\]
For each $n$, the natural map
\[
\GL(A\langle \Delta_{\pi^{j}} \rangle) \to \GL(A\langle \Delta_{\pi^{j}} \rangle/(\pi^{n}))
\]
is a surjective homomorphism of simplicial groups and hence a Kan fibration. Its fibre is given by the simplicial group
of finite matrices of arbitrary size
\[
1+\pi^{n} \Mat(A\langle \Delta_{\pi^{j}} \rangle)
\]
and it suffices to show that the inclusion 
\[
1+\pi^{n+j} \Mat(A\langle \Delta_{\pi^{j}} \rangle) \hookrightarrow 1+\pi^{n} \Mat(A\langle \Delta_{\pi^{j}} \rangle)
\]
is null-homotopic. We let $\Delta[1]$ be the simplicial 1-simplex, and we 
consider  $t_{1}\in A\langle \Delta^{1}_{\pi^{j}} \rangle $  as a map of simplicial sets
\[
t_{1}\colon \Delta[1] \to A\langle \Delta_{\pi^{j}} \rangle
\]
satisfying $t_{1}(0) = 0$, $t_{1}(1) = \pi^{j}$.
A contracting homotopy is then given by
\begin{align*}
\left(1+\pi^{n+j} \Mat( A\langle \Delta_{\pi^{j}} \rangle) \right) \times \Delta[1] &\to  1+\pi^{n} \Mat(A\langle \Delta_{\pi^{j}} \rangle), \\
(1+a, s) &\mapsto 1 + t(s)\cdot (\pi^{-j}a).  \qedhere
\end{align*}
\end{proof}

Since by Lemma~\ref{kan.adic.lem1} the map of pro-spectra $\kco (A) \xrightarrow{\simeq}  k^{\cont,\an}(A)$ is an equivalence, it has an essentially unique inverse. Composing this with the weak equivalence $\ka (A) \to  k^{\cont,\an}(A) $ of Proposition~\ref{kan.adic.prop2} gives an essentially unique weak equivalence of pro-spectra $\ka(A) \to \kco(A)$ which makes the diagram 
\[
  \xymatrix@R=2.5ex@C=1.3ex{
  \ka (A)   \ar[rr]^{\simeq} \ar[rd]^{\simeq} & &     \kco (A) \ar[ld]_{\simeq} \\
    &     k^{\cont,\an}(A)   & 
  }
\]
commutative. Summarizing we have:

\begin{thm}
	\label{thm:kcontan-adic}
For any $\pi$-adic ring $A$, there is a natural weak equivalence of pro-spectra
\[
\ka (A) \xrightarrow{\simeq} \kco (A).
\]
\end{thm}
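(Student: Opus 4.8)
The plan is to obtain Theorem~\ref{thm:kcontan-adic} as a formal consequence of the two results immediately preceding it, so that the only real choice is how to arrange the zig-zag through the ``mixed'' pro-spectrum $k^{\cont,\an}(A)$. By Lemma~\ref{kan.adic.lem1} the natural map $\kco(A) \to k^{\cont,\an}(A)$ is an equivalence of pro-spectra, hence admits an essentially unique inverse $k^{\cont,\an}(A) \xrightarrow{\simeq} \kco(A)$ in $\Pro(\Sp)$; by Proposition~\ref{kan.adic.prop2} the natural map $\ka(A) \to k^{\cont,\an}(A)$ is a weak equivalence. Composing the latter with that inverse produces a weak equivalence $\ka(A) \to \kco(A)$, which by construction sits in a commuting triangle over $k^{\cont,\an}(A)$; this both pins it down up to contractible choice and exhibits its naturality in $A$. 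There is essentially nothing further to do beyond writing down this composition.

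Accordingly, all the substance lies in the two inputs, and the step I expect to be the main obstacle is Proposition~\ref{kan.adic.prop2}. Lemma~\ref{kan.adic.lem1} is easy: reducing modulo a fixed power $\pi^{n}$ collapses the dependence on the radius of the rigid simplex, so $A/(\pi^{n}) \to \lprolim{j} A\langle\Delta_{\pi^{j}}\rangle/(\pi^{n})$ is an isomorphism of pro-simplicial rings, and one applies $k$ and takes the limit over $n$. Proposition~\ref{kan.adic.prop2}, by contrast, requires: first reducing to connected $K$-theory using that $K_{0}$ is insensitive to $\pi$-adic completion; then replacing $\KGL$ by $\BGL^{+}$ and invoking the pro-Whitehead theorem (Proposition~\ref{prop:pro-Whitehead}) together with the acyclicity of the plus-construction; and finally showing that the fibre $1 + \pi^{n}\Mat(A\langle\Delta_{\pi^{j}}\rangle)$ of the Kan fibration $\GL(A\langle\Delta_{\pi^{j}}\rangle) \to \GL(A\langle\Delta_{\pi^{j}}\rangle/(\pi^{n}))$ becomes null-homotopic after shrinking the level from $n$ to $n+j$, via the explicit contracting homotopy $(1+a,s) \mapsto 1 + t_{1}(s)\cdot(\pi^{-j}a)$ built from the coordinate $t_{1}$ viewed as a $1$-simplex with endpoints $0$ and $\pi^{j}$.

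In the write-up I would therefore present the proof of Theorem~\ref{thm:kcontan-adic} in one or two sentences: combine Lemma~\ref{kan.adic.lem1} and Proposition~\ref{kan.adic.prop2} as in the displayed triangle, and note that naturality of the resulting weak equivalence is automatic, since every map in sight is constructed functorially from the pro-simplicial rings $\lprolim{j} A\langle\Delta_{\pi^{j}}\rangle$, $\lprolim{n} A/(\pi^{n})$ and $\lprolim{j,n} A\langle\Delta_{\pi^{j}}\rangle/(\pi^{n})$.
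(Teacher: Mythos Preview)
Your proposal is correct and follows exactly the paper's argument: the paper constructs the weak equivalence $\ka(A)\to\kco(A)$ by composing the weak equivalence of Proposition~\ref{kan.adic.prop2} with the inverse of the equivalence of Lemma~\ref{kan.adic.lem1}, and records the resulting commutative triangle through $k^{\cont,\an}(A)$. Your summary of how those two inputs are established is also accurate.
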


\begin{rmk}
In fact, this result holds more generally.
We needed the completeness of a $\pi$-adic ring only in the proof of
Proposition~\ref{kan.adic.prop2}. More precisely, we used that for a $\pi$-adically
complete ring $A$ the map $K_{0}(A) \to K_{0}( A/(\pi) )$ is an isomorphism and $\GL(A)
\to \GL( A/(\pi) )$ is surjective with kernel $1 + \pi \mathrm{M}(A)$. But these two facts
also hold  if $A$ is only assumed to be henselian with respect to $(\pi)$. Hence we get the same result if we work with 
henselian rings and replace the simplicial ring $A\langle \Delta_{\pi^{j}} \rangle$ in the construction of analytic $K$-theory with the  henselisation of~$A[ \Delta_{\pi^{j}} ]$ along the ideal $(\pi)$.
\end{rmk}

\subsection{Analytic \textit{K}-theory of Tate rings}

Let now $A$ be a Tate ring and $\pi \in A$ a topologically nilpotent unit.

\begin{defn}
The \emph{connective analytic $K$-theory of $A$} is defined as the pro-spectrum
\[
\ka(A) = \prolim{j} k(A\langle \Delta_{\pi^j}\rangle ).
\]
\end{defn}

Again, this is well-defined up to canonical equivalence.
As in the previous subsection we will compare analytic $K$-theory and continuous $K$-theory. For this we  need some auxiliary constructions. If $A_{0} \subseteq A$ is a ring of definition with $\pi \in A_{0}$, we set
\begin{align*}
\bar k^\an (A_{0}) &= \prolim{j} \bark (A_{0}\langle \Delta_{\pi^j}\rangle ),  \\
\tildek(A; A_{0}) &= \cofib (\bark(A_0) \to k(A_0)),  \\
\tilde k^\cont (A; A_0) &= \cofib (\bark(A_0) \to \kco (A_0)),  \\
\tilde k^\an (A; A_0) &=  \cofib (\bar k^\an (A_0) \to \ka (A_0 )  ),  \\
\tilde k^{\cont,\an}(A; A_{0}) &= \cofib ( \bar k^{\an}(A_{0}) \to k^{\cont,\an}(A_{0}) )
\end{align*}
(see \eqref{eq.def.Kbar} for the definition of  $\bark$). 
Note that the above spectra are all connective
and that $\tau_{\geq 1} \tildek(A; A_{0}) \simeq \tau_{\geq 1} k(A) \simeq \KGL(A)$.
There is a natural map $\tilde k^{\cont}(A; A_{0}) \to \Kco(A;A_{0})$. It is immediate from the constructions that this map induces an equivalence on connected covers $\tau_{\geq 1} \tilde k^{\cont}(A; A_{0})  \to  \tau_{\geq 1} \Kco(A;A_{0})$. In view of Proposition~\ref{prop.welldefinedcont} we have a natural weak equivalence
\[
\tau_{\geq 1} \tilde k^{\cont}(A; A_{0})  \xrightarrow{\simeq}  \tau_{\geq 1} \Kco(A).
\]

Recall condition $(\dag)_{A}$ from Subsection~\ref{subsec:nonarch.tate}.
\begin{lem}
	\label{lem:k-tilde-an-cont}
Assume that $A$ is regular and satisfies condition $(\dag)_{A}$. Then there is a natural zig-zag of weak equivalences of pro-spectra
  \[
\tilde k^\cont (A; A_0) \xrightarrow{\simeq}  \tilde k^{\cont,\an}(A; A_{0}) \xleftarrow{\simeq}        \tilde k^\an (A; A_0).
\]
\end{lem}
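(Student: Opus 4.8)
The statement asks for a zig-zag of weak equivalences connecting the three "relative" connective spectra $\tilde k^\cont(A;A_0)$, $\tilde k^{\cont,\an}(A;A_0)$, and $\tilde k^\an(A;A_0)$, all defined as cofibres of a map out of a version of $\bar k$. My approach is to produce the two maps of cofibre sequences directly and check the relevant vertical maps are weak equivalences, using the results already established in Section~\ref{sec.adicank} and Subsection~\ref{subsec.anahom}. The main input is Theorem~\ref{thm:kcontan-adic} applied to the $\pi$-adic ring $A_0$: it gives a natural weak equivalence $\ka(A_0)\xrightarrow{\simeq}\kco(A_0)$ compatible (via the commutative triangle with $k^{\cont,\an}(A_0)$) with the weak equivalences of Lemma~\ref{kan.adic.lem1} and Proposition~\ref{kan.adic.prop2}.

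First I would treat the right-hand map. There is an evident natural transformation $\bar k^\an(A_0)\to \bar k^\an(A_0)$ (the identity) sitting over the transformation $\ka(A_0)\to k^{\cont,\an}(A_0)$ coming from Proposition~\ref{kan.adic.prop2}; applying $\cofib$ levelwise in $\Pro(\Sp)$ yields a map $\tilde k^\an(A;A_0)\to \tilde k^{\cont,\an}(A;A_0)$ fitting in a map of cofibre sequences in which the first two vertical maps are weak equivalences (identity and Proposition~\ref{kan.adic.prop2}). Hence the third is a weak equivalence by the long exact sequence of homotopy pro-groups together with Lemma~\ref{lem:description-of-weak-equivs}; note all spectra involved are connective, so $\tau_{\le m}$ on the level of $H\pi_0$ already matches, and we only need the $\pi_n$-isomorphisms for $n>0$, which follow from the five lemma. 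This gives the arrow $\tilde k^\an(A;A_0)\xleftarrow{\simeq}\tilde k^{\cont,\an}(A;A_0)$ after inverting.

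For the left-hand map I would use Lemma~\ref{kan.adic.lem1}, which says $\kco(A_0)\xrightarrow{\simeq}k^{\cont,\an}(A_0)$ is an equivalence of pro-spectra; combined with the compatible map $\bar k(A_0)\to \bar k^\an(A_0)$ (the natural map into the pro-system over $j$), we again obtain a map of cofibre sequences
\[
\xymatrix@R-1.6ex{
\bar k(A_0) \ar[r]\ar[d] & \kco(A_0) \ar[r]\ar[d]^{\simeq} & \tilde k^\cont(A;A_0) \ar[d] \\
\bar k^\an(A_0) \ar[r] & k^{\cont,\an}(A_0) \ar[r] & \tilde k^{\cont,\an}(A;A_0).
}
\]
The middle vertical map is an equivalence by Lemma~\ref{kan.adic.lem1}. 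To conclude that the right vertical map is a weak equivalence it suffices that the left vertical map $\bar k(A_0)\to \bar k^\an(A_0)$ is a weak equivalence. This is exactly where I expect the real work to be, and it is the point at which regularity and condition $(\dag)_A$ enter: $\bar k^\an(A_0) = \lprolim{j}\bar k(A_0\langle\Delta_{\pi^j}\rangle)$, so using Remark~\ref{rmk.Tatenormedsimplicial} to pass from $A_0\langle\Delta_{\pi^j}\rangle$ to the standard simplicial/pro structure and using simplicial homotopy invariance (Lemma~\ref{lem:simplicial-homotopy}, Proposition~\ref{prop.Fan-prohi}), the claim reduces to showing $\hat N^\pi_m \bar k(A_0)$ is weakly contractible for all $m$; but $\hat N^\pi \bar K(A_0)$ is weakly contractible by Lemma~\ref{lem.homintateco}(ii), and since $\bar k = \tau_{\ge 0}\bar K$ and all the negative $\bar K$-groups contribute only in a bounded range, one passes between $\bar K$ and $\bar k$ using the long exact sequence and the fact that geometric realization of the simplicial spectra is compatible with truncation as in the proof of Lemma~\ref{lem.connected.cover.an}.

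The main obstacle is thus the comparison $\bar k(A_0)\xrightarrow{\simeq}\bar k^\an(A_0)$: one must carefully track the connective-cover issue (the $F^\an$ construction of Subsection~\ref{sec.basic} is built from connective functors, while $\bar K$ is not connective, so one works with $\bar k$ throughout and invokes Lemma~\ref{lem.homintateco}(ii) after noting that $\hat N^\pi_m$ applied to the difference $\bar K\to \bar k$, i.e. to the truncation fibre, contributes only finitely many nonzero homotopy pro-groups and these stabilize, so weak contractibility of $\hat N^\pi_m\bar K(A_0)$ upgrades to that of $\hat N^\pi_m\bar k(A_0)$). Once this is in hand, splicing the two maps of cofibre sequences and inverting the established weak equivalences produces the required zig-zag, and naturality is automatic since every map in sight is induced by a natural transformation of functors on Tate rings with a chosen ring of definition.
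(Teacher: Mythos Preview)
Your overall structure matches the paper's: both arrows in the zig-zag arise from maps of cofibre sequences, the right one uses Proposition~\ref{kan.adic.prop2} with the identity on $\bar k^{\an}(A_0)$, and the left one uses Lemma~\ref{kan.adic.lem1} together with the claim that the natural map $\bar k(A_0)\to\bar k^{\an}(A_0)$ is a weak equivalence. You correctly identify this last claim as the crux and correctly point to Lemma~\ref{lem.homintateco}(ii) as the essential input.

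However, your reduction of this claim is misrouted. Proposition~\ref{prop.Fan-prohi} and Lemma~\ref{lem:simplicial-homotopy} show that the functor $F^{\an}$ is itself pro-homotopy invariant; they do \emph{not} provide a criterion for the comparison map $F\to F^{\an}$ to be an equivalence, which is what you need here. The bridge from ``$\hat N^\pi_m \bar k(A_0)$ is weakly contractible for all $m$'' to ``$\bar k(A_0)\to\bar k^{\an}(A_0)$ is a weak equivalence'' is the convergent spectral sequence of Lemma~\ref{lem.an-ss}:
\[
E^{1}_{pq} = \prolim{j} \bar k_{q}\bigl( A_{0} \langle \Delta^{p}_{\pi^{j}} \rangle \bigr) \Longrightarrow \bar k^{\an}_{p+q}(A_{0}).
\]
Lemma~\ref{lem.homintateco}(ii) gives that $\bar K_q(A_0)\to\prolim{j}\bar K_q(A_0\langle\Delta^{p}_{\pi^j}\rangle)$ is an isomorphism for every $q$; for $q\ge 0$ these groups are the $\bar k_q$-groups, and for $q<0$ both sides of the $\bar k$-version vanish. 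Hence the map from the constant simplicial pro-spectrum induces an isomorphism on $E^1$-pages and therefore on abutments. This is exactly the paper's argument, and it also makes your detour through ``truncation fibres contribute only in a bounded range'' unnecessary: the passage from $\bar K$ to $\bar k$ is immediate once one looks at individual homotopy pro-groups.
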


\begin{proof}
Lemma~\ref{lem.homintateco}(ii) and the convergent spectral sequence from Lemma~\ref{lem.an-ss}
\[
E^{1}_{pq} = \prolim{j} \bark_{q}( A_{0} \langle \Delta^{p}_{\pi^{j}} \rangle ) \Longrightarrow \bar k^{\an}_{p+q}(A_{0})
\]
imply that the natural map $\bark(A_{0}) \to \bar k^{\an}(A_{0})$ is a weak equivalence. Combining this with Lemma~\ref{kan.adic.lem1} 
and the definitions of the tilde versions yields the left  weak equivalence of the assertion.
The right-hand weak equivalence follows from Proposition~\ref{kan.adic.prop2}.
\end{proof}

\begin{lem} 
	\label{lem:K0-K0an}
Let $A$, $A_{0}$ be as above.
\begin{itemize}
\item[(i)] There is a natural isomorphism $\tildek_{0}(A; A_{0}) \cong \tilde k^{\an}_{0}(A; A_{0})$.
\item[(ii)] If $A$ is regular and satisfies condition $(\dag)_{A}$, then we have a natural isomorphism
\[
K_{0}(A) \cong \ka_{0}(A)
\]
and 
a natural weak equivalence
\[
\tau_{\geq 1}\tilde k^{\an}(A; A_{0}) \simeq \tau_{\geq 1} \ka(A).
\]

\end{itemize}
\end{lem}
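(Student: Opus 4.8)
The plan is to handle the two parts separately, using the tilde constructions and the results already established. For part~(i), I would compare the two cofiber sequences defining $\tildek(A;A_0)$ and $\tilde k^\an(A;A_0)$, namely
\[
\bark(A_0) \to k(A_0) \to \tildek(A;A_0)
\qquad\text{and}\qquad
\bar k^\an(A_0) \to \ka(A_0) \to \tilde k^\an(A;A_0).
\]
Taking $\pi_0$ and using right-exactness of $\pi_0$ on cofiber sequences of connective (pro-)spectra, I get surjections onto $\tildek_0$ and $\tilde k^\an_0$ from $K_0(A_0)$ and $\ka_0(A_0)$ respectively, with kernels generated by the images of $\bark_0(A_0)$ and $\bar k^\an_0(A_0)$. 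So the key inputs are: the natural map $K_0(A_0) \to \ka_0(A_0)$ is an isomorphism — which follows from Lemma~\ref{lem.an-ss}, since $\ka_0(A_0) = \prolim{j}\coker(K_0(A_0\langle\Delta^1_{\pi^j}\rangle) \to K_0(A_0))$ and $K_0$ of the $\pi$-adically complete ring $A_0\langle\Delta^\bullet_{\pi^j}\rangle$ agrees with $K_0$ of its reduction, giving $K_0(A_0\langle\Delta^p_{\pi^j}\rangle)\cong K_0(A_0/(\pi))$ independently of $p$, so the cokernel is just $K_0(A_0)$; and similarly one checks the map $\bark_0(A_0)\to\bar k^\an_0(A_0)$ is compatible. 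Chasing the resulting commutative ladder gives the natural isomorphism $\tildek_0(A;A_0)\cong\tilde k^\an_0(A;A_0)$.

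For part~(ii), the plan is to exploit Lemma~\ref{lem:k-tilde-an-cont}, which under the regularity and $(\dag)_A$ hypotheses provides a zig-zag of weak equivalences $\tilde k^\cont(A;A_0) \xrightarrow{\simeq} \tilde k^{\cont,\an}(A;A_0) \xleftarrow{\simeq} \tilde k^\an(A;A_0)$. Since weak equivalences of pro-spectra induce isomorphisms on all homotopy pro-groups, this already yields $\tilde k^\an_0(A;A_0) \cong \tilde k^\cont_0(A;A_0)$. Combining with part~(i) and the identification $\tilde k^\cont_0(A;A_0) \cong \Kco_0(A)$ coming from the map $\tilde k^\cont(A;A_0)\to\Kco(A;A_0)$ together with Proposition~\ref{prop.welldefinedcont} (the cofiber sequence $\bark(A_0)\to\kco(A_0)\to\tilde k^\cont(A;A_0)$ and $\bark(A_0)\to\Kco(A_0)\to\Kco(A;A_0)$ differ only in that $\bar K, K$ are replaced by connective covers, which does not affect $\pi_0$ since all terms in degrees $\ge 0$ agree), and finally Proposition~\ref{prop.comalgcont}(ii) which gives $K_0(A)\cong\Kco_0(A)$, I obtain the chain
\[
K_0(A) \cong \Kco_0(A) \cong \tilde k^\cont_0(A;A_0) \cong \tilde k^\an_0(A;A_0) \cong \tildek_0(A;A_0) \cong \tilde k^\an_0(A;A_0),
\]
and one should track that under these identifications the composite is the natural map $K_0(A)\to\ka_0(A)$; here one uses $\tilde k^\an_0(A;A_0) \to \ka_0(A_0)/(\text{im }\bar k^\an_0)$ together with the fact that $\ka_0(A)$ receives a map from $\ka_0(A_0)$ — more precisely, $\ka_0(A) = \prolim{j}\coker(K_0(A\langle\Delta^1_{\pi^j}\rangle)\to K_0(A))$, and one checks this cokernel identifies with $\tilde k^\an_0(A;A_0)$ via the localization sequence relating $A_0\langle\Delta_{\pi^j}\rangle$ and $A\langle\Delta_{\pi^j}\rangle$ levelwise. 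For the statement about connected covers, I would simply apply $\tau_{\geq 1}$ to the cofiber sequence defining $\tilde k^\an(A;A_0)$: since $\ka(A)$ and $\ka(A_0)$ have the same connected cover $\tau_{\geq 1}$ up to the correction recorded already, and $\bar k^\an(A_0)$ contributes only in degree $0$ after the $K_0$-identification — more carefully, $\tau_{\geq 1}\tilde k^\an(A;A_0)$ sits in a fiber sequence with $\tau_{\geq 1}\bar k^\an(A_0)$ and $\tau_{\geq 1}\ka(A_0)$, and the natural map to $\tau_{\geq 1}\ka(A)$ is an equivalence because the localization relating $\ka(A_0)$ and $\ka(A)$ has fiber $\bar k^\an(A_0)$.

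The main obstacle I anticipate is bookkeeping with connective covers versus the honest (non-connective) spectra: the tilde constructions are cofibers of maps of connective spectra, so the $\pi_0$ computations require care about where negative homotopy groups of $\bar K$ and $K$ would otherwise interfere, and one must verify that each comparison map is the \emph{natural} one (so that the final isomorphism $K_0(A)\cong\ka_0(A)$ is canonical and not merely abstract). The cleanest route is probably to set up, once and for all, a large commutative diagram of cofiber sequences relating $\bark, k, \tilde k$ in the algebraic, continuous, continuous-analytic, and analytic flavors, and then read off all the $\pi_0$- and $\tau_{\geq 1}$-statements from Lemmas~\ref{lem.an-ss}, \ref{kan.adic.lem1}, \ref{kan.adic.prop2}, \ref{lem:k-tilde-an-cont} by diagram chase; the actual identifications of individual groups are then routine given the cited results.
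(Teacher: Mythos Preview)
Your approach to part~(i) is essentially correct, though it differs from the paper's. You compare the two cofiber sequences directly and use Lemma~\ref{lem.an-ss} to identify $\ka_0(A_0)\cong K_0(A_0)$; since $\bar k^{\an}_0(A_0)$ is, again by Lemma~\ref{lem.an-ss}, a quotient of $\bark_0(A_0)$ with compatible map to $\ka_0(A_0)$, the images agree and the cokernels match. The paper instead establishes pro-homotopy invariance of $\tildek_0$ (using Lemmas~\ref{lem.KcoA0low} and~\ref{lem.homintateco}(i) for the top row of a diagram with surjective columns), and then applies Lemma~\ref{lem.connected.cover.an} with $F=\tildek$ to obtain the weak fibre sequence
\[
K^{\GL,\an}(A)\to \tilde k^{\an}(A;A_0)\to \tildek_0(A;A_0),
\]
from which~(i) is immediate since the fibre is connected. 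The paper's route has the advantage that this fibre sequence is reused in~(ii).

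Your plan for part~(ii), however, has genuine gaps. First, the asserted identification $\tilde k^{\cont}_0(A;A_0)\cong \Kco_0(A)$ is false in general: the natural map is only injective. Concretely, $\tilde k^{\cont}_0(A;A_0)=\coker(\bark_0(A_0)\to K_0(A_0))$, whereas the long exact sequence for $\Kco(A;A_0)$ reads
\[
\bar K_0(A_0)\to \Kco_0(A_0)\to \Kco_0(A;A_0)\to \bar K_{-1}(A_0)\to \Kco_{-1}(A_0),
\]
so the cokernel of the first map only agrees with $\Kco_0(A;A_0)$ if the last map is injective --- which you have not checked and which need not hold. Your parenthetical justification (that replacing $\bar K, K$ by connective covers ``does not affect $\pi_0$'') is exactly where this fails: the cofibre of connective covers is not the connective cover of the cofibre. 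Second, even granting your chain of isomorphisms, it terminates at $\tilde k^{\an}_0(A;A_0)$ and never reaches $\ka_0(A)$; your remark that ``one checks this cokernel identifies with $\tilde k^{\an}_0(A;A_0)$ via the localization sequence'' is precisely the step that needs the missing input. Third, your argument for the $\tau_{\geq 1}$ statement treats $\tau_{\geq 1}$ as if it preserved cofibre sequences, and asserts that $\bar k^{\an}(A_0)$ is the fibre of $\ka(A_0)\to \ka(A)$, which holds for non-connective $K$-theory but not for the connective versions used here.

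What you are missing is Corollary~\ref{cor.k0homotopyi}: under $(\dag)_A$ and regularity, $K_0(A)\to \prolim{j} K_0(A\langle \Delta^m_{\pi^j}\rangle)$ is an isomorphism for every $m$. This is the direct input that allows one to apply Lemma~\ref{lem.connected.cover.an} with $F=k$ and obtain a second weak fibre sequence
\[
K^{\GL,\an}(A)\to \ka(A)\to K_0(A).
\]
The first assertion of~(ii) is then immediate, and comparing this fibre sequence with the one above (both have fibre $K^{\GL,\an}(A)$) gives the weak equivalence on connected covers. Your proposed detour through $\tilde k^{\cont}$ and $\Kco$ does not circumvent the need for this pro-homotopy invariance of $K_0$.
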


\begin{proof}
Recall that  $k$ and $\tildek$ have common connected cover $\KGL$. 
 For fixed $m$ we have a commutative diagram of pro-groups
\[
\xymatrix@R-1.6ex{
K_{0}(A_{0}) \ar@{->>}[d] \ar[r]^-{\cong} & \prolim{j} K_{0}( A_{0} \langle \Delta^{m}_{\pi^{j}} \rangle )  \ar@{->>}[d]  \\
\tildek_{0}(A; A_{0}) \ar[r] & \prolim{j} \tildek_{0}(A \langle \Delta^{m}_{\pi^{j}} \rangle, A_{0} \langle \Delta^{m}_{\pi^{j}} \rangle).
}
\]
The top horizontal map is an isomorphism by Lemmas~\ref{lem.KcoA0low} and \ref{lem.homintateco}(i). The vertical maps are epimorphisms since they are level-wise surjective by definition of $\tildek$. Hence the bottom horizontal map is an epimorphism, too. Since it has a left inverse, it is an isomorphism.
Now Lemma~\ref{lem.connected.cover.an} implies that we have a weak fibre sequence
\begin{equation}
	\label{kdjbkj}
K^{\GL,\an}(A)  \to \tilde k^{\an}(A; A_{0}) \to \tildek_{0}(A; A_{0}).
\end{equation}
Since $K^{\GL,\an}(A)$ is connected, this gives (i).

Now assume that $A$ satisfies $(\dag)_{A}$.
By Corollary~\ref{cor.k0homotopyi} the map $K_{0}(A) \to \lprolim{j}  K_{0}(A \langle \Delta^{m}_{\pi^{j}} \rangle)$ is an isomorphism for each $m$.
By Lemma~\ref{lem.connected.cover.an} again, we have a weak fibre sequence
\begin{equation}
	\label{sdkfhvksdhjv}
K^{\GL,\an}(A)  \to  k^{\an}(A) \to  K_{0}(A).
\end{equation}
This gives the first statement of (ii).

There is a natural map from \eqref{kdjbkj} to \eqref{sdkfhvksdhjv}. By what we have shown so far, it induces a weak equivalence 
$\tau_{\geq 1}\tilde k^{\an}(A; A_{0}) \xrightarrow{\simeq} \tau_{\geq 1} \ka(A)$.
\end{proof}

The continuous $K$-theory $\Kco(A)$ of $A$ can be recovered from $\tilde k^{\cont}(A; A_{0})$ via the analytic Bass construction:
\begin{lem}
	\label{lem:Bass-construction-cont}
There is a natural  equivalence of pro-spectra
\[
(\tilde k^{\cont})^{B}(A; A_{0}) \xrightarrow{\simeq} \Kco(A; A_{0}).
\]
\end{lem}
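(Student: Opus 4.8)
The plan is to deduce the statement from exactness of the analytic Bass construction together with the Bass fundamental theorem for continuous $K$-theory established in Subsection~\ref{sec.Bass.fundamental}. Recall that $\tilde k^{\cont}(A; A_0) = \cofib(\bark(A_0) \to \kco(A_0))$ with $\bark(A_0) = \tau_{\geq 0}\bar K(A_0)$ and $\kco(A_0) = \tau_{\geq 0}\Kco(A_0)$; all functors occurring below carry a module structure over connective algebraic $K$-theory and all maps are module maps. Since $\Lambda$ is assembled from finite limits and colimits it is exact, and since filtered colimits in the stable cocomplete $\infty$-category $\Pro(\Sp)$ are exact, the functor $E \mapsto E^{B}$ preserves cofibre sequences. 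Applying it to $\bark(A_0) \to \kco(A_0) \to \tilde k^{\cont}(A; A_0)$ gives a cofibre sequence
\[
(\bark)^{B}(A_0) \to (\kco)^{B}(A_0) \to (\tilde k^{\cont})^{B}(A; A_0),
\]
so it suffices to produce natural, compatible equivalences $(\bark)^{B}(A_0) \simeq \bar K(A_0)$ and $(\kco)^{B}(A_0) \simeq \Kco(A_0)$; then the left-hand map becomes $\bar K(A_0) \to \Kco(A_0)$, whose cofibre is $\Kco(A; A_0)$ by definition.

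Both $\bar K(A_0)$ and $\Kco(A_0)$ satisfy the Bass fundamental theorem: the maps $\lambda\colon \bar K(A_0) \to \Lambda\bar K(A_0)$ and $\lambda\colon \Kco(A_0) \to \Lambda\Kco(A_0)$ are the maps $\lambda_1,\lambda_2$ shown to be equivalences in Subsection~\ref{sec.Bass.fundamental}, so their Bass towers consist of equivalences and $(\bar K)^{B}(A_0) \simeq \bar K(A_0)$, $(\Kco)^{B}(A_0) \simeq \Kco(A_0)$. Applying the exact functor $(-)^{B}$ to the fibre sequences $\bark(A_0) \to \bar K(A_0) \to \tau_{\leq -1}\bar K(A_0)$ and $\kco(A_0) \to \Kco(A_0) \to \tau_{\leq -1}\Kco(A_0)$ thus reduces us to showing
\[
(\tau_{\leq -1}\bar K)^{B}(A_0) \simeq 0 \qquad \text{and} \qquad (\tau_{\leq -1}\Kco)^{B}(A_0) \simeq 0.
\]
The functor $A_0 \mapsto \tau_{\leq -1}\bar K(A_0)$ takes values in ordinary spectra (since $\bar K(A_0) = K(A_0 \text{ on } (\pi))$ does), is bounded above by $-1$, and is a module over connective $K$-theory; so the first vanishing is precisely Lemma~\ref{lem:Bannihilates}.

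For the second vanishing, $\tau_{\leq -1}\Kco(A_0) = \prolim{n}\tau_{\leq -1}K(A_0/(\pi^n))$ is a genuine pro-spectrum, so Lemma~\ref{lem:Bannihilates} does not apply directly and I would argue level-wise. For each fixed $n$ the functor $A_0 \mapsto \tau_{\leq -1}K(A_0/(\pi^n))$ is valued in ordinary spectra, is bounded above by $-1$, and is a module over connective $K$-theory, so Lemma~\ref{lem:Bannihilates} gives $(\tau_{\leq -1}K(-/(\pi^n)))^{B}(A_0) \simeq 0$, naturally in $n$. Since $\Lambda$, being built from finite limits and colimits, commutes with level-wise pro-limits by Lemma~\ref{lem:level-wise-limit}, one has $\Lambda^{j}\tau_{\leq -1}\Kco(A_0) \simeq \prolim{n}\Lambda^{j}\tau_{\leq -1}K(A_0/(\pi^n))$; and because these functors are bounded above by $-1$ uniformly in $n$, on every homotopy group the transition maps in the Bass tower become isomorphisms once $j$ is large independently of $n$ (the degreewise stabilisation used in the proof of Lemma~\ref{lem:Bannihilates}), which lets the defining sequential colimit of the Bass tower commute with $\prolim{n}$. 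Hence $(\tau_{\leq -1}\Kco)^{B}(A_0) \simeq \prolim{n}(\tau_{\leq -1}K(-/(\pi^n)))^{B}(A_0) \simeq 0$.

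Combining the three reductions, the cofibre sequence above becomes $\bar K(A_0) \to \Kco(A_0) \to (\tilde k^{\cont})^{B}(A; A_0)$, so $(\tilde k^{\cont})^{B}(A; A_0) \simeq \cofib(\bar K(A_0) \to \Kco(A_0)) = \Kco(A; A_0)$, and naturality is clear since every step is functorial. The one delicate point is this last commutation of a sequential colimit past a level-wise cofiltered limit in $\Pro(\Sp)$, which fails in general and is rescued here only by the uniform boundedness above of the pieces $\tau_{\leq -1}K(-/(\pi^n))$; alternatively one may avoid it by comparing $(\tilde k^{\cont})^{B}$ and $\Kco(-;A_0)$ directly on homotopy pro-groups via their level-representations.
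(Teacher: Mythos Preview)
Your overall strategy---use exactness of $(-)^B$ together with the Bass fundamental theorem for $\bar K$ and $\Kco$, then kill the coconnective pieces via Lemma~\ref{lem:Bannihilates}---is sound and close in spirit to the paper's proof. The paper organizes the same ingredients slightly differently: rather than splitting into $\bar K$ and $\Kco$ pieces, it compares $\tilde k^{\cont}(A;A_0)$ directly with $\Kco(A;A_0)$, observes that the cofibre of this comparison is $\cofib(\tau_{\leq -1}\bar K(A_0)\to\tau_{\leq -1}\Kco(A_0))$, and then applies Lemma~\ref{lem:Bannihilates} once to this single cofibre.

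There is, however, a genuine gap in your treatment of $(\tau_{\leq -1}\Kco)^B(A_0)$. Lemma~\ref{lem:Bannihilates} is stated only for functors valued in \emph{constant} pro-spectra, and your attempt to circumvent this by commuting the sequential colimit defining $(-)^B$ past $\prolim{n}$ is not justified: the phrase ``the transition maps in the Bass tower become isomorphisms once $j$ is large'' is not what happens (rather, certain compositions become zero on homotopy groups), and uniform boundedness above does not by itself guarantee that a filtered colimit in $\Pro(\Sp)$ commutes with a level-wise cofiltered limit. Your suggested alternative of checking homotopy pro-groups would at best yield a weak equivalence, whereas the statement asserts an honest equivalence.

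The fix is a one-line observation you missed: by nil-invariance of negative algebraic $K$-theory, the maps $\tau_{\leq -1}K(A_0/(\pi^{n+1}))\to\tau_{\leq -1}K(A_0/(\pi^n))$ are all equivalences, so $\tau_{\leq -1}\Kco(A_0)$ is already a constant pro-spectrum and Lemma~\ref{lem:Bannihilates} applies directly. This is exactly the mechanism the paper invokes (applied to the single cofibre rather than the two pieces separately). Once you insert this, your argument becomes correct and essentially equivalent to the paper's, just with one extra cofibre sequence unpacked.
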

\begin{proof}
Since $\Kco(A;A_{0})$ (viewed as functor in $A_{0}$) satisfies the Bass fundamental theorem (see first paragraph of Subsection~\ref{sec.Bass.fundamental}), the natural map 
$\Kco(A; A_{0}) \to (\Kco)^{B}(A;A_{0})$ is an equivalence and we have an induced map $(\tilde k^{\cont})^{B}(A; A_{0}) \to \Kco(A;A_{0})$.
The cofibre $\cofib( \tilde k^{\cont}(A; A_{0}) \to K^{\cont}(A;A_{0}))$ is equivalent to 
$\cofib( \tau_{\leq -1} \bar K(A_{0}) \to \tau_{\leq -1} \Kco(A_{0}) )$. By nil-invariance of negative algebraic $K$-theory of rings, the latter is a constant pro-spectrum, which is moreover 0-truncated. Now Lemma~\ref{lem:Bannihilates} implies that the analytic Bass construction applied to the cofibre (again viewed as functor in $A_{0}$) vanishes, whence $(\tilde k^{\cont})^{B}(A; A_{0}) \to \Kco(A;A_{0})$ is an equivalence.
\end{proof}

\begin{defn} 
The \emph{non-connective analytic $K$-theory} of the Tate ring $A$ is defined  by  the analytic Bass construction applied  to $K^{\GL,\an}$:
\[
\Ka(A) = (K^{\GL,\an})^{B}(A).
\]
\end{defn}

\begin{rmk}\label{rmk.deloop}
Another candidate for the non-connective analytic $K$-theory would be the pro-spectrum $(\ka)^{B}(A)$. If $A$ is regular and satisfies condition $(\dag)_{A}$, both constructions yield weakly equivalent pro-spectra by Lemma~\ref{lem:ktilde-kab-Bass} below. In general, it seems however that $\Ka(A)$ has better  formal properties. For example, one can prove that it admits descent for admissible coverings, as we will explain in a sequel to this note.
\end{rmk}

\begin{prop}
Let $A$ be a Tate ring, and let $I \subset A$ be a closed nilpotent ideal. Then the natural map
\[
\Ka(A) \to \Ka(A/I)
\]
is a weak equivalence.
\end{prop}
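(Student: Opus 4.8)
The plan is to strip off the Bass construction, reduce to a statement about $K^{\GL,\an}$ at each fixed radius $\pi^{j}$, and there exhibit a contracting homotopy of the relative general linear group, copied from the proof of Claim~\ref{claim.phiwe}; the whole argument hinges on the fact that $\pi$ is a \emph{unit}. First I would record the elementary reductions. Since $I$ is closed, $A/I$ is again a Tate ring: if $A_{0}\subseteq A$ is a ring of definition with topologically nilpotent unit $\pi$, then $A_{0}/(I\cap A_{0})$ is a ring of definition of $A/I$ (completeness being ensured by closedness of $I\cap A_{0}$), and $\pi$ stays a topologically nilpotent unit. For any Tate ring $B$ with closed nilpotent ideal $J$, and any $j,p$, the kernel $J\langle \Delta^{p}_{\pi^{j}}\rangle$ of the surjection $B\langle \Delta^{p}_{\pi^{j}}\rangle \twoheadrightarrow (B/J)\langle \Delta^{p}_{\pi^{j}}\rangle$ is a nilpotent ideal: it is the closure of $J\cdot B\langle \Delta^{p}_{\pi^{j}}\rangle$, and if $J^{N}=0$ then $(J\cdot B\langle \Delta^{p}_{\pi^{j}}\rangle)^{N}=0$, whence the $N$-th power of its closure also vanishes since $B\langle \Delta^{p}_{\pi^{j}}\rangle$ is Hausdorff. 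The same holds with $\Delta^{p}_{\pi^{j}}$ replaced by $t$, $t^{-1}$ or $t,t^{-1}$, and then $B\langle t\rangle / J\langle t\rangle = (B/J)\langle t\rangle$, etc.

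Next I claim it suffices to prove the following statement $(\ast)$: for every Tate ring $B$ and every closed nilpotent ideal $J\subseteq B$, the map $K^{\GL,\an}(B)\to K^{\GL,\an}(B/J)$ is an equivalence of pro-spectra. Granting $(\ast)$, one propagates through the Bass construction. By definition $\Ka(C) = (K^{\GL,\an})^{B}(C) = \colim_{n}\Lambda^{n}K^{\GL,\an}(C)$, and $\Lambda E(C)$ is built by a pushout and a fibre from the values of $E$ at $C$, $C\langle t\rangle$, $C\langle t^{-1}\rangle$, $C\langle t,t^{-1}\rangle$. Since finite limits and colimits of pro-spectra are computed level-wise (Lemma~\ref{lem:level-wise-limit}) and $\Pro(\Sp)$ is stable, an induction on $n$ — the statement at level $n$ being ``$\Lambda^{n}K^{\GL,\an}(B)\to\Lambda^{n}K^{\GL,\an}(B/J)$ is an equivalence for all such $(B,J)$'', with base case $(\ast)$ and inductive step applying the previous level to $(B\langle t\rangle,J\langle t\rangle)$, $(B\langle t^{-1}\rangle,J\langle t^{-1}\rangle)$, $(B\langle t,t^{-1}\rangle,J\langle t,t^{-1}\rangle)$ and $(B,J)$ — shows $\Lambda^{n}K^{\GL,\an}(A)\to\Lambda^{n}K^{\GL,\an}(A/I)$ is an equivalence for every $n$; passing to the sequential colimit gives that $\Ka(A)\to\Ka(A/I)$ is an equivalence, hence in particular a weak equivalence.

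It remains to prove $(\ast)$. Writing $K^{\GL,\an}(B) = \prolim{j}\KGL(B\langle\Delta_{\pi^{j}}\rangle)$, it is enough to show that for each fixed $j$ the map $\KGL(B\langle\Delta_{\pi^{j}}\rangle)\to\KGL((B/J)\langle\Delta_{\pi^{j}}\rangle)$ is an equivalence of spectra. Both simplicial rings are connected: in degree zero, $a$ and $b$ are joined by the $1$-simplex $a + (b-a)\pi^{-j}t_{1}$, which makes sense precisely because $\pi$ is invertible. Hence by Lemma~\ref{lem.KGLinfiniteloops} it suffices to check that $\BGL(B\langle\Delta_{\pi^{j}}\rangle)\to\BGL((B/J)\langle\Delta_{\pi^{j}}\rangle)$ is a weak equivalence. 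Now $\GL(B\langle\Delta_{\pi^{j}}\rangle)\to\GL((B/J)\langle\Delta_{\pi^{j}}\rangle)$ is a level-wise surjective homomorphism of simplicial groups — a matrix invertible modulo the nilpotent ideal $J\langle\Delta^{p}_{\pi^{j}}\rangle$ is invertible — hence a Kan fibration, whose fibre is the simplicial group $1+J\langle\Delta_{\pi^{j}}\rangle\Mat$. So it is enough to show this fibre is a contractible pointed simplicial set, and a contracting homotopy is given, exactly as in the proof of Claim~\ref{claim.phiwe}, by
\[
\bigl(1+J\langle\Delta_{\pi^{j}}\rangle\Mat\bigr)\times\Delta[1]\to 1+J\langle\Delta_{\pi^{j}}\rangle\Mat,\qquad (1+a,s)\mapsto 1 + t_{1}(s)\cdot\pi^{-j}a,
\]
where $t_{1}\colon\Delta[1]\to B\langle\Delta_{\pi^{j}}\rangle$ is the coordinate viewed as a map of simplicial sets with $t_{1}(0)=0$, $t_{1}(1)=\pi^{j}$, and $\pi^{-j}a$ again lies in $J\langle\Delta^{p}_{\pi^{j}}\rangle\Mat$ because $J\langle\Delta^{p}_{\pi^{j}}\rangle$ is an ideal and $\pi$ a unit. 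This runs from the constant map at $1$ to the identity, so the fibre is contractible and $(\ast)$ follows.

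The only genuinely new ingredient is the contracting homotopy above — short once found — and the only place where the hypotheses really enter is the invertibility of $\pi$, used both for connectedness of $B\langle\Delta_{\pi^{j}}\rangle$ and to form $\pi^{-j}a$. This is exactly why the analogous statement fails for the analytic $K$-theory of adic rings, so I expect the ``hard part'' to be psychological: realizing that the adic contracting-homotopy argument of Claim~\ref{claim.phiwe} carries over verbatim to kill an arbitrary (closed) nilpotent ideal once $\pi$ is invertible.
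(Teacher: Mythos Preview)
Your proof is correct and follows essentially the same approach as the paper: reduce to showing that $K^{\GL,\an}(A)\to K^{\GL,\an}(A/I)$ is a (weak) equivalence, prove this by exhibiting the contracting homotopy of $1+J\langle\Delta_{\pi^{j}}\rangle\Mat$ exactly as in Claim~\ref{claim.phiwe}, and then propagate through the analytic Bass construction. The only organizational difference is that the paper defers the $K^{\GL,\an}$ step to the $KV^{\an}$ framework of Section~\ref{sec.kv} (via the excision Proposition~\ref{prop:excision} and Corollary~\ref{cor:KV-nilinvariant}), whereas you argue directly at each fixed radius; your route is slightly more self-contained and in fact yields a genuine equivalence of pro-spectra rather than only a weak one.
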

\begin{proof}
We will prove in Section~\ref{sec.kv} that $K^{\GL,\an}(A) \to K^{\GL,\an}(A/I)$ is a weak equivalence, see Corollary~\ref{cor:KV-nilinvariant}. This implies the claim by applying the analytic Bass construction~\eqref{def;EB}.
\end{proof}

Choose a ring of definition $A_{0} \subset A$.
\begin{samepage}
\begin{lem}
	\label{lem:ktilde-kab-Bass}\mbox{}
\begin{itemize}
\item[(i)] The natural map 	
	\[
	\Ka(A) = (K^{\GL,\an})^{B}(A) \to (\tilde k^{\an})^{B} (A; A_{0})
	\]
	is a weak equivalence.
\item[(ii)] If $A$ is regular and satisfies $(\dag)_{A}$, then the natural map 
\[
(\tilde k^{\an})^{B} (A; A_{0}) \to (k^{\an})^{B} (A)
\]
is an equivalence.
\end{itemize}
\end{lem}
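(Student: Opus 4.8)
The plan is to deduce both parts from the two weak fibre sequences recorded in Lemma~\ref{lem:K0-K0an} by applying the analytic Bass construction $(-)^{B}$, together with the annihilation statement Lemma~\ref{lem:Bannihilates}. The key formal input is that $(-)^{B}$ is exact and preserves weak equivalences. Indeed, writing it as in \eqref{def;EB} as a sequential colimit of iterated applications of $\Lambda$, and recalling that $\Lambda$ is assembled from finite limits and colimits in $\Pro(\Sp)$, this follows from the fact that the localization $\iota^{*}\colon\Pro(\Sp)\to\Pro(\Sp^{+})$ of Subsection~\ref{sec:weak-equivalences-of-pro-spectra} commutes with finite limits and with all colimits, so that $\iota^{*}(E^{B})\simeq(\iota^{*}\!\circ E)^{B}$ naturally; hence a (weak) cofibre sequence of modules over connective $K$-theory $E'\to E\to E''$ induces a (weak) cofibre sequence $(E')^{B}\to E^{B}\to(E'')^{B}$.

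For (i), I would start from the weak fibre sequence \eqref{kdjbkj}
\[
K^{\GL,\an}(A)\to\tilde k^{\an}(A;A_{0})\to H\bigl(\tildek_{0}(A;A_{0})\bigr),
\]
which is natural in $A$ and compatible with the module structures over connective $K$-theory. The functor $A\mapsto H(\tildek_{0}(A;A_{0}))$ takes constant values and is $0$-truncated, so Lemma~\ref{lem:Bannihilates} gives $(H\tildek_{0})^{B}(A;A_{0})\simeq 0$. Applying $(-)^{B}$ to the displayed weak cofibre sequence thus yields a weak equivalence $\Ka(A)=(K^{\GL,\an})^{B}(A)\xrightarrow{\;\simeq\;}(\tilde k^{\an})^{B}(A;A_{0})$, which is the natural map of the statement.

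For (ii), assume in addition that $A$ is regular and satisfies $(\dag)_{A}$. Then Lemma~\ref{lem:K0-K0an}(ii), through Corollary~\ref{cor.k0homotopyi} and Lemma~\ref{lem.connected.cover.an}, provides the second weak fibre sequence \eqref{sdkfhvksdhjv}
\[
K^{\GL,\an}(A)\to k^{\an}(A)\to H\bigl(K_{0}(A)\bigr),
\]
and the natural map from \eqref{kdjbkj} to \eqref{sdkfhvksdhjv} restricts to an equivalence on the fibres (both sequences have the common connective-cover functor $K^{\GL,\an}$ as their fibre, and $\tilde k^{\an}(A;A_{0})\to k^{\an}(A)$ is an isomorphism on homotopy pro-groups in positive degrees by Lemma~\ref{lem:K0-K0an}(ii)). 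Since $A\mapsto H(K_{0}(A))=\tau_{\leq 0}k(A)$ is again a constant, $0$-truncated module over connective $K$-theory, Lemma~\ref{lem:Bannihilates} gives $(HK_{0})^{B}(A)\simeq 0$, so applying $(-)^{B}$ to \eqref{sdkfhvksdhjv} produces a weak equivalence $(K^{\GL,\an})^{B}(A)\xrightarrow{\;\simeq\;}(k^{\an})^{B}(A)$. As the map of fibre sequences makes the resulting triangle of Bass constructions commute, comparison with (i) and the $2$-out-of-$3$ property identify the natural map $(\tilde k^{\an})^{B}(A;A_{0})\to(k^{\an})^{B}(A)$ as an equivalence. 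Concretely, its cofibre is $\bigl(\cofib(\tilde k^{\an}(A;A_{0})\to k^{\an}(A))\bigr)^{B}$, and by Lemma~\ref{lem:K0-K0an}(ii) the cofibre inside is (equivalent to) the constant $0$-truncated functor $A\mapsto H(\coker(K_{0}(A_{0})\to K_{0}(A)))$, whose analytic Bass construction vanishes by Lemma~\ref{lem:Bannihilates}.

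The main point requiring care is the bookkeeping of module structures: one must check that $K^{\GL,\an}$, $\tilde k^{\an}(-;A_{0})$, $k^{\an}$ and the Eilenberg--MacLane functors $H\tildek_{0}$ and $HK_{0}$ are modules over connective $K$-theory in a way compatible with the maps of \eqref{kdjbkj} and \eqref{sdkfhvksdhjv}, so that these become cofibre sequences of such modules to which $(-)^{B}$ applies. Granted this, both parts are formal consequences of Lemma~\ref{lem:Bannihilates} and the exactness of $(-)^{B}$. (For the honest equivalence in (ii), rather than merely a weak one, one verifies in addition that the pro-systems $\lprolim{j}K_{0}(A\langle\Delta^{m}_{\pi^{j}}\rangle)$ furnished by Corollary~\ref{cor.k0homotopyi} are pro-constant, so that the cofibre above is literally constant and $0$-truncated.)
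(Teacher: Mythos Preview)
Your proof is correct and follows essentially the same approach as the paper: for (i) you apply the analytic Bass construction to the weak fibre sequence~\eqref{kdjbkj} and kill the $H(\tildek_{0})$-term via Lemma~\ref{lem:Bannihilates}, and for (ii) you identify the cofibre of $\tilde k^{\an}(A;A_{0})\to k^{\an}(A)$ as a constant $0$-truncated functor and again invoke Lemma~\ref{lem:Bannihilates}. The paper's proof is exactly this, stated in two sentences; you add a helpful discussion of why $(-)^{B}$ preserves weak fibre sequences and of the module structures, and you also give a redundant $2$-out-of-$3$ argument for (ii) via~\eqref{sdkfhvksdhjv} before arriving at the direct cofibre argument (note that the $2$-out-of-$3$ route only yields a \emph{weak} equivalence, since (i) is only a weak equivalence; the honest equivalence in the statement really does need the direct cofibre description you give at the end).
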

\end{samepage}

\begin{proof}
Apply the analytic Bass construction to the weak fibre sequence~\eqref{kdjbkj}. Then  Lemma~\ref{lem:Bannihilates} implies~(i).
In the situation of (ii), Lemma~\ref{lem:K0-K0an}(ii) implies that the cofibre of $\tilde k^{\an}(A; A_{0}) \to k^{\an}(A)$ is a constant pro-spectrum concentrated in homotopy degree $0$.  Lemma~\ref{lem:Bannihilates} again implies the assertion.
\end{proof}

\begin{thm}
	\label{thm:Kcont-Kan}
If the Tate ring $A$ is regular and satisfies $(\dag)_{A}$, there is a natural zig-zag of weak equivalences
\[
\Kco(A) \xrightarrow{\simeq} K^{\cont,\an}(A)  \xleftarrow{\simeq} \Ka(A).
\]
\end{thm}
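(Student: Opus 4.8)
The strategy is to define the "mixed" non-connective invariant $K^{\cont,\an}(A)$ as the analytic Bass construction applied to $\tilde k^{\cont,\an}(A;A_0)$ (for a chosen ring of definition $A_0$), and then to connect both $\Kco(A)$ and $\Ka(A)$ to it through the zig-zag of weak equivalences of the tilde-versions already established in Lemma~\ref{lem:k-tilde-an-cont}. Concretely, set
\[
K^{\cont,\an}(A) = (\tilde k^{\cont,\an})^{B}(A; A_{0}).
\]
The left-hand weak equivalence should follow by applying the analytic Bass construction $(-)^{B}$ to the left weak equivalence $\tilde k^{\cont}(A;A_0) \xrightarrow{\simeq} \tilde k^{\cont,\an}(A;A_0)$ of Lemma~\ref{lem:k-tilde-an-cont} and then invoking Lemma~\ref{lem:Bass-construction-cont}, which identifies $(\tilde k^{\cont})^{B}(A;A_0)$ with $\Kco(A;A_0)$, together with Proposition~\ref{prop.welldefinedcont} identifying $\Kco(A;A_0)$ with $\Kco(A)$. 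Here one needs that $(-)^{B}$ preserves weak equivalences, which is clear since it is a sequential colimit of iterates of $\Lambda$ and both colimits and $\Lambda$ (a finite limit/colimit construction) are compatible with weak equivalences of pro-spectra by Lemma~\ref{lem:description-of-weak-equivs} and Lemma~\ref{lem:level-wise-limit}.

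For the right-hand weak equivalence, I would apply $(-)^{B}$ to the right weak equivalence $\tilde k^{\an}(A;A_0) \xrightarrow{\simeq} \tilde k^{\cont,\an}(A;A_0)$ of Lemma~\ref{lem:k-tilde-an-cont}, obtaining
\[
(\tilde k^{\an})^{B}(A;A_0) \xrightarrow{\simeq} (\tilde k^{\cont,\an})^{B}(A;A_0) = K^{\cont,\an}(A).
\]
It then remains to identify $(\tilde k^{\an})^{B}(A;A_0)$ with $\Ka(A) = (K^{\GL,\an})^{B}(A)$, which is precisely Lemma~\ref{lem:ktilde-kab-Bass}(i). Composing these yields the desired weak equivalence $\Ka(A) \xrightarrow{\simeq} K^{\cont,\an}(A)$. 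One should also check naturality, i.e.\ that the resulting zig-zag does not depend on the auxiliary choice of $A_0$ up to the relevant equivalences; this uses the independence statements already built into the $\tilde k$-constructions and into $\Kco(A) = \colim_{A_0}\Kco(A;A_0)$, plus the fact that the partially ordered set of rings of definition is directed (Remark~\ref{rmk.tateringofdef}(ii)).

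**Main obstacle.** The genuinely delicate point is ensuring that the analytic Bass construction $(-)^{B}$, which is only a homotopy colimit of the $\Lambda$-tower, interacts correctly with \emph{weak} (rather than strict) equivalences of pro-spectra and with the cofibre sequences used to define the tilde-versions — in particular that forming $\Lambda$ commutes with $\iota^{*}$ and with truncations in the sense needed to apply Lemma~\ref{lem:description-of-weak-equivs}. Since $\Lambda E(A)$ is defined as a fibre of a map involving finite (co)limits of pro-spectra, Lemma~\ref{lem:level-wise-limit} guarantees these are computed level-wise, so a level-representation argument reduces everything to the strict statements; the colimit over the tower is filtered, hence also compatible with $\iota^{*}$ and homotopy pro-groups. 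Once this bookkeeping is in place, the theorem is essentially a formal consequence of the lemmas collected in Subsections~\ref{sec.adicank}--\ref{sec.anK}, so I expect the proof itself to be short, with the real content already discharged in Lemmas~\ref{lem:k-tilde-an-cont}, \ref{lem:Bass-construction-cont}, and \ref{lem:ktilde-kab-Bass}.
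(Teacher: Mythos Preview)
Your proposal is correct and follows essentially the same route as the paper: apply the analytic Bass construction $(-)^{B}$ to the zig-zag of Lemma~\ref{lem:k-tilde-an-cont}, then invoke Lemma~\ref{lem:Bass-construction-cont} on the left and Lemma~\ref{lem:ktilde-kab-Bass}(i) on the right. The only cosmetic difference is that the paper packages independence of $A_{0}$ by defining $K^{\cont,\an}(A)$ as the colimit of $(\tilde k^{\cont,\an})^{B}(A;A_{0})$ over all rings of definition and then passing to that colimit at the end, rather than fixing one $A_{0}$ and arguing independence separately; your extra discussion of why $(-)^{B}$ preserves weak equivalences is a point the paper leaves implicit.
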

Here $K^{\cont,\an}(A)$ is defined by first applying the analytic Bass construction \eqref{def;EB} to $\tilde k^{\cont,\an}(A;A_{0})$ and then taking the colimit over the poset of rings of definition $A_{0} \subset A$ as in Definition~\ref{def.cont.K}.
\begin{proof}
Fix a ring of definition $A_{0} \subset A$.
We apply the analytic Bass construction  to the zig-zag of Lemma~\ref{lem:k-tilde-an-cont}.
By the previous Lemma $\Ka(A) \to (\tilde  k^{\an})^{B}(A; A_{0})$ is a weak equivalence of pro-spectra. Combining this with Lemma~\ref{lem:Bass-construction-cont} we get the natural  zig-zag of weak equivalences
\[
K^{\cont}(A;A_{0}) \xrightarrow{\simeq} ( \tilde k^{\cont,\an} )^{B} (A; A_{0}) \xleftarrow{\simeq} K^{\an}(A).
\]
Taking the colimit over the rings of definition $A_{0} \subset A$ gives the result.
\end{proof}

\begin{cor}
	\label{cor.kanKan}
Assume that  $A$ is regular and satisfies $(\dag)_{A}$. Then the pro-spectra
$k^{\an}(A)$ and $\tau_{\geq 0} \Ka(A)$ are weakly equivalent.
 In particular, we have an isomorphism of pro-groups
\[
K_{0}(A) \cong \Ka_{0}(A).
\]
\end{cor}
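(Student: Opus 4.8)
The plan is to deduce this from Lemma~\ref{lem:ktilde-kab-Bass} together with the low-degree identifications already in place. By Lemma~\ref{lem:ktilde-kab-Bass} there are natural weak equivalences
\[
\Ka(A) = (K^{\GL,\an})^{B}(A) \xrightarrow{\simeq} (\tilde k^{\an})^{B}(A;A_{0}) \xrightarrow{\simeq} (k^{\an})^{B}(A),
\]
the second one using that $A$ is regular and satisfies $(\dag)_{A}$. So it suffices to show that the canonical map $k^{\an}(A) \to (k^{\an})^{B}(A)$ — the structure map out of the first term in the colimit defining the analytic Bass construction — exhibits $k^{\an}(A)$ as the connective cover $\tau_{\geq 0}(k^{\an})^{B}(A)$. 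Since $k^{\an}$ has values in connective pro-spectra, this amounts to checking that this map induces an isomorphism of homotopy pro-groups in every non-negative degree; it is the analytic counterpart of the classical fact that the Bass delooping does not change the non-negative $K$-groups, i.e.\ $\tau_{\geq 0}K(R) \simeq k(R)$.

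For the positive degrees I would transport the comparison through the already-established chain: Theorem~\ref{thm:Kcont-Kan} gives $\tau_{\geq 1}\Ka(A) \simeq \tau_{\geq 1}\Kco(A)$, the weak equivalence $\tau_{\geq 1}\tilde k^{\cont}(A;A_{0}) \simeq \tau_{\geq 1}\Kco(A)$ from the discussion preceding Lemma~\ref{lem:k-tilde-an-cont} together with the zig-zag of Lemma~\ref{lem:k-tilde-an-cont} and Lemma~\ref{lem:K0-K0an}(ii) gives $\tau_{\geq 1}\Kco(A) \simeq \tau_{\geq 1}k^{\an}(A)$, and one verifies that the resulting weak equivalence $\tau_{\geq 1}\Ka(A) \simeq \tau_{\geq 1}k^{\an}(A)$ is compatible with the canonical map above. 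In degree $0$ one combines Lemma~\ref{lem:K0-K0an}(ii), which gives $\pi_{0}k^{\an}(A) = \ka_{0}(A) \cong K_{0}(A)$, with Theorem~\ref{thm:Kcont-Kan} and Proposition~\ref{prop.comalgcont}(ii), which give $\Ka_{0}(A) \cong \Kco_{0}(A) \cong K_{0}(A)$; this simultaneously establishes the ``in particular'' statement.

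The hard part will be the connectivity bookkeeping needed to upgrade these degree-wise statements to the single assertion $\tau_{\geq 0}(k^{\an})^{B}(A) \simeq k^{\an}(A)$ via the canonical map, equivalently to see that the analytic Bass construction leaves the connective cover untouched. One must track, along the sequential colimit $k^{\an}(A) \to \Lambda k^{\an}(A) \to \Lambda^{2}k^{\an}(A) \to \cdots$, that $\Lambda$ introduces new homotopy pro-groups only in non-positive degrees and that the degree-$0$ part has already stabilised after the first application of $\Lambda$ — note that $\Lambda$ of a connective pro-spectrum need not stay connective, which is exactly how the negative analytic $K$-groups arise. The tools are the split short exact sequences \eqref{seq:111} and the long exact sequence \eqref{LE.longexactsequence} controlling $\pi_{i}(\Lambda^{j}k^{\an}(A))$, the splitting of $A \to A\langle t\rangle$ which yields $\pi_{i}(\Gamma k^{\an}(A)) \cong \Gamma(\pi_{i}k^{\an})(A)$, the analytic Bass fundamental theorem for $\Kco$ (Proposition~\ref{prop:Bass-fundamental-theorem}, transported to $k^{\an}$ in degrees $\geq 1$ via Theorem~\ref{thm:Kcont-Kan} and the identifications above) which pins down the relevant maps in degree $1$, and the fact that homotopy pro-groups commute with the sequential colimit defining $(-)^{B}$, as already used in the proof of Lemma~\ref{lem:Bannihilates}.
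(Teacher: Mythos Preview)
Your first two paragraphs are essentially the paper's proof: reduce via Lemma~\ref{lem:ktilde-kab-Bass} to the canonical map $k^{\an}(A) \to (k^{\an})^{B}(A)$, show it induces isomorphisms on $\pi_{i}$ for $i\geq 1$ by transporting through $\Kco$ and the $\tilde k$-versions (with the compatibility check), and handle $\pi_{0}$ by factoring through $K_{0}(A)$ --- the paper phrases the latter as the composition $K_{0}(A) \to \ka_{0}(A) \to (\ka)^{B}_{0}(A) \cong \Kco_{0}(A)$, noting that the first map and the total composite are isomorphisms, hence so is the second.

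Your third paragraph is unnecessary and somewhat misguided: since $k^{\an}(A)$ is already connective, once the canonical map induces isomorphisms on $\pi_{i}$ for all $i \geq 0$ the weak equivalence $k^{\an}(A) \simeq \tau_{\geq 0}(k^{\an})^{B}(A)$ follows immediately from Lemma~\ref{lem:description-of-weak-equivs} (criterion~(3) with $m=-1$). There is no additional ``connectivity bookkeeping'' along the sequential colimit $\colim_{j} \Lambda^{j}k^{\an}(A)$ required, and the paper does none.
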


\begin{proof}
By Lemma~\ref{lem:ktilde-kab-Bass} the canonical map $\Ka(A) \to (\ka)^{B}(A)$ is a weak equivalence and we claim that $\ka(A) \to (\ka)^{B}(A)$ induces a weak equivalence on connective covers.
We first consider  connected covers. For this we choose a ring of definition $A_{0} \subset A$. By Theorem~\ref{thm:Kcont-Kan} the pro-spectra $\tau_{\geq 1} \Ka(A)$ and $\tau_{\geq 1} \Kco(A)$ are weakly equivalent. The latter is clearly equivalent to $\tau_{\geq 1} \tilde{k}^{\cont}(A;A_{0})$, which  is weakly equivalent to $\tau_{\geq 1} \tilde{k}^{\an}(A;A_{0})$ by Lemma~\ref{lem:k-tilde-an-cont}. Lemma~\ref{lem:K0-K0an} finally gives the desired weak equivalence with $\tau_{\geq 1} \ka(A)$. One checks that all the weak equivalences involved are compatible with the canonical  maps $\Ka(A) \to (\ka)^{B}(A) \leftarrow \ka(A)$. This gives the claim on connected covers.
For the $\pi_{0}$ statement we look at the composition $K_{0}(A) \to \ka_{0}(A) \to (\ka)^{B}_{0}(A) \cong K_{0}^{\cont}(A)$. The first map is an isomorphism by Lemma~\ref{lem:K0-K0an}, and the composition is an isomorphism by Proposition~\ref{prop.comalgcont}. Hence the second map is an isomorphism, as desired.
\end{proof}

\begin{rmk} 
Let $A$ be a Tate ring with ring of definition $A_{0}$ and $\ell$ a positive integer which is invertible in $A_{0}$. A theorem of Weibel~\cite[Thm.~9.5]{thomason} says that $K$-theory of $\Z[1/\ell]$-algebras with $\Z/\ell$-coefficients is homotopy invariant.
Using this one can show that the analogs of Lemmas~\ref{lem:k-tilde-an-cont}, \ref{lem:K0-K0an}, and \ref{lem:ktilde-kab-Bass} with $\Z/\ell$-coefficients are true without assuming that $A$ is regular. Hence the pro-spectra $\Kco(A;\Z/\ell)$ and $\Ka(A; \Z/\ell)$ are weakly equivalent. Combining this with Lemma~\ref{lem.Kcont-finite-coeffs}, we also get isomorphisms $K_{i}(A;\Z/\ell) \cong \Ka_{i}(A;\Z/\ell)$ for $i >0$.
\end{rmk}


\section{Analytic \textit{KV}-theory}\label{sec.kv}

In this section we briefly explain the main points of the $KV$-analog of our analytic $K$-theory.
Here $A = (A, \norm)$ is a complete normed ring as in Subsection~\ref{sec:normed-rings}. 
In particular, $A$ need  not be  commutative nor unital.
If  $A$ is not unital, we set $\GL(A):= \ker(\GL(\widetilde{A}) \to \GL(\Z))$ using the unitalization~$\widetilde A$.
As in Subsection~\ref{sec.basic} we get a pro-simplicial  ring
\[
\prolim{\rho} A\langle \Delta_{\rho} \rangle.
\]

\begin{defn} 
	\label{def-KVan}
The \emph{analytic $KV$-theory} of $A$ is defined as the pro-simplicial set
\[
KV^{\an}(A) = \prolim{\rho} \BGL( A\langle \Delta_{\rho} \rangle ).
\]
Its $i$-th homotopy pro-group is denoted by $KV^{\an}_{i}(A)$.
\end{defn}

We may also view $KV^{\an}(A)$ as a pro-space, i.e.~as an object of the $\infty$-category $\Pro(\Spaces_{*}^{0})$.
Directly from the definition we get:

\begin{lem}
	\label{lem.KVspace}
There are natural  isomorphisms
\[
KV^{\an}_{i}(A) \cong  \prolim{\rho} \pi_{i-1}\GL ( A\langle \Delta_{\rho} \rangle ).
\]
\end{lem}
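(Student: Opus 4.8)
The plan is to reduce the statement to the following \emph{per-radius} fact: for each fixed $\rho \ge 1$ there is an isomorphism of groups $\pi_{i}(\BGL(A\langle \Delta_{\rho} \rangle)) \cong \pi_{i-1}(\GL(A\langle \Delta_{\rho} \rangle))$, natural in $A$ and hence compatible with the restriction maps $A\langle \Delta_{\rho'} \rangle \to A\langle \Delta_{\rho} \rangle$ for $\rho' \ge \rho$. Granting this, the lemma follows immediately: by definition $KV^{\an}_{i}(A)$ is the $i$-th homotopy pro-group of the pro-space $\rho \mapsto \BGL(A\langle \Delta_{\rho} \rangle)$, and homotopy pro-groups are computed level-wise, so
\[
KV^{\an}_{i}(A) = \prolim{\rho} \pi_{i}(\BGL(A\langle \Delta_{\rho} \rangle)) \cong \prolim{\rho} \pi_{i-1}(\GL(A\langle \Delta_{\rho} \rangle)),
\]
which is the asserted isomorphism.

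For the per-radius fact I would argue as follows. By construction $\BGL(A\langle \Delta_{\rho} \rangle)$ is the diagonal of the bisimplicial set $[p] \mapsto \BGL(A\langle \Delta^{p}_{\rho} \rangle)$, where for the (discrete) ring $R = A\langle \Delta^{p}_{\rho} \rangle$ the simplicial set $\BGL(R)$ is the classifying space of the discrete group $\GL(R)$ (taken via the unitalization when $A$ is not unital). One route is to invoke the standard identification of the $\overline{W}$-construction with the diagonal of a degree-wise classifying-space construction, which yields directly $\BGL(A\langle \Delta_{\rho} \rangle) \simeq \overline{W}(\GL(A\langle \Delta_{\rho} \rangle))$ and hence $\pi_{i}(\BGL(A\langle \Delta_{\rho} \rangle)) \cong \pi_{i-1}(\GL(A\langle \Delta_{\rho} \rangle))$. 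A more hands-on variant is to apply the diagonal to the level-wise universal principal bundle
\[
\GL(A\langle \Delta^{p}_{\rho} \rangle) \to E\GL(A\langle \Delta^{p}_{\rho} \rangle) \to \BGL(A\langle \Delta^{p}_{\rho} \rangle),
\]
a fibre sequence of simplicial sets, functorial in the ring, with contractible total space and connected base. Its diagonal is a sequence whose left-hand term is the underlying simplicial set of the simplicial group $\GL(A\langle \Delta_{\rho} \rangle)$ (the bisimplicial set $[p]\mapsto \GL(A\langle\Delta^{p}_{\rho}\rangle)$ being constant in the second direction), and whose middle term is contractible, since the diagonal of a level-wise contractible bisimplicial set is contractible. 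Because each base $\BGL(A\langle \Delta^{p}_{\rho} \rangle)$ is connected, the Bousfield--Friedlander theorem on diagonals of level-wise fibre sequences of bisimplicial sets applies, so the diagonal sequence is again a fibre sequence up to homotopy; thus $\Omega\BGL(A\langle \Delta_{\rho} \rangle) \simeq \GL(A\langle \Delta_{\rho} \rangle)$, giving the desired isomorphism on homotopy groups, naturally in $A$.

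The only step requiring genuine care is this last bisimplicial fibre-sequence statement, whose validity hinges on a connectivity (or $\pi_{*}$-Kan) hypothesis on the base; here it holds for free, since classifying spaces of groups are connected in every simplicial degree. Everything else is bookkeeping: unwinding the definition of $\BGL$ of a simplicial ring, checking that the diagonal of the level-wise constant bisimplicial set $[p] \mapsto \GL(A\langle \Delta^{p}_{\rho} \rangle)$ recovers the simplicial group $\GL(A\langle \Delta_{\rho} \rangle)$, and noting that all constructions are functorial in the ring so that passing to the pro-system in $\rho$ is harmless.
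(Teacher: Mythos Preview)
Your proof is correct and is essentially what the paper has in mind: the paper's own ``proof'' is the single phrase ``Directly from the definition we get,'' so it simply takes for granted the standard fact that for a simplicial group $G_{\bullet}$ the diagonal of $[p]\mapsto BG_{p}$ is a delooping of $G_{\bullet}$, and you have unpacked precisely this. Your Bousfield--Friedlander argument (or equivalently the identification with $\overline{W}$) is the expected justification; nothing more is needed.
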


If $a = (a_{ij})$ is a matrix with coefficients in $A$, we set $\|a\| = \max\{ \|a_{ij}\| \}$. This defines a norm on the ring of matrices $\mathrm{M}(A)$.

\begin{lem}
	\label{lem:KV1-explicit}
There is a natural isomorphism 
\[
KV^{\an}_{1}(A) \cong \prolim{\rho} \GL(A) / \GL(A)_{\rho}.
\]
Here $\GL(A)_{\rho}$ is the  subgroup of $\GL(A)$ generated by matrices $g$ such that $\lim_{n\to\infty} \|(g-1)^{n}\| \rho^{n} = 0$, which is normal.
\end{lem}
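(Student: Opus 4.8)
The plan is, for each radius $\rho$, to compute $\pi_{0}$ of the simplicial group $\GL(A\langle\Delta_{\rho}\rangle)$ and to identify it naturally with $\GL(A)/\GL(A)_{\rho}$ in a way compatible with the restriction maps for $\rho'\geq\rho$; the claim then follows by passing to the cofiltered limit over $\rho$ and applying Lemma~\ref{lem.KVspace}.

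First I would use that $A\langle\Delta^{0}_{\rho}\rangle = A$ and $A\langle\Delta^{1}_{\rho}\rangle\cong A\langle t\rangle_{\rho}$ (solving for one coordinate), the two face maps $A\langle t\rangle_{\rho}\rightrightarrows A$ being evaluation at $t=0$ and at $t=1$. The standard description of the connected components of a simplicial group $G$ as $\pi_{0}(G)\cong G_{0}/d_{0}(\ker(d_{1}\colon G_{1}\to G_{0}))$, with $d_{0}(\ker d_{1})$ automatically a normal subgroup, then gives
\[
\pi_{0}\,\GL(A\langle\Delta_{\rho}\rangle)\;\cong\;\GL(A)/H_{\rho},\qquad H_{\rho}:=\{\,g|_{t=1}\;:\;g\in\GL(A\langle t\rangle_{\rho}),\ g|_{t=0}=1\,\}.
\]

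It remains to show $H_{\rho}=\GL(A)_{\rho}$. For $\GL(A)_{\rho}\subseteq H_{\rho}$: if $\lim_{n}\|(h-1)^{n}\|\rho^{n}=0$ then $g:=1+(h-1)t$ lies in $\GL(A\langle t\rangle_{\rho})$ — its inverse is the convergent series $\sum_{k\geq 0}(-1)^{k}(h-1)^{k}t^{k}$ — with $g|_{t=0}=1$, $g|_{t=1}=h$; since such $h$ generate $\GL(A)_{\rho}$ and $H_{\rho}$ is a subgroup, we are done. For the reverse inclusion, fix $g\in\GL_{N}(A\langle t\rangle_{\rho})$ with $g|_{t=0}=1$. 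Using density of $A[t]$ in $A\langle t\rangle_{\rho}$, choose a polynomial truncation $p$ of $g$ with $\|g-p\|_{\rho}\,\|g^{-1}\|_{\rho}<1$; then $p$ is still invertible over $A\langle t\rangle_{\rho}$ with $p|_{t=0}=1$, and writing $\delta=1-pg^{-1}$ (so $\delta|_{t=0}=0$, $\|\delta\|_{\rho}<1$) a short estimate using $\rho\geq 1$ gives $\|(gp^{-1}|_{t=1}-1)^{n}\|\rho^{n}\leq\|\delta\|_{\rho}^{n}\to 0$, hence $g|_{t=1}\,p|_{t=1}^{-1}\in\GL(A)_{\rho}$. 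Thus it suffices to prove $p|_{t=1}\in\GL(A)_{\rho}$ for polynomial $p$. Here I would apply the classical linearization of matrix polynomials, in the spirit of parts c.\ and d.\ of the proof of \cite[Prop.~IV.6]{Gruson}: after stabilizing, $p$ is carried by elementary row and column operations over $A[t]$ into a linear pencil $B_{0}+B_{1}t$. Evaluating at $t=0$ and using $p|_{t=0}=1$ shows $B_{0}$ lies in the subgroup $E(A)$ generated by elementary matrices, while invertibility of $p$ over $A\langle t\rangle_{\rho}$ forces $1+B_{0}^{-1}B_{1}t$ to be invertible over $A\langle t\rangle_{\rho}$, i.e.\ $\lim_{n}\|(B_{0}^{-1}B_{1})^{n}\|\rho^{n}=0$, so $1+B_{0}^{-1}B_{1}\in\GL(A)_{\rho}$. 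Evaluating the factorization at $t=1$ and using that $(e-1)^{2}=0$ for an elementary matrix $e$ (so that $E(A)\subseteq\GL(A)_{\rho}$), we conclude $p|_{t=1}\in\GL(A)_{\rho}$.

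Finally, all these identifications are compatible with the transition maps $A\langle\Delta_{\rho'}\rangle\to A\langle\Delta_{\rho}\rangle$ and are natural in $A$, so the cofiltered limit over $\rho$ together with Lemma~\ref{lem.KVspace} yields $KV^{\an}_{1}(A)\cong\prolim{\rho}\GL(A)/\GL(A)_{\rho}$. Normality of $\GL(A)_{\rho}$ holds because the set of $\rho$-unipotent matrices is conjugation-invariant; the non-unital case reduces to the unital one by passing to the unitalization $\widetilde A$. The main obstacle is the final step of the reverse inclusion: carrying out the linearization over a possibly non-commutative ring while keeping track of its compatibility with the two evaluations and with invertibility over $A\langle t\rangle_{\rho}$ — that is where most of the careful work lies.
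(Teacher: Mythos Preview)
Your argument is correct and follows the same strategy as the paper: reduce via Lemma~\ref{lem.KVspace} to identifying $\pi_{0}\GL(A\langle\Delta_{\rho}\rangle)$ with $\GL(A)/\GL(A)_{\rho}$, then show that the subgroup $H_{\rho}$ of elements path-connected to $1$ coincides with $\GL(A)_{\rho}$. The paper dispatches this last identification by citing \cite[App.~3]{Karoubi-Villamayor}; the polynomial-approximation and linearization argument you sketch is precisely the content of that reference.
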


\begin{proof}
By the previous  Lemma~\ref{lem.KVspace}, it suffices to show that $\pi_{0}\GL(A \langle \Delta_{\rho} \rangle) \cong \GL(A) / \GL(A)_{\rho}$.
We can identify $\pi_{0}\GL(A \langle \Delta_{\rho} \rangle) = \GL(A)/G$ where
 $G$ is the subgroup consisting of matrices $g$ such that there exists a matrix $g(t) \in \GL(A \langle t \rangle_{\rho})$ with $g(0) = 1$ and $g(1) = g$. The same proof as in \cite[App.~3]{Karoubi-Villamayor} shows that $G = \GL(A)_{\rho}$.
\end{proof}

\begin{rmk}  \label{rem-KVanKcont2}
 An $s$-bounded homomorphism between complete normed rings $\phi\colon A \to B$ (see Definition~\ref{def.s-bounded}) induces a map of simplicial rings $A\langle \Delta_{\rho^{1/s}}\rangle \to B\langle \Delta_{\rho}\rangle$ and hence a map of pro-spaces $KV^{\an}(A) \to KV^{\an}(B)$.
This implies that, up to natural equivalence, $KV^{\an}(A)$ depends only on the equivalence class of the norm. Thus by Example \ref{ex:normed-rings}(iii), the analytic $KV$-theory of a  Tate ring is well defined.
\end{rmk}

\begin{lem}
	\label{lem:KV-Kan-connective-cover}
If $A$ is a regular Tate ring satisfying $(\dag)_{A}$, then we have natural weak equivalences 
\[
KV^{\an}(A) \simeq \Omega^{\infty} \tau_{\geq 1} k^{\an}(A) \simeq \Omega^{\infty}\tau_{\geq 1} \Ka(A).
\]
\end{lem}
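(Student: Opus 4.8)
The plan is to route both weak equivalences through the pro-spectrum $K^{\GL,\an}(A)=\prolim{j}\KGL(A\langle\Delta_{\pi^{j}}\rangle)$, for $\pi\in A$ a fixed topologically nilpotent unit. The second equivalence $\Omega^{\infty}\tau_{\geq 1}\ka(A)\simeq\Omega^{\infty}\tau_{\geq 1}\Ka(A)$ is a formal consequence of Corollary~\ref{cor.kanKan}: it provides a weak equivalence $\ka(A)\simeq\tau_{\geq 0}\Ka(A)$ of connective pro-spectra, and applying the level-wise truncation $\tau_{\geq 1}$ preserves weak equivalences (Lemma~\ref{lem:description-of-weak-equivs}), while $\Omega^{\infty}$ of a weak equivalence of pro-objects of connected spectra is a weak equivalence in $\Pro(\Spaces_{*}^{0})$ (Lemma~\ref{lemma:weak-equivs-infinite-loop-space}).

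For the first equivalence I would argue in two steps. First I would show $KV^{\an}(A)\simeq\Omega^{\infty}K^{\GL,\an}(A)$ for an arbitrary Tate ring $A$. Here the key observation is that each simplicial ring $A\langle\Delta_{\pi^{j}}\rangle$ is connected in the sense of Subsection~\ref{sec:K-theory-spectra-and-plus-construction}: as $A\langle\Delta^{0}_{\pi^{j}}\rangle\cong A$ and $\pi$ is invertible in $A$, any two elements $a,b\in A$ are the two vertices of the $1$-simplex $b+(a-b)\pi^{-j}t_{0}\in A\langle\Delta^{1}_{\pi^{j}}\rangle$. Consequently Lemma~\ref{lem.KGLinfiniteloops} provides natural weak equivalences $\BGL(A\langle\Delta_{\pi^{j}}\rangle)\simeq\Omega^{\infty}\KGL(A\langle\Delta_{\pi^{j}}\rangle)$; using Remark~\ref{rmk.Tatenormedsimplicial} to identify the pro-system $(A\langle\Delta_{\rho}\rangle)_{\rho\geq 1}$ underlying $KV^{\an}(A)$ with its cofinal subsystem $(A\langle\Delta_{\pi^{j}}\rangle)_{j\geq 0}$, and that $\Omega^{\infty}$ of a pro-spectrum is computed level-wise, I would take $\prolim{j}$ to conclude. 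Second, I would identify $\Omega^{\infty}K^{\GL,\an}(A)$ with $\Omega^{\infty}\tau_{\geq 1}\ka(A)$; this is the step that uses regularity and $(\dag)_{A}$. The proof of Lemma~\ref{lem:K0-K0an}(ii) produces --- via Corollary~\ref{cor.k0homotopyi} and Lemma~\ref{lem.connected.cover.an} --- a weak fibre sequence of pro-spectra $K^{\GL,\an}(A)\to\ka(A)\to H(K_{0}(A))$ whose last term is a constant, $0$-truncated pro-spectrum. Since $K^{\GL,\an}(A)$ is a pro-object of connected spectra, the natural map $K^{\GL,\an}(A)\to\ka(A)$ factors through the connected cover $\tau_{\geq 1}\ka(A)$, and the resulting map is an isomorphism on all homotopy pro-groups between pro-objects of connected spectra, hence a weak equivalence (Lemma~\ref{lem:description-of-weak-equivs}); applying $\Omega^{\infty}$ and composing with the first step gives $KV^{\an}(A)\simeq\Omega^{\infty}\tau_{\geq 1}\ka(A)$.

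The main obstacle --- really the one point that requires the hypotheses rather than mere bookkeeping --- is the gap between $\tau_{\geq 1}\ka(A)$ and $K^{\GL,\an}(A)$. At finite level these disagree: $\KGL(A\langle\Delta_{\pi^{j}}\rangle)$, the geometric realization of $[p]\mapsto\KGL(A\langle\Delta^{p}_{\pi^{j}}\rangle)$, is in general \emph{not} the connected cover of the degree-wise $K$-theory $k(A\langle\Delta_{\pi^{j}}\rangle)$, because realization does not commute with the truncation $\tau_{\geq 1}$. They agree only after passing to the pro-limit, and this relies on the pro-homotopy invariance of $K_{0}$ for regular Tate rings satisfying $(\dag)_{A}$ (Corollary~\ref{cor.k0homotopyi}) --- precisely what makes the $H(K_{0})$-term in the fibre sequence above \emph{constant}. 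Everything else --- the cofinality claim, the level-wise behaviour of $\Omega^{\infty}$ on pro-objects, and the naturality of the zig-zag --- is routine.
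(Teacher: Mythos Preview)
Your proposal is correct and follows essentially the same route as the paper: both arguments pass through $K^{\GL,\an}(A)$ via the connectedness of the simplicial rings $A\langle\Delta_{\pi^{j}}\rangle$ (using Lemma~\ref{lem.KGLinfiniteloops} and Remark~\ref{rmk.Tatenormedsimplicial}), then invoke the weak fibre sequence~\eqref{sdkfhvksdhjv} and Corollary~\ref{cor.kanKan}. Your write-up simply unpacks in more detail the two steps the paper compresses into a single sentence, and your discussion of the ``main obstacle'' correctly identifies where regularity and $(\dag)_{A}$ enter.
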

\begin{proof}
Since for each $\rho \geq 1$ the simplicial ring $A\langle \Delta_{\rho} \rangle$ is connected, Lemma~\ref{lem.KGLinfiniteloops} 
implies that $KV^{\an}(A)$ is naturally  equivalent to $\lprolim{\rho} \Omega^{\infty} \KGL( A\langle \Delta_{\rho} \rangle )$.
By Remark~\ref{rmk.Tatenormedsimplicial} the latter is equivalent to $\Omega^{\infty}K^{\GL,\an}(A)$. 
Hence the claim follows  from \eqref{sdkfhvksdhjv} and Corollary~\ref{cor.kanKan}.
\end{proof}

The same proof as that of Proposition~\ref{prop.Fan-prohi} shows:
\begin{prop}
	\label{prop:pro-homotopy-invariance}
The inclusions $A \subset A\langle t\rangle_{\sigma}$ induce an equivalence of pro-spaces
\[
KV^{\an}(A) \xrightarrow{\sim} \prolim{\sigma} KV^{\an}(A\langle t\rangle_{\sigma}).
\]
\end{prop}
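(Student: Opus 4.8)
The plan is to run the argument of Proposition~\ref{prop.Fan-prohi}, with the rescaling $t\mapsto \pi t$ used there replaced by enlarging the radius of convergence $\sigma$ of the extra variable. For a complete normed ring $C$ and $\rho\ge 1$ write $P_\rho(C) = \BGL(C\langle\Delta_\rho\rangle) \in \Spaces^{0}_{*}$, so that $KV^\an(C) = \prolim{\rho} P_\rho(C)$ and the map in question is $\prolim{\rho} P_\rho(A) \to \prolim{\sigma}\prolim{\rho} P_\rho(A\langle t\rangle_{\sigma})$, induced level-wise by the isometric inclusions $A\hookrightarrow A\langle t\rangle_{\sigma}$.

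First I would fix $\rho\ge 1$ and prove that $P_\rho(A) \to \prolim{\sigma} P_\rho(A\langle t\rangle_{\sigma})$ is an equivalence of pro-spaces, where the transition maps of the target come from the restriction inclusions $A\langle t\rangle_{\sigma'} \hookrightarrow A\langle t\rangle_{\sigma}$ for $\sigma'\ge \sigma$. The evaluation $t\mapsto 0$ gives a bounded retraction $A\langle t\rangle_{\sigma}\to A$, so $P_\rho(A)$ is a strict retract of each $P_\rho(A\langle t\rangle_{\sigma})$, compatibly with the transition maps. The key input is the normed analog of Lemma~\ref{lem:simplicial-homotopy}: whenever $\sigma'\ge \sigma\rho$, the two maps of simplicial rings
\[
(A\langle t\rangle_{\sigma'})\langle\Delta_\rho\rangle \longrightarrow (A\langle t\rangle_{\sigma})\langle\Delta_\rho\rangle
\]
given by $t\mapsto t$ (the restriction inclusion) and by $t\mapsto 0$ (which factors through $A\langle\Delta_\rho\rangle$) are simplicially homotopic. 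The homotopy is given by the same formulas as in the proof of Lemma~\ref{lem:simplicial-homotopy}, namely by the ring homomorphisms $h_\ell\colon (A\langle t\rangle_{\sigma'})\langle\Delta^{n}_{\rho}\rangle \to (A\langle t\rangle_{\sigma})\langle\Delta^{n+1}_{\rho}\rangle$ with $h_\ell(t) = t(t_0 + \dots + t_\ell)$ and $h_\ell(t_i) = s_\ell(t_i)$; that the two faces are as claimed uses the relation $t_0 + \dots + t_m = 1$ on $\Delta_\rho$. The one genuinely new point is a convergence estimate: in $(A\langle t\rangle_{\sigma'})\langle\Delta_\rho\rangle$ the extra variable has norm $\sigma'$, while on $\Delta_\rho$ the coordinates $t_i$ have norm at most $\rho$, so $t(t_0+\dots+t_\ell)$ has norm at most $\sigma\rho$ in the target, and $h_\ell$ is well defined on the completed rings provided $\sigma'\ge\sigma\rho$.

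Applying $\BGL$ (which preserves simplicial homotopies, being a diagonal of a bisimplicial set) then shows that for $\sigma'\ge\sigma\rho$ the transition map $P_\rho(A\langle t\rangle_{\sigma'}) \to P_\rho(A\langle t\rangle_{\sigma})$ is homotopic to a map through $P_\rho(A)$, while $P_\rho(A)\hookrightarrow P_\rho(A\langle t\rangle_{\sigma})\xrightarrow{t\mapsto 0} P_\rho(A)$ is the identity. Since $[1,\infty)$ has a cofinal countable chain, the standard pro-homotopy argument (as in the last step of the proof of Proposition~\ref{prop.Fan-prohi}) gives that $P_\rho(A)\xrightarrow{\simeq}\prolim{\sigma} P_\rho(A\langle t\rangle_{\sigma})$ is an equivalence of pro-spaces, naturally in $\rho$. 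Passing to $\prolim{\rho}$ and using that iterated cofiltered limits of pro-objects commute then yields
\[
KV^\an(A) = \prolim{\rho} P_\rho(A) \xrightarrow{\;\simeq\;} \prolim{\rho}\prolim{\sigma} P_\rho(A\langle t\rangle_{\sigma}) = \prolim{\sigma}\prolim{\rho} P_\rho(A\langle t\rangle_{\sigma}) = \prolim{\sigma} KV^\an(A\langle t\rangle_{\sigma}),
\]
which is the assertion.

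I expect the only real difficulty to be the bookkeeping around the constraint $\sigma'\ge\sigma\rho$: one must check that, with this inequality, the substitutions $h_\ell$ genuinely define homomorphisms of the \emph{normed} simplicial rings in question, that $A\langle t\rangle_{\sigma'}\hookrightarrow A\langle t\rangle_{\sigma}$ is indeed the transition map of $\prolim{\sigma} KV^\an(A\langle t\rangle_{\sigma})$, and---a minor point---that the dependence of $\sigma'$ on $\sigma$ and $\rho$ is tame enough that the final passage to $\prolim{\rho}$ is legitimate, e.g.\ by reindexing the double pro-system along a cofinal chain. Everything else is formal and parallels Proposition~\ref{prop.Fan-prohi}.
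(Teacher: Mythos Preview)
Your proposal is correct and is precisely the adaptation the paper has in mind: the paper's own ``proof'' is the single sentence ``The same proof as that of Proposition~\ref{prop.Fan-prohi} shows,'' and you have spelled out exactly what that means in the normed setting, including the correct convergence constraint $\sigma'\ge\sigma\rho$ (which plays the role of the ``$j$-fold composite of transition maps'' in the adic case, since on $\Delta_\rho$ the relation is $t_0+\dots+t_m=1$ rather than $=\pi^j$). Your concern about the bookkeeping is unfounded: once the equivalence $P_\rho(A)\simeq\prolim{\sigma}P_\rho(A\langle t\rangle_\sigma)$ is established for each fixed $\rho$ and seen to be natural in $\rho$ (both the inclusion and the retraction $t\mapsto 0$ are), taking $\prolim{\rho}$ is a limit in $\Pro(\Spaces^0_*)$ and preserves equivalences.
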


We write $\GL(A\langle t_{1}, \dots, t_{n}\rangle_{\rho})_{0}$ for the subgroup consisting of matrices $f$ such that $f(0,\dots, 0)=1$.
\begin{defn}
A bounded  homomorphism $\phi\colon A \to B$ of complete normed rings  is called a \emph{pro-$\GL$-fibration} if for every $n$ the induced map
\[
\prolim{\rho} \GL(A\langle t_{1}, \dots, t_{n}\rangle_{\rho})_{0} \to  \prolim{\rho} \GL(B\langle t_{1}, \dots, t_{n}\rangle_{\rho})_{0}
\]
is an epimorphism of pro-sets. 
\end{defn}

\begin{rmks}
\phantomsection 
\label{rmk:pro-GL-fibrations}
\begin{itemize}
\item[(i)] A pro-$\GL$-fibration is surjective. (For $b\in B$ consider $\left(\begin{smallmatrix} 1 & t_{1}b \\ 0 & 1\end{smallmatrix}\right)$.)

\item[(ii)] If $I\subset A$ is a closed ideal, we equip $A/I$ with the residue norm. If $I$ is moreover nilpotent, then $A \to A/I$ is a pro-$\GL$-fibration.
(Each $I\langle \Delta^{n}_{\rho}\rangle$ is nilpotent, hence $\GL(A\langle \Delta^{n}_{\rho}\rangle) \to \GL(A/I\langle \Delta^{n}_{\rho}\rangle)$ is surjective.)

\end{itemize}
\end{rmks}

Let $X = \lprolim{j} X^{(j)}$ be a pro-simplicial set. Write $X_{n}= \lprolim{j} X^{(j)}_{n}$ for the pro-set of its $n$-simplices.
We call $X$ \emph{weakly discrete} if, for every $n$, the unique degeneracy map
$s:=s_{0}^{n}\colon X_{0} \to X_{n}$ is an isomorphism of pro-sets.
\begin{lemma}\label{lem:weakly-discrete}
If $X\to Y$ is a map of pro-simplicial sets such that each $X_{n}\to Y_{n}$ is an epimorphism of pro-sets, and if $X$ is weakly discrete, then so is $Y$.
\end{lemma}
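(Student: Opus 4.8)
The plan is to fix an integer $n\geq 0$ and show that the unique degeneracy operator $s=s_{0}^{n}\colon Y_{0}\to Y_{n}$ is an isomorphism of pro-sets; since this is precisely the condition defining weak discreteness, it gives the lemma. To set things up, I would apply the functor ``pro-set of $n$-simplices'' to the map $X\to Y$, obtaining a commutative square in $\Pro(\Set)$ whose horizontal maps are the degeneracy operators $s\colon X_{0}\to X_{n}$ and $s\colon Y_{0}\to Y_{n}$ and whose vertical maps are induced by $X\to Y$. By hypothesis the two vertical maps are epimorphisms of pro-sets, and by weak discreteness of $X$ the top horizontal map $s\colon X_{0}\to X_{n}$ is an isomorphism.

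First I would check that $s\colon Y_{0}\to Y_{n}$ is an epimorphism of pro-sets. The composite $X_{0}\xrightarrow{\simeq} X_{n}\to Y_{n}$ is an epimorphism, being a composite of an isomorphism and an epimorphism; by commutativity of the square it equals the composite $X_{0}\to Y_{0}\xrightarrow{s} Y_{n}$. Since in any category a composite $g\circ f$ can be an epimorphism only if $g$ is one, it follows that $s\colon Y_{0}\to Y_{n}$ is an epimorphism of pro-sets.

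Next I would observe that $s\colon Y_{0}\to Y_{n}$ is a split monomorphism. Indeed, the unique morphism $[n]\to [0]$ in $\Delta$ admits a section (any of the $n+1$ morphisms $[0]\to[n]$ is one), so the corresponding face operator $Y_{n}\to Y_{0}$ is a left inverse of $s$, on the nose and level by level, hence a retraction of $s$ in $\Pro(\Set)$. Finally, a morphism which is simultaneously a split monomorphism and an epimorphism is an isomorphism: writing $r$ for the retraction, so that $rs=\id$, the identity $s(rs)=s=(\id)\,s$ together with the fact that $s$ is an epimorphism yields $sr=\id$. Hence $s\colon Y_{0}\to Y_{n}$ is an isomorphism of pro-sets for every $n$, so $Y$ is weakly discrete.

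This argument is entirely formal, so I do not expect a genuine obstacle; the only points needing (minor) care are that the square above really does commute in $\Pro(\Set)$, and that the standard cancellation facts about epimorphisms and split monomorphisms, valid in any category, are being applied correctly here — in particular that the retraction coming from a face operator is defined at the level of the indexing diagram and therefore descends to a retraction of pro-sets.
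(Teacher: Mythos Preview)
Your proof is correct and follows essentially the same approach as the paper: set up the commutative square, use that the top map is an isomorphism and the right vertical map is an epimorphism to deduce that $s\colon Y_0\to Y_n$ is an epimorphism, then observe that a face map provides a retraction so $s$ is an isomorphism. The only difference is that you spell out the ``split mono plus epi implies iso'' step explicitly, whereas the paper leaves this implicit.
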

\begin{proof}
Consider the commutative diagram of pro-sets
\[
\xymatrix{
X_{0} \ar[r]^{s}_{\cong} \ar[d] & X_{n} \ar@{->>}[d] \\
Y_{0} \ar[r]^{s} & Y_{n}.
}
\]
Since $X_{0} \to X_{n}$ is an isomorphism and $X_{n} \to Y_{n}$ is an epimorphism, $s\colon Y_{0} \to Y_{n}$ is also an epimorphism. Since any composition of face maps $Y_{n} \to Y_{0}$ is a left-inverse, $s$ is indeed an isomorphism.
\end{proof}
\begin{lemma}\label{lem:homotopy-groups-weakly-discrete}
If $X$ is a pointed weakly discrete pro-simplicial set which is level-wise Kan, then $\pi_{0}(X)\cong X_{0}$ and the higher homotopy pro-groups vanish.
\end{lemma}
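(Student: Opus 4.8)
The plan is to compute all homotopy pro-groups level-wise. Write $X = \lprolim{j} X^{(j)}$ with each $X^{(j)}$ a pointed Kan simplicial set and $(x_{j})_{j}$ a compatible choice of basepoints, so that by definition $\pi_{0}(X) = \lprolim{j}\pi_{0}(X^{(j)})$ and $\pi_{i}(X) = \lprolim{j}\pi_{i}(X^{(j)},x_{j})$ for $i \geq 1$. The case $i=0$ is purely formal; the cases $i \geq 1$ use the Kan condition through the simplicial description of the homotopy groups of a Kan complex.

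For $\pi_{0}$, recall that for any simplicial set $Y$ one has $\pi_{0}(Y) = \coeq(d_{0},d_{1}\colon Y_{1} \rightrightarrows Y_{0})$, so $\pi_{0}(X)$ is the level-wise coequalizer of $d_{0},d_{1}\colon X_{1}\rightrightarrows X_{0}$, which by Lemma~\ref{lem:level-wise-limit} agrees with the coequalizer of $d_{0},d_{1}$ formed in $\Pro(\Set)$. Since $X$ is weakly discrete, $s_{0}\colon X_{0}\to X_{1}$ is an isomorphism of pro-sets; the simplicial identities $d_{0}s_{0} = d_{1}s_{0} = \id_{X_{0}}$ then force $d_{0} = d_{1}$ in $\Pro(\Set)$, and the coequalizer of a morphism with itself is the identity on its target. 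Hence $\pi_{0}(X)\cong X_{0}$.

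For $i \geq 1$ I would first extract a concrete consequence of weak discreteness. Since $s_{0}^{i}\colon X_{0}\to X_{i}$ is an isomorphism of pro-sets, reading off its inverse from the mapping-set formula \eqref{eq:mapping-spaces-in-Pro-cat} shows that for every index $j$ there is an index $j' \geq j$ for which the transition map $X^{(j')}_{i}\to X^{(j)}_{i}$ factors through the iterated degeneracy $s_{0}^{i}\colon X^{(j)}_{0}\to X^{(j)}_{i}$; in particular its image consists of degenerate simplices. Now fix $j$, pick such a $j'$, and let $\xi \in \pi_{i}(X^{(j')},x_{j'})$. As $X^{(j')}$ is a Kan complex, $\xi$ is represented by an $i$-simplex $\sigma \in X^{(j')}_{i}$ all of whose faces equal $s_{0}^{i-1}(x_{j'})$. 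Let $\bar\sigma \in X^{(j)}_{i}$ be its image; by the previous sentence $\bar\sigma = s_{0}^{i}(v)$ for some $v \in X^{(j)}_{0}$. Applying $d_{0}$, and using $d_{0}s_{0}^{i} = s_{0}^{i-1}$ together with the fact that the transition map is simplicial and basepoint-preserving, gives $s_{0}^{i-1}(v) = s_{0}^{i-1}(x_{j})$; since degeneracies are injective, $v = x_{j}$, so $\bar\sigma = s_{0}^{i}(x_{j})$ is the degenerate simplex at the basepoint and represents the neutral element of $\pi_{i}(X^{(j)},x_{j})$. Thus the transition map $\pi_{i}(X^{(j')},x_{j'})\to \pi_{i}(X^{(j)},x_{j})$ is trivial, and since $j$ was arbitrary the pro-group $\pi_{i}(X)$ vanishes.

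The step I expect to be the main obstacle is the opening of the last paragraph: translating the statement ``$s_{0}^{i}$ is an isomorphism of pro-sets'' into the assertion that transition maps eventually factor through the degenerate simplices. This requires unwinding the pro-inverse via \eqref{eq:mapping-spaces-in-Pro-cat} and keeping track of indices carefully, and it is the one place where one genuinely exploits the pro-structure rather than an individual level. Everything else is bookkeeping with the simplicial identities, the level-wise computation of finite (co)limits in $\Pro(\Set)$ from Lemma~\ref{lem:level-wise-limit}, and the standard description of $\pi_{i}$ of a Kan complex by $i$-simplices with degenerate boundary.
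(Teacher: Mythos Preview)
Your argument is correct. The $\pi_0$ computation via the coequalizer and the simplicial identities is clean, and for $i\ge 1$ the key step---extracting from the pro-isomorphism $s_0^i\colon X_0\to X_i$ an index $j'\ge j$ such that the transition $X^{(j')}_i\to X^{(j)}_i$ factors through $s_0^i$---is exactly what the equality $s_0^i\circ (s_0^i)^{-1}=\id_{X_i}$ unwinds to in $\Pro(\Set)$, and the remainder is straightforward simplicial bookkeeping using the Kan condition.

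The paper takes a different route: it observes that for \emph{truncated} pro-simplicial sets, weakly discrete is equivalent to being isomorphic to a pro-system of discrete simplicial sets (so the conclusion is immediate there), and then reduces the general case to the truncated one via the Postnikov pro-tower $X\to X^{\natural}$ of Artin--Mazur, which is a weak equivalence for level-wise Kan $X$. Your approach is more elementary and self-contained: it avoids invoking the Postnikov machinery in the pro-setting and instead exploits directly the simplicial description of $\pi_i$ of a Kan complex. The paper's argument, on the other hand, packages the ``factor through degenerate simplices'' step into the single statement that truncated weakly discrete pro-objects are pro-discrete, which is conceptually tidy but leaves that step implicit. Either way the content is the same; your version makes explicit precisely the point you flagged as the main obstacle.
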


\begin{proof}
The analog of Lemma~\ref{lem:homotopy-groups-weakly-discrete} for truncated pro-simplicial
sets is clear as they are weakly discrete if and only if they are equivalent to
pro-systems of discrete simplicial sets. As the map $X\to X^\natural$ to the Postnikov
tower, see
\cite[Ch.\ 4]{AM}, is a weak equivalence under our assumption on $X$, one immediately reduces
the lemma to the truncated case.
\end{proof}

Analytic $KV$-theory satisfies excision for pro-GL-fibrations:
\begin{prop}\label{prop:excision} 
If $A\to B$ is a pro-$\GL$-fibration of complete normed rings with kernel $I$, there is a long exact sequence of pro-abelian groups
\[
\dots \to KV^{\an}_{i}(I) \to KV^{\an}_{i}(A) \to KV^{\an}_{i}(B) \to KV^{\an}_{i-1}(I) \to \dots
\]
ending in $KV^{\an}_{1}(B) \to K_{0}(I) \to K_{0}(A) \to K_{0}(B)$.
\end{prop}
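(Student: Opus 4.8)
The plan is to reproduce the classical Karoubi--Villamayor excision argument for a $\GL$-fibration (compare~\cite{Karoubi-Villamayor}) inside the category of pro-spaces, using the pro-$\GL$-fibration hypothesis exactly where the classical proof uses levelwise surjectivity of $\GL(A[\underline t]) \to \GL(B[\underline t])$. Throughout $\ol{W}$ denotes the bar construction of a (simplicial) group, so that $\BGL(R) \simeq \ol{W}\GL(R)$ and $KV^{\an}(A) \simeq \prolim{\rho}\ol{W}\GL(A\langle\Delta_{\rho}\rangle)$.

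First I would reinterpret the hypothesis. For $\rho'\ge\rho\ge 1$ the transition map $A\langle t_1,\dots,t_n\rangle_{\rho'} \to A\langle t_1,\dots,t_n\rangle_{\rho}$ is the natural inclusion, so the pro-systems in the definition of a pro-$\GL$-fibration are indexed by $\rho$ along monomorphisms; for such systems a level map is an epimorphism of pro-sets precisely when it is cofinally surjective. Hence the hypothesis says: for every $n$ the set of radii $\rho$ with $\GL(A\langle t_1,\dots,t_n\rangle_{\rho})_{0} \to \GL(B\langle t_1,\dots,t_n\rangle_{\rho})_{0}$ surjective is cofinal. Via the isomorphism $A\langle\Delta^n_{\rho}\rangle \cong A\langle x_1,\dots,x_n\rangle_{\rho}$ carrying a fixed vertex $v_{0}$ to the origin, this becomes: for every $n$ there are cofinally many $\rho$ with $\GL(A\langle\Delta^n_{\rho}\rangle)_{v_{0}} \to \GL(B\langle\Delta^n_{\rho}\rangle)_{v_{0}}$ surjective, where $(-)_{v_{0}}$ denotes matrices restricting to $1$ at $v_{0}$. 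Next, for each $\rho$ set $I\langle\Delta_{\rho}\rangle = \ker(A\langle\Delta_{\rho}\rangle \to B\langle\Delta_{\rho}\rangle)$, a non-unital simplicial ring whose $\GL$ is formed via unitalisation as in Section~\ref{sec.kv}; since a matrix over $A\langle\Delta_{\rho}\rangle$ congruent to $1$ modulo $I\langle\Delta_{\rho}\rangle$ is invertible over $A\langle\Delta_{\rho}\rangle$ iff it is invertible over $\widetilde{I\langle\Delta_{\rho}\rangle}$, one gets $\GL(I\langle\Delta_{\rho}\rangle) = \ker(\GL(A\langle\Delta_{\rho}\rangle) \to \GL(B\langle\Delta_{\rho}\rangle))$. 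Let $Q_{\rho} \subseteq \GL(B\langle\Delta_{\rho}\rangle)$ be the image; as $A \to B$ is surjective, $Q_{\rho}$ contains $E(B\langle\Delta_{\rho}\rangle)$, hence is normal with abelian quotient. The two resulting fibre sequences $\ol{W}\GL(I\langle\Delta_{\rho}\rangle) \to \ol{W}\GL(A\langle\Delta_{\rho}\rangle) \to \ol{W}Q_{\rho}$ and $\ol{W}Q_{\rho} \to \ol{W}\GL(B\langle\Delta_{\rho}\rangle) \to \ol{W}(\GL(B\langle\Delta_{\rho}\rangle)/Q_{\rho})$ compose to a fibre sequence $\ol{W}\GL(I\langle\Delta_{\rho}\rangle) \to F_{\rho} \to \GL(B\langle\Delta_{\rho}\rangle)/Q_{\rho}$ with $F_{\rho} = \fib(\BGL(A\langle\Delta_{\rho}\rangle) \to \BGL(B\langle\Delta_{\rho}\rangle))$. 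Taking $\prolim{\rho}$, a finite limit hence computed levelwise (Lemma~\ref{lem:level-wise-limit}), gives a fibre sequence of pro-spaces
\[
KV^{\an}(I) \longrightarrow F \longrightarrow \prolim{\rho}\, \GL(B\langle\Delta_{\rho}\rangle)/Q_{\rho}, \qquad F = \fib\big(KV^{\an}(A) \to KV^{\an}(B)\big).
\]

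The crucial point is that $\prolim{\rho}\GL(B\langle\Delta_{\rho}\rangle)/Q_{\rho}$ is \emph{weakly discrete} with $\pi_{0}$ the constant pro-group $\GL(B)/\im\GL(A)$. For $\rho$ in the cofinal set where $\GL(A\langle\Delta^n_{\rho}\rangle)_{v_{0}}$ surjects onto $\GL(B\langle\Delta^n_{\rho}\rangle)_{v_{0}}$ one has $Q_{\rho,n} = \im\GL(A)\cdot\GL(B\langle\Delta^n_{\rho}\rangle)_{v_{0}}$, so evaluation at $v_{0}$ identifies $\GL(B\langle\Delta^n_{\rho}\rangle)/Q_{\rho,n}$ with $\GL(B)/\im\GL(A)$; under this identification every simplicial operator becomes the identity, because normality of $\im\GL(A)$ in $\GL(B)$ together with the one-variable case of the hypothesis shows that the values of a matrix at any two vertices of $\Delta^n_{\rho}$ differ by an element of $\im\GL(A)$. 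Degree by degree this makes each degeneracy $\prolim{\rho}\GL(B)/\im\GL(A) \to \prolim{\rho}\GL(B\langle\Delta^n_{\rho}\rangle)/Q_{\rho,n}$ an isomorphism of pro-sets, so the system is weakly discrete; being degreewise a simplicial group, hence Kan, Lemma~\ref{lem:homotopy-groups-weakly-discrete} gives the claim. The long exact sequence of $KV^{\an}(I) \to F \to \prolim{\rho}\GL(B\langle\Delta_{\rho}\rangle)/Q_{\rho}$ then yields $\pi_{i}F \cong KV^{\an}_{i}(I)$ for $i \ge 1$ and $\pi_{0}F \cong \GL(B)/\im\GL(A)$. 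Since $E(B) \subseteq \im\GL(A)$, the latter equals $K_{1}(B)/\im(K_{1}(A)\to K_{1}(B))$, which by the relative $K_{0}$-sequence of the surjection $A \to B$ (using $K_{0}$-excision, see~\cite[Ch.~III]{K-book}) is canonically $\ker(K_{0}(I) \to K_{0}(A))$. Combining this with the long exact sequence of $F \to KV^{\an}(A) \to KV^{\an}(B)$ in $\Pro(\Spaces_{*})$ (all relevant $\pi_{0}$-terms being groups) and splicing at the bottom with the exact sequence $K_{0}(I) \to K_{0}(A) \to K_{0}(B)$ produces the asserted long exact sequence, the boundary $KV^{\an}_{1}(B) \to K_{0}(I)$ being $\pi_{1}KV^{\an}(B) \to \pi_{0}F \cong \ker(K_{0}(I) \to K_{0}(A)) \hookrightarrow K_{0}(I)$.

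The main obstacle is the handling of the pro-$\GL$-fibration hypothesis in the two steps above: its reinterpretation as cofinal surjectivity of $\GL(A\langle\Delta^n_{\rho}\rangle)_{v_{0}} \to \GL(B\langle\Delta^n_{\rho}\rangle)_{v_{0}}$, and then its use --- necessarily degree by degree, which is exactly why \emph{weakly discrete}, rather than genuinely constant, is the correct notion and why one does not need a single cofinal set of radii valid for all $n$ at once --- to pin down $\prolim{\rho}\GL(B\langle\Delta_{\rho}\rangle)/Q_{\rho}$ as a constant discrete pro-space. Everything beyond that is formal long-exact-sequence bookkeeping together with the standard identification of the degree-$0$ relative term with $\ker(K_{0}(I) \to K_{0}(A))$.
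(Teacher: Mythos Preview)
Your overall strategy matches the paper's: introduce the image simplicial group $Q_\rho$ (the paper calls it $G_\rho$), show that the pro-simplicial set $\lprolim{\rho}\GL(B\langle\Delta_\rho\rangle)/Q_\rho$ is weakly discrete with $\pi_0\cong\GL(B)/\im\GL(A)$, and then splice the two long exact sequences together with the relative $K_0$-sequence from \cite[Exc.~II.2.3]{K-book}. The packaging via $\ol{W}$ and the homotopy fibre $F_\rho$ is a harmless reformulation of the paper's direct manipulation of the two short exact sequences of simplicial groups.

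There is, however, a genuine gap in your first paragraph. The claim that for pro-systems with injective transition maps a level map is an epimorphism of pro-sets \emph{precisely when it is cofinally levelwise surjective} is false. A counterexample: take $Y_n=\{0\}\cup\{1/k:k\ge n\}$ with inclusions as transitions, $X_n=Y_{n+1}$, and $f_n\colon X_n\hookrightarrow Y_n$ the inclusion. Then $f$ is an isomorphism of pro-sets (it is a shift), hence an epimorphism, yet no $f_n$ is surjective. The correct consequence of the epimorphism hypothesis is only that for every $\rho_0$ there exists $\rho\ge\rho_0$ with $\GL(B\langle\underline t\rangle_\rho)_0\subseteq\im\big(\GL(A\langle\underline t\rangle_{\rho_0})_0\to\GL(B\langle\underline t\rangle_{\rho_0})_0\big)$; the lift need not lie in $\GL(A\langle\underline t\rangle_\rho)_0$. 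Consequently your explicit identification $Q_{\rho,n}=\im\GL(A)\cdot\GL(B\langle\Delta^n_\rho\rangle)_{v_0}$ for cofinally many $\rho$ is not justified, and your direct computation of the quotient collapses.

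The paper avoids this by not attempting a levelwise identification. It simply observes that the pro-$\GL$-fibration hypothesis makes $\lprolim{\rho}\GL(A\langle\Delta^n_\rho\rangle)\times\GL(B)\to\lprolim{\rho}\GL(B\langle\Delta^n_\rho\rangle)$ an epimorphism of pro-sets, hence so is the composite $\GL(B)\to\lprolim{\rho}\GL(B\langle\Delta^n_\rho\rangle)/G_{\rho,n}$; then Lemma~\ref{lem:weakly-discrete} (epimorphic image of weakly discrete is weakly discrete) and Lemma~\ref{lem:homotopy-groups-weakly-discrete} give the required vanishing of higher homotopy pro-groups and the identification of $\pi_0$. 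Replacing your explicit computation by this abstract step repairs the argument; everything after that point in your write-up is correct.
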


\begin{proof}
Let $G_{\rho}$ denote the image of the simplicial group $\GL(A\langle \Delta_{\rho}\rangle)$ in $\GL(B\langle \Delta_{\rho}\rangle)$.
For each $\rho$ we have a short exact sequence of simplicial groups
\begin{equation}\label{seq:GL}
1 \to \GL(I\langle \Delta_{\rho}\rangle) \to \GL(A\langle \Delta_{\rho}\rangle) \to G_{\rho} \to 1.
\end{equation}
Since $A\to B$ is a pro-$\GL$-fibration, 
\[
\prolim{\rho} \GL(A\langle \Delta^{n}_{\rho}\rangle) \times \GL(B) \to \prolim{\rho} \GL(B\langle \Delta^{n}_{\rho}\rangle)
\]
is  an epimorphism of pro-sets.
This implies that 
\[
\GL(B) \to \prolim{\rho} \GL(B\langle \Delta^{n}_{\rho}\rangle)/G_{\rho,n}
\]
is an epimorphism, too. Hence the pro-simplicial set $\lprolim{\rho} \GL(B\langle \Delta_{\rho}\rangle)/G_{\rho}$ is weakly discrete by Lemma \ref{lem:weakly-discrete}.
For each $\rho$, the projection 
\[
\GL(B\langle \Delta_{\rho}\rangle) \to \GL(B\langle \Delta_{\rho}\rangle)/G_{\rho}
\]
is a Kan fibration between Kan complexes with fibre $G_{\rho}$. Together with Lemma \ref{lem:homotopy-groups-weakly-discrete} this implies that
\[
\prolim{\rho} \pi_{n}(G_{\rho}) \cong \prolim{\rho} \pi_{n}(\GL(B\langle \Delta_{\rho}\rangle)) = KV^{\an}_{n+1}(B)
\]
for $n>0$ and that we have an exact sequence
\begin{multline}\label{seq:GL-0}
0 \to \prolim{\rho} \pi_{0}(G_{\rho})  \to \prolim{\rho} \pi_{0}(\GL(B\langle \Delta_{\rho}\rangle)) \to \\ \to  \prolim{\rho} \GL(B\langle \Delta^{0}_{\rho}\rangle)/G_{\rho,0} \to 0.
\end{multline}
Note that $G_{\rho,0} = \im(\GL(A) \to \GL(B))$ is a normal subgroup of $\GL(B\langle \Delta^{0}_{\rho}\rangle) = \GL(B)$
as $A \to B$ is surjective by Remark~\ref{rmk:pro-GL-fibrations} and hence every  elementary matrix in $\GL(B)$ can be lifted to $\GL(A)$. 
By \cite[Exc.~II.2.3]{K-book} we have an exact sequence
\begin{equation}\label{sec:GL-K0}
0 \to \GL(B)/\im(\GL(A)\to\GL(B)) \to K_{0}(I) \to K_{0}(A) \to K_{0}(B).
\end{equation}
Splicing together the long exact sequence of homotopy groups associated to \eqref{seq:GL}, the sequence \eqref{seq:GL-0}, and \eqref{sec:GL-K0}, we get the desired result.
\end{proof}

\begin{cor}
	\label{cor:KV-nilinvariant}
If $I \subset A$ is a closed, nilpotent ideal, then $KV^{\an}_{i}(A) \cong KV^{\an}_{i}(A/I)$ for all $i\geq 1$.
\end{cor}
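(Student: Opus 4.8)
The plan is to compare $KV^{\an}(A)$ with $KV^{\an}(A/I)$ by means of the excision long exact sequence of Proposition~\ref{prop:excision}, once we know that the ``fibre'' $KV^{\an}(I)$ is weakly contractible. First I would equip $A/I$ with the residue norm; then by Remark~\ref{rmk:pro-GL-fibrations}(ii) the quotient map $A \to A/I$ is a pro-$\GL$-fibration with kernel $I$, and Proposition~\ref{prop:excision} supplies a long exact sequence of pro-abelian groups
\[
\dots \to KV^{\an}_{i}(I) \to KV^{\an}_{i}(A) \to KV^{\an}_{i}(A/I) \to KV^{\an}_{i-1}(I) \to \dots \to KV^{\an}_{1}(A/I) \to K_{0}(I) \to K_{0}(A).
\]
Granting that $KV^{\an}_{i}(I) = 0$ for all $i \geq 1$ and that $K_{0}(I) = 0$, the conclusion is immediate: for $i \geq 2$ the neighbouring terms $KV^{\an}_{i}(I)$ and $KV^{\an}_{i-1}(I)$ vanish, and for $i = 1$ the terms $KV^{\an}_{1}(I)$ and $K_{0}(I)$ vanish, so in all cases $KV^{\an}_{i}(A) \to KV^{\an}_{i}(A/I)$ is an isomorphism. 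The vanishing of $K_{0}(I)$ is standard: $I$ is a nilpotent two-sided ideal of its unitalization $\widetilde{I} = I \rtimes \Z$, hence lies in the Jacobson radical, so $K_{0}(\widetilde{I}) \xrightarrow{\sim} K_{0}(\widetilde{I}/I) = K_{0}(\Z)$ (cf.~\cite[Lemma~II.2.2]{K-book}) and therefore $K_{0}(I) = \ker(K_{0}(\widetilde{I}) \to K_{0}(\Z)) = 0$.

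The main work is the weak contractibility of $KV^{\an}(I) = \prolim{\rho}\BGL(I\langle\Delta_{\rho}\rangle)$. Fix $N$ with $I^{N} = 0$. Since the coefficients of an $N$-fold product in $I\langle\Delta^{m}_{\rho}\rangle$ lie in $I^{N}$, we have $(I\langle\Delta^{m}_{\rho}\rangle)^{N} = 0$ for every $m$ and $\rho$; hence every matrix $1 + a$ with $a \in \Mat(I\langle\Delta^{m}_{\rho}\rangle)$ is invertible, with inverse $\sum_{k=0}^{N-1}(-a)^{k}$, and $1 + a \mapsto a$ gives an isomorphism of simplicial sets $\GL(I\langle\Delta_{\rho}\rangle) \xrightarrow{\sim} \Mat(I\langle\Delta_{\rho}\rangle)$, the target being a simplicial abelian group. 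The key point is then that, for each fixed $\rho \geq 1$, the simplicial abelian group $[m] \mapsto I\langle\Delta^{m}_{\rho}\rangle$ is contractible; I would prove this by writing down an explicit contracting homotopy modelled on the one in Lemma~\ref{lem:simplicial-homotopy}. Namely, letting $t_{0}, \dots, t_{m}$ denote the coordinates on $\Delta^{m}_{\rho}$, viewed as elements of the unitalization $\widetilde{I}\langle\Delta^{m}_{\rho}\rangle$ with $t_{0} + \dots + t_{m} = 1$, one sets $h_{j}(f) = s_{j}(f)\cdot(t_{0} + \dots + t_{j})$ for $0 \leq j \leq m$. Multiplication by the polynomial $t_{0} + \dots + t_{j}$ preserves $\rho$-convergence of coefficients and maps the ideal $I\langle\Delta^{m+1}_{\rho}\rangle$ into itself, so each $h_{j} \colon I\langle\Delta^{m}_{\rho}\rangle \to I\langle\Delta^{m+1}_{\rho}\rangle$ is a well-defined homomorphism, and the very computation of Lemma~\ref{lem:simplicial-homotopy} shows that $(h_{j})$ is a simplicial homotopy from the zero map to the identity, with the two ends coming out as $\partial_{0}h_{0}(f) = f \cdot \partial_{0}(t_{0}) = 0$ and $\partial_{m+1}h_{m}(f) = f \cdot \partial_{m+1}(t_{0} + \dots + t_{m}) = f$. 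Given this, $\Mat(I\langle\Delta_{\rho}\rangle) = \colim_{n}\Mat_{n}(I\langle\Delta_{\rho}\rangle)$ is contractible as a filtered colimit of finite products of copies of $I\langle\Delta_{\rho}\rangle$, so $\GL(I\langle\Delta_{\rho}\rangle)$ is a contractible simplicial group and $\BGL(I\langle\Delta_{\rho}\rangle)$ is contractible; passing to the cofiltered limit over $\rho$ shows that $KV^{\an}(I)$ is weakly contractible, and in particular $KV^{\an}_{i}(I) = 0$ for all $i \geq 1$.

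The hard part, and the only place where something must genuinely be checked, is verifying that the maps $h_{j}$ satisfy all the simplicial-homotopy identities and that they land in $I\langle\Delta^{m+1}_{\rho}\rangle$ at each finite radius; but this is the same bookkeeping already carried out in the proof of Lemma~\ref{lem:simplicial-homotopy} (using the simplicial identities for the degeneracies $s_{j}$ together with the action of the face operators on the coordinates $t_{i}$), so in the write-up I would simply refer to that argument. Everything else --- the pro-$\GL$-fibration property, the excision sequence, and the nil-invariance of $K_{0}$ --- is immediate from results stated above.
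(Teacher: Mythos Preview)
Your proof is correct and follows essentially the same approach as the paper: reduce via the excision sequence (using Remark~\ref{rmk:pro-GL-fibrations}(ii) and Proposition~\ref{prop:excision}) to the vanishing of $K_0(I)$ and the contractibility of $\GL(I\langle\Delta_\rho\rangle)$, then exhibit an explicit contracting homotopy built from multiplication by partial sums of the coordinate functions $t_i$. The paper argues the contractibility slightly more tersely by referring to the proof of Claim~\ref{claim.phiwe}, writing the homotopy directly on $1+\mathrm{M}(I\langle\Delta_\rho\rangle)$ rather than first on the simplicial abelian group $I\langle\Delta_\rho\rangle$, but this is only a cosmetic difference. One minor notational slip: in your formula $h_j(f)=s_j(f)\cdot(t_0+\dots+t_j)$ the $t_i$ should be the coordinates on $\Delta^{m+1}_\rho$, not $\Delta^m_\rho$, since $s_j(f)$ lives in degree $m+1$; your endpoint computations already implicitly use this.
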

\begin{proof}
Since $I$ is nilpotent, $K_{0}(I) = 0$. By Remark \ref{rmk:pro-GL-fibrations}(ii) and Proposition \ref{prop:excision}, it suffices to prove that
 $\GL(I\langle \Delta_{\rho}\rangle)$ is contractible for every $\rho$. 
Since $I\langle \Delta^{n}_{\rho}\rangle$ is nilpotent for each $n$ and $\rho$, $\GL(I\langle \Delta_{\rho}\rangle) \cong 1+\mathrm{M}(I\langle \Delta_{\rho}\rangle)$
and the contractibility is shown as in the proof of Claim~\ref{claim.phiwe}.
\end{proof}

Since the maps $A\langle s\rangle_{\rho} \xrightarrow{s\mapsto 0} A$ have a section, they are pro-$\GL$-fibrations. By Propositions \ref{prop:excision} and  \ref{prop:pro-homotopy-invariance} it follows that 
\begin{equation}\label{eq:vanishing-KV-path-space}
\prolim{\rho} KV^{\an}_{i}(sA\langle s\rangle_{\rho}) = 0
\end{equation}
for every $i\geq 1$.

\begin{lemma}\label{lem.proGLfibration}
For $\rho >1$, the map $sA\langle s\rangle_{\rho} \xrightarrow{s\mapsto 1} A$ is a pro-$\GL$-fibration.
\end{lemma}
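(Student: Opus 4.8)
The plan is to prove the lemma by exhibiting, for each $n\ge 1$, an explicit section in $\Pro(\Set)$ of the map
\[
\mathrm{ev}_{1}\colon\ \prolim{\sigma}\GL\bigl((sA\langle s\rangle_{\rho})\langle t_{1},\dots,t_{n}\rangle_{\sigma}\bigr)_{0}\ \longrightarrow\ \prolim{\sigma}\GL\bigl(A\langle t_{1},\dots,t_{n}\rangle_{\sigma}\bigr)_{0}
\]
induced by $s\mapsto 1$; a section immediately makes this an epimorphism, which is the defining condition of a pro-$\GL$-fibration (for $n=0$ both pro-sets are trivial, so there is nothing to prove). One first records the routine facts that $\mathrm{ev}_{1}\colon sA\langle s\rangle_{\rho}\to A$, $\sum_{i\ge 1}a_{i}s^{i}\mapsto\sum_{i\ge 1}a_{i}$, is well defined --- the series converges since the norm is non-archimedean and $\|a_{i}\|\to 0$ --- and is $1$-bounded, because $\|\mathrm{ev}_{1}(f)\|\le\max_{i}\|a_{i}\|\le\|f\|_{\rho}$. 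Abbreviate $\underline{t}=(t_{1},\dots,t_{n})$; the section will be induced by the rescaling substitution $t_{i}\mapsto st_{i}$.

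For the construction I would first check that for $\sigma\ge 1$ the rule $t_{i}\mapsto st_{i}$ extends to an isometric homomorphism $\varphi_{\sigma}\colon A\langle\underline{t}\rangle_{\rho\sigma}\to A\langle s\rangle_{\rho}\langle\underline{t}\rangle_{\sigma}$, since $\sum_{I}a_{I}\underline{t}^{I}\mapsto\sum_{I}a_{I}s^{|I|}\underline{t}^{I}$ and $\|a_{I}s^{|I|}\|_{\rho}\,\sigma^{|I|}=\|a_{I}\|(\rho\sigma)^{|I|}$. Now take $g=1+h\in\GL(A\langle\underline{t}\rangle_{\rho\sigma})_{0}$ with inverse $1+k$. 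The subscript $0$ forces $h(\underline{0})=k(\underline{0})=0$, so $\varphi_{\sigma}(h)$ and $\varphi_{\sigma}(k)$ have vanishing constant term in $\underline{t}$; hence they actually have entries in the non-unital ring $(sA\langle s\rangle_{\rho})\langle\underline{t}\rangle_{\sigma}$ and vanish at $\underline{t}=\underline{0}$. Transporting the relations $(1+h)(1+k)=1=(1+k)(1+h)$ through the ring homomorphism $\varphi_{\sigma}$ shows that $1+\varphi_{\sigma}(h)$ is invertible over that ring with inverse $1+\varphi_{\sigma}(k)$. Thus $g\mapsto g(s\underline{t}):=1+\varphi_{\sigma}(h)$ is a well-defined map
\[
\beta_{\sigma}\colon\GL(A\langle\underline{t}\rangle_{\rho\sigma})_{0}\longrightarrow\GL\bigl((sA\langle s\rangle_{\rho})\langle\underline{t}\rangle_{\sigma}\bigr)_{0}.
\]
I would emphasise that no norm estimates or Neumann series enter, precisely because one substitutes into $g^{-1}$ directly.

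It remains to assemble the $\beta_{\sigma}$ and to identify the composite with the identity. The $\beta_{\sigma}$ commute with restriction in $\sigma$, but are defined at a \emph{shifted} level: out of a matrix convergent on polydisc-radius $\rho\sigma$ they produce one convergent only on radius $\sigma$. Because $\rho>1$, the radii $\{\rho\sigma:\sigma\ge 1\}=[\rho,\infty)$ are cofinal in $[1,\infty)$, so $\{\GL(A\langle\underline{t}\rangle_{\rho\sigma})_{0}\}_{\sigma}$ represents the same pro-set as $\{\GL(A\langle\underline{t}\rangle_{\sigma})_{0}\}_{\sigma}$, and the $\beta_{\sigma}$ assemble to a morphism of pro-sets $\beta$ in the required direction. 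Since applying $s\mapsto 1$ to the coefficients of $g(s\underline{t})=1+\sum_{I}c_{I}s^{|I|}\underline{t}^{I}$ returns $1+\sum_{I}c_{I}\underline{t}^{I}$, i.e.\ the image of $g$ under the restriction $A\langle\underline{t}\rangle_{\rho\sigma}\hookrightarrow A\langle\underline{t}\rangle_{\sigma}$, the composite $\mathrm{ev}_{1}\circ\beta$ is represented at each index by a structure map of the pro-system, hence equals $\mathrm{id}$ in $\Pro(\Set)$. The only genuinely delicate point --- and the place where the subscript $0$ and the flexibility of $\Pro(\Set)$ are both used --- is this radius shift: one cannot lift $g$ at the \emph{same} radius, and indeed $\mathrm{ev}_{1}$ need not be surjective on $\GL$ of the rings themselves (for instance $p\in\GL_{1}(\mathbb{Q}_{p})$ has no preimage in $\GL_{1}(s\,\mathbb{Q}_{p}\langle s\rangle_{\rho})$ when $\rho>1$); it is precisely $g(\underline{0})=1$ that attaches a positive power $s^{|I|}$ to each monomial of $g-1$, and cofinal reindexing then absorbs the one-step shift in radius.
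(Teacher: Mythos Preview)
Your proof is correct and follows essentially the same approach as the paper: construct the homomorphism $A\langle\underline{t}\rangle_{\rho\sigma}\to A\langle s\rangle_{\rho}\langle\underline{t}\rangle_{\sigma}$ via $t_{i}\mapsto st_{i}$, note that a matrix $g$ with $g(\underline{0})=1$ lands in $\GL$ of the non-unital ring $sA\langle s\rangle_{\rho}\langle\underline{t}\rangle_{\sigma}$, and observe that $s\mapsto 1$ recovers the restriction map, so the radius shift (absorbed by cofinality since $\rho>1$) yields the required epimorphism of pro-sets. Your write-up simply fills in details the paper leaves implicit --- the isometry check, the explicit transport of invertibility, the cofinality argument, and the illustrative counterexample explaining why the condition $g(\underline{0})=1$ is essential.
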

\begin{proof}
Write $t=(t_{1}, \dots, t_{n})$.
Fix $\rho' \geq 1$. There is a unique continuous homomorphism $A\langle t\rangle_{\rho\rho'} \to A\langle s\rangle_{\rho}\langle t\rangle_{\rho'}$ sending $t_{i}$ to $st_{i}$.
Given a matrix $g=g(t) \in \GL(A\langle t\rangle_{\rho\rho'})$ with $g(0)=1$, we get a matrix $g(st) \in \GL(sA\langle s\rangle_{\rho}\langle t\rangle_{\rho'})$ whose image in $\GL(A\langle t\rangle_{\rho'})$ under the map $s\mapsto 1$ coincides with that of $g$, concluding the proof.
\end{proof}
For $\rho>1$ we set
\[
\Omega_{\rho}A := \ker(sA\langle s\rangle_{\rho}\xrightarrow{s\mapsto 1} A) = s(s-1)A\langle s\rangle_{\rho}.
\]
\begin{cor}
We have natural isomorphisms
\[
KV_{i}^{\an}(A) \cong \prolim{\rho} KV^{\an}_{i-1}(\Omega_{\rho}A) 
\]
for $i\geq 2$, and
\[
KV_{1}^{\an}(A) \cong \prolim{\rho}\ker(K_{0}(\Omega_{\rho}A) \to K_{0}(sA\langle s\rangle_{\rho})).
\]
\end{cor}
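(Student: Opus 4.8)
The plan is to feed the pro-$\GL$-fibration $sA\langle s\rangle_{\rho} \xrightarrow{s\mapsto 1} A$ of Lemma~\ref{lem.proGLfibration} into the excision sequence of Proposition~\ref{prop:excision} and then pass to the limit over $\rho$. So I would first fix $\rho > 1$ and note that, since the kernel of $sA\langle s\rangle_{\rho} \xrightarrow{s\mapsto 1} A$ is by definition $\Omega_{\rho}A$, Proposition~\ref{prop:excision} provides a long exact sequence of pro-abelian groups
\[
\cdots \to KV^{\an}_{i}(\Omega_{\rho}A) \to KV^{\an}_{i}(sA\langle s\rangle_{\rho}) \to KV^{\an}_{i}(A) \to KV^{\an}_{i-1}(\Omega_{\rho}A) \to \cdots
\]
ending in $KV^{\an}_{1}(A) \to K_{0}(\Omega_{\rho}A) \to K_{0}(sA\langle s\rangle_{\rho})$. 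These sequences are natural in $\rho$ with respect to the restriction maps $sA\langle s\rangle_{\rho'} \to sA\langle s\rangle_{\rho}$ for $\rho' \geq \rho$, which restrict to maps $\Omega_{\rho'}A \to \Omega_{\rho}A$.

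Next I would take the cofiltered limit over $\rho > 1$ of this whole system of long exact sequences. Since $\{\rho > 1\}$ is cofinal in $\{\rho \geq 1\}$, Equation~\eqref{eq:vanishing-KV-path-space} applies and shows $\prolim{\rho} KV^{\an}_{i}(sA\langle s\rangle_{\rho}) = 0$ for every $i \geq 1$. Using that cofiltered limits are exact in $\Pro(\Ab)$ (the latter being opposite to $\Ind(\Ab^{\op})$, where filtered colimits are exact), the limit of the above sequences stays exact. For $i \geq 2$ the portion around $KV^{\an}_{i}(A)$ degenerates to $0 \to KV^{\an}_{i}(A) \to \prolim{\rho} KV^{\an}_{i-1}(\Omega_{\rho}A) \to 0$, the right-hand term vanishing because $i - 1 \geq 1$; this gives the first isomorphism. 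For $i = 1$ the relevant portion is $0 \to KV^{\an}_{1}(A) \to \prolim{\rho} K_{0}(\Omega_{\rho}A) \to \prolim{\rho} K_{0}(sA\langle s\rangle_{\rho})$, so $KV^{\an}_{1}(A)$ is identified with the kernel of the last arrow; as kernels are limits and limits commute with limits, this kernel equals $\prolim{\rho}\ker(K_{0}(\Omega_{\rho}A) \to K_{0}(sA\langle s\rangle_{\rho}))$, which is the second isomorphism.

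Essentially all the content sits in the cited statements --- Lemma~\ref{lem.proGLfibration}, Proposition~\ref{prop:excision}, and Equation~\eqref{eq:vanishing-KV-path-space} (which in turn rests on Propositions~\ref{prop:excision} and~\ref{prop:pro-homotopy-invariance}) --- so what remains is bookkeeping. The one place I expect to need care is the passage to the limit over $\rho$: each $KV^{\an}_{i}(\Omega_{\rho}A)$ is itself a pro-group, so $\prolim{\rho} KV^{\an}_{i}(\Omega_{\rho}A)$ has to be read as a pro-object over the combined (still cofiltered) index, and one must check that the excision sequences are compatible with the transition maps in $\rho$ and that exactness survives this iterated limit. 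No new $K$-theoretic or geometric input should be required.
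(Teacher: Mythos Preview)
Your proposal is correct and follows exactly the paper's approach: write down the excision long exact sequence from Proposition~\ref{prop:excision} for the pro-$\GL$-fibration of Lemma~\ref{lem.proGLfibration}, then apply the vanishing~\eqref{eq:vanishing-KV-path-space} after passing to the pro-system over $\rho$. The paper's proof is simply the two-line version of what you wrote; your extra care about exactness of cofiltered limits in $\Pro(\Ab)$ and the double pro-indexing is justified but not strictly needed beyond what the paper implicitly uses.
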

\begin{proof}
For every $\rho>1$ we have a long exact excision sequence
\begin{multline*}
\dots \to KV^{\an}_{i}(sA\langle s\rangle_{\rho}) \to KV^{\an}_{i}(A) \to KV^{\an}_{i-1}(\Omega_{\rho}A) \to KV^{\an}_{i-1}(sA\langle s\rangle_{\rho}) \to \\
\dots \to KV^{\an}_{1}(A) \to K_{0}(\Omega_{\rho}A) \to K_{0}(sA\langle s\rangle_{\rho} ) \to K_{0}(A).
\end{multline*}
Now apply \eqref{eq:vanishing-KV-path-space}.
\end{proof}

\bibliographystyle{amsalpha}
\bibliography{Kanalytic}

\end{document}